\newcommand{\scal}[2]{\langle #1,#2\rangle}
\newcommand{\rr}[1]{\mathbf R^{#1}}
\newcommand{\cc}[1]{\mathbf C^{#1}}
\newcommand{\nm}[2]{\Vert #1\Vert _{#2}}
\newcommand{\nmm}[1]{\Vert #1\Vert }
\newcommand{\op}{\operatorname{Op}}
\newcommand{\sets}[2]{\{ \, #1\, ;\, #2\, \} }
\newcommand{\ep}{\varepsilon}
\newcommand{\fy}{\varphi}
\newcommand{\cdo}{\, \cdot \, }
\newcommand{\wpr}{{\text{\footnotesize $\#$}}}
\newcommand{\essup}{\operatorname{ess\, sup}}
\newcommand{\eabs}[1]{\langle #1\rangle}     %%%%%   for <x>
\newcommand{\tp}{\operatorname{Tp}}
\newcommand{\tw}{\operatorname{tw}}
\newcommand{\vrum}{\vspace{0.2cm}}
\newcommand{\im}{i}
\newcommand{\mascB}{\mathscr B}
\newcommand{\mascP}{\mathscr P}
\newcommand{\maclS}{\mathcal S}
\numberwithin{equation}{section}          %Detta goer att man faar
\newtheorem{thm}{Theorem}
\numberwithin{thm}{section}
\newtheorem{prop}[thm]{Proposition}
\newtheorem{cor}[thm]{Corollary}
\newtheorem{lemma}[thm]{Lemma}
\newcommand{\rubrik}{}
\theoremstyle{definition}
\newtheorem{defn}[thm]{Definition}
\newtheorem{example}[thm]{Example}
\theoremstyle{remark}
\newtheorem{rem}[thm]{Remark}
\newcommand{\rd}{\mathbf{R} ^{d}}
\newcommand{\intrd}{\int _{\rd }}
\title{The Bargmann transform on a broad family of Banach spaces,
with applications to Toeplitz and pseudo-differential operators}
\author{Joachim Toft}
\address{Department of Computer science, Physics and Mathematics,
Linn{\ae}us University, V{\"a}xj{\"o}, Sweden}
\email{joachim.toft@lnu.se}
\begin{document}

\begin{abstract}
We investigate mapping properties for the Bargmann transform on modulation spaces whose 
weights and their reciprocals are allowed to grow faster than exponentials. We prove
that this transform is isometric and bijective from modulation spaces
to convenient Lebesgue spaces of analytic functions. We use this to prove that such modulation spaces fulfill most of the continuity properties which are well-known when the weights are moderated. Finally we use the results to establish continuity properties of Toeplitz and pseudo-differential operators in the context of these modulation spaces.
\end{abstract}

\maketitle

\par

%%%%%%%%%%%%%%%%%%%%%%%%%%%%
\section{Introduction}
%%%%%%%%%%%%%%%%%%%%%%%%%%%%

\par

In this paper we introduce and establish basic continuity properties for a broad
family of (quasi-)Banach spaces of functions and distributions of Gelfand-Shilov
types, in the framework of harmonic analysis. We establish close links between
these spaces and
(weighted) Lebesgue spaces $A^{p,q}_{(\omega )}$ of analytic functions
related to Bargmann-Fock spaces. The family of spaces consists of modulation
spaces, where each modulation space is obtained by imposing a weighted mixed
norm estimate on the short-time Fourier transforms
of the involved distributions. Important cases of such spaces are
given by $M^{p,q}_{(\omega )}$, where the weighted mixed norm estimate
is constituted by the $L^{p,q}_{(\omega)}$ norm.

\par

Among the involved parameters $p$, $q$ and the weight $\omega$,
it follows that $\omega$ is most important concerning
imposing regularity, or relaxing growth,
oscillations and singularity conditions on the involved distributions.
In comparison to already established theories of such spaces
(cf. \cite{FGW,Gc1,SiT2} and the references therein) the conditions for
the involved weight functions are significantly relaxed in the present paper.
This leads to that our family of modulation spaces are significantly larger compared
to the "usual" families of such spaces. For example, for each fixed $s >1/2$, the
modulation space $M^{p,q}_{(\omega )}$ can be made "arbitrary close" to the
Gelfand-Shilov space $\maclS _s$ or to $\maclS _s'$, by choosing the weight
$\omega$ in appropriate ways.

\par

%Among the parameters, the choice of weight $\omega$ has in
%general the largest impact when imposing regularity, or relaxing growth,
%oscillations and singularity conditions on the involved distributions.
%In comparison to already established theories of such spaces (cf. \cite{FGW,Gc1,SiT2} and the
%references therein) the conditions for the involved weight functions are significantly
%relaxed. This leads to that our family of modulation spaces are significantly larger compared
%to the "usual" families of such spaces. For example, for each fixed $s >1/2$, the
%modulation space $M^{p,q}_{(\omega )}$ can be made "arbitrary close" to the
%Gelfand-Shilov space $\maclS _s$ or to $\maclS _s'$, by choosing the weight
%$\omega$ in appropriate ways.
%
%\par

%In this paper we introduce and establish basic continuity properties for a family
%of (quasi-)Banach spaces, defined in 
%
%general types of 
%modulation spaces $M^{p,q}_{(\omega )}$, and for canonical (weighted) Lebesgue spaces
%$A^{p,q}_{(\omega )}$ of analytic functions related to Bargmann-Fock spaces.
%In comparison to already established theories of such spaces (cf. \cite{FGW,Gc1,SiT2} and the
%references therein) the conditions for the involved weight funcitons are significantly
%relaxed. This leads to that our families of spaces are significantly larger compared
%to the "usual" families of such spaces. For example, for each fixed $s >1/2$, the
%modulation space $M^{p,q}_{(\omega )}$ can be made arbitrary close to the
%Gelfand-Shilov space $\maclS _s$ or to $\maclS _s'$, by choosing the weight
%$\omega$ in appropriate ways.
%
%\par

\par

An essential part of our investigations concerns the establishment of the
links between the modulation spaces and the $A^{p,q}_{(\omega)}$
spaces, by proving that the Bargmann transform is isometric and bijective
between these spaces. One of the benefits is that any
property valid for the $A^{p,q}_{(\omega )}$ spaces, carry over to the
modulation spaces, and vise versa. For example, we prove that any 
modulation space is a Banach or quasi-Banach space,
and that convenient density, duality and interpolation properties hold
for such spaces, because similar properties are valid for corresponding
spaces of analytic functions.

\par

Finally we use our results to extend the theory of pseudo-differential operators to involve more extreme symbols and target distributions comparing to earlier investigations.

\medspace

We recall that the (classical)
modulation space $M^{p,q}_{(\omega )}$, $p,q \in [1,\infty]$, as introduced and carefully
investigated by
Feichtinger and Gr{\"o}chenig in \cite{F1,FG1,FG2,Feichtinger5,Gc1}, consists of all
tempered distributions whose short-time Fourier transforms (STFT) have finite mixed
$L^{p,q}_{(\omega)}$ norm. Here the weight $\omega$ quantifies the degree
of asymptotic decay and singularity of the distribution in
$M^{p,q}_{(\omega )}$. In general it is assumed that $\omega$ should be \emph{moderate}, which imposes
several properties on $\omega$ and thereby on the modulation space $M^{p,q}_{(\omega )}$.
(See Sections \ref{sec1} and \ref {sec2} for strict definitions.)
For example, the moderate property implies that $\omega$ is not allowed
to grow or decay faster than exponentials, that $M^{p,q}_{(\omega )}$ are
invariant (but not norm invariant) under pullbacks of translations, and that
several properties valid for weighted Lebesgue spaces (e.{\,}g. density,
duality and interpolation properties) carry over to classical modulation spaces. 

\par

A major idea behind the design of these spaces was to find
useful Banach spaces, which are defined in a way similar to Besov
spaces, in the sense of replacing the dyadic decomposition on the
Fourier transform side, characteristic to Besov spaces, with a
\textit{uniform} decomposition. From the construction of these
spaces, it turns out that modulation spaces and Besov spaces in some
sense are rather similar, and sharp embeddings between these
spaces can be found in \cite{Toft2, To04B}, which
are improvements of certain embeddings in \cite {Grobner}. (See
also \cite {Sugimoto1,WH} for verification of the sharpness.)

\par

During the last 15 years, several results have been proved which confirm
the usefulness of the modulation
spaces in time-frequency analysis, where they occur naturally. For
example, in \cite{Feichtinger5,Gc2,GL}, it
is shown that all modulation spaces admit
reconstructible sequence space representations using Gabor frames.

\par

Parallel to this development, modulation spaces have been incorporated
into the calculus of pseudo-differential operators, which also involve
Toeplitz operators. (See
e.{\,}g. \cite{Gc2, GT1, GT2, He1, HTW, Sugimoto1, Toft2, To04B, To5, To8A, To8B} and the
references therein concerning symbol classes embedded in $\mathscr S'$,
and \cite{CPRT10, Gc2, PT1, PT2, PT3, Te1, Te2} for results
involving ultra-distributions. Here and in what follows we use the usual notations
for the usual function and distribution spaces, see e.{\,}g. \cite{Ho1}.)

\medspace

The Bargmann transform can easily be reformulated in terms of the
short-time Fourier transform, with a particular Gauss function as
window function. By reformulating the Bargmann transform in such way,
and using the fundamental role of the short-time Fourier
transform in the definition of modulation spaces, it easily follows
that the Bargmann transform is continuous and injective from
$M^{p,q}_{(\omega )}$ to $A^{p,q}_{(\omega)}$. Furthermore, by
choosing the window function as a particular Gaussian function in the
$M^{p,q}_{(\omega )}$ norm, it follows that $\mathfrak V\, :\,
M^{p,q}_{(\omega)}\to A^{p,q}_{(\omega)}$ is isometric.

\par

These facts and several other mapping properties for the Bargmann
transform on (classical) modulation spaces
were established in \cite{FG1, FGW, Gc1, GW, SiT2}. 
In fact, here it is proved that the Bargmann transform from
$M^{p,q}_{(\omega )}$ to
$A^{p,q}_{(\omega )}$ is not only injective, but in fact
\emph{bijective}. 

\medspace

For the modulation space $M^{p,q}_{(\omega )}$, the weight function $\omega$ is
important for imposing or relaxing conditions on the distributions $f$
in $M^{p,q}_{(\omega )}$.
More precisely, the weight $\omega =\omega (x,\xi )$ depends on both the space (or time) 
variables $x$ as well as the momentum (or frequency) variables $\xi$. Roughly speaking, the weight 
function posses (cf.  \cite{F1,CJT2,Gc2,GT1,GT2}):
\begin{itemize}
\item $\omega$ tending rapidly to infinity as $x$ tends to infinity,
imposes that $f$ tends rapidly to zero at infinity;

\vrum

\item $\omega$ tending rapidly to zero as $x$ tends to infinity,
relaxes the growth conditions on $f$ at infinity;

\vrum

\item $\omega$ tending rapidly to infinity as $\xi $ tends to infinity,
imposes high regularity for $f$;

\vrum

\item $\omega$ tending rapidly to zero as $\xi $ tends to infinity,
relaxes the conditions on singularities of $f$;

\vrum

\item $\omega$ tending rapidly to infinity as both $x$ and $\xi $
tends to infinity, imposes stronger restrictions on oscillations for $f$ at infinity;

\vrum

\item $\omega$ tending rapidly to zero as both $x$ and $\xi $
tends to infinity, relax the restrictions on oscillations for $f$ at infinity.
\end{itemize}

\par

The condition that $\omega$ should be moderate implies that
\begin{equation}\label{conseqmoder}
\omega +1/\omega \le v
\end{equation}
for some $v=Ce^{c|\cdo |}$, where $c,C>0$ are constants. In this case, $\omega$ is called a weight of
\emph{exponential type}. We remark that corresponding modulation spaces
$M^{p,q}_{(\omega)}$ are subsets of appropriate spaces of Gelfand-Shilov distributions, and
for certain choices of $\omega$ we may have that  $M^{p,q}_{(\omega )}$ is contained
in $\mathscr S$, or that $\mathscr S'$ is contained in  $M^{p,q}_{(\omega )}$.
A more restrictive case appears when \eqref{conseqmoder}
is true for some $v=Ce^{c|\cdo |^s}$, with $0\le s<1$. In this case, $\omega$ is
called a weight of \emph{subexponential type}. If instead $v$ in \eqref{conseqmoder}
can be chosen as polynomial, then $\omega$
is said to be of \emph{polynomial type}. In this case, $M^{p,q}_{(\omega )}$ contains
$\mathscr S$, and is contained in $\mathscr S'$.

\par

Several properties for the modulation spaces might be violated when passing
from the subexponential type weights into exponential type weights. For example,
if $\omega$ is of exponential type, then $M^{p,q}_{(\omega )}$ might be contained
in the set of real analytic functions, which in particular
implies that there are no non-trivial compactly supported elements in $M^{p,q}_
{(\omega )}$.
%Consequently, there are no compactly supported Gabor atoms, implying
%that certain parts of the time-frequency machinery break  down because compactly
%supported Gabor atoms are in some situations fundamental in the applications.
Consequently, there are no compactly supported Gabor atoms, implying
the time-frequency machinery breaks  in those parts were compactly
supported Gabor atoms are needed.

\par

In the present paper we go beyond these situations and relax the assumptions on $v$
even more. For example, we permit $v$ in \eqref{conseqmoder} to be superexponential, i.{\,}e. 
$v=Ce^{c|\cdo |^\gamma}$, where $1<\gamma <2$. In this situation, almost no arguments 
in classical modulation space theory can be used, because the main results in that theory are based 
on the fact that $\omega$ should be moderate. This condition is violated when $v$ in
\eqref{conseqmoder} has to be superexponential.

\par

In Sections \ref{sec1} and \ref{sec2} we give the explicit conditions on the weight functions, 
and in Sections \ref{sec3} and \ref{sec4} we prove:
\begin{enumerate}
\item any extended weight class contains all weights in classical modulation space theory, 
including weights which are moderated by exponential type weights. Furthermore, any 
superexponential weight with $\gamma$ above less than $2$ are included, as well as 
weights of the form $\omega =\eabs \cdo ^{\eabs \cdo}$ and $\omega = \Gamma (\eabs \cdo 
+1)$. Here $\eabs x=(1+|x|^2)^{1/2}$ and $\Gamma$ is the Gamma function.

\vrum

\item $A^{p,q}_{(\omega )}$ and $M^{p,q}_{(\omega )}$ are Banach spaces and fulfill convenient density, duality and interpolation properties.

\vrum

\item the Bargmann transform is isometric and bijective from $M^{p,q}_{(\omega )}$ to $A^{p,q}_
{(\omega )}$.
\end{enumerate}

\par

In the last section we establish new forms of pseudo-differential calculi in the framework of these 
modulation spaces. This means that the spans of the spaces for operator symbols, target functions 
and image functions, are significantly larger comparing to earlier theories. Therefore, these spaces 
may be smaller as well as larger comparing the usual situations. The approach here is similar to \cite
{To5, To8A, To9}, where similar results were obtained in background of classical modulation space 
theory. The results here are, to some extent, also related to the results in
\cite{Cap,PT1,PT2, PT3, Rod, Te1, Te2}, when $v$ in \eqref{conseqmoder} is bounded by a  
subexponential function.

\par

We remark that in contrast to classical theory of pseudo-differential operators,
(cf. e.{\,}g. \cite{Ho1}), there are no explicit regularity assumptions on the symbols.
On the other hand, if $1<\gamma _1<\gamma <\gamma _2 <2$ with $c>0$,
and the weight $\omega$ is given by
\begin{equation}\label{subsupexp}
\omega (x,\xi )=e^{c(|x|^{\gamma}+|\xi |^\gamma )},
\end{equation}
then the corresponding modulation spaces are contained in the Gelfand-Shilov space
$\mathcal S_{1/\gamma _1}$, and contain $\mathcal S_{1/\gamma _2}$. In particular, 
this means that the involved functions and their derivatives are extendable to
entire analytic functions and fulfill estimates of the form
$$
|f(x)|\le Ce^{-c|x|^{\gamma _1}},\quad \text{and}\quad |\widehat f(\xi )|\le Ce^{-c|\xi |^{\gamma _1}},
$$
for some positive constants $c$ and $C$. It is therefore obvious that in this situation,
the elements in these  modulation spaces posses strong regularity properties.

\par

On the other hand, if $c<0$ in \eqref{subsupexp}, then the corresponding
modulation spaces contain  the dual $\mathcal S_{1/\gamma _1}'$ of
$\mathcal S_{1/\gamma _1}$, which in turn is significantly larger than
e.{\,}g. $\mathscr S'$, the space of tempered distributions.

\par

Finally, in Section \ref{sec5} we apply the continuity results for modulation
spaces to establish  continuity properties for Toeplitz operators with symbols
in weighted mixed norm space of Lebesgue types. (Cf. \cite
{Bau, Berezin71, Bog1, Cob01, CG03, To8B} and the references
therein for similar and related investigations.)

\par

%%%%%%%%%%%%%%%%%%%%%%%
\section*{Acknowledgement}
%%%%%%%%%%%%%%%%%%%%%%%

\par

First I would like to express my gratitude to
M. Signahl for interesting and valuable discussions on questions
related to certain parts of the present paper during 2009 and 2010. I also thank
K. Gr{\"o}chenig for valuable discussions.
In fact, important ideas to the paper appeared when I communicated
with him during the spring 2010. I am also grateful to A. Galbis, C. Fernandez,
P. Wahlberg and S. Pilipovi{\'c} for supports and careful reading of the original paper,
leading to several improvements of the style and content. 

\par

%%%%%%%%%%%%%%%%%%%%%%%
\section{Preliminaries}\label{sec1}
%%%%%%%%%%%%%%%%%%%%%%%

\par

In this section we give some definitions and recall some basic
facts. The proofs are in general omitted. In the first part we consider
appropriate cnditions on the involvd weight functions. Thereafter
we review some facts for Gelfand-Shilov spaces. Then we discuss
basic properties of the short-time Fourier transform, which is thereafter
used in the definition of modulation spaces, and obtaining basic
properties for such spaces. The last part of the section is devoted
to the Bargmann transform and appropriate Banach spaces of
entire functions, which are appropriate in the background of the
Bargmann transform.

\par

\subsection{Weight functions}\label{subsec1.1}

\par

We start by discussing general properties on the involved weight
functions. A \emph{weight} on $\rr d$ is a positive function $\omega$
on $\rr d$ such that $\omega \in 
L^\infty _{loc}(\rr d)$, and for each compact set $K\subseteq \rr d$, there
is a constant $c>0$ such that
$$
\omega (x)\ge c\qquad \text{when}\qquad x\in K.
$$
A usual condition on $\omega$ is that it should be \emph{$v$-moderate}
for some positive function $v \in L^\infty _{loc}(\rr d)$. This means that
\begin{equation}\label{moderate}
\omega (x+y) \leq C\omega (x)v(y),\qquad x,y\in \rr d,
\end{equation}
for some constant $C$ which is independent of $x,y\in \rr d$. We note
that \eqref{moderate} implies that $\omega$ fulfills the estimates
\begin{equation*}%\label{moderateconseq}
C^{-1}v(-x)^{-1}\le \omega (x)\le Cv(x).
\end{equation*}

\par

We say that $v$ is
\emph{submultiplicative} when \eqref{moderate} holds with $\omega =v$.
In the sequel, $v$ and $v_j$ for $j\ge 0$, always stand for
submultiplicative weights if nothing else is stated.

\par

The weight $\omega$ is called a weight of \emph{exponential type}, if $v$
in \eqref{moderate} can be chosen as $v(x)=Ce^{c|x|}$ for some $c,C>0$.
If, more restrictive, $v$ can be chosen as a polynomial, then $\omega$
is called a weight of \emph{polynomial type}. We let $\mascP (\rr d)$ and
$\mascP _E(\rr d)$ be the sets
of all weights on $\rr d$ of polynomial type and exponential type, respectively. Obviously,
$\mascP (\rr d)\subseteq \mascP _E(\rr d)$.

\par

A broader class of moderate weights comparing to $\mascP (\rr d)$ is obtained by replacing
the polynomial assumption on $v$ by the so called GRS condition 
(Gelfand-Raikov-Shilov condition). That is, $v\in L^\infty _{loc}(\rr d)$ is positive and satisfies
$$
\lim _{n\to \infty}\frac {\log v(nx)}{n}=0.
$$
An important class of submultiplicative weights which fulfills the GRS condition is
the so called subexponential weights, i.{\,}e. weights of the form
\begin{equation}\label{subexpdef}
\omega (x)=Ce^{c|x|^{s}},
\end{equation}
when $\omega =v$, and $c$, $C$ and $s$ are positive constants such that $s<1$.
On the other hand, if $v$ is a weight of 
exponential type, then the GRS condition is violated. Furthermore,
if $\omega$ is $v$-moderate for some $v$, then it is moderated by an 
exponential type weight.
Consequently, the $\mascP _E(\rr d)$ contains \emph{all} weights on $\rr d$
which are moderated by some functions, including those weights
moderated by $v$ which fulfills the GRS-conditions. We refer to \cite{Gc2.5} and the
references therein for these facts.

\medspace

In this paper we permit weights where the moderate condition \eqref{moderate} on $\omega $ has been relaxed by appropriate local and global conditions. In most of the situations, the local condition is
\begin{equation}\label{modrelax}
C^{-1}\omega (x)\le \omega (x+y)\le C\omega (x) \quad \text{when}\quad Rc \le |x|\le c/|y| ,\quad R\ge 2,
\end{equation}
for some positive constants $c$ and $C$. However, in most of the situations, the condition
\eqref{modrelax} is relaxed into
\begin{equation}\tag*{(\ref{modrelax})$'$}
C^{-1}\omega (x)^2\le \omega (x+y)\omega (x-y)\le C\omega (x)^2 \quad \text{when}\quad Rc \le |x|\le c/|y| ,\quad R\ge 2,
\end{equation}
for some positive constants $c$ and $C$.

\par

Important examples of weights satisfying \eqref{modrelax} are those which satisfy \eqref
{subexpdef}, when $C$ and $s$ being positive such that $s\le 2$, and $c\in \mathbf R$. 
Especially we note that if $\omega$ is given by \eqref{subexpdef} with $1<s\le 2$, then $
\omega$ is \emph{not} moderated by any weight $v$, but satisfies \eqref{modrelax} for some 
choices of $c>0$ and $C>0$. On the other hand, if $\omega >0$ and satisfies \eqref
{modrelax}, then Proposition \ref{PGsmoothness} in Section \ref{sec2} shows that
\begin{equation}\label{Gaussest}
C^{-1}e^{-c|x|^2}\le \omega (x)\le Ce^{c|x|^2},
\end{equation}
holds for some positive constants $c$ and $C$.

\par

\begin{defn}\label{defweights}
Let $\omega$ be a weight on $\rr d$. 
\begin{enumerate}
\item $\omega$ is called a \emph{weight of Gaussian type} (\emph{weakly Gaussian type}) on $\rr d$, if \eqref{modrelax} holds (if \eqref{modrelax}$'$ holds) for some positive $c$ and $C$, and \eqref{Gaussest} holds for some positive $c$ and $C$. The set of Gaussian type and weakly Gaussian type weights on $\rr d$ are denoted by $\mascP _G (\rr d)$ and $\mascP  _{Q}(\rr d)$, respectively;

\vrum

\item $\omega$ is called a \emph{weight of subgaussian type} (\emph{weakly
subgaussian type}) on $\rr d$, if \eqref{modrelax} holds (if \eqref{modrelax}$'$
holds) for some positive $c$ and $C$, and for every $c>0$, there is a
constant $C>0$ such that \eqref{Gaussest} holds. The set of subgaussian
type and weakly subgaussian type weights on $\rr d$ are denoted by
$\mascP  _{G}^0(\rr d)$ and $\mascP  _{Q}^0(\rr d)$, respectively.
\end{enumerate}
\end{defn}

\par

We note that each one of the families of weight functions in Definition \ref{defweights}
are groups under the ordinary multiplications.

\par

\begin{rem}\label{extentionPQ0}
The family $\mascP _Q^0$ is larger than $\mascP _G^0$, but its definition is
somewhat more complicated. An important reason for introducing this family is that we
may prove that the general modulation spaces, introduced later on, can be made close to
Gelfand-Shilov spaces in the sense of Proposition \ref{capcupmodsp} in Section \ref{sec3}.
So far we are unable
to prove any similar result when the family  $\mascP _Q^0$ is replaced by $\mascP _G^0$.

\par

On the other hand, for any weight in $\mascP _G^0$ one may find an equivalent smooth
weight (cf. Proposition \ref{PGsmoothness} in Section \ref{sec2}). So far we are unable to
extend this property to all weights in $\mascP _Q^0$.

\par

We note that if $\omega \in \mascP _Q^0(\rr d)$, then $\omega$ satisfies
the following conditions:
\begin{enumerate}
\item there are invertible $d\times d$-matrices $T_1,\dots ,T_N$ whose norms are at
most one, i.{\,}e. $\nmm {T_j}\le 1$, $j=1,\dots ,N$, and such that
\begin{multline}\tag*{(\ref{modrelax})$''$}
C^{-1}\omega (x)^N\le \prod _{j=1}^N\omega (x+T_jy)\le C\omega (x)^N
\\[1ex]
\text{when}\quad Rc \le |x|\le c/|y| ,\quad R\ge 2,
\end{multline}
for some positive constants $c$ and $C$;

\vrum

\item for every $c>0$, there is a constant $C>0$ such that \eqref{Gaussest} holds.
\end{enumerate}

\par

Hence if we modify the definition of $\mascP _Q^0(\rr d)$ in such way
that it should contain all weights $\omega$ satisfying (1) and (2), then
we obtain a larger family of weights, comparing to Definition
\ref{defweights}. By straight-forward computations it follows
that all results in the paper are true after the definition of $\mascP _Q^0$
in Definition \ref{defweights} has been modified in this way.

\par

A special situation appears
for Proposition \ref{capcupmodsp} in Section \ref{sec3}, where the symmetry condition in
$y$ in \eqref{modrelax}$'$ is essential for its proof. However, it follows that
Proposition \ref{capcupmodsp} is true, after \eqref{modrelax}$'$ in the definition of
$\mascP _Q^0$ has been replaced by
$$
C^{-1}\omega (x)^{2N}\le \prod _{j=1}^N\omega (x+T_jy)\omega (x-T_jy)\le C\omega (x)^{2N},
$$
when $Rc \le |x|\le c/|y|$ and $R\ge 2$, where $T_j$ are invertible
matrices with norm at most one.
\end{rem}

\par

In Section \ref{sec2} we introduce other convenient subfamilies of $\mascP _Q(\rr d)$.

\par

\begin{example}\label{weightexampl1}
Let $c,s\in \mathbf R$, $C>0$ and $t>1/2$. Then
\begin{equation}\label{defsigmas}
\sigma _s(x)\equiv \eabs {x}^s = (1+|x|^2)^{s/2},\quad \omega _1(x)=e^{c|x|^{1/t}}\quad
\text{and}\quad \omega _2(x)=e^{c|x|^{2}},
\end{equation}
are weights of polynomial type, subgaussian type and Gaussian type, respectively.
\end{example}

\par

\begin{defn}\label{defweightfam}
Let $\Omega \subseteq \mascP _Q(\rr d)$.
%be such that for each $\omega \in \Omega$, there are constants $c>0$
%and $C>0$ such that \eqref{Gaussest} holds.
Then $\Omega$ is called an \emph{admissible family of weights}, if
%$\Omega \subseteq  \mascP _Q(\rr d)$, and
there is a rotation invariant
function $0<\omega _0(x)\in L^\infty _{loc}(\rr d)\cap L^1(\rr d)$ which
decreases with $|x|$ and such that
$$
\omega \cdot \omega _0\in \Omega \quad \text{and}\quad
\omega / \omega _0\in \Omega \quad \text{when}\ \omega \in \Omega .
$$
\end{defn}

\par

We note that for some choice of $\omega _0$ in Definition \ref{defweightfam} we have
\begin{equation}\label{omega0est}
\omega _0(x)\le C\eabs x^{-d}
\end{equation}
for some constant $C>0$.

\par

\begin{example}\label{exadmweights}
Every family in Definition \ref{defweights} are admissible. Moreover, if $\omega _0\in \mascP _{Q}(\rr d)$ and $\Omega$ is a family of admissible weights, then
\begin{enumerate}
\item $\sets {\sigma _N}{N\in \mathbf Z}$ is admissible;

\vrum

\item $\omega _0\cdot \Omega \equiv \sets {\omega _0\omega}{\omega \in \Omega}$ is admissible.
\end{enumerate}
\end{example}

\par

%For each $\omega \in \mascP (\rr d)$ and $p\in [1,\infty ]$, we let
%$L^p_{(\omega )}(\rr d)$ be the Banach space which consists of all
%$f\in L^1_{loc}(\rr d)$ such that $\nm {f}{L^p_{(\omega )}}\equiv \nm
%{f\, \omega }{L^p}$ is finite.
%
%\par

\subsection{Gelfand-Shilov spaces}

\par

Next we recall the definition of Gelfand-Shilov spaces.

\par

Let $0<h,s\in \mathbf R$ be fixed. Then we let $\mathcal S_{s,h}(\rr d)$ be the set of all $f\in C^\infty (\rr d)$ such that
\begin{equation*}%\label{gfseminorm}
\nm f{\mathcal S_{s,h}}\equiv \sup \frac {|x^\beta \partial ^\alpha
f(x)|}{h^{|\alpha | + |\beta |}(\alpha !\, \beta !)^s}
\end{equation*}
is finite. Here the supremum should be taken over all $\alpha ,\beta \in
\mathbf N^d$ and $x\in \rr d$. Obviously $\mathcal S_{s,h}\subseteq
\mathscr S$ is a Banach space which increases with $h$ and $s$.
Furthermore, if $s>1/2$ or $s=1/2$ and $h\ge 1$, then $\mathcal
S_{s,h}$ contains all finite linear combinations of Hermite functions.
Since such linear combinations are dense in $\mathscr S$, it follows
that the dual $\mathcal S_{s,h}'(\rr d)$ of $\mathcal S_{s,h}(\rr d)$ is
a Banach space which contains $\mathscr S'(\rr d)$.

\par

The \emph{Gelfand-Shilov spaces} $\mathcal S_{s}(\rr d)$ and
$\Sigma _{s}(\rr d)$ are the inductive and projective limit respectively
of $\mathcal S_{s,h}(\rr d)$. This implies that
\begin{equation}\label{GSspacecond1}
\mathcal S_{s}(\rr d) = \bigcup _{h>0}\mathcal S_{s,h}(\rr d)
\quad \text{and}\quad \Sigma _{s}(\rr d) =\bigcap _{h>0}\mathcal S_{s,h}(\rr d),
\end{equation}
and that the topology for $\mathcal S_{s}(\rr d)$ is the strongest possible one such that each inclusion map from $\mathcal S_{s,h}(\rr d)$ to $\mathcal S_{s}(\rr d)$ is continuous. The space $\Sigma _s(\rr d)$ is a Fr{\'e}chet space with semi norms $\nm \cdo{\mathcal S_{s,h}}$, $h>0$. 

\par

We remark that the space $\mathcal S_s(\rr d)$ is zero when $s<1/2$, and that $\Sigma _s(\rr d)$ is zero when $s\le 1/2$. Furthermore, for each $\ep >0$ and $s\ge 1/2$ we have
$$
\Sigma _s (\rr d)\subseteq \mathcal S_s(\rr d)\subseteq \Sigma _{s+\ep}(\rr d).
$$
On the other hand, in \cite{pil} there is an alternative elegant definition of $\Sigma _{s_1}(\rr d)$ and $\mathcal S _{s_2}(\rr d)$ such that these spaces agrees with the definitions above when $s_1>1/2$ and $s_2\ge 1/2$, but $\Sigma _{1/2}(\rr d)$ is non-trivial and contained in $\mathcal S_{1/2}(\rr d)$.

\par

From now on we assume that $s>1/2$ when considering $\Sigma _s(\rr d)$.

\par

\medspace

The \emph{Gelfand-Shilov distribution spaces} $\mathcal S_{s}'(\rr d)$ and $\Sigma _{s}'(\rr d)$ are the projective and inductive limit respectively of $\mathcal S_{s,h}'(\rr d)$.  This means that
\begin{equation}\tag*{(\ref{GSspacecond1})$'$}
\mathcal S_{s}'(\rr d) = \bigcap _{h>0}\mathcal S_{s,h}'(\rr d)\quad
\text{and}\quad \Sigma _{s}'(\rr d) =\bigcup _{h>0}\mathcal S_{s,h}'(\rr d).
\end{equation}
We remark that already in \cite{GS} it is proved that $\mathcal S_{s}'(\rr d)$ is the dual of $\mathcal S_{s}(\rr d)$, and if $s>1/2$, then $\Sigma _{s}'(\rr d)$ is the dual of $\Sigma _{s}(\rr d)$ (also in topological sense).

\par

The Gelfand-Shilov spaces are invariant under several basic transformations. For example they are invariant under translations, dilations, tensor products and under any Fourier transformation.

\par

From now on we let $\mathscr F$ be the Fourier transform which takes the form
$$
(\mathscr Ff)(\xi )= \widehat f(\xi ) \equiv (2\pi )^{-d/2}\int _{\rr
{d}} f(x)e^{-i\scal  x\xi }\, dx
$$
when $f\in L^1(\rr d)$. Here $\scal \cdo \cdo$ denotes the usual scalar product on $\rr d$. The map $\mathscr F$ extends 
uniquely to homeomorphisms on $\mathscr S'(\rr d)$, $\mathcal S_s'(\rr d)$ and $\Sigma _s'(\rr d)$, and restricts to 
homeomorphisms on $\mathscr S(\rr d)$, $\mathcal S_s(\rr d)$ and $\Sigma _s(\rr d)$,  and to a unitary operator on $L^2(\rr d)$. 

\par

The following lemma shows that elements in Gelfand-Shilov spaces can be characterized by estimates of the form
\begin{equation}\label{GSexpcond}
|f(x)|\le Ce^{-\ep |x|^{1/s}}\quad \text{and}\quad |\widehat f (\xi )|\le Ce^{-\ep |\xi |^{1/s}} .
\end{equation}
The proof is omitted, since the result can be found in e.{\,}g. \cite{GS, ChuChuKim}.

\par

\begin{lemma}\label{GSFourierest}
Let $f\in \mathcal S'_{1/2}(\rr d)$. Then the following is true:
\begin{enumerate}
\item if $s\ge 1/2$, then $f\in \mathcal S_s(\rr d)$, if and only if there are constants $\ep >0$ and $C>0$
such that \eqref{GSexpcond} holds;

\vrum

\item  if $s> 1/2$, then $f\in \Sigma _s(\rr d)$, if and only if for each $\ep >0$, there is a constant $C$
such that \eqref{GSexpcond} holds.
\end{enumerate} 
\end{lemma}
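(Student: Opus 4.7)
The proof reduces to an elementary real-analytic equivalence between the Gelfand-Shilov seminorm bounds $\sup _x |x^\beta \partial ^\alpha f(x)| \le Ch^{|\alpha |+|\beta |}(\alpha !\beta !)^s$ and the pair of exponential decay estimates \eqref{GSexpcond}. The key tool in both directions is the optimization
\[
\sup _{t\ge 0} t^k e^{-\varepsilon t^{1/s}} \le C_\varepsilon h^k(k!)^s,
\]
obtained by computing the maximum at $t=(sk/\varepsilon )^s$ and applying Stirling's formula, together with its partial converse expressed through the Taylor expansion $e^{\varepsilon |x|^{1/s}} = \sum _k \varepsilon ^k |x|^{k/s}/k!$. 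Part (2) then follows from the same arguments tracked uniformly in $\varepsilon $: the quantitative correspondence ``for every $\varepsilon $'' $\Leftrightarrow $ ``for every $h$'' matches precisely the distinction between $\Sigma _s(\rr d) = \bigcap _h \mathcal S_{s,h}(\rr d)$ and $\mathcal S_s(\rr d) = \bigcup _h \mathcal S_{s,h}(\rr d)$.

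For the forward direction of (1), assume $f \in \mathcal S_{s,h}(\rr d)$. Taking $\alpha = 0$ in the defining bound gives $|x^\beta f(x)| \le Ch^{|\beta |}(\beta !)^s$ for every $\beta$. Expanding $|x|^{2k} = (\sum _j x_j^2)^k$ by the multinomial theorem and using $(2j)!\le 4^{|j|}(j!)^2$ together with $j!\le k!$ for $|j|=k$ yields $|x|^k|f(x)| \le CH^k(k!)^s$ for a suitable $H$. Inserting this into the Taylor series of $e^{\varepsilon |x|^{1/s}}$ and checking convergence for small $\varepsilon $ (via Stirling) produces $|f(x)| \le C'e^{-\varepsilon |x|^{1/s}}$. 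Since $\mathcal S_s(\rr d)$ is invariant under the Fourier transform, the same argument applied to $\widehat f$ delivers the second half of \eqref{GSexpcond}.

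The backward direction is the substantial half. The optimization estimate immediately converts \eqref{GSexpcond} into $|x^\beta f(x)| \le Ch^{|\beta |}(\beta !)^s$ and $|\xi ^\alpha \widehat f(\xi )| \le Ch^{|\alpha |}(\alpha !)^s$. Via $\partial ^\nu \widehat f = \widehat{(-ix)^\nu f}$, a change of variables $t=r^{1/s}$ reducing $\int |x|^{|\nu |}e^{-\varepsilon |x|^{1/s}}dx$ to a Gamma function, and the comparison $\Gamma (sm+1) \le Ch^m(m!)^s$, one obtains pointwise bounds on $\partial ^\nu \widehat f$ (and symmetrically on $\partial ^\nu f$). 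The main technical obstacle is the joint bound $|x^\beta \partial ^\alpha f(x)| \le CH^{|\alpha |+|\beta |}(\alpha !\beta !)^s$, which must simultaneously exploit spatial decay of $f$ and spectral decay of $\widehat f$. I would handle it via the Fourier inversion identity
\[
x^\beta \partial ^\alpha f(x) = (2\pi )^{-d/2} i^{|\alpha |+|\beta |} \int e^{ix\cdot \xi }\, \partial _\xi ^\beta \bigl( \xi ^\alpha \widehat f(\xi ) \bigr)\, d\xi ,
\]
applying the Leibniz rule to $\partial _\xi ^\beta (\xi ^\alpha \widehat f)$, splitting $e^{-\varepsilon |\xi |^{1/s}} = e^{-\varepsilon |\xi |^{1/s}/2}\cdot e^{-\varepsilon |\xi |^{1/s}/2}$ in order to absorb each $\xi ^{\alpha -\gamma }$ factor against $h^{|\alpha -\gamma |}((\alpha -\gamma )!)^s$ while retaining $L^1$-integrability, and then controlling the combinatorial sum $\sum _{\gamma \le \alpha \wedge \beta }\binom{\beta }{\gamma }\alpha !/(\alpha -\gamma )!$ against $(\alpha !\beta !)^s$. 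Alternatively, this step admits the more elegant complex-analytic treatment of \cite{GS, ChuChuKim}, using analytic continuation of $f$ controlled by the decay of $\widehat f$ together with Cauchy's integral formula for the derivative bounds.
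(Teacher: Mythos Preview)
The paper does not actually prove this lemma: immediately after the statement it reads ``The proof is omitted, since the result can be found in e.{\,}g.\ \cite{GS, ChuChuKim}.'' So there is no in-paper argument to compare against; the relevant benchmark is the cited literature.

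Your proposal is correct and is essentially the standard proof one finds in \cite{ChuChuKim}: the forward direction by summing the exponential series against the polynomial bounds, the backward direction by optimizing $t^k e^{-\varepsilon t^{1/s}}$ and then recovering the mixed seminorms through Fourier inversion and the Leibniz rule. You even note the complex-analytic alternative from \cite{GS, ChuChuKim} for the last step. One minor remark: in the forward direction your Taylor-series argument needs $|x|^{k/s}|f(x)|$ with non-integer exponent $k/s$; this is harmless (bound $|x|^{k/s}$ by $|x|^{\lceil k/s\rceil}$ for $|x|\ge 1$, and use boundedness for $|x|\le 1$), but it is slightly cleaner to argue by minimizing $CH^k(k!)^s/|x|^k$ over integer $k$ directly, which yields $|f(x)|\le C'e^{-\varepsilon |x|^{1/s}}$ without fractional powers.
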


\par

Gelfand-Shilov spaces posses several other  convenient properties. For example, they can easily be characterized by Hermite functions. We recall that the Hermite function $h_\alpha$ with respect to the multi-index $\alpha \in \mathbf N^d$ is defined by
$$
h_\alpha (x) = \pi ^{-d/4}(-1)^{|\alpha |}(2^{|\alpha |}\alpha
!)^{-1/2}e^{|x|^2/2}(\partial ^\alpha e^{-|x|^2}).
$$
The set $(h_\alpha )_{\alpha \in \mathbf N^d}$ is an orthonormal basis for $L^2(\rr d)$. In particular,
\begin{equation}\label{hermexp}
f=\sum _\alpha c_\alpha h_\alpha ,\quad c_\alpha =(f,h_\alpha )_{L^2(\rr d)},
\end{equation}
and
$$
\nm f{L^2}=\nm {\{ c_\alpha \} _\alpha }{l^2}<\infty ,
$$
when $f\in L^2(\rr d)$. Here and in what follows, $(\cdo ,\cdo )_{L^2(\rr d)}$ denotes any continuous extension of the $L^2$ form on $\mathcal S_{1/2}(\rr d)$.

\par

It is well-known that $f$ here belongs to $\mathscr S(\rr d)$, if and only if
\begin{equation}\label{Shermexp}
\nm {\{ c_\alpha \eabs \alpha ^t\} _\alpha }{l^2}<\infty 
\end{equation}
for every $t\ge 0$. Furthermore, for every $f\in \mathscr S'(\rr d)$, the expansion \eqref{hermexp} still holds, where the sum converges in $\mathscr S'$, and \eqref{Shermexp} holds for some choice of $t\in \mathbf R$, depending on $f$. The same conclusion holds after the $l^2$ norm has been replaced by any $l^p$ norm with $1\le p\le \infty$.

\par

The following proposition, which can be found in e.{\,}g. \cite{GrPiRo}, shows that similar conclusion for Gelfand-Shilov spaces hold, after the estimate \eqref{Shermexp} is replaced by
\begin{equation}\label{GShermexp}
\nm { \{ c_\alpha e^{t |\alpha |^{1/2s}} \} _\alpha }{l^p}<\infty .
\end{equation}
(Cf. formula (2.12) in \cite{GrPiRo}.)

\par

\begin{prop}\label{stftGS2}
Let $p\in [1,\infty ]$, $f\in \mathcal S_{1/2}'\rr d)$, $s\ge 1/2$ and let $c_\alpha$ be as in \eqref{hermexp}. Then the following is true:
\begin{enumerate}
\item $f\in \mathcal S_{s}'(\rr d)$, if and only if \eqref{GShermexp} holds for every $t<0$. Furthermore, \eqref{hermexp} holds where the sum converges in $\mathcal S_{s}'$;

\vrum

\item  $f\in \Sigma _{s}'(\rr d)$, if and only if \eqref{GShermexp} holds for some $t<0$. Furthermore, \eqref{hermexp} holds where the sum converges in $\Sigma _{s}'$;

\vrum

\item $f\in \mathcal S_{s}(\rr d)$, if and only if \eqref{GShermexp} holds for some $t>0$. Furthermore, \eqref{hermexp} holds where the sum converges in $\mathcal S_{s}$;

\vrum

\item  $f\in \Sigma _{s}(\rr d)$, if and only if \eqref{GShermexp} holds for every $t>0$. Furthermore, \eqref{hermexp} holds where the sum converges in $\Sigma _{s}$.
\end{enumerate}
\end{prop}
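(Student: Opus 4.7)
The plan is to reduce the Hermite-coefficient characterization to norm estimates involving powers of the harmonic oscillator $H=-\Delta + |x|^2$, which is diagonal in the Hermite basis with $H h_\alpha = (2|\alpha |+d) h_\alpha$. First, I would establish (or invoke from the Gelfand--Shilov literature, e.\,g.\ \cite{GS, GrPiRo}) the equivalence, up to adjustment of the constant $h$, between the norm $\nm f{\mathcal S_{s,h}}$ and a norm of the form $\sup_k \nm {H^k f}{L^2}/(h'^{2k}(k!)^{2s})$ for a suitable $h'$. The nontrivial direction follows by expanding $H^k$ as a sum of monomials $x^\beta \partial ^\alpha$ with $|\alpha |+|\beta |\le 2k$ whose combinatorial coefficients are compatible with the Gelfand--Shilov factorial weights; the reverse direction uses that every monomial $x^\beta \partial ^\alpha$ can be expressed as a polynomial of degree $\le |\alpha |+|\beta |$ in $H$ and the creation/annihilation operators $x_j\pm \partial _j$, which are themselves controlled by $H^{1/2}$ in the $L^2$-sense.

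Once this operator-theoretic equivalence is in place, Parseval applied to the Hermite expansion gives
\begin{equation*}
\nm {H^k f}{L^2}^2 = \sum _\alpha |c_\alpha |^2 (2|\alpha |+d)^{2k},
\end{equation*}
and the condition $\sup_k \nm {H^k f}{L^2}/(h^{2k}(k!)^{2s}) < \infty$ translates, after Stirling and optimization in $k$, into
\begin{equation*}
\sum _\alpha |c_\alpha |^2 e^{2t|\alpha |^{1/2s}} < \infty
\end{equation*}
with $t$ of order $h^{-1/(2s)}$. Combined with \eqref{GSspacecond1}, this proves (3) and (4) for $p=2$: $f\in \mathcal S_s(\rd)$ corresponds to some $t>0$, while $f\in \Sigma _s(\rd)$ corresponds to every $t>0$. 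To pass from $l^2$ to arbitrary $l^p$, $p\in [1,\infty]$, I would use that for any $t'<t$ the factor $e^{(t-t')|\alpha |^{1/2s}}$ dominates every polynomial in $|\alpha |$, so a single Hölder estimate gives two-sided comparisons between the $l^p$- and $l^2$-weighted norms at the cost of an arbitrarily small perturbation of $t$; since (3) and (4) quantify over \emph{some} or \emph{all} positive $t$, this loss is absorbed.

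For (1) and (2), I would use duality against the Hermite ON basis: for $f\in \mathcal S_{1/2}'(\rd)$ the coefficients $c_\alpha = (f,h_\alpha )_{L^2}$ are well defined through the extension of the $L^2$-pairing, and testing against $\varphi = \sum d_\alpha h_\alpha \in \mathcal S_s(\rd)$ or $\Sigma _s(\rd)$ gives $(f,\varphi ) = \sum c_\alpha \overline{d_\alpha }$. The continuity of this bilinear form on the spaces characterized in (3)--(4) is precisely the dual weighted-sequence condition in (1)--(2) with $t<0$, by the standard duality of weighted $l^p$ sequence spaces. Convergence of the Hermite expansion in the respective Gelfand--Shilov topology then follows automatically, since the coefficient bound makes the truncated partial sums Cauchy in every relevant seminorm. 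The main obstacle is the first step -- the equivalence between the supremum-type Gelfand--Shilov norm and the $L^2$-norm of $H^k f$; everything after that reduces to Stirling arithmetic, Hölder's inequality and formal duality.
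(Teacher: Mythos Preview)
The paper does not give its own proof of this proposition; it is stated as a known result with a reference to \cite{GrPiRo} (see the sentence immediately preceding the proposition and formula (2.12) there). Your outline via the harmonic oscillator $H=-\Delta+|x|^2$, Parseval in the Hermite basis, Stirling/optimization in $k$, and duality for the distribution cases is exactly the standard route taken in that reference and the surrounding literature, so there is nothing substantive to compare against and your plan is on target.
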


\par

\subsection{The short-time Fourier transform}

\par

Let $\phi \in \mathscr S(\rr d)\setminus 0$ be fixed. For every $f\in
\mathscr S'(\rr d)$, the \emph{short-time Fourier transform} $V_\phi
f$ is the distribution on $\rr {2d}$ defined by the formula
\begin{equation}\label{defstft}
(V_\phi f)(x,\xi ) =\mathscr F(f\, \overline{\phi (\cdo -x)})(\xi ).
\end{equation}
We note that the right-hand side defines an element in $\mathscr
S'(\rr {2d})\cap C^\infty (\rr {2d})$, and 
that if $f\in L^q_{(\omega )}$ for some $\omega \in
\mascP (\rr d)$, then $V_\phi f$ takes the form
\begin{equation}\tag*{(\ref{defstft})$'$}
V_\phi f(x,\xi ) =(2\pi )^{-d/2}\int _{\rr d}f(y)\overline {\phi
(y-x)}e^{-i\scal y\xi}\, dy.
\end{equation}

\par

In order to extend the definition of the short-time Fourier transform we reformulate \eqref{defstft} in terms of partial Fourier transforms and tensor products. More presisely, we let $\mathscr F_2F$ be the partial Fourier transform of $F(x,y)\in \mathscr S'(\rr {2d})$ with respect to the $y$-variable, and we let $U$ be the map which takes $F(x,y)$ into $F(y,y-x)$. Then it follows that
\begin{equation}\label{tensorsftf}
V_\phi f =(\mathscr F_2\circ U)(f\otimes \overline \phi )
\end{equation}
when $f\in \mathscr S'(\rr d)$ and $\phi \in \mathscr S(\rr d)$.

\par

We remark that tensor products of elements in Gelfand-Shilov
spaces are defined
in similar ways as for tensor products for distributions (cf. Chapter V in
\cite{Ho1}). Let $f,g\in \mathcal S_{1/2}'(\rr d)$ and let $s\ge 1/2$.
Then it follows that $f\otimes g \in \mathcal S_{s}'(\rr {2d})$, if and
only if $f,g\in \mathcal S_{s}'(\rr d)$. Similar fact holds for any
other choice of Gelfand-Shilov spaces of functions or distributions.

\par

The following result is essentially a restatement of ?? in \cite{CPRT10} and concerns the map
\begin{equation}\label{stftmap}
(f,\phi )\mapsto V_\phi f,
\end{equation}
and follows immediately from \eqref{tensorsftf},
and the facts that tensor products, $\mathscr F_2$ and $U$ are
continuous on Gelfand-Shilov spaces. (See also \cite{GZ, CPRT10}
for general properties of the short-time Fourier transform in
background of Gelfand-Shilov spaces.)

\par

\begin{prop}\label{stftGelfand1}
Let $s\ge 1/2$ and let $f,\phi \in \mathcal S'_{1/2}(\rr d)\setminus 0$. Then the map \eqref{stftmap} from $\mathscr S(\rr d)\times \mathscr S(\rr d)$ to $\mathscr S'(\rr {2d})$ is uniquely extendable to a continuous map from $\mathcal S_{1/2}'(\rr d)\times \mathcal S'_{1/2}(\rr d)$ to $\mathcal S'_{1/2}(\rr {2d})$. Furthermore, the following is true:
\begin{enumerate}
\item the map \eqref{stftmap} restricts to a continuous map from $\mathcal S_{s}(\rr d)\times \mathcal S_{s}(\rr d)$ to $\mathcal S_{s}(\rr {2d})$. Moreover, $V_\phi f\in \mathcal S_{s}(\rr {2d})$, if and only if $f,\phi \in \mathcal S_{s}(\rr d)$;

\vrum

\item the map \eqref{stftmap} restricts to a continuous map from $\mathcal S_{s}'(\rr d)\times \mathcal S_{s}'(\rr d)$ to $\mathcal S_{s}'(\rr {2d})$. Moreover, $V_\phi f\in \mathcal S_{s}'(\rr {2d})$, if and only if $f,\phi \in \mathcal S_{s}'(\rr d)$.
\end{enumerate}

\par

Similar facts hold after the spaces $\mathcal S_s$ and $\mathcal S_s'$ have been replaced by $\Sigma _s$ and $\Sigma _s'$ respectively.
\end{prop}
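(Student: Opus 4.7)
The whole plan is to leverage the factorization \eqref{tensorsftf}, namely $V_\phi f=(\mathscr F_2\circ U)(f\otimes \overline \phi )$, together with the remarks immediately preceding the proposition concerning tensor products in Gelfand-Shilov spaces. If I can show that each of the three building blocks --- taking tensor products, applying $\mathscr F_2$ and applying the substitution $U:F(x,y)\mapsto F(y,y-x)$ --- is a continuous bijection on the relevant Gelfand-Shilov space (respectively its dual), then composing them yields both the continuity and the ``if and only if'' parts more or less automatically.

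First I would verify the three building blocks. For $U$, this is a linear change of variables given by an invertible matrix, and the map $F\mapsto F\circ T$ is well known to restrict and extend to a homeomorphism on $\mathcal S_s(\rr{2d})$, $\Sigma _s(\rr{2d})$ and their duals (the symbol estimates defining $\mathcal S_{s,h}$ transform into equivalent ones because $|T\cdot|\asymp |\cdot|$ and derivatives transform by the chain rule, producing only a change in the constant $h$). For $\mathscr F_2$, the partial Fourier transform, continuity on $\mathcal S_s(\rr{2d})$, $\Sigma _s(\rr{2d})$ and their duals follows by combining the corresponding property for the full Fourier transform (recalled before Lemma \ref{GSFourierest}) with Fubini-type arguments and the tensor product property cited just before the proposition (one can, for instance, test on tensor products first and extend by density/duality). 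For tensor products, the cited fact is exactly: $f\otimes g\in \mathcal S_s(\rr{2d})$ iff $f,g\in \mathcal S_s(\rr d)$, and the corresponding statements for $\mathcal S_s'$, $\Sigma _s$, $\Sigma _s'$.

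Next I would assemble the three ingredients. The unique continuous extension of $(f,\phi )\mapsto V_\phi f$ to $\mathcal S_{1/2}'(\rr d)\times \mathcal S_{1/2}'(\rr d)\to \mathcal S_{1/2}'(\rr{2d})$ is obtained by defining the extension through \eqref{tensorsftf}: for $f,\phi \in \mathcal S_{1/2}'(\rr d)$, the element $f\otimes \overline \phi$ lies in $\mathcal S_{1/2}'(\rr{2d})$, and then $(\mathscr F_2\circ U)(f\otimes \overline \phi )\in \mathcal S_{1/2}'(\rr{2d})$. Uniqueness follows because $\mathcal S_{1/2}(\rr d)$ (and hence $\mathscr S(\rr d)$) is dense in $\mathcal S_{1/2}'(\rr d)$ in an appropriate weak sense, and the map is separately continuous. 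Restricting this same formula to $\mathcal S_s(\rr d)\times \mathcal S_s(\rr d)$ gives the continuity into $\mathcal S_s(\rr{2d})$ in part (1); the dual version gives part (2). For the converse ``only if'' statements, I use that $\mathscr F_2\circ U$ is a \emph{bijection} (in fact a homeomorphism) on $\mathcal S_s(\rr{2d})$ and on $\mathcal S_s'(\rr{2d})$, so $V_\phi f\in \mathcal S_s(\rr{2d})$ forces $f\otimes \overline \phi \in \mathcal S_s(\rr{2d})$, which by the tensor-product characterization gives $f,\phi \in \mathcal S_s(\rr d)$ (and analogously for the dual, and with $\Sigma _s$ replacing $\mathcal S_s$).

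The main potential obstacle is the extension step to $\mathcal S_{1/2}'\times \mathcal S_{1/2}'$: one must be sure that the tensor product $f\otimes \overline \phi$ is well defined for arbitrary Gelfand-Shilov distributions of order $1/2$, and that the resulting bilinear map is jointly (or at least separately) continuous. This is handled by the standard kernel-type argument in the Gelfand-Shilov setting (as used in \cite{CPRT10,GZ}): one defines $f\otimes g$ by its action on tensor test functions and extends by the nuclearity of Gelfand-Shilov spaces. Once this is in place, everything else is a formal consequence of the factorization.
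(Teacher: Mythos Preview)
Your proposal is correct and follows essentially the same approach as the paper: the paper simply states that the result follows immediately from the factorization \eqref{tensorsftf} together with the continuity of tensor products, $\mathscr F_2$ and $U$ on Gelfand-Shilov spaces (citing \cite{CPRT10,GZ}). Your write-up is a fleshed-out version of exactly this argument, including the use of bijectivity of $\mathscr F_2\circ U$ and the tensor-product characterization to obtain the ``only if'' direction.
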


\par

We also have the following proposition.

\par

\begin{prop}\label{stftGelfand2}
Let $s\ge 1/2$, $\phi \in \mathcal S_{s}(\rr d)\setminus 0$ be even, and let $f\in \mathcal S_{1/2}'(\rr d)$.
Then the following is true:
\begin{enumerate}
\item $f\in  \mathcal S_{s}(\rr d)$, if and only if for some $\ep > 0$ and some constant $C_\ep$ it holds
\begin{equation}\label{stftexpest2}
|V_\phi f(x,\xi )|\le C_\ep e^{-\ep (|x|^{1/s}+|\xi |^{1/s})}\text ;
\end{equation}

\vrum

\item if $f\in \mathcal S_{s}'(\rr d)$, then there are constants $\ep > 0$ and $C_\ep>0$ such that
\begin{equation}\label{stftexpest}
|V_\phi f(x,\xi )|\le C_\ep e^{\ep (|x|^{1/s}+|\xi |^{1/s})}\text ;
\end{equation}

\vrum

\item if for every $\ep > 0$, there is a constant $C_\ep$ such that \eqref{stftexpest} holds, then $f\in \mathcal S_{s}'(\rr d)$.
\end{enumerate}
\end{prop}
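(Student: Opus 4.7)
\emph{Proof plan.} The plan is to assemble the three statements from four recurring ingredients: the STFT inversion formula, the fundamental identity $V_{\widehat \phi}\widehat f(\xi,-x)=e^{-\im \scal x\xi}V_\phi f(x,\xi)$, the elementary subadditivity inequality $|y-x|^{1/s}+|x|^{1/s}\ge c_s|y|^{1/s}$ (valid with some $c_s>0$ since $1/s\le 2$), and the characterizations in Lemma \ref{GSFourierest} together with Proposition \ref{stftGelfand1}. The evenness of $\phi$ enters only to ensure that $\widehat \phi\in \mathcal S_s(\rd)\setminus 0$, so that the fundamental identity still has a legitimate window on the right-hand side.

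For the ``only if'' direction of (1) I would invoke Proposition \ref{stftGelfand1}(1) to get $V_\phi f\in \mathcal S_s(\rdd)$ and then Lemma \ref{GSFourierest}(1) on $\rdd$, yielding $|V_\phi f(x,\xi)|\le Ce^{-\ep|(x,\xi)|^{1/s}}$; the bound $|(x,\xi)|^{1/s}\ge \tfrac 12 (|x|^{1/s}+|\xi|^{1/s})$ then gives \eqref{stftexpest2}. For the converse I would start from the reconstruction formula
$$
f(y)=\|\phi\|_{L^2}^{-2}(2\pi )^{-d/2}\iint V_\phi f(x,\xi )\,\phi (y-x)\,e^{\im \scal y\xi }\,dx\,d\xi ,
$$
which is legitimate because the hypothesis makes the integrand absolutely convergent. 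Using $|\phi (y-x)|\le Ce^{-\ep _1|y-x|^{1/s}}$ from $\phi \in \mathcal S_s$ and splitting the exponent $\ep |x|^{1/s}+\ep _1|y-x|^{1/s}$ into two halves---one producing decay $e^{-\ep '|y|^{1/s}}$ via subadditivity, the other securing integrability in $(x,\xi )$---I obtain $|f(y)|\le C'e^{-\ep '|y|^{1/s}}$. The estimate $|\widehat f(\eta )|\le C''e^{-\ep ''|\eta |^{1/s}}$ is then deduced by running the same argument on $\widehat f$ with window $\widehat \phi$, after translating the STFT hypothesis via the fundamental identity. Lemma \ref{GSFourierest}(1) places $f$ in $\mathcal S_s(\rd)$.

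For (2) I would write $V_\phi f(x,\xi )=\scal f{\mathscr M_\xi \tau _x\phi }$ with $\mathscr M_\xi \tau _x\phi (y)=e^{\im \scal y\xi }\phi (y-x)$, pick $h,C>0$ so that $|\scal f g|\le C\nm g{\mathcal S_{s,h}}$ for $g\in \mathcal S_{s,h}$ (available since $\mathcal S_s'=\bigcap _h\mathcal S_{s,h}'$), and control $\nm {\mathscr M_\xi \tau _x\phi }{\mathcal S_{s,h}}$ by Leibniz: derivatives of the modulation produce factors $\xi ^\beta $, the translation shifts the argument by $-x$, and the Gelfand-Shilov decay of $\phi $ and its derivatives absorbs the translate at the cost of $|x|$-growth. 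The elementary inequality $t^k\le (k/(e\ep ))^{sk}e^{\ep t^{1/s}}$ then converts all remaining polynomial factors into $e^{\ep (|x|^{1/s}+|\xi |^{1/s})}$ for arbitrarily small $\ep >0$, yielding \eqref{stftexpest}. For (3) I would use the Moyal-type identity
$$
(f,g)_{L^2}=\|\phi \|_{L^2}^{-2}\iint V_\phi f(x,\xi )\,\overline{V_\phi g(x,\xi )}\,dx\,d\xi
$$
as the defining formula of $f$ as a functional on $\mathcal S_s(\rd)$. A quantitative refinement of the ``only if'' part of (1), linear in $\nm g{\mathcal S_{s,h}}$, gives $|V_\phi g(x,\xi )|\le C\nm g{\mathcal S_{s,h}}e^{-\ep _0(|x|^{1/s}+|\xi |^{1/s})}$ for every $g\in \mathcal S_{s,h}$ with $\ep _0,C$ depending only on $h,s,\phi$; choosing $\ep <\ep _0$ in the hypothesis makes the integral absolutely convergent and bounded by $C'\nm g{\mathcal S_{s,h}}$, so $f$ extends to a continuous functional on $\mathcal S_s$, i.{\,}e. $f\in \mathcal S_s'(\rd)$.

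The main obstacle I anticipate is the quantitative refinement of (1) needed in (3): one has to rerun the Leibniz bookkeeping of (2) in the opposite direction, tracking linear dependence on $\nm g{\mathcal S_{s,h}}$ so that the resulting $\ep _0$ is uniform on $\mathcal S_{s,h}$. Everything else reduces to the standard interplay between phase-space decay and the Gelfand-Shilov characterization \eqref{GSexpcond}.
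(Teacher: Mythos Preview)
Your proposal is essentially correct, but for the converse direction of (1) you take a genuinely different path from the paper, and for (3) the paper sidesteps the ``main obstacle'' you flag.

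\textbf{The converse of (1).} You recover $f$ and $\widehat f$ separately via the STFT inversion formula and the identity $V_{\widehat\phi}\widehat f(\xi,-x)=e^{-i\scal x\xi}V_\phi f(x,\xi)$, then invoke Lemma \ref{GSFourierest}. The paper instead passes to the Wigner distribution: from the relation $V_\phi f(x,\xi)=2^{-d}e^{i\scal x\xi/2}W_{f,\check\phi}(-x/2,\xi/2)$ and the evenness hypothesis $\check\phi=\phi$, the decay of $V_\phi f$ transfers to $W_{f,\phi}$; then the Parseval identity $|\mathscr F(W_{f,\phi})(\xi,x)|=|V_\phi f(-x,\xi)|$ gives the same decay for $\mathscr F(W_{f,\phi})$, so Lemma \ref{GSFourierest} puts $W_{f,\phi}\in\mathcal S_s(\rr{2d})$, and Proposition \ref{stftGelfand1} (in its Wigner form) yields $f\in\mathcal S_s$. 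Your route is more computational but self-contained; the paper's is shorter and makes the role of evenness transparent. In fact your argument does \emph{not} use evenness at all: your remark that evenness ``ensures $\widehat\phi\in\mathcal S_s\setminus 0$'' is mistaken, since Fourier invariance of $\mathcal S_s$ already gives this for any $\phi\in\mathcal S_s\setminus 0$. So you have actually proved a slightly stronger statement than the paper claims.

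\textbf{Part (3).} The paper avoids your ``quantitative refinement'' entirely by testing against finite linear combinations of Hermite functions $\varphi$ rather than general $g\in\mathcal S_{s,h}$. For such $\varphi$ one has $V_\phi\varphi\in\mathcal S_s(\rr{2d})$ directly from Proposition \ref{stftGelfand1}, with no need to track constants; continuity on all of $\mathcal S_s$ then follows from density of Hermite sums. Your approach is also valid but requires exactly the bookkeeping you anticipate, whereas the paper's Hermite-density trick makes it a one-liner.
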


\par

Proposition \ref{stftGelfand2} can be found in \cite{CPRT10} and to some
extend also in \cite{GZ}. Since the arguments in the proof are important
later on, we present here an explicit  proof, based on reformulation of the
statements in terms of Wigner distributions.

\par

First let $f,g\in L^2(\rr d)$.
Then the \emph{Wigner distribution} of $f$ and $g$ is defined by the 
formula
$$
W_{f,g}(x,\xi )=(2\pi )^{-d/2}\int f(x-y/2)\overline{g(x+y/2)}e^{i\scal y\xi}\, dy .
$$
We note that the Wigner distribution is closely connected to
the short-time Fourier transform, since
$$
V_\phi f(x,\xi ) =2^{-d}e^{i\scal x\xi /2}W_{f, \check \phi}(-x/2,\xi /2),
$$
which follows by straight-forward computations. Here $\check f(x)=f(-x)$.
From this relation it follows that 
most of the properties which involve short-time Fourier transform also hold after replacing 
the short-time Fourier transforms by Wigner distributions. For example, Propositions \ref
{stftGelfand1} and \ref{stftGelfand2} remain the same after such replacements.

\par

\begin{proof}
(1) If $f\in \mathcal S_s(\rr d)$, then it follows from Lemma \ref{GSFourierest} and Proposition 
\ref{stftGelfand1} that \eqref{stftexpest2} holds for some constants $\ep >0$ and $C_\ep >0$.

\par

Now assume instead that \eqref{stftexpest2} holds for some constants $\ep >0$ and $C_\ep 
>0$. Then \eqref{stftexpest2} still holds after $V_\phi f$ has been replaced by $W_{f, \check 
\phi}=W_{f,\phi}$, provided the constants $\ep$ and $C_\ep$ have been replaced by other 
suitable ones, if necessary. Since
$$
|\mathscr F(W_{f,\phi })(\xi ,x)| = |V_\phi f(-x,\xi )|,
$$
by Parseval's formula, it follows that \eqref{stftexpest2} holds for both $W_{f,\phi}$ and $
\mathscr F(W_{f,\phi})$. Hence, $f\in \mathcal S_s(\rr d)$ by Lemma \ref{GSFourierest}. 
This proves (1).

\par

The assertion (2) follows by straight-forward computations, using the fact that
$$
V_\phi f(x,\xi ) =\scal f{\overline {\phi (\cdo -x)}e^{-i\scal \cdo \xi }}
$$
in combination  with Lemma \ref{GSFourierest}.

\par

On the other hand, if for every $\ep >0$ there is a constant $C_\ep >0$ such that
\eqref{stftexpest} holds and $\fy $ is a finite sum of Hermite functions, 
then $V_\phi \fy \in \mathcal S_{s}(\rr {2d})$, and
\begin{equation}\label{dualstftrel}
(f,\fy ) _{L^2(\rr d)}= c (V_\phi f,V_\phi \fy )_{L^2(\rr {2d})}
\end{equation}
is well-defined. Here $c=\nm {\phi}{L^2}^{-2}>0$. Now, by \eqref{stftexpest}, (1) and the fact 
that finite sums of Hermite functions are dense in $\mathcal S_{s}(\rr {d})$, it follows that the 
right-hand side of \eqref{dualstftrel} defines a continuous linear form on $\mathcal S_{s}(\rr 
{d})$ with respect to $\fy$. Hence, $f\in \mathcal S_{s}'(\rr {d})$, which gives (3), and the 
proof is complete.
\end{proof}

\par

\begin{rem}\label{Upsilspace}
There is obviously a gap between the necessary and sufficiency conditions in (2) and (3) of 
Proposition \ref{stftGelfand2}. In general it seems to be difficult to find convenient
equivalent conditions 
for the short-time Fourier transform of $f\in \mathcal S'_{1/2}$ in order for $f$ should belong to
$\mathcal S'_{s}$, for some $s>1/2$.

\par

On the other hand, for each $s\ge 1/2$ and $\phi \in \mathcal S_s(\rr d)\setminus 0$, let $
\Upsilon  _{s,\phi} (\rr d)$ be the space which consists of all $f\in \maclS '_s(\rr d)$ such 
that for every $\ep >0$ there is a constant $C_\ep >0$ such that \eqref{stftexpest} holds. Then 
$\Upsilon _{s,\phi} (\rr d)$ is still a "large space" in the sense that it contains every $\mathcal 
S'_t(\rr d)$ for $t>s$.

\par

For future references we set $\Upsilon = \Upsilon _{s,\phi} $ when
$s=1/2$ and $\phi (x)=\pi ^{-d/4}e^{-|x|^2/2}$.
\end{rem}

\par

\subsection{Modulation spaces}\label{subsec1.4}

\par

We shall now discuss modulation spaces and recall some basic properties. In what follows 
we let $\mascB $ be a \emph{mixed norm space} on $\rr d$. This means that for some 
$p_1,\dots ,p_n\in [1,\infty ]$ and vector spaces
\begin{equation}\label{Vdirsum}
V_1,\dots ,V_n\subseteq \rr d\quad \text{such that}\quad
V_1\oplus \cdots \oplus V_n =\rr d,
\end{equation}
then $\mascB =\mascB _n$, where $\mascB _j$, $j=1,\dots ,n$ is inductively defined 
by
\begin{equation}\label{mixnormspacenorm}
\mascB _j =
\begin{cases}
L^{p_1}(V_1), & \ j=1
\\[1ex]
L^{p_j}(V_j; \mascB _{j-1}), &\ j=2,\dots , n.
\end{cases}
\end{equation}
The minimal and maximal exponents $\min (p_1,\dots ,p_n)$ and $\max (p_1,\dots ,p_n)$ 
are denoted by $\nu _1(\mascB )$ and $\nu _2(\mascB )$ respectively, and the norm of 
$\mascB $ is given by $\nm f{\mascB } \equiv \nm {F_{n-1}}{L^{p_n}(V_n)}$, where $F_0=f$ and
$$
F_j(x_{j+1},\dots ,x_n) =
\nm {F_{j-1}(\cdo ,x_{j+1},\dots ,x_n)}{L^{p_j}(V_j)},\quad j=1,\dots , n-1.
$$
In several situations the notation $L^p(V)$ is used instead of $\mascB$, where
\begin{equation}\label{Vparrays}
V=(V_1,\dots ,V_n)\quad \text{and} \quad p=(p_1,\dots ,p_n).
\end{equation}
We set  $\mascB ' = L^{p'}(V)$, where $p'=(p_1',\dots ,p_2')$ and
$p_j'\in [1,\infty ]$ is the conjugate
exponent of $p_j$, $j=1,\dots ,n$. That is, $p_j$ and $p_j'$ should
satisfy $1/p_j+1/p_j'=1$. If $\nu _2(\mascB )<\infty$, then the dual of $\mascB$ with
respect to $(\cdo ,\cdo )_{L^2}$ is given by $\mascB '$.

\par

In some situations we relax the conditions on $p_1,\dots ,p_n$ in such
way that they should belong to $(0,\infty ]$ instead of $[1,\infty ]$. Still we set
$$
\nm f{L^{p_j}(V_j)}\equiv
\begin{cases}
\left (\int _{V_j}|f(x_j)|^{p_j}\, dx_j\right )^{1/p_j}, & \text{when} \  0< p_j<\infty 
\\[2ex]
\underset {x_j\in V_j}  \essup \big ( |f(x_j)| \big ), & \text{when} \  \phantom{1<}p_j=\infty ,
\end{cases}
$$
where $f$ is measurable on $V_j$. (Cf. \cite{BLo}.) We note that
$\nm \cdo{L^{p_j}(V_j)}$ is a quasi-norm, but not a norm, when
$p_j<1$. Furthermore, in this case, $L^{p_j}(V_j)$ is a quasi-Banach
space, with topology defined by this quasi-norm.

\par

Now, if $p_1,\dots ,p_n\in (0,\infty ]$, and $V=(V_1,\dots V_n)$ is the same as above, then
$L^p(V)$ is called \emph{mixed quasi-norm space} on $\rr d$, and is defined as $\mascB _n$
in \eqref{mixnormspacenorm}.

\par

\begin{example}
Let $p,q\in [1,\infty ]$, and $L^{p,q}(\rr {2d})$ and its twisted space $L^{p,q}_{\tw}(\rr {2d})$
be the Banach spaces, which consist of all  $F\in L^1_{loc}(\rr {2d})$ such that
$$
\nm F{L^{p,q}} \equiv \Big (\intrd \Big (\intrd |F(x,\xi
 )|^p\, dx\Big )^{q/p}\, d\xi \Big )^{1/q}<\infty \, .
$$
and
$$
\nm F{L^{p,q}_{\tw}} \equiv \Big (\intrd \Big (\intrd |F(x,\xi
 )|^q\, d\xi \Big )^{p/q}\, dx \Big )^{1/p}<\infty \, ,
$$
respectively (with obvious modifications when $p=\infty$ or $q=\infty$).
Then it follows that both $L^{p,q}(\rr {2d})$ and $L^{p,q}_{\tw}(\rr {2d})$
are mixed norm spaces.

\par

If instead $p,q \in (0,\infty ]$, then $L^{p,q}$ and $L^{p,q}_{\tw}$ are defined
in analogous ways, where the condition $F\in L^1_{loc}(\rr {2d})$ has to
be replaced by $F\in L^r_{loc}(\rr {2d})$ with $r=\min (p,q )$. In this case,
one obtains mixed quasi-norm spaces.
\end{example}

\par

The definition of modulation spaces is given in the following.

\par

\begin{defn}\label{bfspaces2}
Let $\mascB $ be a mixed quasi-norm space on
$\rr {2d}$, $\omega \in \mascP _{Q}^0(\rr {2d})$, and let $\phi =\pi ^{-d/4}e^{-|x|^2/2}$. Then the \emph{modulation space}
$M(\omega ,\mascB )$ consists of all $f\in
\mathcal S_{1/2}'(\rr d)$ such that
\begin{equation}\label{modnorm2}
\nm f{M(\omega ,\mascB )}
\equiv \nm {V_\phi f\, \omega }{\mascB }<\infty .
\end{equation}
If $\omega =1$, then the notation $M(\mascB )$ is used
instead of $M(\omega ,\mascB )$.
\end{defn}

\par

Since the cases $\mascB =L^{p,q}(\rr {2d})$ and
$\mascB =L^{p,q}_{\tw}(\rr {2d})$ are especially important to us we set $M^{p,q}_
{(\omega )}(\rr d) = M(\omega ,L^{p,q}(\rr {2d}))$ and $W^{p,q}_{(\omega )}(\rr d)
= M(\omega ,L^{p,q}_{\tw}(\rr {2d}))$. We recall that 
if $\omega \in \mathscr P(\rr {2d})$, then the former space is a
"classical modulation space", and the latter space is related to 
certain types of classical Wiener amalgam spaces. For convenience we
set $M^p_{(\omega)}=M^{p,p}_{(\omega
)}=W^{p,p}_{(\omega )}$. Furthermore, we set $M^{p,q}_s=M^{p,q}_{(\sigma _s)}$ and
$M^{p}_s=M^{p}_{(\sigma _s)}$, where $\sigma _s$ is given by \eqref{defsigmas},
and if  $\omega =1$, then we use the notations $M(\mascB )$, $M^{p,q}$, $W^{p,q}$
and $M^p$, instead of $M(\omega ,\mascB )$, $M^{p,q}_{(\omega )}$,
$W^{p,q}_{(\omega )}$ and $M^{p}_{(\omega )}$, respectively. Here
we note that
$$
\sigma _s(x)=\eabs {x}^s = (1+|x|^2+|\xi |^2)^{s/2}\quad \text{and}\quad
\sigma _s(x,\xi )=\eabs {x,\xi }^s = (1+|x|^2+|\xi |^2)^{s/2}.
$$

\par

For exponential type weights we have the following proposition.
We omit the proof,  since the result can be found in \cite{F1,FG1,FG2,Gc2,To5}.
Here and in what follows we write $p_1\le p_2$, when
\begin{equation}\label{parrays}
p_1 =(p_{1,1},\dots ,p_{1,n})\in (0,\infty ]^n\quad \text{and}\quad p_2 =(p_{2,1},
\dots ,p_{2,n})\in (0,\infty ]^n
\end{equation}
satisfy $p_{1,j}\le p_{2,j}$ for every $j=1,\dots ,n$.

\par

\begin{prop}\label{p1.4}
Let $p,q,p_j,q_j\in [1,\infty ]$, $\omega
,\omega _j,v\in \mascP  _{E}(\rr {2d})$ for $j=1,2$ be such that $v$ is
submultiplicative and even, $\omega$ is $v$-moderate, and let $\mascB $
be a mixed normed space  on $\rr {2d}$. Then the following is true:
\begin{enumerate}
\item[{\rm{(1)}}] if $\phi \in M^1_{(v)}(\rr d)\setminus 0$, then
$f\in M{(\omega ,\mascB )}$ if and only if \eqref{modnorm2} holds,
i.{\,}e. $M{(\omega ,\mascB )}$ is independent of the choice of
$\phi$. Moreover, $M{(\omega ,\mascB )}$ is a Banach space under the
norm in \eqref{modnorm2}, and different choices of $\phi$ give rise to
equivalent norms;

\vrum

\item[{\rm{(2)}}] if  \eqref{Vdirsum}, \eqref{Vparrays} and \eqref{parrays} hold
with $p_1\le p_2$, and $\omega _2\le C \omega _1$ for some constant $C$, then
%%
%\begin{alignat*}{2}
$$
\Sigma _{1}(\rr d) \subseteq M(\omega _1,L^{p_1}(V))  
   \subseteq M(\omega _2,L^{p_2}(V))\subseteq
\Sigma ' _{1}(\rr d)\text ;
$$
%\\[1ex]
%M^1_{(v_0v)}(\rr d)\subseteq M(&\omega ,\mascB )\subseteq
%M^{\infty}_{(1/(v_0v))}(\rr d)\text ;
%\end{alignat*}
%%

\vrum

\item[{\rm{(3)}}] the sesqui-linear form $( \cdo ,\cdo )_{L^2}$ on
$\Sigma _1(\rr d)$ extends to a continuous map from $M^{p,q}_{(\omega
)}(\rr d)\times M^{p'\! ,q'}_{(1/\omega )}(\rr d)$ to $\mathbf
C$. This extension is unique, except when $p=q'\in \{ 1,\infty \}$. On the
other hand, if $\nmm a = \sup |{(a,b)_{L^2}}|$, where the supremum is
taken over all $b\in M^{p',q'}_{(1/\omega )}(\rr d)$ such that
$\nm b{M^{p',q'}_{(1/\omega )}}\le 1$, then $\nmm {\cdot}$ and $\nm
\cdot {M^{p,q}_{(\omega )}}$ are equivalent norms;

\vrum

\item[{\rm{(4)}}] if $p,q<\infty$, then $\Sigma _1(\rr d)$ is dense
in $M^{p,q}_{(\omega )}(\rr d)$, and the dual space of $M^{p,q}_{(\omega
)}(\rr d)$ can be identified with $M^{p'\! ,q'}_{(1/\omega )}(\rr
d)$, through the form $(\cdo  ,\cdo )_{L^2}$. Moreover,
$\Sigma _1(\rr d)$ is weakly dense in $M^{\infty }_{(\omega )}(\rr
d)$.
\end{enumerate}
\end{prop}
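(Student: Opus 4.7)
The proof of all four statements rests on the reproducing formula for the short-time Fourier transform. For $\phi_1,\phi_2\in\maclS_{1/2}(\rr d)$ with $(\phi_2,\phi_1)_{L^2}\ne 0$ one has, on sufficiently regular $f$,
\begin{equation*}
V_{\phi_2}f(x,\xi)=\frac{1}{(\phi_2,\phi_1)_{L^2}}\iint V_{\phi_1}f(y,\eta)\,\overline{V_{\phi_1}\phi_2(y-x,\eta-\xi)}\,e^{-\im\scal{x}{\eta-\xi}}\,dy\,d\eta,
\end{equation*}
so that $|V_{\phi_2}f(X)|\le C(|V_{\phi_1}f|*|V_{\phi_1}\phi_2|)(X)$ on $\rr{2d}$. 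The $v$-moderation of $\omega$ gives $\omega(X)\le C\omega(Y)v(X-Y)$, hence $|V_{\phi_2}f(X)|\,\omega(X)\le C\bigl((|V_{\phi_1}f|\,\omega)*(|V_{\phi_1}\phi_2|\,v)\bigr)(X)$. Since $\phi_2\in M^1_{(v)}(\rr d)$ forces $|V_{\phi_1}\phi_2|\,v\in L^1(\rr{2d})$, a Young-type convolution inequality on the mixed norm space $\mascB$ yields $\|V_{\phi_2}f\cdot\omega\|_\mascB\le C\|V_{\phi_1}f\cdot\omega\|_\mascB$, proving (1). Completeness is then standard: the continuous inclusion $M(\omega,\mascB)\hookrightarrow\Upsilon(\rr d)\subseteq\maclS'_{1/2}(\rr d)$ together with injectivity of $V_\phi$ on $\Upsilon$ (Proposition~\ref{stftGelfand2}) allows limits of Cauchy sequences to be recovered by the STFT inversion formula.

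For (2), the leftmost inclusion uses the $\Sigma$-analogue of Proposition~\ref{stftGelfand2}(1): for every $\ep>0$, $|V_\phi f(X)|\le C_\ep e^{-\ep|X|}$ when $f\in\Sigma_1(\rr d)$. Choosing $\ep>c$, where $\omega_1(X)\le Ce^{c|X|}$ by virtue of $\omega_1\in\mascP_E(\rr{2d})$, shows $V_\phi f\cdot\omega_1$ decays exponentially, hence lies in every mixed norm space. The passage $M(\omega_1,L^{p_1}(V))\subseteq M(\omega_2,L^{p_2}(V))$ splits: the weight step uses $\omega_2\le C\omega_1$ directly, while the exponent step again invokes the convolution domination $|V_\phi f|\le C|V_\phi f|*|V_\phi\phi|$ with the $L^1$-kernel $|V_\phi\phi|\,v$ and Young's inequality to embed $L^{p_1}(V)\hookrightarrow L^{p_2}(V)$ after accounting for the weight. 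The rightmost inclusion $M(\omega_2,L^{p_2}(V))\subseteq\Sigma_1'(\rr d)$ is the dual statement, obtained by testing against the pairing from (3).

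Part (3) follows from the STFT--Parseval identity $(f,g)_{L^2(\rr d)}=\nm{\phi}{L^2}^{-2}(V_\phi f,V_\phi g)_{L^2(\rr{2d})}$ with $\phi$ the normalised Gaussian: writing the integrand as $(V_\phi f\cdot\omega)\cdot\overline{V_\phi g\cdot\omega^{-1}}$ and applying the Hölder inequality on $L^{p,q}$ gives $|(f,g)_{L^2}|\le C\|f\|_{M^{p,q}_{(\omega)}}\|g\|_{M^{p',q'}_{(1/\omega)}}$. The reverse estimate realising the equivalent norm is obtained by the standard saturation of Hölder on mixed norm spaces: one picks a sequence $g_n\in M^{p',q'}_{(1/\omega)}$ with unit norm, recovered from a near-extremal sequence for $V_\phi f\cdot\omega$ by STFT-synthesis against $\phi$. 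Uniqueness of the extension off the endpoint exponents follows from the density statement of part (4). For (4) itself, take a smooth compactly supported cutoff $\chi_n\uparrow 1$ on $\rr{2d}$, set $F_n=V_\phi f\cdot\chi_n$, and form $f_n$ by STFT-synthesis of $F_n$ against the Gaussian; then $f_n\in\Sigma_1(\rr d)$ because $F_n$ has compact support, and dominated convergence in $\mascB$, which requires $p,q<\infty$, yields $f_n\to f$ in $M^{p,q}_{(\omega)}$. The duality identification combines this density with (3), and the weak-$*$ density in $M^\infty_{(\omega)}$ follows by the same truncation.

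The main technical hurdle throughout is the Young/convolution step: one must show that convolution with a kernel in $L^1(\rr{2d})$ maps the \emph{$\omega$-weighted} mixed norm space into itself, which is precisely where the $v$-moderation of $\omega$ is essential in order to transfer the shift $\omega(X)\le C\omega(Y)v(X-Y)$ from the function factor onto the convolution kernel. This step fails if $\omega$ is only known to satisfy the weaker Gaussian-type condition \eqref{modrelax}$'$, which explains why the subsequent sections of the paper construct an alternative, Bargmann-transform based, route to analogous results in the enlarged weight classes.
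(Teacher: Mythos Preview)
The paper does not prove this proposition itself but states that the result ``can be found in \cite{F1,FG1,FG2,Gc2,To5}''; your sketch is precisely the classical Feichtinger--Gr\"ochenig argument from those references (STFT convolution domination via the reproducing formula, $v$-moderation to pull the weight through the convolution, Young's inequality on mixed $L^p(V)$ spaces, Moyal's identity plus H\"older for the pairing, and STFT-synthesis of a compactly truncated $V_\phi f$ for density). Your closing remark correctly isolates the step---the weighted Young inequality---that requires moderateness and hence explains why the remainder of the paper builds a Bargmann-transform route to reach analogous conclusions for weights in $\mascP_Q^0$.
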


\par

\subsection{The Bargmann transform}

\par

We shall now consider the Bargmann transform which is defined by the
formula
\begin{equation*}%\label{bargtransf}
(\mathfrak Vf)(z) =\pi ^{-d/4}\int _{\rr d}\exp \Big ( -\frac 12(\scal
z z+|y|^2)+2^{1/2}\scal zy \Big )f(y)\, dy,
\end{equation*}
when $f\in L^2(\rr d)$. We note that if $f\in
L^2(\rr d)$, then the Bargmann transform
$\mathfrak Vf$ of $f$ is the entire function on $\cc d$, given by
$$
(\mathfrak Vf)(z) =\int \mathfrak A_d(z,y)f(y)\, dy,
$$
or
\begin{equation}\label{bargdistrform}
(\mathfrak Vf)(z) =\scal f{\mathfrak A_d(z,\cdo )},
\end{equation}
where the Bargmann kernel $\mathfrak A_d$ is given by
$$
\mathfrak A_d(z,y)=\pi ^{-d/4} \exp \Big ( -\frac 12(\scal
zz+|y|^2)+2^{1/2}\scal zy\Big ).
$$
Here
$$
\scal zw = \sum _{j=1}^dz_jw_j,\quad \text{when} \quad
z=(z_1,\dots ,z_d) \in \cc d\quad  \text{and} \quad w=(w_1,\dots ,w_d)\in \cc d,
$$
and otherwise $\scal \cdo \cdo $ denotes the duality between test function spaces and their
corresponding duals.
We note that the right-hand side in \eqref{bargdistrform} makes sense
when $f\in \maclS _{1/2}'(\rr d)$ and defines an element in $A(\cc d)$,
since $y\mapsto \mathfrak A_d(z,y)$ can be interpreted as an element
in $\maclS _{1/2} (\rr d)$ with values in $A(\cc d)$. Here and in what follows,
$A(\cc d)$ denotes the set of all entire functions on $\cc d$.

\par

It was proved by Bargmann that $f\mapsto \mathfrak Vf$ is a bijective and isometric map 
from $L^2(\rr d)$ to the Hilbert space $A^2(\cc d)$, the set of entire functions $F$ on $\cc 
d$ which fulfills
\begin{equation}\label{A2norm}
\nm F{A^2}\equiv \Big ( \int _{\cc d}|F(z)|^2d\mu (z)  \Big )^{1/2}<\infty .
\end{equation}
Here $d\mu (z)=\pi ^{-d} e^{-|z|^2}\, d\lambda (z)$, where $d\lambda (z)$ is the Lebesgue measure on $\cc d$, and the scalar product on $A^2(\cc d)$ is given by
\begin{equation}\label{A2scalar}
(F,G)_{A^2}\equiv  \int _{\cc d} F(z)\overline {G(z)}\, d\mu (z),\quad F,G\in A^2(\cc d).
\end{equation}

\par

Furthermore, Bargmann proved that there is a convenient reproducing kernel on
$A^2(\cc d)$, given  by the formula
\begin{equation}\label{reproducing}
F(z)= \int _{\cc d} e^{(z,w)}F(w)\, d\mu (w),\quad F\in
A^2(\cc d),
\end{equation}
where $(z,w)$ is the scalar product of $z\in \cc d$ and $w\in \cc d$ (cf. \cite{B1,B2}).
Note that this reproducing kernel is unique in view of \cite{Kr}.

\par

From now on we assume that $\phi$ in \eqref{defstft},
\eqref{defstft}$'$  and \eqref{modnorm2} is given by
\begin{equation}\label{phidef}
\phi (x)=\pi ^{-d/4}e^{-|x|^2/2},
\end{equation}
if nothing else is stated. Then it follows that the Bargmann transform
can be expressed in terms of the short-time Fourier transform
$f\mapsto V_\phi f$. More precisely, let $S$ be the dilation operator given by
\begin{equation}\label{Sdef}
(SF)(x,\xi ) = F(2^{-1/2}x,-2^{-1/2}\xi ),
\end{equation}
when $F\in L^1_{loc}(\rr {2d})$. Then it
follows by straight-forward computations that
\begin{multline}\label{bargstft1}
  (\mathfrak{V} f)(z)  =  (\mathfrak{V}f)(x+\im \xi ) 
 =  (2\pi )^{d/2}e^{(|x|^2+|\xi|^2)/2}e^{-i\scal x\xi}V_\phi f(2^{1/2}x,-2^{1/2}\xi )
\\[1ex]
=(2\pi )^{d/2}e^{(|x|^2+|\xi|^2)/2}e^{-i\scal x\xi}(S^{-1}(V_\phi f))(x,\xi ),
\end{multline}
or equivalently,
\begin{multline}\label{bargstft2}
V_\phi f(x,\xi )  =  
(2\pi )^{-d/2} e^{-(|x|^2+|\xi |^2)/4}e^{-\im \scal x \xi /2}(\mathfrak{V}f)
(2^{-1/2}x,-2^{-1/2}\xi).
\\[1ex]
=(2\pi )^{-d/2}e^{-i\scal x\xi /2}S(e^{-|\cdo |^2/2}(\mathfrak{V}f))(x,\xi ).
\end{multline}
For future references we observe that \eqref{bargstft1} and \eqref{bargstft2}
can be formulated into
\begin{equation*}%\label{BargStftlink}
\mathfrak V = U_{\mathfrak V}\circ V_\phi ,\quad \text{and}\quad
U_{\mathfrak V}^{-1} \circ \mathfrak V =  V_\phi ,
\end{equation*}
where $U_{\mathfrak V}$ is the linear, continuous and bijective operator on
$\mathscr D'(\rr {2d})\simeq \mathscr D'(\cc d)$, given by
\begin{equation}\label{UVdef}
(U_{\mathfrak V}F)(x,\xi ) = (2\pi )^{d/2} e^{(|x|^2+|\xi |^2)/2}e^{-i\scal x\xi}
F(2^{1/2}x,-2^{1/2}\xi ) .
\end{equation}

\par

\begin{defn}\label{thespaces}
Let $\omega _1\in \mascP _{Q}^0(\rr {2d})$, $\omega _2\in \mascP _{Q}(\rr {2d})$,
$\mascB $ be a mixed quasi-norm space on $\rr {2d}=\cc d$, and let $r>0$ be such that $r\le \nu _1(\mascB )$.
\begin{enumerate}
\item The space $B(\omega _2,\mascB )$ is the modified
weighted
$\mascB $-space which consists of all $F\in L^r_{loc}(\rr {2d})=
L^r_{loc}(\cc {d})$ such that
$$
\nm F{B(\omega _2,\mascB )}\equiv \nm {(U_{\mathfrak V}^{-1}F)\omega _2}{\mascB }<\infty .
$$
Here $U_{\mathfrak V}$ is given by \eqref{UVdef};

\vrum

\item The space $A(\omega _2,\mascB )$ consists of all $F\in A(\cc
d)\cap B(\omega _2,\mascB )$ with topology inherited
from $B(\omega _2,\mascB )$;

\vrum

\item The space $A_0(\omega _1,\mascB )$ is given by
$$
A_0(\omega _1,\mascB ) \equiv \sets {(\mathfrak Vf)}{f\in M(\omega _1
,\mascB )},
$$
and is equipped with the quasi-norm $\nm F{A_0(\omega _1,\mascB )}\equiv \nm
f{M(\omega _1,\mascB )}$, when $F=\mathfrak Vf$.
\end{enumerate}
\end{defn}

\par

We note that the spaces in Definition \ref{thespaces} are normed
spaces when $\nu _1(\mascB )\ge 1$.

\par

For conveneincy we set $\nm F{B(\omega ,\mascB )}=\infty$, when
$F\notin B(\omega ,\mascB )$ is measurable, and
$\nm F{A(\omega ,\mascB )}=\infty$, when $F\in A(\cc d)\setminus B(\omega ,\mascB )$.
We also set
$$
B^{p,q}_{(\omega )} = B^{p,q}_{(\omega )}(\cc d) = B(\omega ,\mascB ),\quad
A^{p,q}_{(\omega )} = A^{p,q}_{(\omega )}(\cc d) = A(\omega ,\mascB )
$$
when $\mascB = L^{p,q}(\cc d)$, $A^p_{(\omega )}=A^{p,p}_{(\omega )}$,
and if $\omega =1$, then
we use the notations $B^{p,q}$, $A^{p,q}$, $B^p$ and $A^p$ instead of
$B^{p,q}_{(\omega )}$, $A^{p,q}_{(\omega )}$, $B^p_{(\omega )}$
and $A^p_{(\omega )}$, respectively.

\par

For future references we note that the $B^{p}_{(\omega )}$ quasi-norm is given by
\begin{multline}\label{BLpnorm}
\nm F{B^{p}_{(\omega )}} = 2^{d/p}(2\pi )^{-d/2}\left ( \int _{\cc d} |e^{-|z|^2/2}F(z)\omega (2^
{1/2}\overline z)|^p\, d\lambda (z)  \right )^{1/p}
\\[1ex]
= 2^{d/p}(2\pi )^{-d/2}\left ( \iint _{\rr {2d}} |e^{-(|x|^2+|\xi |^2)/2}F(x+i\xi )\omega (2^
{1/2}x,-2^{1/2}\xi )|^p\, dxd\xi \right )^{1/p}
\end{multline}
(with obvious modifications when $p=\infty$). Especially it follows that the norm and scalar
product in $B^2_{(\omega )}(\cc d)$ take the forms
\begin{equation*}%\label{BL2norm}
\begin{alignedat}{2}
\nm F{B^{2}_{(\omega )}} &= \left ( \int _{\cc {d}} |F(z)\omega (2^
{1/2}\overline z)|^2\, d\mu (z) \right )^{1/2},& \quad F &\in B^2_{(\omega )}(\cc d)
\\[1ex]
(F,G)_{B^2_{(\omega )}} &=  \int _{\cc {d}} F(z)\overline {G(z)}\omega (2^
{1/2}\overline z)^2\, d\mu (z) ,& \quad F,G &\in B^2_{(\omega )}(\cc d)
\end{alignedat}
\end{equation*}
(cf. \eqref{A2norm} and \eqref{A2scalar}).

\par

The following result shows that the norm in $A_0(\omega,\mathcal{B})$ is well-defined.

\par

\begin{prop}\label{mainstep1prop}
Let $\omega \in \mascP  _{Q}^0(\rr {2d})$, $\mascB $ be a mixed
norm space on $\rr {2d}$ and let $\phi$ be as in
\eqref{phidef}. Then $A_0(\omega ,\mascB )\subseteq A(\omega ,\mascB )$,
and the map $\mathfrak V$ is an
isometric injection from $M(\omega ,\mascB )$ to $A(\omega ,\mascB )$.
\end{prop}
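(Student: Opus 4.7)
\medspace

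The plan is to deduce the proposition essentially as a tautology from the identity $\mathfrak{V} = U_{\mathfrak V}\circ V_\phi$ established in \eqref{bargstft1} and \eqref{UVdef}, once we verify that the objects on both sides are well defined for $f\in M(\omega ,\mascB )$.

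First I would pick $f\in M(\omega ,\mascB )$. By Definition \ref{bfspaces2}, this means $f\in \maclS '_{1/2}(\rr d)$ and $V_\phi f\cdot \omega \in \mascB $, with $\nm f{M(\omega ,\mascB )}=\nm {V_\phi f\cdot \omega}{\mascB }$. Proposition \ref{stftGelfand1} guarantees that $V_\phi f$ is a well-defined element of $\maclS _{1/2}'(\rr {2d})$, and the representation \eqref{bargdistrform} together with the fact that $y\mapsto \mathfrak A_d(z,y)$ takes values in $A(\cc d)$ (as noted after \eqref{bargdistrform}) shows that $\mathfrak Vf\in A(\cc d)$.

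The second step is a direct computation. Applying \eqref{bargstft1} pointwise gives $\mathfrak Vf = U_{\mathfrak V}(V_\phi f)$, hence $U_{\mathfrak V}^{-1}(\mathfrak Vf)= V_\phi f$. Therefore, by the definition of the $B(\omega ,\mascB )$ quasi-norm in Definition \ref{thespaces}(1),
\begin{equation*}
\nm {\mathfrak Vf}{B(\omega ,\mascB )} = \nm {(U_{\mathfrak V}^{-1}(\mathfrak Vf))\,\omega }{\mascB } = \nm {V_\phi f\cdot \omega }{\mascB } = \nm f{M(\omega ,\mascB )}.
\end{equation*}
Combined with entirety of $\mathfrak Vf$, this places $\mathfrak Vf$ in $A(\omega ,\mascB )$ and makes the assignment $f\mapsto \mathfrak Vf$ a norm-preserving map from $M(\omega ,\mascB )$ into $A(\omega ,\mascB )$. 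In particular, any element of $A_0(\omega ,\mascB )$, which by construction is of the form $\mathfrak Vf$ for some $f\in M(\omega ,\mascB )$, belongs to $A(\omega ,\mascB )$ with matching norm, giving the inclusion $A_0(\omega ,\mascB )\subseteq A(\omega ,\mascB )$.

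Finally I would handle injectivity. Suppose $\mathfrak Vf=0$ for some $f\in M(\omega ,\mascB )\subseteq \maclS _{1/2}'(\rr d)$. Since $U_{\mathfrak V}$ is a bijection on $\mathscr D'(\rr {2d})$, the identity $\mathfrak V=U_{\mathfrak V}\circ V_\phi$ forces $V_\phi f=0$. The STFT with the nonzero Gaussian window $\phi$ is injective on $\maclS _{1/2}'(\rr d)$ (this is immediate from Proposition \ref{stftGelfand1} together with the reconstruction formula underlying \eqref{dualstftrel}: $(f,\fy )_{L^2}=c(V_\phi f, V_\phi \fy )_{L^2}$ vanishes for every $\fy $ in the dense class of finite sums of Hermite functions, hence $f=0$). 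This proves injectivity. The only place where one must be slightly careful is the measurability/integrability needed to interpret $U_{\mathfrak V}^{-1}(\mathfrak Vf)$ as an $L^r_{\text{loc}}$-function with $r\le \nu _1(\mascB )$, but this is automatic since $V_\phi f$ is smooth on $\rr {2d}$ whenever $\phi $ is a Schwartz function, so no genuine obstacle arises.
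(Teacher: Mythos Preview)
Your argument is correct and coincides with the paper's own proof, which simply states that the result is an immediate consequence of \eqref{bargstft1}, \eqref{bargstft2} and Definition \ref{thespaces}. You have merely unpacked those references explicitly; in particular, your separate treatment of injectivity is not needed once the isometry is established, since $\nm f{M(\omega ,\mascB )}=0$ already forces $V_\phi f=0$ and hence $f=0$.
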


\par

\begin{proof}
The result is an immediate consequence of \eqref{bargstft1},
\eqref{bargstft2} and Definition \ref{thespaces}.
\end{proof}

\par

In the case $\omega =1$ and $\mascB =L^2$, it follows from
\cite {B1} that Proposition \ref{mainstep1prop}
holds, and the inclusion is replaced by equality. That is, we have
$A^2_0=A^2$ which is called the Bargmann-Fock space, or just the Fock
space. In Section \ref{sec4} we improve the latter property and show
that for any choice of $\omega \in \mascP  _{Q}^0$ and every mixed quasi-norm space
$\mascB $, we have $A_0(\omega ,\mascB )=A(\omega ,\mascB )$.

\par

%%%%%%%%%%%%%%%%%%%%%%%
\section{Weight functions}\label{sec2}
%%%%%%%%%%%%%%%%%%%%%%%

\par

In this section we establish results on weight functions which are needed. In the first part we
investigate weights belonging to $\mathscr P_E$. Here we are especially focused on finding 
properties which are needed to show that $\mathscr P_E$ is contained in convenient
subfamilies of $ \mascP _Q$, which are introduced in the second part of the section.

\par

\subsection{Moderate weights}

\par

For a moderate weight $\omega$ there are convenient ways to find smooth weigths $\omega _0$ which are equivalent in the sense that for some constant $C>0$ we have
\begin{equation}\label{weightequiv}
C^{-1}\omega _0\le \omega \le C\omega _0.
\end{equation}
In fact, we have the following result, which extends Lemma 1.2 (4) in \cite{To04B}. Here the weight
$\omega \in \mascP _E(\rr d)$ is called \emph{elliptic} if $\omega \in C^\infty (\rr d)$, and for each
multi-index $\alpha$, we have
\begin{equation}\label{omegaderest}
(\partial ^\alpha \omega _0)/\omega _0 \in L^\infty (\rr d).
\end{equation}

\par

\begin{lemma}\label{smoothweights}
Let $\omega \in \mascP _E(\rr d)$. Then it exists an elliptic weight
$\omega _0\in \mascP _E(\rr d)$ such that \eqref{weightequiv} holds.
\end{lemma}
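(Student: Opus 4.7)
The plan is to regularize $\omega$ by convolution with a smooth compactly supported bump, and then to use the $v$-moderate property to transfer pointwise estimates through the convolution integral. Fix $\varphi \in C_c^\infty(\rr d)$ with $\varphi \ge 0$ and $\int \varphi \, dx = 1$, and set $\omega_0 \equiv \omega * \varphi$. Since $\omega \in L^\infty_{loc}(\rr d)$, the convolution is well defined and smooth, and $\partial^\alpha \omega_0 = \omega * \partial^\alpha \varphi$ for every multi-index $\alpha$.

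By hypothesis, $\omega$ is $v$-moderate for some $v(x)=Ce^{c|x|}$. The key observation is that \eqref{moderate} applied in the two directions
\begin{equation*}
\omega(x-y)\le C\omega(x)v(-y),\qquad \omega(x)\le C\omega(x-y)v(y)
\end{equation*}
controls $\omega(x-y)$ from above and below by $\omega(x)$, up to factors depending only on $y$. Integrating the first against $\varphi(y)$ gives
\begin{equation*}
\omega_0(x)\le C\omega(x)\int v(-y)\varphi(y)\, dy = C_1\omega(x),
\end{equation*}
which is finite because $\varphi$ has compact support and $v$ is continuous. Integrating the second against $\varphi(y)$ (using $\int \varphi = 1$) yields
\begin{equation*}
\omega(x)=\int \omega(x)\varphi(y)\, dy\le C\int v(y)\omega(x-y)\varphi(y)\, dy \le C_2\, \omega_0(x).
\end{equation*}
These two estimates are \eqref{weightequiv}.

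Next I would verify that $\omega_0 \in \mascP _E(\rr d)$. Equivalence with $\omega$ shows that $\omega_0$ is bounded from below on any compact set (since $\omega$ is) and lies in $L^\infty_{loc}$ (being continuous), so it is a weight. The $v$-moderate property is inherited: from $\omega_0(x+y)\le C_1\omega(x+y)\le C_1'\omega(x)v(y)\le C''\omega_0(x)v(y)$. Finally, for ellipticity I would differentiate under the convolution and apply the same upper estimate to $|\partial^\alpha \varphi|$ in place of $\varphi$:
\begin{equation*}
|\partial^\alpha\omega_0(x)|\le \int \omega(x-y)\,|\partial^\alpha\varphi(y)|\, dy \le C\omega(x) \int v(-y)|\partial^\alpha\varphi(y)|\, dy = C_\alpha \omega(x),
\end{equation*}
which is finite because $\partial^\alpha \varphi$ has compact support. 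Combined with $\omega\le C_2 \omega_0$, this gives $|\partial^\alpha \omega_0|/\omega_0 \in L^\infty(\rr d)$, that is, \eqref{omegaderest}.

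There is no real obstacle here; the only point that requires attention is confirming that the constants
$\int v(\pm y)|\partial^\alpha\varphi(y)|\, dy$ are finite, which is automatic because $\varphi$ (hence $\partial^\alpha\varphi$) has compact support, so no growth condition on $v$ at infinity intervenes. The approach is essentially the standard mollification argument, adapted from the polynomial case (Lemma 1.2 (4) in \cite{To04B}) by exploiting that an exponential-type moderating weight is still locally bounded.
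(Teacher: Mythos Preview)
Your proof is correct and follows essentially the same mollification argument as the paper: convolve $\omega$ with a smooth test function and use the $v$-moderate condition to transfer bounds through the integral. The only cosmetic difference is that the paper chooses a mollifier bounded (together with its derivatives) by Gaussians rather than a compactly supported one; your choice of $\varphi\in C_c^\infty$ is just as good and indeed makes the finiteness of the constants $\int v(\pm y)|\partial^\alpha\varphi(y)|\,dy$ immediate.
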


\par

Lemma \ref{smoothweights} follows by similar arguments as in the proof of
Lemma 1.2 (4) in \cite{To04B}. In order to be self-contained we here present a proof.

\par

\begin{proof}
Let $\psi \in C^\infty (\rr d)$ be a positive function which is bounded by Gauss functions together with all its derivatives, and let $\omega _0=\psi *\omega$. Also let $v\in \mascP _E(\rr d)$  be chosen such that $\omega$ is $v$-moderate. Then $\omega _0$ is smooth, and
$$
\omega _0 (x)=\int \omega (x-y)\psi (y)\, dy \le C_1\omega (x),
$$
with
$$
C_1=\int v(-y)\psi (y)\, dy <\infty .
$$

\par

Furthermore, if
$$
C_2=\int v(y)^{-1}\psi (y)\, dy <\infty ,
$$
then
$$
C_2\omega (x) =\int \omega (x-y+y)\frac {\psi (y)}{v(y)}\, dy \le C\int \omega (x-y )\psi (y)\, dy =C\omega _0(x),
$$
for some constant $C$. This proves that $\omega _0\in \mascP _E \cap C^\infty$, and that \eqref{weightequiv} is fulfilled.

\par

By differentiating $\omega _0$, the first part of the proof gives that for some Gaussian $\psi _1$, and some constants $C_1$ and $C_2$ we have
\begin{equation*}
|\partial ^\alpha \omega _0| = |(\partial ^\alpha \psi )*\omega | \le |\partial ^\alpha \psi |*\omega
\le \psi _1*\omega \le C_1\omega \le C_2\omega _0.
\end{equation*}
Hence, \eqref{omegaderest} is fulfilled for  $\omega =\omega _0$. The proof is complete.
\end{proof}

\par

We also need some properties concerning the minimal weight which moderates a specific weight $\omega \in \mascP _E(\rr d)$. For any such weight, the fact that $\omega$ is $v$-moderate for some function $v$, implies that
\begin{equation}\label{v0def}
v_0(x)\equiv \sup _{y_0\in \rr d}\frac {\omega (x+y_0)}{\omega (y_0)}
\end{equation}
is well-defined. Furthermore, by straight-forward computations it follows that
\begin{equation}\label{vomegamod}
\omega (x+y)\le \omega (x)v(y)\quad \text{and}\quad v(x+y)\le v(x)v(y),\qquad x,y\in \rr d,
\end{equation}
holds for $v=v_0$. The following result shows that $v_0$ is
minimal among elements which moderates $\omega$, and
satisfies \eqref{vomegamod}. Furthermore, here we establish
differential properties of $v_0$ in terms of the functionals
\begin{equation*}%\label{J1J2def}
\begin{aligned}
(J_1\omega )(x,y ) &\equiv \inf _{y_0\in \rr d}\Big ( \frac {(\partial _y \omega ) (x+y_0)}{\omega (y_0)}
\Big ) = \inf _{y_0\in \rr d}\Big ( \frac {\scal {(\nabla \omega ) (x+y_0)}y }{\omega (y_0)}\Big ),
\\[1ex]
(J_2\omega )(x,y ) &\equiv \sup _{y_0\in \rr d}\Big ( \frac {(\partial _y \omega ) (x+y_0)}{\omega (y_0)}
\Big ) = \sup _{y_0\in \rr d}\Big ( \frac {\scal {(\nabla \omega ) (x+y_0)}y }{\omega (y_0)}\Big ),
\end{aligned}
\end{equation*}
when in additional $\omega$ is elliptic. We note that $J_1\omega$ and $J_2\omega$ satisfy
\begin{equation*}%\label{J1J2est}
|(J_k\omega )(x,y)|\le \sup _{y_0\in \rr d}
\Big ( \frac {|(\nabla \omega ) (x+y_0)|}{\omega (y_0)}\Big )|y| \le Cv_0(x)|y|,\quad k=1,2,
\end{equation*}
for such $\omega$ and some constant $C$, depending on $\omega$ only.

\par

\begin{lemma}\label{minmodweight}
Let $\omega \in \mascP _E(\rr d)$, $0<v\in L^\infty _{loc}(\rr d)$ be such that $\omega (x+y)
\le \omega (x)v(y)$ when $x,y\in\rr d$, and let $v_0$ be as in \eqref{v0def}. Then
$v_0\le v$, and \eqref{vomegamod} holds.

\par

Moreover, if in addition $\omega$ is elliptic, then the following is true:
\begin{enumerate}
\item $(J_1\omega )(x,y)$ and $(J_2\omega )(x,y)$ are continuous functions which are positively homogeneous in the $y$-variable of order one;

\vrum

\item if $x,y\in \rr d$, then
$$
\inf _{0\le t\le 1}(J_1\omega )(x+ty,y)\le v_0(x+y)-v_0(x)\le \sup _{0\le t\le 1}(J_2\omega )(x+ty,y) .
$$
In particular, $v_0$ is locally Lipschitz continuous;

\vrum

\item if $x,y\in \rr d$, then
\begin{multline*}
(J_1\omega )(x,y)\le \liminf _{h\to 0}\,  \frac {v_0(x+hy)-v_0(x)}{h}
\\[1ex]
\le \limsup _{h\to 0}\, 
\frac {v_0(x+hy)-v_0(x)}{h}\le (J_2\omega )(x,y).
\end{multline*}
\end{enumerate}
\end{lemma}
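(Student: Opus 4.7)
I would prove the claims in the order they are stated. The first three assertions $v_0\le v$, $\omega(x+y)\le \omega(x)v_0(y)$ and $v_0(x+y)\le v_0(x)v_0(y)$ are direct manipulations of the definition of $v_0$. Indeed, specializing $\omega(x+y)\le \omega(x)v(y)$ to the pair $(y_0,x)$ in place of $(x,y)$ gives $\omega(x+y_0)/\omega(y_0)\le v(x)$ for every $y_0$, whence $v_0\le v$. The second inequality is the choice $y_0=x$ in the supremum defining $v_0(y)$. Submultiplicativity of $v_0$ follows from the factorization $\omega(x+y+y_0)/\omega(y_0)=[\omega(x+(y+y_0))/\omega(y+y_0)]\cdot[\omega(y+y_0)/\omega(y_0)]$ and taking supremum over $y_0$.

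For the differential part, first record the bounds coming from ellipticity: $|\nabla\omega(z)|\le C\omega(z)$ and $|\partial^\alpha\omega(z)|\le C\omega(z)$ for $|\alpha|\le 2$. Combined with $\omega(x+y_0)\le \omega(y_0)v_0(x)$, this yields that the family $g_{y_0}(x,y):=\scal{(\nabla\omega)(x+y_0)}{y}/\omega(y_0)$ is jointly locally Lipschitz in $(x,y)$ with a constant independent of $y_0$ (a mean-value argument applied to $\nabla\omega$ handles the $x$-variable). Hence the pointwise infimum and supremum of this equicontinuous, locally bounded family, namely $J_1\omega$ and $J_2\omega$, are themselves continuous on $\rr d\times\rr d$. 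Positive homogeneity of degree one in $y$ is immediate from linearity in $y$ of the expression inside the inf and sup. This proves (1).

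For (2), apply the fundamental theorem of calculus to $t\mapsto \omega(x+ty+y_0)$ and divide by $\omega(y_0)$ to obtain
\[
\frac{\omega(x+y+y_0)}{\omega(y_0)} \;=\; \frac{\omega(x+y_0)}{\omega(y_0)} + \int_0^1 \frac{\scal{(\nabla\omega)(x+ty+y_0)}{y}}{\omega(y_0)}\,dt.
\]
For the upper bound, bound the integrand by $(J_2\omega)(x+ty,y)$ uniformly in $y_0$ and then take the supremum over $y_0$ on the left, yielding $v_0(x+y)\le v_0(x)+\sup_{0\le t\le 1}(J_2\omega)(x+ty,y)$. For the lower bound, for each $\varepsilon>0$ pick $y_0^\ast$ with $\omega(x+y_0^\ast)/\omega(y_0^\ast)\ge v_0(x)-\varepsilon$; evaluate the identity at $y_0=y_0^\ast$, use $v_0(x+y)\ge \omega(x+y+y_0^\ast)/\omega(y_0^\ast)$ on the left, bound the integrand below by $(J_1\omega)(x+ty,y)$ (valid since this is the infimum over all $y_0$, including $y_0^\ast$), and send $\varepsilon\to 0$. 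The estimate $|(J_k\omega)(x,y)|\le Cv_0(x)|y|$, together with local boundedness of $v_0$ (which follows from $v_0\le v$), then gives local Lipschitz continuity of $v_0$. The main obstacle is precisely this lower bound, since the supremum defining $v_0(x)$ need not be attained, forcing the $\varepsilon$-perturbation.

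Finally, for (3), substitute $hy$ in place of $y$ in (2). For $h>0$, positive homogeneity $(J_k\omega)(x',hy)=h(J_k\omega)(x',y)$ permits cancellation of $h$ after dividing through. For $h<0$, one uses the identities $(J_1\omega)(x',-y)=-(J_2\omega)(x',y)$ and $(J_2\omega)(x',-y)=-(J_1\omega)(x',y)$ together with the sign reversal from dividing by a negative number, to arrive at exactly the same pinched inequality
\[
\inf_{0\le t\le 1}(J_1\omega)(x+thy,y)\;\le\; \frac{v_0(x+hy)-v_0(x)}{h}\;\le\; \sup_{0\le t\le 1}(J_2\omega)(x+thy,y).
\]
Passing $h\to 0$ and invoking the continuity of $J_1\omega$ and $J_2\omega$ in the first argument (established in (1)) collapses the left endpoint to $(J_1\omega)(x,y)$ and the right endpoint to $(J_2\omega)(x,y)$, completing the proof.
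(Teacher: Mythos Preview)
Your proof is correct and follows essentially the same route as the paper's. The paper leaves the first part and (1) to the reader and derives (3) from (2) by the same substitution $y\mapsto hy$; for (2) it uses the Lagrange (single-point) form of Taylor's formula where you use the integral form, and for the upper estimate it chooses a near-optimal $y_0$ for $v_0(x+y)$ rather than taking the supremum over all $y_0$ as you do, but these are cosmetic variations on the same argument.
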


\par

\begin{proof}
The first part and (1) are simple consequences of the definitions. The details are left for the reader. It is also obvious that (3) follows from (1) and (2) if we replace $y$ in (2) by $hy$, and let $h$ turns to zero.

\par

It remains to prove (2), and then we may assume that $y\neq 0$ and $x$ are fixed. By \eqref{v0def} it follows that for every $\ep >0$, there is an element $y_0\in \rr d$ such that
$$
v_0(x+y)\le \frac {\omega (x+y+y_0)}{\omega (y_0)}+\ep .
$$
By Taylor's formula we get for some $\theta \in [0,1]$,
\begin{multline*}
v_0(x+y)\le \frac {\omega (x+y_0)}{\omega (y_0)} + \frac {\scal {(\nabla \omega )(x+\theta y+y_0)}y}{\omega (y_0)} + \ep 
\\[1ex]
\le v_0(x)+(J_2\omega )(x+\theta y,y)+\ep \le v_0(x)+\sup _{0\le t\le 1}(J_2\omega )(x+ty,y)+\ep.
\end{multline*}
Since $\ep$ was arbitrary chosen, the last inequality in (2) follows.

\par

In the same way, let $\ep >0$ be arbitrary and choose $y_1\in \rr d$ such that
$$
v_0(x)\le \frac {\omega (x+y_1)}{\omega (y_1)}+\ep .
$$
By Taylor's formula we get for some $\theta \in [0,1]$,
\begin{multline*}
v_0(x+y)\ge \frac {\omega (x+y+y_1)}{\omega (y_1)} =
\frac {\omega (x+y_1)}{\omega (y_1)} + \frac {\scal {(\nabla \omega )(x+\theta y+y_1)}y}{\omega (y_1)} 
\\[1ex]
\ge v_0(x)+(J_1\omega )(x+\theta y,y)-\ep \ge v_0(x)+\inf _{0\le t\le 1}(J_1\omega )(x+ty,y) -\ep ,
\end{multline*}
and the first inequality in (2) follows. The proof is complete.
\end{proof}

\par

\subsection{Subfamilies of Gaussian type weights}

\par

Next we discuss further appropriate conditions for subfamilies to $\mathscr P_Q^0(\rr d)$
and show that these
subfamilies contain both $\mathscr P_E(\rr d)$ as well as all weights of the form
$\omega (x)=Ce^{c|x|^\gamma}$, when $C>0$, $c\in \mathbf R$ and $0\le \gamma <2$.

\par

In the following definition we list most of the needed properties. The definitions involve global 
conditions of the form
\begin{equation}\label{addomegcond}
\frac {\omega (2^{1/2}x)e^{-\ep (1-\lambda ^2)|x|^2/2}}{\omega (2^{1/2}\lambda x)}
\le C_\ep ,\qquad 1-\theta _\ep < \lambda <1 ,\ x\in \rr d ,
\end{equation}
and
\begin{multline}\label{addomegcond2}
\lim  _{\lambda \to 1-}\Big (\sup _{x_2\in V^\bot}\frac {\omega _0(2^{1/2}x)e^{-\ep (1-\lambda ^2)|x|^2/2}}
{\omega _0(2^{1/2}\lambda x)}\Big ) \le 1,
\\[1ex]
x_1\in V,\ x_2\in V^\bot,\ x=x_1+x_2\in \rr d,
\end{multline}
for some vector space $V$. Here the dilation factor $2^{1/2}$, is needed because of the relation between the Bargmann transform and the short-time Fourier transform in \eqref{bargstft1}, and the quasi-norms in Definition \ref{thespaces}.

\par

\begin{defn}\label{defdilsuit}
Let $V\subseteq \rr d$ be a vector space.
\begin{enumerate}
\item  The weight $\omega \in \mathscr P_Q^0(\rr d)$ is called
\emph{dilated suitable} with respect to
$\ep \in (0,1]$, if there are constants $C_\ep >0$ and
$\theta _\ep \in (0,1)$ such that \eqref{addomegcond} holds.
If both $\omega$ and $1/\omega$ are dilated 
suitable with respect to every $\ep \in (0,1]$,
%and in addition $\omega \in \mascP _G^0(\rr d)$,
then $\omega$ is called \emph{strongly dilated suitable};

\vrum

\item  The weight $\omega \in \mathscr P_Q^0(\rr d)$ is called \emph{narrowly
dilated suitable} with respect to
$\ep \in (0,1]$ and $V$, if it is dilated suitable with respect to $\ep$,
and \eqref{addomegcond2} holds for every $x_1\in V$ and some
equivalent
%smooth
continuous
weight $\omega _0$ to $\omega$. If $\omega$
and $1/\omega$ are narrowly dilated 
suitable with respect to $V$ and every $\ep \in (0,1]$,
%and in addition $\omega \in \mascP _G^0(\rr d)$,
then $\omega$ is
called \emph{strongly narrowly dilated suitable}.
\end{enumerate}
\end{defn}

\par

The set of strongly dilated and strongly narrowly dilated suitable weights with respect to $V$
are denoted by $\mascP _{D}(\rr d)$ and $\mascP ^0_{D,V}(\rr d)$ respectively.
Furthermore we set $\mascP ^0_{D}(\rr d)=\mascP ^0_{D,V}(\rr d)$, when $V=\{ 0\}$.
We note that $\mascP ^0_{D,V}(\rr d)$ is increasing with respect to $V$, and that 
$\mascP ^0_{D,V}(\rr d)\subseteq \mascP _{D}(\rr d)$.

\par

If $\omega $ is $v$-moderate for some $v$, then $\omega$ is moderated
by some function which grow 
exponentially. It follows easily that $\omega$ satisfies \eqref{addomegcond}
in this case. Hence, any weight in $\mathscr P_E$ is 
dilated suitable.

\par

%In the following proposition we improve this property and show that any weight in $\mascP $
%or $\mascP _E$ are strongly dilated  suitable.
In the following proposition we stress the latter property and prove
that we in fact have that $\mascP _E\subseteq \mascP _D^0$.

\par

\begin{prop}\label{weightfamincl}
Let $V\subseteq \rr d$ be a vector space. Then the following
inclusions are true:
$$
\mascP (\rr d)  \subseteq \mascP  _E(\rr d)  \subseteq \mascP _{D}^0(\rr d)\bigcap
\mascP _G^0(\rr d),\quad 
\mascP _{D}^0(\rr d) \subseteq \mascP  _{D,V}^0(\rr d) \subseteq \mascP _{D}(\rr d),
$$
$$
\mascP _G^0(\rr d) \subseteq \mascP  _Q^0(\rr d) \bigcap \mascP  _G(\rr d) \subseteq
\mascP  _Q^0(\rr d) \bigcup \mascP  _G(\rr d) \subseteq \mascP  _Q(\rr d) 
$$

\par

Furthermore, if $C>0$, $0\le \gamma <2$, $t\in \mathbf R$, and
$\omega (x)=\displaystyle{Ce^{t|x|^\gamma}}$, $x\in \rr d$, then $\omega \in
\mascP  _{D}^0(\rr d)\cap \mascP _G^0(\rr d)$.
\end{prop}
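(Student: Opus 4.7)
The approach splits into three layers: definitional inclusions, the substantive inclusion $\mascP_E \subseteq \mascP_D^0 \cap \mascP_G^0$, and verification for the explicit weight $\omega(x)=Ce^{t|x|^\gamma}$. The inclusion $\mascP \subseteq \mascP_E$ is built into the definitions, since a polynomial is a particular submultiplicative function of exponential type. The chain $\mascP_G^0 \subseteq \mascP_Q^0\cap \mascP_G \subseteq \mascP_Q^0\cup\mascP_G\subseteq \mascP_Q$ is immediate, because \eqref{modrelax} implies \eqref{modrelax}$'$ and the ``for every $c>0$'' clause of the subgaussian condition trivially implies ``for some $c>0$''. Likewise $\mascP_D^0=\mascP_{D,\{0\}}^0 \subseteq \mascP_{D,V}^0\subseteq \mascP_D$ follows from monotonicity in $V$ together with the fact that the narrow condition \eqref{addomegcond2} is an additional requirement on top of \eqref{addomegcond}.

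For $\mascP_E\subseteq \mascP_G^0$, I would use that any $v$-moderate $\omega$ satisfies $\omega(x)+\omega(x)^{-1}\le Cv(x)\cdot v(-x)$ with $v$ of exponential type, hence $\omega(x)+1/\omega(x)\le Ce^{c|x|}$ for some $c,C>0$. The elementary inequality $c|x|\le \delta|x|^2+c^2/(4\delta)$ then gives the two-sided Gaussian bound \eqref{Gaussest} for \emph{every} $\delta>0$. The local condition \eqref{modrelax} is automatic: if $|x|\ge Rc$ and $|y|\le c/|x|$ with $R\ge 2$, then $|y|\le 1/R\le 1/2$, so $v(\pm y)$ is bounded by a universal constant. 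For $\mascP_E\subseteq \mascP_D^0$ the nontrivial point is the limit $\le 1$ in \eqref{addomegcond2} with $V=\{0\}$, not merely boundedness. Here I would first invoke Lemma \ref{smoothweights} to replace $\omega$ by an equivalent elliptic $\omega_0\in C^\infty$, so that $|\nabla\omega_0/\omega_0|\le M$ and logarithmic differentiation yields $\omega_0(x+y)/\omega_0(x)\le e^{M|y|}$ (with constant $1$, which is the key gain from ellipticity). Plugging $y=2^{1/2}(1-\lambda)x$ and optimizing the scalar quantity $M\,2^{1/2}(1-\lambda)r-\tfrac{\ep}{2}(1-\lambda^2)r^2$ over $r=|x|\ge 0$, the maximum equals $M^2(1-\lambda)/(\ep(1+\lambda))$, which tends to $0$ as $\lambda\to 1^-$; hence the supremum in \eqref{addomegcond2} tends to $1$ as required. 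The same argument applied to $1/\omega$ (also elliptic and of exponential type) gives the ``strongly'' part of the definition, and \eqref{addomegcond} for every $\ep\in(0,1]$ is obtained \emph{en passant}.

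For the explicit weight $\omega(x)=Ce^{t|x|^\gamma}$ with $0\le\gamma<2$, I handle the three defining conditions directly. The subgaussian estimate follows from $|t|\,r^\gamma\le \delta r^2 + K_{\delta,t,\gamma}$ for every $\delta>0$, which holds because $r^\gamma/r^2\to 0$ as $r\to\infty$; the same applies to $1/\omega$. For the local condition \eqref{modrelax}, put $f(x)=|x|^\gamma$ and use the mean-value estimate $|f(x+y)-f(x)|\le \gamma\sup_{t\in[0,1]}|x+ty|^{\gamma-1}|y|$. Under $|x|\ge Rc$ and $|y|\le c/|x|$ one has $|x|/2\le|x+ty|\le 3|x|/2$, so regardless of the sign of $\gamma-1$ one gets $|f(x+y)-f(x)|\le C_\gamma c\,|x|^{\gamma-2}$, which is bounded because $\gamma<2$; exponentiating by $t$ yields \eqref{modrelax}. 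For the dilated-suitable condition, compute directly
\[
\frac{\omega(2^{1/2}x)e^{-\ep(1-\lambda^2)|x|^2/2}}{\omega(2^{1/2}\lambda x)}
= \exp\!\Big(t\,2^{\gamma/2}(1-\lambda^\gamma)|x|^\gamma-\tfrac{\ep}{2}(1-\lambda^2)|x|^2\Big).
\]
If $t\le 0$ the first term in the exponent is $\le 0$, so the expression is $\le 1$. If $t>0$, maximize over $r=|x|$: since $\gamma<2$ the quadratic eventually dominates and the maximum is finite; moreover the asymptotics $1-\lambda^\gamma\sim\gamma(1-\lambda)$ and $1-\lambda^2\sim 2(1-\lambda)$ as $\lambda\to 1^-$ give a maximum of order $(1-\lambda)^{2/(2-\gamma)-\gamma/(2-\gamma)}=(1-\lambda)^1\to 0$, so the supremum is uniformly bounded for $\lambda$ near $1$ (and in fact tends to $1$). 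The same calculation with $t$ replaced by $-t$ handles $1/\omega$.

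The main obstacle is the ``limit $\le 1$'' clause of the narrow condition for $V=\{0\}$: a crude moderate bound $\omega(x+y)/\omega(x)\le Cv(y)$ produces an extra multiplicative constant $C$ that refuses to go to $1$. This is precisely why I pass first to an equivalent elliptic smooth weight via Lemma \ref{smoothweights}, which removes that constant and lets the quadratic term in $\ep(1-\lambda^2)|x|^2/2$ do its work.
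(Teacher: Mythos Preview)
Your proof is correct. The definitional inclusions and the verification for $\omega(x)=Ce^{t|x|^\gamma}$ are handled as in the paper (the paper simply declares these ``immediate consequences of the definitions'' and does not spell them out).

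For the substantive inclusion $\mascP_E\subseteq\mascP_D^0$, however, you take a genuinely different and more economical route than the paper. Both arguments begin by invoking Lemma~\ref{smoothweights} to pass to an equivalent elliptic weight. The paper then introduces the minimal moderating weight $v_0$ from~\eqref{v0def}, appeals to Lemma~\ref{minmodweight} for its local Lipschitz continuity, and establishes the quantitative bound
\[
\frac{\omega(x)e^{-\ep(1-\lambda^2)|x|^2/2}}{\omega(\lambda x)} \le 1+C_0\sqrt{1-\lambda}
\]
via a three-region case split in $|x|$. You bypass $v_0$ and Lemma~\ref{minmodweight} entirely: from $|\nabla\omega_0/\omega_0|\le M$ you get the constant-free bound $\omega_0(2^{1/2}x)/\omega_0(2^{1/2}\lambda x)\le e^{M\sqrt2(1-\lambda)|x|}$, and a single elementary optimization of $M\sqrt2(1-\lambda)r-\tfrac{\ep}{2}(1-\lambda^2)r^2$ gives a maximum $M^2(1-\lambda)/(\ep(1+\lambda))\to 0$, which simultaneously yields \eqref{addomegcond} (with the equivalence constant absorbed into $C_\ep$) and the limit $\le 1$ in \eqref{addomegcond2}. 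Your approach is shorter and avoids the machinery of Lemma~\ref{minmodweight}; the paper's case analysis produces the sharper rate $\sqrt{1-\lambda}$ rather than your $1-\lambda$, but this refinement is not used anywhere downstream.
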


\par

\begin{proof}
All the inclusions, except $\mascP _E\subseteq \mascP _{D}^0$
are immediate consequences of the definitions.

\par

Therefore, let $\omega \in \mascP _E(\rr d)$, and let $v_0$ be the same as in \eqref{v0def}. By Lemma \ref{smoothweights} we may assume that $\omega$ is elliptic.

\par

We shall prove that \eqref{addomegcond} and \eqref{addomegcond2}
holds for $\omega$ and $1/\omega$, for every choice of $\ep \in (0,1]$, and since
$\mascP _E$ is a group under multiplications, it suffices to prove
these relations for $\omega$.
%It is then no restriction to assume that $\ep <1$, and since 
Since the left-hand side of \eqref{addomegcond} is equal to $1$ as $x=0$,
the result follows if we prove
\begin{equation}\label{omeglambdest}
\frac {\omega (x)e^{-\ep (1-\lambda ^2)|x|^2/2}}{\omega (\lambda x)}
\le 1+C_0\sqrt {1-\lambda},\qquad 0<\lambda <1,
\end{equation}
for some constant $C_0$ which depends on $\ep \in (0,1)$.

\par

Let $\ep \in (0,1)$ be fixed. We have $v_0(x)\le Ce^{C|x|}$ for some
$C>1$, and we choose $R>0$ such that
$$
Ce^{RC-\ep R^2/2}<1.
$$
Since $0<\lambda <1$ we have $\lambda ^2<\lambda$. Hence
Lemma \ref{minmodweight} gives
\begin{equation}\label{fracest}
\frac {\omega (x)e^{-\ep (1-\lambda ^2)|x|^2/2}}{\omega (\lambda x)} \le v_0((1-\lambda )x)
e^{-\ep (1-\lambda )|x|^2/2}.
\end{equation}
We shall estimate the right-hand side, and start by considering the case when
$|x|\le R/\sqrt {1- \lambda}$. Since $v_0(0)=1$ and $(1-\lambda )x$ stays bounded,
the right-hand side of \eqref{fracest} can be estimated by
\begin{multline*}
v_0((1-\lambda )x)e^{-\ep (1-\lambda )|x|^2/2}
\le |v_0((1-\lambda )x) -v_0(0)|e^{-\ep (1-\lambda )|x|^2/2} + e^{-\ep (1-\lambda )|x|^2/2}.
\\[1ex]
\le C_1(1-\lambda )|x|e^{-\ep(1-\lambda )|x|^2/2}+1,
\end{multline*}
where the last inequality follows from the fact that $v_0$ is locally Lipschitz
continuous, in view of Lemma \ref{minmodweight}. Since
$$
(1-\lambda )|x|e^{-\ep (1-\lambda )|x|^2/2}\le R\sqrt {1-\lambda }
$$
as $|x|\le R/\sqrt {1-\lambda}$, we obtain
\begin{equation*}%\label{inclcase1}
v_0((1-\lambda )x)e^{-\ep (1-\lambda )|x|^2/2}\le 1+C_1R\sqrt {1-\lambda },
\qquad |x|\le R/\sqrt {1-\lambda},
\end{equation*}
and \eqref{omeglambdest} follows in this case.

\par

Next we consider the case $R/\sqrt {1-\lambda}\le |x|\le R/(\ep (1-\lambda ))$.
Then we have
$$
v_0((1-\lambda )x)\le Ce^{C(1-\lambda )|x|}\le Ce^{RC/\ep }
$$
and
$$
 e^{-(1-\lambda )|x|^2/2}\le e^{-R^2/2},
$$
which gives
\begin{equation}\label{inclcase2}
v_0((1-\lambda )x)e^{-\ep (1-\lambda )|x|^2/2}\le Ce^{RC/\ep -R^2/2}
=C^{1-1/\ep}\Big ( Ce^{RC-\ep R^2/2}\Big )^{1/\ep}<1,
\end{equation}
where the last inequality follows from our choice of $R$, together
with the fact that $C>1$. This proves \eqref{omeglambdest} in this case.

\par

Finally, we consider the case $R/(\ep (1-\lambda ))\le |x|$. From the assumptions on $R$,
it follows that $R>2C/\ep$, which implies that
\begin{equation}\label{CxRest}
C-|x|/2<C-R/(2\ep)<C-\ep R/2 <0.
\end{equation}
Then
\begin{multline*}
v_0((1-\lambda )x)e^{-(1-\lambda ^2)|x|^2/2}\le
Ce^{C(1-\lambda )|x|}e^{-(1-\lambda )|x|^2/2}
\\[1ex]
\le Ce^{(1-\lambda )|x|(C-|x|/2)}\le Ce^{R(C-\ep R/2)/\ep}<1,
\end{multline*}
where the last inequalities follows from \eqref{inclcase2} and \eqref{CxRest}.
This gives \eqref{omeglambdest} also for $R/(\ep (1-\lambda ))\le |x|$, and the proof is complete.
\end{proof}

\par

In most of our investigations, the pairs of weights and mixed norm
spaces fulfill the conditions in the following definition.

\par

\begin{defn}\label{feasiblpair}
Let $\mascB =L^p(V)$ be a mixed norm space on $\rr d$ such that \eqref{Vdirsum}
and \eqref{Vparrays} hold for some $p\in 
[1,\infty ]^n$, and let $\omega \in \mascP _Q^0(\rr d)$. Then the pair $(\mascB ,\omega )$
is called \emph{feasible} (\emph{strongly feasible}) on $\rr d$
if one of the following conditions hold:
\begin{enumerate}
\item $\nu _1(\mascB )>1$ and $\omega$ is dilated suitable with respect to $\ep =1$
($\omega$ is strongly dilated suitable);

\vrum

\item $\nu _2(\mascB )<\infty$, and $\omega$ is dilated suitable with respect to
$\ep =1$ ($\omega$ is strongly dilated suitable);

\vrum

\item $p_1=\infty$, $1<p_2,\dots ,p_{n-1}<\infty$, $p_n=1$, and $\omega$ is narrowly dilated 
suitable with respect to $\ep =1$ and $V=\{ 0\}$ ($\omega$ is strongly narrowly dilated suitable
with respect to $V=\{ 0\}$).
\end{enumerate}
\end{defn}

\par

In some situations it is convenient to separate the case (3) in Definition
\ref{feasiblpair} from the other ones. Therefore we say
that the pair $(\mascB ,\omega )$ is \emph{narrowly feasible}
(\emph{strongly narrowly feasible}) if it is feasible and satisfies (3) in
Definition \ref{feasiblpair}.

\par

We note that if $\mascB $ fulfills (1) or (2) in Definition \ref{feasiblpair}
and $\omega \in \mascP _E(\rr d)$, then the pair $(\mascB ,
\omega )$ is feasible. If instead $\mascB $ fulfills (3), then Proposition
\ref{weightfamincl} shows that the pair $(\mascB ,
\omega )$ is narrowly feasible.

\par

The following result related to Lemma \ref{smoothweights} shows that for
any weight in $\mathscr P_G(\rr d)$, it is always possible to find a smooth
equivalent weight.

\par

\begin{prop}\label{PGsmoothness}
Let $\omega$  be a weight on $\rr d$ such that \eqref{modrelax} holds.
Then $\omega \in \mathscr P_G(\rr d)$. Furthermore, 
there is a weight $\omega _0\in \mathscr P_G(\rr d) \cap C^\infty (\rr d)$
such that the following is  true:
\begin{enumerate}
\item for every multi-index $\alpha$, there is a constant $C_\alpha$ such that
$$
|\partial ^\alpha \omega _0(x)|\le C_\alpha \omega _0(x)\eabs x^{|\alpha |} \text ;
$$

\vrum

\item \eqref{weightequiv} holds for some constant $C$;

\vrum

\item if in addition $\omega$ is rotation invariant, then $\omega _0$ is rotation invariant.
\end{enumerate}
\end{prop}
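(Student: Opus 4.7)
The strategy splits into two essentially independent parts: first verify that a weight obeying \eqref{modrelax} already satisfies the Gaussian bounds \eqref{Gaussest}, hence belongs to $\mathscr P_G(\rr d)$; then mollify $\omega$ against a bump whose width shrinks like $1/\eabs x$, producing a smooth equivalent weight with the required derivative estimates, preserving rotation invariance when present.

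For the first part, fix $x_0$ with $|x_0|=Rc$ and, for any $x_1$ with $|x_1|\ge Rc$, connect $x_0$ to $x_1$ by a polygonal path with vertices $x_0,x_1,\ldots,x_N=x_1$ and increments $y_k=x_{k+1}-x_k$ satisfying $|y_k|\le c/|x_k|$. I organise the path in two phases: first a tangential sweep on the sphere $\{|x|=Rc\}$ that rotates $x_0$ into the ray through $x_1$, using steps of length $1/R=c/|x_k|$ and requiring at most $\pi R^2 c$ steps; then a radial outward phase with steps of length $c/(2|x_k|)$, which increases $|x_k|^2$ by a fixed amount of order $c$ per step, so that $\mathcal O((|x_1|^2-(Rc)^2)/c)$ steps suffice. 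At each step \eqref{modrelax} applies and yields $\omega(x_{k+1})\le C\omega(x_k)$; telescoping gives $\omega(x_1)\le C_0 e^{c'|x_1|^2}\omega(x_0)$. The lower bound is obtained by reversing the path, and for $|x_1|<Rc$ the conclusion is immediate from $\omega,1/\omega\in L^\infty_{loc}$. This places $\omega$ in $\mathscr P_G(\rr d)$.

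For the smooth construction, pick a radial $\psi_0\in C_c^\infty(\rr d)$ with $\supp\psi_0\subseteq B(0,1)$, $\psi_0\ge 0$, $\int\psi_0=1$, and fix $c_0\in(0,c)$. Define
\begin{equation*}
\psi_x(y)=\Bigl(\tfrac{\eabs x}{c_0}\Bigr)^d\psi_0\bigl(\tfrac{\eabs x}{c_0}\,y\bigr),\qquad
\omega_0(x)=\int_{\rr d}\omega(x+y)\,\psi_x(y)\,dy.
\end{equation*}
Since $\supp\psi_x\subseteq\{|y|\le c_0/\eabs x\}$ and $c_0/\eabs x\le c/|x|$ whenever $|x|\ge Rc$, the estimate \eqref{modrelax} gives $C^{-1}\omega(x)\le\omega(x+y)\le C\omega(x)$ on that support; integrating against the unit-mass density $\psi_x$ yields $\omega_0(x)\asymp\omega(x)$ in this regime, while for $|x|<Rc$ both $\omega$ and the averaged $\omega_0$ are bounded above and below by positive constants, so \eqref{weightequiv} holds globally. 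Rewriting $\omega_0(x)=\int\omega(z)\psi_x(z-x)\,dz$ and differentiating under the integral, each $x$-derivative of $\psi_x(z-x)$ produces a factor of order $\eabs x$, arising from the scaling $\eabs x$ that appears in both the normalisation and the argument of $\psi_0$; since on the shrinking ball $|z-x|\le c_0/\eabs x$ we have $\omega(z)\asymp\omega(x)\asymp\omega_0(x)$, iterating yields $|\partial^\alpha\omega_0(x)|\le C_\alpha\eabs x^{|\alpha|}\omega_0(x)$, which is (1). Rotation invariance (3) is immediate from the radial choice of $\psi_0$: then $\psi_{Rx}(Ry)=\psi_x(y)$ for every rotation $R$, and the substitution $y\mapsto R^{-1}y$ in the integral defining $\omega_0(Rx)$ recovers $\omega_0(x)$. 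Finally, $\omega_0\in\mathscr P_G(\rr d)$ because both \eqref{modrelax} and \eqref{Gaussest} transfer from $\omega$ to any equivalent weight up to an adjustment of constants.

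The main obstacle is the discretised path argument: one must design the path so that \eqref{modrelax} is applicable at every step and count the steps accurately enough to obtain the Gaussian exponent $|x_1|^2$ in \eqref{Gaussest} rather than, say, $|x_1|^3$. The observation that makes the rate quadratic is that radial steps of length $c/(2|x|)$ increase $|x|^2$ by a constant amount per step, matching exactly the quadratic growth rate in the target estimate. The derivative estimate in (1), by contrast, is a routine bookkeeping exercise once the $\eabs x$-dependent scaling of the mollifier is in place.
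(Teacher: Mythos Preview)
Your argument is correct. The mollification step is essentially the paper's construction: the paper writes
\[
\omega_0(x)=\psi_0(x)+(1-\psi_0(x))\,|x|^d\int\psi(|x|(x-y))\,\omega(y)\,dy
\]
with an explicit cutoff $\psi_0$ near the origin, whereas you use the scale $\eabs x$ uniformly and thereby avoid the cutoff; this is a cosmetic difference and the derivative bookkeeping is identical.

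Where your route genuinely diverges is in the proof of the Gaussian bounds \eqref{Gaussest}. You argue directly on $\omega$ by telescoping \eqref{modrelax} along a polygonal path, carefully designing radial steps of length $c/(2|x_k|)$ so that $|x_k|^2$ increases by a fixed amount per step, which is exactly what forces the exponent to be quadratic in $|x_1|$. The paper instead reverses the logical order: it first constructs the smooth $\omega_0$, establishes the derivative estimate (1), and only then deduces \eqref{Gaussest} from the differential inequalities $|\psi'(t)|\le Ct\,\psi(t)$ for $\psi(t)=\omega_0(tx_0)$, $|x_0|=1$, which integrate immediately to $C_1e^{-C(t^2-1)/2}\le\psi(t)\le C_2e^{C(t^2-1)/2}$. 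The paper's approach is shorter and more elegant once (1) is in hand, and it makes transparent why the exponent is quadratic (the factor $\eabs x$ in (1) becomes the $t$ in the ODE). Your discrete path argument, on the other hand, has the virtue of proving $\omega\in\mathscr P_G(\rr d)$ without first passing to a smooth representative, which is conceptually more direct and would survive in settings where one does not have, or does not want, a differentiable equivalent weight.
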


\par

In the proof and later on we let $B_r(a)$ denote an open ball in $\rr d$ or $\cc d$
with radius $r\ge 0$ and center at $a$ in $\rr d$ or $\cc d$, respectively.

\par

\begin{proof}
Let $B=B_{c}(0)$, where $c$ be the same as in \eqref{modrelax}, and let $0\le \psi \in
C_0^\infty (B)$ be rotation invariant such that $\int \psi 
\, dx=1$. Also let $\psi _0\in C_0^\infty (3B)$ be rotation
invariant such that $0\le \psi _0\le 1$ and $\psi _0(x)=1$ when $x\in 2B$.
Then it follows by straight-forward computations that (1)--(3) are fulfilled when
$$
\omega _0(x) \equiv \psi _0(x) + (1-\psi _0(x))|x|^d\int \psi (|x|(x-y))\omega (y)\, dy .
$$

\par

It remains to prove that $\omega _0,\omega \in \mascP _G(\rr d)$, and
then it suffices to  prove that $\omega _0$ satisfies \eqref{Gaussest}.
Let $1\le t\in \mathbf R$, $x _0\in \rr d$ be fixed such that $|x _0| 
= 1$, and let
$$
C_1 =\inf _{|x|=1}\omega _0(x)\quad \text{and}\quad C_2=\sup _{|x|=1}\omega _0(x).
$$
Then (1) implies that
$$
\psi (t)=\psi _{x_0}(t) \equiv \omega _0(tx _0),\qquad t\ge 1,
$$
satisfies
$$
\psi '(t)-Ct\psi (t) \le 0\quad \text{and}\quad \psi '(t)+Ct\psi (t) \ge 0,
\qquad 0<C_1\le \psi (1)\le C_2,
$$
for some constant $C>0$ which is independent of $x_0$. This implies that
$$
C_1e^{-C(t^2-1)/2}\le \psi (t)\le C_2e^{C(t^2-1)/2},\qquad t\ge 1.
$$
Since $C_1$ and $C_2$ are independent of the choice of $x_0$ on the unit sphere,
\eqref{Gaussest} follows from the latter inequalities. Hence $\omega _0\in  \mascP _G$, and the proof is complete.
\end{proof}

\par

We finish this section by proving the following result on existence
rotation invariant weights in our families of weights.

\par

\begin{prop}\label{rotinvprop}
Let
%$V\subseteq \rr d$ be a vector space,
%and let
$\mathcal P$ be equal to
\begin{alignat*}{3}
&\mascP (\rr d),&\quad &\mascP _E(\rr d),&\quad &\mascP _{D}^0(\rr d),
\quad \mascP _{G}^0(\rr d),
\\[2ex]
&\mascP _{D}(\rr d),&\quad &\mascP _{Q}^0(\rr d),&\quad &\mascP _{G}(\rr d)
\quad \text{or}\quad \mascP _{Q}(\rr d).
\end{alignat*}
If $\omega \in \mathcal P$, then there are rotation invariant weights $\omega _1,\omega _2\in \mathcal P$ such that
$$
C^{-1}\omega _1\le \omega \le C\omega _2 .
$$
\end{prop}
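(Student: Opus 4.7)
The plan is to take $\omega_1$ and $\omega_2$ as the infimum and supremum of $\omega$ over each origin-centred sphere, that is,
\begin{equation*}
\omega_1(x) = \inf_{R\in O(d)}\omega(Rx), \qquad \omega_2(x) = \sup_{R\in O(d)}\omega(Rx).
\end{equation*}
Both are rotation invariant by construction, and since the orthogonal group $O(d)$ is compact and $\omega$ is positive and locally bounded, the infimum is attained and strictly positive while the supremum is attained and finite; hence $\omega_1,\omega_2$ are weights on $\rr d$ in the sense of Subsection \ref{subsec1.1}. The trivial pointwise inequality $\omega_1(x)\le \omega(x)\le \omega_2(x)$ gives the required sandwich with $C=1$, so only the inclusions $\omega_1,\omega_2\in\mathcal P$ remain to check.

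For every defining condition of each class in the list the multiplicative constants depend only on $|x|$ (and possibly $|y|$), never on direction. The uniform strategy is to apply the defining inequality to $(Rx,Ry)$ for each $R\in O(d)$, which preserves $|Rx|=|x|$ and $|Ry|=|y|$ and hence leaves every occurrence of $v(y)$, $e^{\ep(1-\lambda^2)|x|^2/2}$ or similar factor unchanged, and then to take $\sup_R$ (respectively $\inf_R$) on both sides to propagate the estimate to $\omega_2$ (respectively $\omega_1$). This handles the Gaussian bounds \eqref{Gaussest} for all the $\mascP_G$- and $\mascP_Q$-variants, the $v$-moderation condition for $\mascP$ and $\mascP_E$ after replacing $v$ by $\tilde v(y)=\sup_{|z|=|y|} v(z)$ (which still lies in the same submultiplicative class), the local condition \eqref{modrelax} for the $\mascP_D$- and $\mascP_G$-families, and the dilation condition \eqref{addomegcond} for $\mascP_D$ and $\mascP_D^0$. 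Closure of each class under reciprocals, together with the identity $1/\omega_2=(1/\omega)_1$, takes care of the strong variants that require the condition to hold for both $\omega$ and $1/\omega$.

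The genuinely delicate case, and the main obstacle, is the symmetric product condition \eqref{modrelax}$'$ defining $\mascP_Q^0$ and $\mascP_Q$. Applying \eqref{modrelax}$'$ to $(Rx,Ry)$ only yields
\begin{equation*}
\omega(R(x+y))\omega(R(x-y))\le C\omega(Rx)^2,
\end{equation*}
so $\sup_R$ on the left controls only the diagonal $R_1=R_2$ quantity, whereas
\begin{equation*}
\omega_2(x+y)\omega_2(x-y)=\sup_{R_1,R_2}\omega(R_1(x+y))\omega(R_2(x-y))
\end{equation*}
with independent $R_1,R_2$ is in general larger. My plan to bridge this is to invoke the enlargement of $\mascP_Q^0$ from Remark \ref{extentionPQ0}, in which $\pm I$ are replaced by arbitrary contractions $T_1,\dots,T_N$ with $\nmm{T_j}\le 1$ and all results of the paper remain valid; letting the $T_j$'s range over a suitable finite family of rotations in $O(d)$, the diagonal product estimate obtained from the orbit argument matches \eqref{modrelax}$''$ directly, and the corresponding bound for $\omega_1$ follows by passing to $1/\omega$ via closure under reciprocals. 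As a backup one can first replace $\omega$ by a smooth rotation invariant equivalent in the spirit of Proposition \ref{PGsmoothness} and analyse the radial profile $\tilde\omega_2(r)=\sup_{|z|=r}\omega(z)$ directly, exploiting \eqref{modrelax}$'$ along radial rays to estimate $\tilde\omega_2(r\pm\delta)$ when $\delta\ll 1/r$.
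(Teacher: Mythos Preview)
Your sup/inf construction is exactly what the paper does for $\mascP_G^0$, and it also works for $\mascP$, $\mascP_E$, $\mascP_G$ (the paper instead just picks explicit polynomial, exponential or Gaussian weights in those cases, which is simpler but equivalent). The real problem is with the four classes built on \eqref{modrelax}$'$, namely $\mascP_Q$, $\mascP_Q^0$, $\mascP_D$ and $\mascP_D^0$. First, note that the local condition for the $\mascP_D$-families is \eqref{modrelax}$'$, not \eqref{modrelax}, since by Definition \ref{defdilsuit} these are subclasses of $\mascP_Q^0$; so your ``uniform strategy'' paragraph misidentifies the relevant hypothesis there.

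Your proposed fix via Remark \ref{extentionPQ0} does not close the gap. The obstruction is not the \emph{direction} of the perturbation but the \emph{center}: in
\[
\omega_2(x+y)\,\omega_2(x-y)=\omega(R_1(x+y))\,\omega(R_2(x-y))=\omega(R_1x+R_1y)\,\omega(R_2x-R_2y),
\]
the two base points $R_1x$ and $R_2x$ lie on the same sphere but can be, say, antipodal, so no single-center product hypothesis---neither \eqref{modrelax}$'$ nor the extended \eqref{modrelax}$''$, both of which fix one $x$ and vary only the increments---can control this. Your backup via Proposition \ref{PGsmoothness} is also blocked: that smoothing result is stated for $\mascP_G$, and the paper explicitly notes (Remark \ref{extentionPQ0}) that it is unknown whether it extends to $\mascP_Q^0$.

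The paper sidesteps the issue entirely. For $\mascP_Q$ (and $\mascP_G$) it simply takes $\omega_1=e^{-c|\cdot|^2}$, $\omega_2=e^{c|\cdot|^2}$, which works because \eqref{Gaussest} already gives the sandwich. For $\mascP_Q^0$ it invokes Lemma \ref{PQGweights}, which uses only the subgaussian growth of $\omega$ (not \eqref{modrelax}$'$) to \emph{construct} a new rotation-invariant majorant in $\mascP_Q^0$ by smoothing $\log\omega/|x|^2$; the group property under multiplication then gives the minorant. For $\mascP_D$ and $\mascP_D^0$ the same lemma is used, together with the observation that the constructed weight inherits (strong) dilated suitability. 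This construction is the missing ingredient in your argument.
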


\par

For the case $\mathcal P = \mascP _{Q}^0$ we need the following lemma.

\par

\begin{lemma}\label{PQGweights}
Let $f\in L^\infty _{loc}(\rr d)$ be such that for each $\ep >0$, there is a
constant $C_\ep >0$ such that
$$
|f(x)|\le C_\ep e^{\ep |x|^2}.
$$
Then there is a rotation invariant $\omega \in \mascP _{Q}^0(\rr d)$
such that $|f|\le \omega$.
\end{lemma}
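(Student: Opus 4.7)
The plan is to produce $\omega$ in the rotation invariant form $\omega (x)=e^{\phi (|x|^2)}$, where $\phi :[0,\infty )\to [0,\infty )$ is smooth, nondecreasing, with $\phi (t)=o(t)$ at infinity and $\phi '$ bounded on $[0,\infty )$. Once such a $\phi$ is in hand, membership of $\omega$ in $\mathscr P_Q^0(\rr d)$ is essentially immediate: $\phi \ge 0$ yields $\omega \ge 1\ge e^{-c|x|^2}$, $\phi (t)=o(t)$ yields $\omega (x)\le C_ce^{c|x|^2}$ for every $c>0$ so that \eqref{Gaussest} holds, and the boundedness of $\phi '$ will give the symmetric dilated moderate estimate \eqref{modrelax}$'$ via a one-variable mean-value argument.

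To construct $\phi$ I would first compress the dominance problem to one dimension by setting
\[
H(r)=\max \Big (1,\sup _{|x|\le r}|f(x)|\Big ),\qquad G(t)=\log H(\sqrt t),
\]
so that $G\ge 0$ is nondecreasing; the hypothesis $|f(x)|\le C_\ep e^{\ep |x|^2}$ yields $G(t)\le \ep t+\log C_\ep$ for each $\ep >0$, hence $G(t)/t\to 0$. Any $\phi \ge G$ will then produce $\omega (x)\ge H(|x|)\ge |f(x)|$. Next I would pass to the least concave majorant $\overline G(t)=\inf \{at+b:a\ge 0,\ as+b\ge G(s)\ \text{for all}\ s\ge 0\}$. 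This $\overline G$ is nondecreasing and concave; the decay $G(t)/t\to 0$ together with the finiteness of $b_a=\sup _s(G(s)-as)$ for every $a>0$ gives $\overline G(t)/t\to 0$ (let $t\to \infty$, then $a\to 0^+$), and concavity combined with this decay forces the right derivative $\overline G'$ to be nonincreasing with $\overline G'(t)\to 0$. Finally I would mollify by
\[
\phi (t) = \int \overline G(t+2-u)\rho (u)\, du,\qquad t\ge 0,
\]
with $\rho \in C_0^\infty ((-1,1))$ nonnegative of integral one. The shift $+2$ confines the argument of $\overline G$ to $[t+1,t+3]\subseteq [1,\infty )$, keeping us away from any singularity of $\overline G'$ at the origin, so $\phi \in C^\infty ([0,\infty ))$, and one obtains $\phi (t)\ge \overline G(t+1)\ge G(t)$, $\phi (t)\le \overline G(t+3)\le \overline G(t)+3\overline G'(t^+)=o(t)$, and $\phi '(t)=\int \overline G'(t+2-u)\rho (u)\, du\le \overline G'(1^+)<\infty$ uniformly in $t\ge 0$.

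With $\omega (x)=e^{\phi (|x|^2)}$ it remains only to verify \eqref{modrelax}$'$. Take $c=1$ and $R=2$, and suppose $2\le |x|\le 1/|y|$; set $t=|x|^2\ge 4$ and $\delta _\pm =\pm 2\scal xy+|y|^2$, so $|x\pm y|^2=t+\delta _\pm$. The constraints force $|\delta _\pm |\le 2+\tfrac 14$ and $t+\delta _\pm \ge \tfrac 74\ge 1$, hence
\[
|\phi (t+\delta _+)+\phi (t+\delta _-)-2\phi (t)|\le (|\delta _+|+|\delta _-|)\sup _{s\ge 1}\phi '(s)\le \tfrac 92\overline G'(1^+),
\]
and exponentiating gives \eqref{modrelax}$'$ with $C=e^{9\overline G'(1^+)/2}$. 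The main obstacle I anticipate is producing $\phi$ that simultaneously dominates $\log |f|$ and has bounded derivative; a naive Lipschitz envelope of $G$ would tend to introduce spurious linear growth in $t$, but routing through the concave majorant $\overline G$ converts the subquadratic hypothesis $G(t)=o(t)$ into the sharp derivative bound $\overline G'(t)\to 0$, which is exactly what makes the scheme close.
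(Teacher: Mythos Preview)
Your argument is correct and complete. It is, however, genuinely different from the paper's construction. The paper writes $\omega(x)=e^{h_1(x)|x|^2}$, where $h_1$ is obtained by mollifying in $\rr d$ a rotation-invariant envelope of $h_0(x)=\log(|f(x)|+e)/|x|^2$; the condition \eqref{modrelax}$'$ is then verified via a second-order Taylor estimate $|h_1(x+y)+h_1(x-y)-2h_1(x)|\le C|y|^2\le C/|x|^2$, the boundedness of all derivatives of $h_1$ being inherited from the mollifier. You instead reduce to one real variable by writing $\omega(x)=e^{\phi(|x|^2)}$, pass through the least concave majorant to turn the sublinear growth $G(t)=o(t)$ into a global derivative bound $\overline G'\le \overline G'(1^+)$, and check \eqref{modrelax}$'$ by a first-order mean-value argument on $\phi$. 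Your route is arguably cleaner in that the rotation invariance is automatic and only one-dimensional analysis is needed; the paper's approach avoids the concave-majorant machinery but has to mollify in $\rr d$ and control cross terms coming from $|x\pm y|^2$. Both produce a smooth rotation-invariant weight dominating $|f|$ and satisfying the required estimates.
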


\par

\begin{proof}
Let
\begin{equation}\label{Gdef}
g\equiv |f|+e\quad \text{and}\quad h_0(x)\equiv \frac {\log g(x)}{|x|^2}.
\end{equation}
Then
\begin{equation}\label{Glim}
\lim _{|x|\to \infty}h(x)=0,
\end{equation}
for $h=h_0$ due to the assumptions.

\par

Now set
$$
h_1(x) =\int \fy (x-y)\left [ \sup _{|y_0|\ge |y|-1}h_0(y_0)\right ]\, dy,\quad |x|\ge 2,
$$
where $0\le \fy \in C_0^\infty (\rr {d})$ is rotation invariant, supported in the unit ball, and 
satisfies $\nm \fy {L^1}=1$. Then it follows that $h_1$ is rotation invariant, smooth and larger than 
$h_0$ in $\Omega _2$, where $\Omega _r \equiv \rr d \setminus B_r(0)$.
Furthermore, \eqref{Glim} holds for 
$h=h_1$, and since $h_0$ is bounded in $\Omega _1$, it follows that $h_1^{(\alpha )}$ is bounded 
in $\Omega _2$ for every multi-index $\alpha$.

\par

Hence, if $|y|\le 1/|x|$ with $x\in \Omega _3$, Taylor's formula gives
\begin{equation*}%\label{Gtayest}
|h_1(x+y)+h_1(x-y)-2h_1(x)|\le C|y|^2\le C/|x|^2.
\end{equation*}
By again using the fact that $h_1$ is bounded in $\Omega _2$, it follows that
$$
\omega (x) \equiv 
\begin{cases}
\sup _{|y|\le 3}|g(y)| & \text{when}\  |x|\le 3,
\\[1ex]
e^{h_1(x)|x|^2} & \text{when}\  |x|\ge 3,
\end{cases}
$$
is rotation invariant, larger than $g$ and fulfills \eqref{modrelax}$'$. This together with \eqref{Glim} shows that $\omega \in \mascP _{Q}^0(\rr {d})$, and the result follows.
\end{proof}

\par

\begin{proof}[Proof of Proposition \ref{rotinvprop}]
If $\mathcal P=\mascP _Q^0$, then the result is an immediate
consequence of Lemma \ref{PQGweights} and the fact
that $\mascP _{Q}^0$ is a group under multiplications.
By straight-forward computations it also follows that
$\omega$ in Lemma \ref{PQGweights} is dilated suitable
or strongly dilated suitable when this is true for $f$. This proves the
statement for $\mathcal P=\mathscr P_D(\rr d)$ and for
$\mathcal P=\mathscr P_D^0(\rr d)$.

\par

For $\mathcal P=\mascP $ we may choose $\omega _j=C_j\eabs \cdo ^{N_j}$ for appropriate constants $C_j$
and $N_j$, and if $\mathcal P=\mascP  _{E}$ we may choose $\omega _j=C_je^{c_j |\cdo |^{s_j}}$ for
appropriate constants $0<s_j\le 1$, $c_j$ and $C_j$. If instead $\mathcal P=\mascP  _{G}$ or
$\mathcal P=\mascP  _{Q}$, then similar is true after the condition on $s_j$ is replaced by $s_j=2$.

\par

It remains to consider the case $\mathcal P= \mascP _G^0$.
Therefore, assume that $\omega \in \mascP _G^0$, and set
$$
\omega _2(x) = \sup _{|z|=|x|}\omega (z)\quad \text{and}\quad \omega _1(x) = \inf _{|z|=|x|}\omega (z).
$$
Now choose $c$ and $C$ such that \eqref{modrelax} holds, and let $x,y$ be such that $|x|\ge 2c$
and $|y|\le c/|x|$. Then for each $\ep >0$, there exists $z\in \rr d$ such that $|z|=|x|$ and
$$
\omega _2(x)\le \omega (z)+\ep .
$$
By \eqref{modrelax} we get
\begin{multline*}
\omega _2(x) \le \omega (z)+\ep \le C\inf _{|y_0|\le c/|x|}\omega (z+y_0)+\ep 
\\[1ex]
\le C\inf _{|y_0|\le c/|x|}\omega _2(x+y_0)+\ep \le C\omega _2(x+y)+\ep .
\end{multline*}
This proves that $\omega _2(x)\le C\omega _2(x+y)$. In the same way it follows that
$\omega _2(x+y)\le C\omega _2(x)$. Since $\omega _2$ fulfills similar types of estimates
as $\omega$, it follows that $\omega _2$ is subgaussian. In the same way it follows
that $\omega _1$ is subgaussian, and the result follows for $\mathcal P=\mascP _{G}^0$.
%
%\par
%
%The case $\mathcal P=\mascP _{D,V}^0$ follows by similar arguments
%and is left for the reader. The proof is complete.
\end{proof}

\par

\subsection{Examples} Next we give some examples on weights in $\mascP _D^0(\rr d)$.  
First we note that any weight of the form $\sigma _s$ and $\omega _1$ in Example \ref
{weightexampl1} belongs to $\mathscr P(\rr d)$ in view of Proposition
\ref{weightfamincl}. In order to give other examples it is convenient to
consider corresponding logarithmic 
conditions on those weights. We note that if $\omega$ is a weight on
$\rr d$ and $\fy (x) =\log \omega (x)$, then \eqref{Gaussest} is equivalent to
\begin{equation}\label{equivGauss}
|\fy (x)|\le C+\ep |x|^2,
\end{equation}
for some positive constants $C$ and $\ep$. The conditions \eqref{modrelax} and
\eqref{modrelax}$'$ are the same 
as
\begin{equation}\label{equivmodrel}
|\fy (x+y)-\fy (x)|\le C,\qquad |x|\ge 2c,\quad |y|\le c/|x| ,
\end{equation}
and
\begin{equation}\tag*{(\ref{equivmodrel})$'$}
|\fy (x+y)+\fy (x-y)-2\fy (x)|\le C,\qquad |x|\ge 2c,\quad |y|\le c/|x| ,
\end{equation}
respectively, for some positive constants $c$ and $C$.
Finally, $\omega$ and $1/\omega$ are dilated suitable 
with respect to $\ep \in (0,1]$, if and only if there are constants $C_\ep >0$ and $
\theta _\ep \in (0,1)$ such that
\begin{equation}\label{equivaddomcond}
|\fy (\lambda x)-\fy (x)|\le C_\ep +\ep (1-\lambda ^2)|x|^2,\quad 1-\theta _\ep<\lambda <1,
\end{equation}
and $\omega$ and $1/\omega$ are narrowly dilated suitable 
with respect to $\ep \in (0,1]$ and $V=\{ 0\}$ when
\begin{equation}\label{equivaddomcond2}
\limsup _{\lambda \to 1-}\Big (|\fy (\lambda x)-\fy (x)| -\ep (1-\lambda ^2)|x|^2\Big )=0.
\end{equation}

\par

In particular, the following lemma is an immediate consequence of the definitions.

\par

\begin{lemma}\label{logweightslemma}
Let $\omega$ be a weight on $\rr d$, and let $\fy (x)=\log \omega (x)$. Then the following is 
true:
\begin{enumerate}
\item $\omega \in \mathscr P_G^0(\rr d)$, if and only if \eqref{equivmodrel}
holds for some positive constants $c$ and $C$, and for every $\ep >0$,
there is a positive constant $C$ such that \eqref{equivGauss} holds:

\vrum

\item $\omega \in \mathscr P_Q^0(\rr d)$, if and only if \eqref{equivmodrel}$'$
holds for some positive constants $c$ and $C$, and for every $\ep >0$,
there is a positive constant $C$ such that \eqref{equivGauss} holds:

\vrum

\item $\omega \in \mathscr P_D^0(\rr d)$, if and only if \eqref{equivmodrel}$'$
holds for some positive constants $c$ and $C$, and for every $\ep >0$,
there are positive constants $C$, $C_\ep$ and $\theta _\ep \in (0,1)$
such that \eqref{equivGauss}, \eqref{equivaddomcond} and
\eqref{equivaddomcond2} hold.
\end{enumerate}
\end{lemma}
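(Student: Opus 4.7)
The plan is to prove all three equivalences by the same simple device: take logarithms. Since $\omega$ is a weight, it is positive and belongs to $L^\infty_{loc}$ with a positive lower bound on compacta, so $\fy=\log\omega$ is a well-defined, locally bounded, real-valued function on $\rr d$. Every inequality appearing in Definitions \ref{defweights} and \ref{defdilsuit} is a multiplicative inequality between positive quantities, so taking logarithms is an equivalence.

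For item (1), applying $\log$ to \eqref{modrelax} turns $C^{-1}\omega(x)\le\omega(x+y)\le C\omega(x)$ into $|\fy(x+y)-\fy(x)|\le\log C$, which is \eqref{equivmodrel} after relabeling the constant. Similarly, \eqref{Gaussest} becomes $-\log C-c|x|^2\le\fy(x)\le\log C+c|x|^2$, i.e. $|\fy(x)|\le\log C+c|x|^2$; the subgaussian quantifier ``for every $c>0$ there is $C>0$'' is exactly \eqref{equivGauss} after renaming $c$ as $\ep$. For item (2), the same logarithm argument applied to \eqref{modrelax}$'$ turns $C^{-1}\omega(x)^2\le\omega(x+y)\omega(x-y)\le C\omega(x)^2$ into $|\fy(x+y)+\fy(x-y)-2\fy(x)|\le\log C$, which is \eqref{equivmodrel}$'$; the Gaussian bound is handled as in (1).

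For item (3), the extra input is strong dilated suitability, i.e. the condition \eqref{addomegcond} for both $\omega$ and $1/\omega$ and for every $\ep\in(0,1]$. Making the change of variable $y=2^{1/2}x$ in \eqref{addomegcond} (so that $|x|^2=|y|^2/2$) and taking logarithms yields
\[
\fy(y)-\fy(\lambda y)\le\log C_\ep+\tfrac{\ep}{4}(1-\lambda^2)|y|^2,
\qquad 1-\theta_\ep<\lambda<1.
\]
Applying the same step to $1/\omega$ (i.e.\ to $-\fy$) gives the reversed inequality, and combining them produces $|\fy(\lambda y)-\fy(y)|\le C_\ep+\tfrac{\ep}{4}(1-\lambda^2)|y|^2$. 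The factor $1/4$ is harmlessly absorbed into the arbitrary parameter $\ep$, which yields \eqref{equivaddomcond}. The condition \eqref{addomegcond2} (with $V=\{0\}$, so the supremum over $V^\bot$ is trivially the single value at $x$) transforms in exactly the same manner into \eqref{equivaddomcond2}, the limit superior replacing the limit because we now have a two-sided bound.

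The argument is thus entirely a matter of bookkeeping: positivity of $\omega$ makes taking logarithms a bijective move on the relevant inequalities, and the only nontrivial point to verify is that the dilation factor $2^{1/2}$ in \eqref{addomegcond}--\eqref{addomegcond2} only produces a multiplicative constant in front of $\ep$, which is absorbed because $\ep>0$ ranges over all positive reals. There is no genuine obstacle; the potentially confusing step is precisely this rescaling, and once it is observed the three equivalences follow simultaneously from the definitions.
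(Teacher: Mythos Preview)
Your approach---taking logarithms to convert multiplicative inequalities on $\omega$ into additive ones on $\fy$---is exactly what the paper has in mind; indeed the paper offers no proof at all, stating only that the lemma ``is an immediate consequence of the definitions.'' Your bookkeeping for items (1) and (2) and for \eqref{equivaddomcond} in item (3) is correct.

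One slip to flag in item (3): you write that ``with $V=\{0\}$, the supremum over $V^\bot$ is trivially the single value at $x$.'' This is backwards. If $V=\{0\}$ then $V^\bot=\rr d$, so the supremum in \eqref{addomegcond2} runs over \emph{all} $x\in\rr d$, not a single point. Thus after the substitution $y=2^{1/2}x$ and taking logarithms for both $\omega$ and $1/\omega$, the condition one actually obtains is
\[
\limsup_{\lambda\to 1-}\ \sup_{y\in\rr d}\Big(|\fy_0(\lambda y)-\fy_0(y)|-\tfrac{\ep}{4}(1-\lambda^2)|y|^2\Big)\le 0,
\]
for some continuous $\omega_0$ equivalent to $\omega$ (the definition of narrowly dilated suitable allows passage to such an $\omega_0$). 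The paper's display \eqref{equivaddomcond2} suppresses the supremum over $x$, but it must be read uniformly in $x$ to match Definition~\ref{defdilsuit}. This does not affect your method---the logarithm step is still the whole argument---but your explanation of why the supremum disappears is incorrect.
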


\par

\begin{example}
Let $\omega (x)=\eabs x^{t\eabs x}$, $x\in \rr d$, for some choice of $t\in \mathbf R$. Then it 
follows by straight-forward computations that $\fy (x) = \log \fy (x) =\eabs x\log \eabs x$ 
satisfies \eqref{equivGauss}--\eqref{equivaddomcond2}. Hence $\omega \in \mathscr P_D^0
(\rr d)\cap \mathscr P_G^0
(\rr d)$.
\end{example}

\par

\begin{example}
Let $\omega (x)=\Gamma (\eabs x+1+r)$ and $\fy (x)=\log \omega (x)$, where $\Gamma$ is the gamma function, and $r>-2$ is real. Then we have
$$
\omega (x)=(2\pi (\eabs x +r))^{1/2}\left ( \frac {\eabs x +r}{e}\right )^{\eabs x+r}(1+ o (\eabs x^{-1})),
$$
by Stirling's formula. This is gives
\begin{equation}\label{fyloggamma}
\fy (x) = \frac 12\log (2\pi )+ \frac {1+2r}2 \log \eabs x +\eabs x(\log \eabs x-1) + \psi (x),
\end{equation}
where $\psi (x)$ is continuous and satisfies
\begin{equation}\label{xrlimit0}
\lim _{|x|\to \infty}\eabs x \psi (x)=0.
\end{equation}

\par

By straight-forward computations it follows that the first three terms in
\eqref{fyloggamma} satisfy the conditions
\eqref{equivGauss}--\eqref{equivaddomcond2}. Furthermore, the condition
\eqref{xrlimit0} together with the proof of Proposition \ref{weightfamincl}
show that also $\psi (x)$ satisfies \eqref{equivGauss}--\eqref{equivaddomcond2}. 
Consequently, $\omega \in \mathscr P_D^0(\rr d)\cap \mathscr P_G^0
(\rr d)$.
\end{example}

\par

The following result shows that there are weights in $(\mascP _D^0\cap \mathscr P_G^0 )\setminus \mascP$
which fulfills \eqref{conseqmoder} for some polynomial $v$.

\par

\begin{prop}
Let $\omega (x,\xi )=(1+|x|^r|\xi |^r)^s$ for some $r>0$ and $s\in \mathbf R\setminus 0$.
Then $\omega$ belongs to $(\mascP _D^0(\rr {2d})\cap \mathscr P_G^0
(\rr {2d}) )\setminus
\mascP (\rr {2d})$ and fulfills \eqref{conseqmoder} for some polynomial $v$.
\end{prop}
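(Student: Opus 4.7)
The plan is to use the logarithmic characterization of Lemma \ref{logweightslemma} for $\fy(x,\xi) = \log\omega(x,\xi) = s\log(1+|x|^r|\xi|^r)$, exploiting the identity
\begin{equation*}
\fy(\lambda X)-\fy(X) = s\log\frac{1+\lambda^{2r}u}{1+u}, \quad u = |x|^r|\xi|^r,\ X=(x,\xi),\ \lambda\in(0,1).
\end{equation*}

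First I handle the easy parts. For \eqref{conseqmoder}, the AM-GM inequality $|x|^r|\xi|^r \le \frac12(|x|^{2r}+|\xi|^{2r})$ shows that whichever of $\omega$ or $1/\omega$ has positive exponent is dominated by a polynomial in $(x,\xi)$, while the other is bounded by $1$. To see $\omega\notin \mascP(\rr{2d})$, assume for contradiction a moderating polynomial $v$: for fixed $|y|=1$, the moderation condition at $x=0$ reads $(1+|\xi|^r)^s = \omega(y,\xi)\le C\omega(0,\xi)v(y,0) = Cv(y,0)$, a fixed constant, which is absurd as $|\xi|\to\infty$ when $s>0$; applying the same argument to $1/\omega$ handles $s<0$. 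The subgaussian estimate \eqref{equivGauss} is immediate since $|\fy(X)|\le |s|\log(1+|X|^{2r})$ grows only logarithmically.

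The main step is the local condition \eqref{equivmodrel}: for $|X|\ge 2c$ and $|Y|\le c/|X|$ one must bound $|\fy(X+Y)-\fy(X)|$ by a constant. I would rescale to the unit sphere. Write $X=\rho\Theta$ with $\rho=|X|$ and $\Theta=(\theta_1,\theta_2)$, $\theta_1,\theta_2\in\rr d$, $|\Theta|=1$, and set $\tilde y = y/\rho$, $\tilde\eta=\eta/\rho$, so $|\tilde y|,|\tilde\eta|\le c/\rho^2$. With $u' = \rho^{2r}|\theta_1+\tilde y|^r|\theta_2+\tilde\eta|^r$, it suffices to bound $(1+u')/(1+u)$ and its reciprocal by constants. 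Case analysis: (i) if $|\theta_1|\ge 2|\tilde y|$ and $|\theta_2|\ge 2|\tilde\eta|$, both ratios $|\theta_1+\tilde y|/|\theta_1|$ and $|\theta_2+\tilde\eta|/|\theta_2|$ lie in $[1/2,3/2]$, giving $u'/u\in[2^{-2r},(3/2)^{2r}]$; (ii) otherwise, say $|\theta_1|\le 2|\tilde y|$, then $|\theta_1+\tilde y|^r\le (3|\tilde y|)^r\le(3c)^r\rho^{-2r}$ while $|\theta_2+\tilde\eta|$ is bounded by an absolute constant, so $u'$ is bounded independently of $\rho$. In every case both $(1+u')/(1+u)$ and $(1+u)/(1+u')$ are bounded by constants depending on $r$ and $c$, yielding \eqref{equivmodrel}. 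Since \eqref{equivmodrel}$'$ is a symmetric consequence, $\omega\in\mascP_G^0\subseteq\mascP_Q^0$.

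For $\mascP_D^0$, the dilation conditions \eqref{equivaddomcond} and \eqref{equivaddomcond2} for both $\fy$ and $-\fy$ follow at once from the opening identity: since the ratio $(1+\lambda^{2r}u)/(1+u)$ decreases monotonically in $u\ge 0$ from $1$ to $\lambda^{2r}$, we obtain
\begin{equation*}
|\fy(\lambda X)-\fy(X)|\le 2r|s|\log(1/\lambda),
\end{equation*}
uniformly in $X$. The right-hand side is $O(1-\lambda^2)$ near $\lambda=1$ (so \eqref{equivaddomcond} holds, e.g.\ with $C_\ep = 2r|s|$) and tends to $0$ as $\lambda\to 1^-$ uniformly in $X$ (so \eqref{equivaddomcond2} holds). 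The same bound applies to $-\fy$, handling $1/\omega$, and strong narrow dilation suitability with respect to $V=\{0\}$ follows. The main obstacle is the case analysis for \eqref{equivmodrel}: when $r<1$, $|x|^r$ is not smooth at the origin and a naive attempt to bound $|\nabla\fy(X)|\le C/|X|$ fails near the axes $\{x=0\}$ and $\{\xi=0\}$; the spherical rescaling to $\Theta$ on the unit sphere sidesteps this by reducing the problem to uniform geometric inequalities on $S^{2d-1}$ that do not feel the singularities of $\omega$.
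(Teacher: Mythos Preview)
Your proof is correct and proceeds along genuinely different lines than the paper's.

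The paper first reduces to the special case $r=1/2$, $s=1$ using the group structure of the weight classes, and then for the dilation condition carries out an explicit two-variable optimization: it sets $h(t_1,t_2)=(1+\sqrt{t_1t_2})e^{-\ep(1-\lambda)(t_1^2+t_2^2)/2}/(1+\lambda\sqrt{t_1t_2})$, locates the maximum at an explicit $t_0$, and checks $h(t_0,t_0)\to 1$ as $\lambda\to 1^-$. You instead observe that the whole dilation question collapses to one real variable via $\fy(\lambda X)-\fy(X)=s\log\frac{1+\lambda^{2r}u}{1+u}$, and that the ratio lies in $[\lambda^{2r},1]$ by monotonicity in $u$; this gives the uniform bound $|\fy(\lambda X)-\fy(X)|\le 2r|s|\log(1/\lambda)$ with no optimization at all and handles both $\omega$ and $1/\omega$ simultaneously. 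This is cleaner and works for all $r,s$ at once. For the non-membership in $\mascP$ you use the translation $(0,\xi)\mapsto(y,\xi)$, whereas the paper uses a different point configuration; both are fine. You also spell out the local condition \eqref{equivmodrel} via a spherical rescaling and case split, which the paper just declares ``straight-forward''.

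Two small remarks. In case (ii) of your local-condition argument you only wrote that $u'$ is bounded; for the ratio $(1+u)/(1+u')$ you also need that $u$ itself is bounded there, which it is by the same reasoning ($|\theta_1|\le 2|\tilde y|\le 2c/\rho^2$ forces $u\le(2c)^r$). And your constant ``$C_\ep=2r|s|$'' in \eqref{equivaddomcond} should carry a $\log(1/\lambda)$ factor bounded on, say, $\lambda\in(1/2,1)$; the intended constant is $2r|s|\log 2$. Neither affects the argument.
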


\par

\begin{proof}
Since $\mascP _G^0$, $\mascP _D^0$ and $\mascP$ are groups under multiplications,
and invariant under mappings $\omega \mapsto \omega ^t$ for $t\neq 0$,
and that $\omega$ is equivalent to
$$
\omega _0(x,\xi ) = (1+|x|^{1/2}|\xi |^{1/2})^{2rs},
$$
we may assume that $r=1/2$ and $s=1$.

\par

It is obvious that $\omega$ fulfills \eqref{conseqmoder} for some
polynomial $v$, and by straight-forward computations it also
follows that \eqref{modrelax} is fulfilled. Furthermore, by choosing
$x,y,\xi , \eta \in \rr d$ in such way that $|\eta |=1/|y|$ and $|x|=|\xi |=1$,
it follows that
$$
\sup _{y,\eta}\frac {\omega (x+y,\xi +\eta )}{\omega (y,\eta )} \ge
\sup _{|\eta |=1/|y|} \frac {1+|x+y|^{1/2}|\xi +\eta|^{1/2}}2 =\infty .
$$
This implies that $\omega \notin \mascP$.

\par

It remains to prove that $\omega \in \mascP _D^0$, which follows if we prove that for
every $\ep >$, then \eqref{addomegcond2} holds for $V=\{ 0\}$, after
$\rr d$ and $x$ have been replaced by $\rr {2d}$ and $(x,\xi )$, respectively.

\par

Here it is obvious that \eqref{addomegcond2} is true with
$\omega =\omega _0$, since
$$
\frac {\omega (\lambda x,\lambda \xi)e^{- \ep (1-\lambda ^2)(|x|^2+|\xi ^2)|/2}}{\omega (x,\xi )}\le 1
$$
with equality when $x=\xi =0$. Let
$$
h(t_1,t_2) = \frac {(1+\sqrt {t_1t_2})e^{-\ep (1-\lambda )(t_1^2+t_2^2)/2}}{1+\lambda
\sqrt {t_1t_2}},\qquad t_1,t_2\ge 0.
$$
Since
\begin{equation*}
h(|x|,|\xi |) = \frac {\omega (x,\xi )e^{- \ep (1-\lambda )(|x|^2+|\xi |^2)/2}}
{\omega (\lambda x,\lambda \xi)}
\ge
\frac {\omega (x,\xi )e^{- \ep (1-\lambda ^2)(|x|^2+|\xi |^2)/2}}
{\omega (\lambda x,\lambda \xi)},
\end{equation*}
and $h(0,0)=1$, the result follows if we prove
$$
\lim _{\lambda \to 1-}\big (\sup _{t_1,t_2\ge 0}h(t_1,t_2)\big ) =1.
$$

\par

Now, by straight-forward computations it follows that
$$
\sup _{t_1,t_2\ge 0}h(t_1,t_2) = h(t_0,t_0),\quad
\text{where}\quad t_0=-\frac {1+\lambda}{2\lambda} +
\sqrt {\displaystyle{\left (\frac {1+\lambda }{2\lambda}\right )^2+
\frac {1-\ep}{\ep \lambda}}},
$$
and it is straight-forward to control that $h(t_0,t_0)\to 1$
as $\lambda \to 1-$. The proof is complete.
\end{proof}

\par

%%%%%%%%%%%%%%%%%%%%%%%
\section{Harmonic estimates and mapping properties for the Bargmann
transform on modulation spaces}\label{sec3}
%%%%%%%%%%%%%%%%%%%%%%%

\par

In the first part of the section we establish certain invariance properties
of spaces of harmonic or analytic functions.
Thereafter we apply these properties to prove that $A_0(\omega ,\mascB) = A(\omega ,\mascB)$,
for appropriate weights $\omega$. In the end of the section we use these results to prove general properties
of $A(\omega ,\mascB)$ and $M(\omega ,\mascB)$, for example that they
are Banach spaces when $\mascB$ is a mixed norm space.

\par

\subsection{Analytic and harmonic estimates}

\par

Let $\Omega$ be an admissible
family of weights on $\rr d$, and let $\mascB $
be a mixed quasi-norm space on $\rr d$. (Cf. Definition \ref{defweightfam}.)
Then we prove that subsets of
\begin{align*}
E_1(\Omega ,\mascB ) &\equiv \sets {f\in L^r_{loc}(\rr d)}{f\omega \in \mascB
\ \text{for every}\ \omega \in \Omega}
\intertext{and}
E_2(\Omega ,\mascB ) &\equiv \sets {f\in L^r_{loc}(\rr d)}{f\omega \in \mascB
\ \text{for some}\ \omega \in \Omega},\quad r=\nu _1(\mascB ),
\end{align*}
of analytic or harmonic functions are independent of the choice of $\mascB$.
Recall here that $\nu _1(\mascB )$ is the
smallest involved Lebesgue exponent in $\mascB$, and belongs to
$(0,\infty ]$. Also recall that $\mascB$ is a mixed norm space, if and
only if $\nu _1(\mascB )\ge 1$.
Some restrictions are needed when considering subsets of harmonic functions.

\par

It is easy to prove one direction. In fact, by the definitions and H{\"o}lder's inequality we get
\begin{multline}\label{Lebincl}
E_j(\Omega ,L^\infty (\rr d))\subseteq E_j(\Omega ,\mascB )
\subseteq E_j(\Omega ,L^r (\rr d)),
\\[1ex]
\text{where}\ j=1,2\ \text{and} \ r=\min (1,\nu _1(\mascB ))
\end{multline}
(with continuous inclusions).

\par

In order to establish opposite inclusions to \eqref{Lebincl}, for corresponding subsets of analytic
or harmonic functions, we will use techniques based on harmonic estimates for such functions.

\par

We start with the following result. Here and in what follows we let
$\mathcal H(\rr d)$ be the set of harmonic functions on $\rr d$.

\par

\begin{prop}\label{harmident}
Let $\Omega \subseteq \mascP _G(\rr d)$ be an admissible family of weights on $\rr d$,
and let $\mascB _1$ and $\mascB _2$ be mixed norm space on $\rr d$. Then
\begin{equation}\label{harmidentform}
E_j(\Omega ,\mascB _1) \bigcap \mathcal H(\rr d)= E_j(\Omega ,\mascB _2)
\bigcap \mathcal H(\rr d),\quad j=1,2.
\end{equation}
\end{prop}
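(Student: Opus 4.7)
The strategy is to squeeze both sides of \eqref{harmidentform} against the two endpoint Lebesgue spaces. The sandwich \eqref{Lebincl} applied to a mixed norm space $\mascB$ gives, since $\min (1,\nu _1(\mascB ))=1$,
\[
E_j(\Omega ,L^\infty (\rr d))\subseteq E_j(\Omega ,\mascB )\subseteq E_j(\Omega ,L^1 (\rr d)),\qquad j=1,2,
\]
so it suffices to prove the reverse inclusion
\[
E_j(\Omega ,L^1(\rr d))\bigcap \mathcal H(\rr d)\subseteq E_j(\Omega ,L^\infty (\rr d))\bigcap \mathcal H(\rr d),\qquad j=1,2.
\]

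The central ingredient is the pointwise estimate: for every $\omega \in \mascP _G(\rr d)$ there is $C>0$ such that
\[
|f(x)|\, \omega (x)\le C\eabs x^d \nm {f\omega }{L^1(\rr d)},\qquad x\in \rr d,
\]
whenever $f\in \mathcal H(\rr d)$ and $f\omega \in L^1(\rr d)$. I would prove this by combining the mean value identity, which forces $|f(x)|\le |B_r(x)|^{-1}\int _{B_r(x)}|f(y)|\, dy$ for every $r>0$, with a radius tuned to \eqref{modrelax}. For $|x|\ge 2c$, where $c$ is as in \eqref{modrelax}, set $r=c/|x|$; then $\omega (y)\asymp \omega (x)$ uniformly on $B_r(x)$, so
\[
|f(x)|\omega (x) \le \frac {C\omega (x)}{r^d}\int _{B_r(x)}|f(y)|\, dy \le \frac {C'}{r^d}\int _{B_r(x)}|f(y)|\omega (y)\, dy \le C''|x|^d\nm {f\omega }{L^1},
\]
while for $|x|\le 2c$ a fixed radius yields the bound by local boundedness of $\omega$.

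With this pointwise estimate at hand, the admissibility of $\Omega$ closes the argument. By Definition \ref{defweightfam} and \eqref{omega0est}, fix $\omega _0\in L^1(\rr d)$, decreasing in $|x|$, with $\omega _0(x)\le C\eabs x^{-d}$, and with both $\omega \omega _0$ and $\omega /\omega _0$ in $\Omega$ whenever $\omega \in \Omega$. For $j=2$, if $f\omega _1\in L^1$ with $\omega _1\in \Omega$, the pointwise estimate yields $|f(x)|\omega _1(x)\omega _0(x)\le C\eabs x^d\omega _0(x)\nm {f\omega _1}{L^1}$, which is bounded; hence $f\cdot \omega _1\omega _0\in L^\infty$ and $\omega _1\omega _0\in \Omega$. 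For $j=1$, pick any $\omega \in \Omega$; then $\omega /\omega _0\in \Omega$ and $f\cdot \omega /\omega _0\in L^1$, so applying the estimate with weight $\omega /\omega _0$ gives $|f(x)|\omega (x)=\omega _0(x)\cdot (|f(x)|\omega (x)/\omega _0(x))\le C\eabs x^d\omega _0(x)\nm {f\omega /\omega _0}{L^1}$, again bounded, so $f\omega \in L^\infty$.

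The main obstacle is the trade-off between the shrinking-ball radius dictated by \eqref{modrelax} (valid only on balls of radius $c/|x|$ at infinity) and the polynomial blow-up $\eabs x^d$ that comes from the inverse volume factor. The role of the auxiliary weight $\omega _0$ in the admissibility condition is precisely to deliver the $\eabs x^{-d}$ reserve that absorbs this blow-up while keeping the resulting weight inside $\Omega$; without such a family one could obtain only a polynomially weighted $L^\infty$ bound rather than the genuine $L^\infty$ bound needed for the inclusion.
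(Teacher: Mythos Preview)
Your proposal is correct and follows essentially the same approach as the paper: reduce via \eqref{Lebincl} to showing $E_j(\Omega ,L^1)\cap \mathcal H\subseteq E_j(\Omega ,L^\infty )\cap \mathcal H$, then use the mean-value property on balls of radius $c/|x|$ together with \eqref{modrelax} to get a pointwise bound with a polynomial loss $\eabs x^d$, which is absorbed by the admissibility weight $\omega _0$ via \eqref{omega0est}. The only cosmetic difference is that the paper packages the two weights directly as $\omega _1\le C\eabs \cdo ^{-d}\omega$ rather than isolating your intermediate pointwise estimate, but the content is identical.
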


\par

\begin{proof}
It suffices to prove $E_j(\Omega ,L^1) \cap \mathcal H\subseteq  E_j(\Omega ,L^\infty )\cap
\mathcal H$, in view of \eqref{Lebincl}.

\par

Assume that $f\in E_2(\Omega ,L^1)
\cap \mathcal H$. By \eqref{omega0est} and the assumptions we have
\begin{equation}\label{fL1bound}
\nm {f}{L^1_{(\omega )}}<\infty \quad \text{and}\quad \omega _1\le C\eabs \cdo ^{-d}\omega ,
\end{equation}
for some $\omega ,\omega _1\in \Omega$ and some constant $C>0$. Let $c$ and $C$
be the same as in \eqref{modrelax}. Then the result follows if we prove
\begin{equation}\label{L2normest}
\nm {f\, \chi }{L^\infty _{(\omega _1)}}<\infty ,
\end{equation}
when $\chi$ is the characteristic function of $\sets {x\in \rr d}{|x|\ge 2c}$.

\par

Since $f\in \mathcal H(\rr d)$, the mean-value property for harmonic functions gives
$$
f(x)=c_d^{-1}(|x|/c)^{d}\int _{|y|\le c/|x|}f(x+y)\, dy,
$$
where $c_d$ is the volume of the $d$-dimensional unit ball. If $|x|\ge 2c$,
then $\eabs x\le C_1|x|$ for some constant $C_1>0$, and \eqref{modrelax}
and \eqref{fL1bound} give
\begin{multline*}
|f(x)\omega _1(x)| = \left | c_d^{-1}(|x|/c)^{d}\omega _1(x)\int _{|y|\le c/|x|}f(x+y)\, dy  \right |
\\[1ex]
\le C_1\int _{|y|\le c/|x|}|f(x+y)\omega (x)|\, dy
\\[1ex]
\le  C_2\int _{|y|\le c/|x|}|f(x+y)\omega (x+y)|\, dy \le C_2\nm {f}{L^1_{(\omega )}}<\infty ,
\end{multline*}
for some constants $C_1$ and $C_2$. This gives \eqref{harmidentform}
for $j=2$. By similar arguments, \eqref{harmidentform} also follows for
$j=1$. The details are left for the reader. The proof is complete.
\end{proof}

\par

Next we discuss similar questions for spaces of analytic functions, i.{\,}e.
we present sufficient conditions in order for the identity
\begin{equation}\label{analidentform}
E_j(\Omega ,\mascB _1) \cap A(\cc d)= E_j(\Omega ,\mascB _2)\cap A(\cc d),\quad j=1,2.
\end{equation}
should hold.

\par

\begin{thm}\label{analident}
Let $\Omega$ be an admissible family of weights on $\cc d\simeq \rr {2d}$,
and let $\mascB _1$ and $\mascB _2$ be mixed quasi-norm spaces on
$\cc d$. Then \eqref{analidentform} holds.

\par

Furthermore, for every fixed $\omega \in \Omega$, there are
$\omega _1,\omega _2\in \Omega$ and constant $C>0$ such that
\begin{equation}\label{normembds}
C^{-1}\nm F{A(\omega _1,\mascB _2)} \le \nm F{A(\omega ,\mascB _1)}\le
C\nm F{A(\omega _2,\mascB _2)},\qquad F\in A(\cc d).
\end{equation}
\end{thm}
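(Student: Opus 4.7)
Thanks to the sandwich \eqref{Lebincl}, both equalities in \eqref{analidentform} will follow once we establish, for every $r\in(0,\infty]$, every $\omega\in\Omega$, and every $F\in A(\cc d)$ with $F\omega\in L^r$, that $F\omega'\in L^\infty$ for some $\omega'\in\Omega$ depending only on $\omega$, together with a norm bound of the form $\nm{F\omega'}{L^\infty}\le C\nm{F\omega}{L^r}$. The two halves of \eqref{normembds} are then obtained by chaining this implication with the two containments in \eqref{Lebincl} applied successively to $\mascB_1$ and to $\mascB_2$.

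The central ingredient is the pointwise estimate
\begin{equation*}
|F(z)|\,\omega(z)\le C\,V_\rho^{-1/r}\,\nm{F\omega}{L^r(B_\rho(z))},\qquad \rho=c/|z|,\ |z|\ge 2c,
\end{equation*}
valid for $F\in A(\cc d)$ and $\omega\in\mascP _Q(\rr{2d})$; here $c$ is the constant from \eqref{modrelax}$'$ for $\omega$, and $V_\rho$ is the Lebesgue volume of the Euclidean ball $B_\rho(z)\subseteq\cc d\simeq\rr{2d}$. To prove it, I introduce the entire function $G(y)=F(z+y)F(z-y)$ on $\cc d$, so that $G(0)=F(z)^2$ and $|G|^{r/2}$ is subharmonic on $\rr{2d}$; the mean-value inequality on $B_\rho(0)$ yields
\begin{equation*}
|F(z)|^r\le V_\rho^{-1}\int_{B_\rho(0)}|F(z+y)|^{r/2}|F(z-y)|^{r/2}\,d\lambda(y).
\end{equation*}
Multiplying by $\omega(z)^r$ and invoking \eqref{modrelax}$'$ in the form $\omega(z)^r\le C^{r/2}\omega(z+y)^{r/2}\omega(z-y)^{r/2}$ (valid on $|y|\le c/|z|$ when $|z|\ge 2c$), a Cauchy--Schwarz inequality combined with the symmetry $y\mapsto -y$ of $B_\rho(0)$ collapses the integrand into a single copy of $\nm{F\omega}{L^r(B_\rho(z))}^r$. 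The decisive point is the pairing of the product $F(z+y)F(z-y)$ with the factored weight $\omega(z+y)\omega(z-y)$: this is what makes the weak moderate condition \eqref{modrelax}$'$ suffice, and it is also the main obstacle, since a naive application of subharmonicity to $|F|^r$ alone produces $\omega$-factors at points that are not aligned with the $F$-factors and would force us back onto the stronger condition \eqref{modrelax} used in Proposition \ref{harmident}.

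Given the pointwise bound, the asymptotic $V_\rho\asymp\eabs z^{-2d}$ gives $|F(z)|\omega(z)\le C\eabs z^{2d/r}\nm{F\omega}{L^r}$ for $|z|\ge 2c$, and the region $|z|<2c$ is handled by a fixed-radius mean-value inequality together with the local boundedness of $\omega$ and $\omega^{-1}$. Now choose a positive integer $k$ with $kd\ge 2d/r$; by admissibility and iteration, $\omega'\equiv\omega\cdot\omega_0^{k}\in\Omega$, and \eqref{omega0est} gives $\omega_0(z)^{k}\le C\eabs z^{-kd}$, so $\nm{F\omega'}{L^\infty}\le C'\nm{F\omega}{L^r}$. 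This settles the case $j=2$. For $j=1$, given any $\widetilde\omega\in\Omega$, I apply the previous argument to $\omega\equiv\widetilde\omega\cdot\omega_0^{-k}\in\Omega$, which lies in the $L^r$-condition by the $E_1$ hypothesis; the conclusion delivers $F\cdot(\omega\cdot\omega_0^{k})=F\widetilde\omega\in L^\infty$. Finally, tracking constants and combining with \eqref{Lebincl} for $\mascB_1$ and $\mascB_2$ yields the double inequality \eqref{normembds}, with $\omega_1$ and $\omega_2$ obtained from $\omega$ by multiplication by suitable integer powers of $\omega_0$.
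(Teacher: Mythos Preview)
Your proposal is correct and follows essentially the same route as the paper: reduce via \eqref{Lebincl} to showing that $F\omega\in L^r$ forces $F\omega'\in L^\infty$ for some $\omega'\in\Omega$, and obtain the key pointwise estimate from the sub-mean-value inequality applied to the subharmonic function $w\mapsto|F(z+w)F(z-w)|^{r/2}$, then pair the resulting product with the factored weight via \eqref{modrelax}$'$ and Cauchy--Schwarz. The paper packages the subharmonicity step as Lemma~\ref{analmeans}, but the content is identical to your direct use of $G(y)=F(z+y)F(z-y)$; your explicit iteration $\omega_0^{k}$ to absorb the factor $\eabs z^{2d/r}$ when $r<1$ is in fact a small clarification over the paper's write-up, which tacitly treats the case $r\ge 1$.
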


\par

For the proof we need the following lemma, concerning mean-value
properties for analytic functions.

\par

\begin{lemma}\label{analmeans}
Let $\nu$ be a positive Borel measure on $\cc d$ which is rotation invariant under
each coordinate $z_1,\dots ,z_d\in \mathbf C$, $T _1,\dots ,T _n$ be (complex)
$d\times d$-matrices, and let $F_1,\dots ,F_n\in A(\cc d)$. Also let $r>0$, and let $\Omega
\subseteq \cc d$ be compact and convex, which is rotation invariant under
each coordinate $z_1,\dots ,z_d\in \mathbf C$. Then
\begin{align}
\prod _{j=1}^nF_j(z) &= \frac 1{\nu (\Omega )} \int _{\Omega}
\prod _{j=1}^nF_j(z+T_jw)\, d\nu (w)\label{analmv}
\intertext{and}
\prod _{j=1}^n |F_j(z) |^r &\le  \frac 1{\nu (\Omega )}  \int _{\Omega} \prod _{j=1}^n
| F_j(z+T_jw) | ^r\, d\nu (w).\label{modanalmv}
\end{align}
\end{lemma}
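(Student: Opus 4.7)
The plan is to fix $z$ and study the single entire function $G(w)=\prod_{j=1}^n F_j(z+T_jw)$ of $w\in\cc d$; both \eqref{analmv} and \eqref{modanalmv} are then statements about $G$ (and its modulus to the $r$-th power) alone.

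For \eqref{analmv} I would expand $G$ as a power series $G(w)=\sum_{\alpha\in\mathbf N^d}c_\alpha w^\alpha$, which converges uniformly on the compact set $\Omega$, and integrate term by term against $d\nu$. The coordinatewise rotation invariance of $\nu$ and $\Omega$ gives
\[
\int_\Omega w^\alpha\,d\nu(w)=e^{i(\alpha_1\theta_1+\cdots+\alpha_d\theta_d)}\int_\Omega w^\alpha\,d\nu(w)
\]
for every $\theta\in\mathbf R^d$, so this integral vanishes unless $\alpha=0$, in which case it equals $\nu(\Omega)$. Only the constant term survives, yielding $\int_\Omega G\,d\nu=c_0\,\nu(\Omega)=G(0)\,\nu(\Omega)$, which is exactly \eqref{analmv}.

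For \eqref{modanalmv} the function $u(w)=|G(w)|^r=\prod_j |F_j(z+T_jw)|^r$ is no longer holomorphic in $w$, but it is plurisubharmonic: each $\log|F_j(z+T_jw)|$ is plurisubharmonic, its product with $r$ and its sum over $j$ remains plurisubharmonic, and composition with the convex nondecreasing function $t\mapsto e^t$ preserves plurisubharmonicity. Using the rotation invariance of $\nu$ together with Fubini one rewrites
\[
\int_\Omega u\,d\nu=\int_\Omega \Phi(|w_1|,\dots,|w_d|)\,d\nu(w),\quad \Phi(\rho)=(2\pi)^{-d}\int_{[0,2\pi]^d}u(\rho_1 e^{i\theta_1},\dots,\rho_d e^{i\theta_d})\,d\theta.
\]
Iterating the one-variable sub-mean value property for subharmonic functions in each $w_j$ separately gives $\Phi(\rho)\ge u(0)$ for every $\rho\in[0,\infty)^d$, whence $\int_\Omega u\,d\nu\ge u(0)\,\nu(\Omega)=\prod_j |F_j(z)|^r\,\nu(\Omega)$, which is \eqref{modanalmv}.

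The only technical obstacle I foresee is handling the fact that $u=|G|^r$ may vanish, so that $\log|G|$ takes the value $-\infty$; this is standard in plurisubharmonic theory via upper semicontinuity, but if one prefers a more elementary route, one may replace $u$ by the smooth approximants $u_\varepsilon=(|G|^2+\varepsilon)^{r/2}$ (which are plurisubharmonic by the same composition argument), apply the inequality to each $u_\varepsilon$, and pass to the limit $\varepsilon\to 0+$ by monotone convergence. Beyond this, the argument is a direct application of the rotation-invariance hypotheses combined with classical mean-value properties of holomorphic and plurisubharmonic functions.
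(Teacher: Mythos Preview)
Your proof is correct and follows the same strategy as the paper: fix $z$, set $G(w)=\prod_j F_j(z+T_jw)$, use analyticity of $G$ for \eqref{analmv} and (pluri)subharmonicity of $|G|^r$ for \eqref{modanalmv}. The paper is terser---it invokes ``the mean-value property for harmonic functions'' for the first and cites two results from Krantz (subharmonicity of $|G|^r$ and a sub-mean inequality) for the second---whereas you spell out the torus-averaging explicitly via the power series (for the equality) and the iterated one-variable circle averages (for the inequality); this makes it clearer how the mere \emph{coordinatewise} rotation invariance of $\nu$ and $\Omega$, rather than full $O(2d)$-invariance, suffices.

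One small remark: your claim that $u_\varepsilon=(|G|^2+\varepsilon)^{r/2}$ is plurisubharmonic ``by the same composition argument'' is not literally that argument, since $t\mapsto t^{r/2}$ is concave for $r<2$. It is nevertheless true: a direct computation gives
\[
\partial_{\bar k}\partial_j u_\varepsilon=\tfrac r2(|G|^2+\varepsilon)^{r/2-2}\Big(\tfrac r2|G|^2+\varepsilon\Big)\,(\partial_jG)\,\overline{\partial_kG}\ge 0.
\]
Alternatively, as you note, $|G|^r$ is itself plurisubharmonic (via $|G|^r=\exp(r\log|G|)$ with the usual upper-semicontinuous convention at zeros), so the approximation step is not actually needed.
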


\par

\begin{proof}
Let $G(z,w)= \prod _{j=1}^nF_j(z+T_jw)$. Then $w\mapsto G(z,w)$ is analytic. By the mean-value property
for harmonic functions we get
$$
G(z,0) =\frac 1{\nu (\Omega )} \int _{\Omega} G(z,w)\, d\nu (w),
$$
which is the same as \eqref{analmv}.

\par

From the same analyticity property it follows that the map $w\mapsto |G(z,w)|^r$
is subharmonic, in view of \cite[Corollary 2.1.15]{Kr}. Hence, by \cite[Theorem 2.1.4]{Kr} we get
$$
|G(z,0)|^r \le \frac 1{\nu (\Omega )} \int _{\Omega} |G(z,w)|^r\, d\nu (w),
$$
which is the same as \eqref{modanalmv}, and the proof is complete.
\end{proof}

\par

\begin{proof}[Proof of Theorem \ref{analident}]
We only prove \eqref{analidentform} for $j=2$, leaving the small modifications of the case $j=1$ for the reader. By \eqref{Lebincl} it suffices to prove
$$
E_2(\Omega ,L^r)\cap A(\cc d) \subseteq E_2(\Omega ,L^\infty)\cap A(\cc d),\qquad r=\nu _1(\mascB ).
$$

\par

Therefore, assume that $F\in E(\Omega ,L^r)\cap A(\cc d)$.
We shall mainly follow the ideas in Proposition \ref{harmident}.
By \eqref{omega0est} and the assumptions we have
$$
\nm {F}{L^r_{(\omega )}}<\infty \quad \text{and}\quad \omega _1\le C\eabs \cdo ^{-2d}\omega ,
$$
for some $\omega ,\omega _1\in \Omega$ and some constant $C>0$. Let $c$ and $C$ be
the same as in \eqref{modrelax}$'$. Then the result follows if we prove that \eqref{L2normest} holds
when $\chi$ is the characteristic function of $\sets {z\in \cc d}{|z|\ge 2c}$.

\par

By Lemma \ref{analmeans} and Cauchy-Schwartz inequality we get
\begin{multline*}
|F(z)\omega _1(z)|^r \le  c_{2d}^{-1}(|z|/c)^{d}\omega _1(z)^r
\int _{|w|\le c/|z|}|F(z+w)F(z-w)|^{r/2}\, d\lambda (w)
\\[1ex]
\le C_1\int _{|w|\le c/|z|}|F(z+w)F(z-w)\omega (z)^2|^{r/2}\, d\lambda (w)
\\[1ex]
\le  C_2\int _{|w|\le c/|z|}|F(z+w)\omega (z+w)|^{r/2} |F(z-w)
\omega (z-w)|^{r/2}\, d\lambda (w)
\\[1ex]
\le C_2\int _{\cc d}|F(z+w)\omega (z+w)|^{r/2} |F(z-w)
\omega (z-w)|^{r/2}\, d\lambda (w) 
\\[1ex]
\le C_2\left ( \int _{\cc d}|F(z+w)\omega (z+w)|^r\, \lambda (w)
\right )^{1/2}\cdot \left (
\int _{\cc d}|F(z-w)\omega (z-w)|^r\, \lambda (w)\right )^{1/2}
\\[1ex]
= C_2\nm {F}{L^r_{(\omega )}}^r<\infty ,
\end{multline*}
for some constants $C_1$ and $C_2$. Here recall that $d\lambda (z)$
is the Lebesgue measure on $\cc d$. This proves \eqref{analidentform}
for $j=2$, and \eqref{normembds}. By similar arguments,
\eqref{analidentform} follows for $j=1$. The details are left for the reader, and
the proof is complete.
\end{proof}

\par

\subsection{Mapping properties for the Bargmann transform
on modulation spaces}

\par

Next we prove that $A_0(\omega ,\mascB )$ is equal to
$A(\omega ,\mascB )$ for every choice of $\omega$ in $\mascP  _{Q}^0$
and mixed quasi-norm space $\mascB $.

\par

\begin{thm}\label{mainthm}
Let $\mascB $ be a mixed quasi-norm space on $\rr {2d}\simeq \cc d$
and let $\omega \in \mascP  _{Q}^0(\cc {d})$. Then $A_0(\omega ,\mascB )
=A(\omega ,\mascB )$, and the map $f\mapsto \mathfrak Vf$ from
$M(\omega ,\mascB )$ to $A(\omega ,\mascB )$ is isometric and
bijective.
\end{thm}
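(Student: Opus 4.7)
The plan: by Proposition \ref{mainstep1prop} the map $\mathfrak V : M(\omega,\mascB)\to A(\omega,\mascB)$ is already an isometric injection with $A_0(\omega,\mascB)\subseteq A(\omega,\mascB)$, so the theorem reduces to surjectivity, i.e.\ to proving $A(\omega,\mascB) \subseteq A_0(\omega,\mascB)$. Fix $F\in A(\omega,\mascB)$; I need to produce $f\in M(\omega,\mascB)$ with $\mathfrak V f = F$.

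First I would pass to a Hilbert-space setting: the family $\mascP_{Q}^0(\rr{2d})$ is admissible (Example \ref{exadmweights} together with the remark that every family in Definition \ref{defweights} is admissible), so Theorem \ref{analident} applied with $\mascB _2=L^2$ yields a weight $\omega_1\in \mascP_Q^0(\rr{2d})$ and a constant $C$ with $\nm F{A^2_{(\omega_1)}}\le C\nm F{A(\omega,\mascB)}<\infty$. Since $\omega_1\in \mascP_Q^0$, for every $\delta>0$ there is $C_\delta$ with $\omega_1(x)\ge C_\delta^{-1}e^{-\delta|x|^2/4}$, hence $\omega_1(\sqrt 2\,\overline z)^2 e^{-|z|^2}\ge C_\delta^{-2}e^{-(1+\delta)|z|^2}$. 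Inserting this into the norm formula \eqref{BLpnorm} for $B^2_{(\omega_1)}$ gives $\int_{\cc d}|F(z)|^2 e^{-(1+\delta)|z|^2}\,d\lambda(z)<\infty$.

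The key step is extracting coefficient bounds by rescaling: substituting $w=\sqrt{1+\delta}\,z$ turns the previous integral into $\nm{G_\delta}{A^2}<\infty$ where $G_\delta(w)=F(w/\sqrt{1+\delta})$ belongs to the ordinary Fock space $A^2(\cc d)$. Expanding $F$ in the Bargmann orthonormal basis $e_\alpha(z)=z^\alpha/\sqrt{\alpha!}$ as $F=\sum_\alpha c_\alpha e_\alpha$, a direct computation gives $G_\delta=\sum_\alpha c_\alpha(1+\delta)^{-|\alpha|/2}e_\alpha$, so Parseval's identity in $A^2$ yields $\sum_\alpha |c_\alpha|^2(1+\delta)^{-|\alpha|}=\nm{G_\delta}{A^2}^2<\infty$. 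In particular $|c_\alpha|\le C_\delta(1+\delta)^{|\alpha|/2}$, and for every $r>0$ choosing $\delta$ small enough that $\log(1+\delta)/2<r$ produces $|c_\alpha|\le C_r e^{r|\alpha|}$.

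Finally I would assemble the preimage and conclude. The estimate $|c_\alpha|\le C_r e^{r|\alpha|}$ valid for every $r>0$ is precisely the $l^\infty$ case of \eqref{GShermexp} with $s=1/2$, so Proposition \ref{stftGS2}(1) ensures that $f=\sum_\alpha c_\alpha h_\alpha$ converges in $\maclS_{1/2}'(\rr d)$. Since $\mathfrak V h_\alpha = e_\alpha$ and the Bargmann transform is sequentially continuous on $\maclS_{1/2}'$ via the pairing \eqref{bargdistrform}, passing to the limit along partial sums gives $\mathfrak V f=\sum_\alpha c_\alpha e_\alpha=F$ in $A(\cc d)$. The isometry in Proposition \ref{mainstep1prop} then yields $\nm f{M(\omega,\mascB)}=\nm F{A(\omega,\mascB)}<\infty$, so $f\in M(\omega,\mascB)$ and $F\in A_0(\omega,\mascB)$. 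I expect the main obstacle to be the second step, where the margin is tight: the Gaussian lower bound on $\omega_1$ barely lets the Bargmann coefficients meet the $\maclS_{1/2}'$ growth condition, and the rescaling into the classical Fock space is what cleanly delivers $|c_\alpha|\le C_\delta(1+\delta)^{|\alpha|/2}$ without Cauchy-estimate bookkeeping or Stirling-type control of $\sqrt{\alpha!}$.
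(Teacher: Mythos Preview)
Your argument is correct and takes a genuinely different route from the paper's.

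Both proofs begin the same way: Proposition~\ref{mainstep1prop} reduces to surjectivity, and Theorem~\ref{analident} (with the admissible family $\mascP_Q^0$) places $F$ in some $A^2_{(\omega_1)}$. From there the paper proceeds by first replacing $\omega_1$ with a rotation-invariant weight (Proposition~\ref{rotinvprop}, which in turn needs Lemma~\ref{PQGweights}) and then invoking Proposition~\ref{mainthmspecial}: the monomials form an orthonormal basis of the \emph{weighted} Fock space $A^2_{(\omega_1)}$ (Lemma~\ref{basislemma}), and since $M^2_{(\omega_1)}$ is a Banach space (Proposition~\ref{modbanach}) and $\mathfrak V h_\alpha = z^\alpha/\sqrt{\alpha!}$, the isometric image of $M^2_{(\omega_1)}$ must be all of $A^2_{(\omega_1)}$.

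You instead exploit only the crude Gaussian lower bound on $\omega_1\in\mascP_Q^0$ to rescale $F$ into the \emph{unweighted} Fock space $A^2$, read off $|c_\alpha|\le C_\delta(1+\delta)^{|\alpha|/2}$ from classical Parseval, and then let $\delta\to 0$ to meet the Hermite-coefficient characterisation of $\maclS_{1/2}'$ (Proposition~\ref{stftGS2}). This sidesteps the rotation-invariant reduction, the weighted orthonormal-basis lemma, and the completeness argument for $M^2_{(\omega_1)}$ entirely. The trade-off is that you lean on Proposition~\ref{stftGS2}, which is external input about Gelfand--Shilov spaces, whereas the paper's route is more self-contained within the Bargmann--Fock machinery it develops. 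Both approaches ultimately rest on the identity $\mathfrak V h_\alpha = z^\alpha/\sqrt{\alpha!}$ and close with the same isometry step.
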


\par

We need some preparations for the proof, and start to show that
$M(\omega ,\mascB)$ is a Banach space when $\nu _1(\mascB )\ge 1$,
which is a consequence of the following result. Here, for each
$\psi \in \maclS _{1/2}(\rr d)\setminus 0$, $0<\omega \in
L^\infty _{loc}(\rr {2d})$ and mixed norm space $\mascB$ on
$\rr {2d}$, we let $M_\psi (\omega ,\mascB )$ be the set of all
$f\in (\maclS _{1/2})'(\rr d)$ such that
$$
\nm f{M_\psi (\omega ,\mascB )}\equiv \nm {V_\psi f \cdot \omega}{\mascB}<\infty .
$$

\par

\begin{prop}\label{modbanach}
Let $\psi \in \maclS _{1/2}(\rr d)\setminus 0$ and $\omega \in L^\infty _{loc} (\rr {2d})$
be such that for every $\ep >0$ there is a constant $C_\ep >0$ such that
\begin{equation}\label{omegaussbound}
\omega \ge C_\ep ^{-1}e^{-\ep |\cdo |^2}.%\le \omega \le C_\ep e^{\ep |\cdo |^2}.
\end{equation}
Also let  $\mascB$ be a mixed norm space on $\rr {2d}$. Then
$M_\psi (\omega ,\mascB )$ is a Banach space.
\end{prop}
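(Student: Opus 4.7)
The plan is as follows.

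First I would verify that $\|\cdot\|_{M_\psi(\omega,\mascB)}$ is a norm. Triangle inequality and homogeneity are inherited from $\mascB$ and pointwise multiplication. For positive-definiteness, if $\|V_\psi f\cdot\omega\|_{\mascB}=0$, then $V_\psi f=0$ almost everywhere since $\omega$ is locally bounded below by \eqref{omegaussbound}; because $V_\psi f$ is continuous on $\rr{2d}$ (being $(x,\xi)\mapsto \langle f,M_\xi T_x\overline\psi\rangle$ with $\psi\in\maclS_{1/2}$ and $f\in\maclS'_{1/2}$), we get $V_\psi f\equiv 0$, and Moyal's reconstruction formula yields $f=0$.

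The technical heart of the proof is the pointwise estimate: for every $\ep>0$ there is $C_\ep>0$ such that
\begin{equation}\tag{$*$}\label{ptwbd}
|V_\psi f(x,\xi)|\le C_\ep e^{\ep(|x|^2+|\xi|^2)}\,\|f\|_{M_\psi(\omega,\mascB)},\qquad (x,\xi)\in\rr{2d}.
\end{equation}
Starting from the reproducing identity
$$
V_\psi f(x,\xi)=\nm{\psi}{L^2}^{-2}\iint V_\psi f(y,\eta)\,\overline{V_\psi(M_\xi T_x\psi)(y,\eta)}\,dy\,d\eta ,
$$
Proposition \ref{stftGelfand1} gives $V_\psi\psi\in\maclS_{1/2}(\rr{2d})$, so the kernel is bounded by $Ce^{-\delta(|y-x|^2+|\eta-\xi|^2)}$ for some $\delta>0$. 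Writing $V_\psi f=(V_\psi f\cdot\omega)\omega^{-1}$ and using $\omega^{-1}\le C_{\ep_0}e^{\ep_0|\cdot|^2}$, I would complete the square in $(y,\eta)$ for the combined exponent $-\delta|(y,\eta)-(x,\xi)|^2+\ep_0|(y,\eta)|^2$, which produces a prefactor $e^{\delta\ep_0/(\delta-\ep_0)\cdot(|x|^2+|\xi|^2)}$ times a translated Gaussian in $(y,\eta)$. Since that Gaussian belongs to $\mascB'$ (it lies in every mixed Lebesgue space) and $V_\psi f\cdot\omega\in\mascB$, H\"older's inequality on $\mascB$ bounds the integral by a multiple of $\|f\|_{M_\psi(\omega,\mascB)}$, and choosing $\ep_0$ so small that $\delta\ep_0/(\delta-\ep_0)<\ep$ gives \eqref{ptwbd}.

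With \eqref{ptwbd} in hand, take a Cauchy sequence $(f_n)\subset M_\psi(\omega,\mascB)$. Then $G_n:=V_\psi f_n\cdot\omega$ is Cauchy in $\mascB$ with limit $G\in\mascB$, while \eqref{ptwbd} applied to $f_n-f_m$ shows that $V_\psi f_n$ is Cauchy uniformly on compact subsets of $\rr{2d}$. The uniform limit $F$ is continuous and satisfies $|F(x,\xi)|\le C_\ep e^{\ep(|x|^2+|\xi|^2)}\sup_n\|f_n\|_{M_\psi(\omega,\mascB)}$ for every $\ep>0$. Passing to a subsequence for which $G_{n_k}\to G$ a.{\,}e., we get $F\omega=G$ a.{\,}e., so in particular $F\omega\in\mascB$. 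I would then define $f\in\maclS'_{1/2}(\rr d)$ by
$$
(f,\fy):=\nm{\psi}{L^2}^{-2}\iint F(y,\eta)\,\overline{V_\psi\fy(y,\eta)}\,dy\,d\eta,\qquad \fy\in\maclS_{1/2}(\rr d),
$$
which is well-defined and continuous on $\maclS_{1/2}(\rr d)$ because the $e^{\ep|\cdot|^2}$ growth of $F$ is absorbed by the $\maclS_{1/2}(\rr{2d})$ decay of $V_\psi\fy$ (Proposition \ref{stftGelfand2}(1)). Moyal's identity then yields $V_\psi f=F$, hence $V_\psi f\cdot\omega=G\in\mascB$, i.{\,}e.\ $f\in M_\psi(\omega,\mascB)$, and $\|f_n-f\|_{M_\psi(\omega,\mascB)}=\|G_n-G\|_{\mascB}\to 0$.

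The main obstacle is the pointwise estimate \eqref{ptwbd}: the completion of squares must be carried out so that the factor $e^{\ep_0|\cdot|^2}$ coming from $\omega^{-1}$ is absorbed by the Gaussian kernel $e^{-\delta|\cdot|^2}$ while generating only an arbitrarily small exponential growth in $(x,\xi)$. This quantitative balance, rather than any soft functional-analytic fact, is what actually drives the completeness.
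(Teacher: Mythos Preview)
Your proof is correct. The paper takes a closely related but differently organized route: rather than deriving your pointwise estimate on $V_\psi f$, it proves a continuous embedding $M_\psi(\omega,\mascB)\hookrightarrow(\maclS_{1/2,\ep})'(\rr d)$ for every $\ep>0$ by bounding $|(f,\fy)_{L^2}|=|(V_\psi f,V_\psi\fy)_{L^2}|$ via H\"older on $\mascB\times\mascB'$, using $1/\omega\le C_\ep e^{\ep|\cdot|^2}$ together with the $\maclS_{1/2}(\rr{2d})$ decay of $V_\psi\fy$. A Cauchy sequence then converges in each $(\maclS_{1/2,\ep})'$, hence in $\maclS_{1/2}'$, to some $f$; testing against $M_\xi T_x\psi$ gives $V_\psi f_j\to V_\psi f$ pointwise, and Fatou's lemma yields $f\in M_\psi(\omega,\mascB)$ and completeness. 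Your approach makes the Gaussian bookkeeping explicit through the reproducing formula and completion of squares, and constructs the limit $f$ by hand from the pointwise limit $F$; the paper stays at the abstract dual-space level and lets Fatou do the work. Both arguments ultimately pivot on the same H\"older step and the same interplay between the Gaussian decay of $V_\psi\psi$ and the sub-Gaussian growth of $1/\omega$. One small remark: your sentence ``Moyal's identity then yields $V_\psi f=F$'' is really the statement that the reproducing identity $V_\psi f_n(x,\xi)=\nm\psi{L^2}^{-2}(V_\psi f_n,V_\psi(M_\xi T_x\psi))_{L^2}$ passes to the limit $F$ by dominated convergence (licensed by your uniform pointwise bound), which is what forces $F$ into the range of $V_\psi$; this is easy but is not Moyal per se.
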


\par

\begin{proof}
We may assume that $\nm \psi {L^2}=1$, and start by proving that
$M_\psi (\omega ,\mascB)$ is continuously embedded in
$(\maclS _{1/2 ,\ep})'(\rr d)$, for every choice of $\ep >0$.
We have $\psi \in \maclS _{1/2,\ep _0}$, for some $\ep _0>0$.

\par

Therefore let $\ep >0$ be arbitrary. By a straight-forward combination of
Lemma \ref{GSFourierest} and Proposition \ref{stftGelfand1} and their proofs it follows that
for some $\delta >0$ we have
\begin{equation}\label{Vpsifyest}
\nm {V_\psi \fy}{\maclS _{1/2,\delta}}\le C_\ep \nm \fy{\maclS _{1/2,\ep}}
\nm \psi {\maclS _{1/2,\ep _0}},
\end{equation}
for some constant $C_\ep>0$, which only depends on $\ep$. (Cf. \cite{CPRT10}.)

\par

Now we recall that
$$
(f,\fy )_{L^2(\rr d)} \equiv (V_\psi f,V_\psi \fy )_{L^2(\rr {2d})},
\quad \fy \in \maclS _{1/2,\ep}(\rr d),
$$
for any $f\in M_\psi (\omega ,\mascB )$. Hence, \eqref{omegaussbound},
\eqref{Vpsifyest} and H{\"o}lder's inequality give
\begin{multline*}
|(f,\fy )_{L^2}| = |(V_\psi f,V_\psi \fy )_{L^2}|
\le
\nm f{M_\psi (\omega ,\mascB )}\nm {V_\psi \fy /\omega}{\mascB '}
\\[1ex]
\le
C_{1} \nm f{M_\psi (\omega ,\mascB )}
\nm {V_\psi \fy \cdot e^{\delta |\cdo |^2}\cdot \eabs \cdo ^{-2d-1}}{\mascB '}
\\[1ex]
\le
C_{2} \nm f{M_\psi (\omega ,\mascB )}
\nm {V_\psi \fy \cdot e^{\delta |\cdo |^2}}{L^\infty}\nm {\eabs \cdo ^{-2d-1}}{\mascB '}
\le
C_{3} \nm f{M_\psi (\omega ,\mascB )}
\nm {V_\psi \fy }{\maclS _{1/2,\delta}},
\\[1ex]
\le
C_4 \nm f{M_\psi (\omega ,\mascB )} \nm \fy {\maclS _{1/2 ,\ep}},
\end{multline*}
for some constants $C_1,\dots ,C_4>0$. Consequently, $M_\psi (\omega ,\mascB )$
is continuously embedded in $(\maclS _{1/2 ,\ep})'(\rr d)$ and in
$(\maclS _{1/2})'(\rr d)$ for every $\ep >0$.

\par

Now let $\{ f_j \} _{j=1}^\infty$ be a Cauchy sequence in $M_\psi (\omega ,\mascB )$.
Since $(\maclS _{1/2 ,\ep})'(\rr d)$ decreases when $\ep$ decreases, \eqref{GSspacecond1}$'$ in combination with the previous embedding properties show that there is a element $f\in (\maclS _{1/2})'(\rr d)$ (which is independent of $\ep$) such that $f_j\to f$ in $(\maclS _{1/2 ,\ep})'$ as $j\to \infty$.

\par

Since $V_\psi f_j \to V_\psi f$ pointwise, it now follows that $f\in M_\psi (\omega ,\mascB )$ by Fatou's lemma. This shows that $M_\psi (\omega ,\mascB )$ is a Banach space, and the proof is complete.
\end{proof}

\par

Next we consider the case when $\mascB =L^2$ and $\omega $ is rotation invariant
in each coordinate. In this case we have the following.

\par

\begin{lemma}\label{basislemma}
Let $\omega \in \mascP  _{Q}^0(\rr {2d})$ be rotation invariant. If
$$
c_{\alpha}=\nm {z^\alpha}{A^2_{(\omega)}}^{-1},\quad \text{then}
\quad \{ c_\alpha z^\alpha  \} _{\alpha \in \mathbf N^d}
$$
is an orthonormal basis for $A^2_{(\omega)}(\cc d)$.
\end{lemma}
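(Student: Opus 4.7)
The plan is to verify orthogonality and completeness separately. Before starting, I would check that $c_\alpha$ is well-defined, i.e.\ that $0<\nm{z^\alpha}{A^2_{(\omega)}}<\infty$. The positivity is immediate since $z^\alpha$ is not the zero function. For finiteness, $\omega\in \mascP _Q^0(\rr {2d})$ implies the subgaussian bound $\omega(z)\le C_\ep e^{\ep |z|^2}$ for every $\ep>0$. Choosing $\ep$ small enough, the weight $\omega(2^{1/2}\bar z)^2 e^{-|z|^2}$ is dominated by $C\, e^{-|z|^2/2}$, against which every polynomial is square-integrable; hence $z^\alpha\in A^2_{(\omega)}$.

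For orthogonality, I would exploit the rotation invariance of $\omega$ coordinatewise. Since $\omega(z)$ depends only on $|z|$, it is in particular invariant under the unitary map $T_k^\theta:(z_1,\dots,z_d)\mapsto (z_1,\dots,e^{\im \theta}z_k,\dots,z_d)$, which also preserves the Gaussian measure $d\mu$. For $\alpha\neq\beta$, pick $k$ with $\alpha_k\neq\beta_k$. Applying $T_k^\theta$ inside the integral
\[
(z^\alpha,z^\beta)_{A^2_{(\omega)}}=\int_{\cc d} z^\alpha\overline{z^\beta}\, \omega(2^{1/2}\overline z)^2\, d\mu(z),
\]
and using that the measure and $\omega$ are invariant while $z^\alpha\overline{z^\beta}$ acquires the factor $e^{\im(\alpha_k-\beta_k)\theta}$, gives the identity $(z^\alpha,z^\beta)_{A^2_{(\omega)}}=e^{\im(\alpha_k-\beta_k)\theta}(z^\alpha,z^\beta)_{A^2_{(\omega)}}$ for every $\theta$, forcing the inner product to vanish.

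For completeness, I would show that if $F\in A^2_{(\omega)}$ satisfies $(F,z^\beta)_{A^2_{(\omega)}}=0$ for every $\beta$, then $F=0$. Expand $F$ in its Taylor series $F(z)=\sum_\alpha a_\alpha z^\alpha$, which converges uniformly on every compact set. For fixed $R>0$ and $\beta$, the uniform convergence on $\overline{B_R(0)}$ allows interchange of sum and integral:
\[
\int_{|z|\le R}F(z)\overline{z^\beta}\,\omega(2^{1/2}\overline z)^2\, d\mu(z)=\sum_\alpha a_\alpha \int_{|z|\le R}z^\alpha\overline{z^\beta}\,\omega(2^{1/2}\overline z)^2\, d\mu(z).
\]
Since the ball $\{|z|\le R\}$ is also invariant under each $T_k^\theta$, the orthogonality argument above applies on it, so the right-hand side collapses to $a_\beta\int_{|z|\le R}|z^\beta|^2\omega(2^{1/2}\overline z)^2\, d\mu$. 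Letting $R\to\infty$, the left-hand side converges to $(F,z^\beta)_{A^2_{(\omega)}}=0$ by dominated convergence (using Cauchy--Schwarz together with $F\omega,z^\beta\omega\in L^2(d\mu)$), while the right-hand side converges to $a_\beta\nm{z^\beta}{A^2_{(\omega)}}^2$. Thus $a_\beta=0$ for every $\beta$, forcing $F\equiv 0$.

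The main technical point is the double passage to the limit in the completeness step: to pass from the pointwise Taylor expansion of $F$ to a statement about the weighted $L^2$-inner product, one must localize to balls (where uniform convergence gives a free interchange of sum and integral), exploit orthogonality on those balls, and only then use the global integrability of $F\omega$ and $z^\beta\omega$ to remove the truncation. Everything else reduces to the change-of-variables argument coming from the rotation invariance of $\omega$.
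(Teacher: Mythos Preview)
Your proof is correct and follows essentially the same route as the paper: rotation invariance yields orthogonality, and completeness follows by showing that $(F,z^\beta)_{A^2_{(\omega)}}=0$ for all $\beta$ forces all Taylor coefficients of $F$ to vanish. The only real difference is technical: the paper passes to polar coordinates $z=(r_1e^{i\theta_1},\dots,r_de^{i\theta_d})$ and uses the absolute convergence $\sum_\beta |a_\beta r^\beta|<\infty$ at each fixed $r$ to swap the angular integral with the sum directly, whereas you truncate to balls, interchange via uniform convergence on compacta, and then remove the truncation by dominated convergence; both justifications are standard and equally clean.
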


\par

In the proof of Lemma \ref{basislemma} and in several other situations later on,
we encounter the integral
\begin{equation}\label{I0def}
\int _{\Delta _d}e^{i\scal n\theta}\, d\theta
=
\begin{cases}
(2\pi )^d, & n =0,
\\[1ex]
0, & n\neq 0,
\end{cases}
\quad
\text{when} \quad
\Delta _d=[0,2\pi ]^d\quad \text{and}\quad n\in \mathbf Z^d .
\end{equation}

\par

\begin{proof}
First we prove that the scalar product $(z^\alpha ,z^\beta)_{A^2_{(\omega )}}$
is zero when $\alpha \neq \beta$. By polar coordinates we have
$$
z=(r_1e^{i\theta _1},\dots ,r_de^{i\theta _d}),
$$
where
$$
r=(r_1,\dots ,r_d)\in [0,\infty )^d,\quad \text{and}\quad \theta =
(\theta _1,\dots ,\theta _d)\in \Delta _d .
$$
Furthermore it follows from the assumptions that $\omega (z)
=\omega _0(2^{-1/2}r)$, for some positive function $\omega _0$
on $[0,\infty )^d$. Hence \eqref{BLpnorm} and \eqref{I0def} give
$$
(z^\alpha ,z^\beta )_{A^2_{(\omega )}} = \pi ^{-d}I_{(\omega _0)}
(\alpha +\beta )\cdot \int _{\Delta _d}e^{i\scal {\alpha -\beta}\theta}
\, d\theta = 2^dI_{(\omega _0)}(2\alpha )\delta _{\alpha ,\beta}, 
$$
where
\begin{equation*}%\label{I1def}
I_{(\omega _0)} (\alpha ) = \int _{[0,\infty )^d} \omega _0(r)r^{\alpha }
e^{-|r|^2}r_1\cdots r_d\, dr > 0,\qquad \alpha \in \mathbf N^d,
\end{equation*}
and $\delta _{\alpha ,\beta}$ is the Kronecker's delta function. In particular,
$(z^\alpha ,z^\beta )_{A^2_{(\omega )}}=0$, if and only if $\alpha \neq \beta$, and
we have proved that $\{ c_\alpha z^\alpha  \}$ is an orthonormal system for
$A^2_{(\omega)}$.

\par

It remains to prove that the set of linear combinations of
$c_\alpha z^\alpha$ spans $A^2_{(\omega)}$. By Hahn-Banach's
theorem, it suffices to prove that if $F\in A^2_{(\omega )}$, and
\begin{equation}\label{fzalpha}
(F,z^\alpha )_{A^2_{(\omega )}} =0\quad \text{for all}\ \alpha \in \mathbf N^d,
\end{equation}
implies that $F=0$.

\par

Therefore assume that \eqref{fzalpha} holds. Since $F$ is entire, it follows that
its Taylor series expansion
\begin{equation}\label{fTaylor}
F(z) =\sum _\beta a_\beta z^\beta ,\qquad a_\beta =\frac {F^{(\beta )}(0)}{\beta !}
\end{equation}
is locally uniformly convergent, and that
\begin{equation}\label{perfunct2}
\sum _\beta |a_\beta r^\beta| <\infty
\end{equation}
holds. Hence
%
%. Hence, if $D_R$ is the poly-disc
%$$
%\sets {z\in \cc d}{|z_j|\le R,\ j=1,\dots ,d},
%$$
\eqref{I0def} gives
\begin{multline*}
0=(F,z^{\alpha})_{A^2_{(\omega )}} =  \int _{\cc d}\left (
\sum _\beta a_\beta z^\beta \right ) \overline z^\alpha \omega (2^{1/2}\overline z )
\, d\mu (z)
\\[1ex]
=\pi ^{-d}\int _{[0,\infty )^d}\left ( \int _{\Delta _d} \left (\sum _\beta a_\beta
r^{\alpha +\beta}e^{i\scal {\beta -\alpha}\theta } \right ) \, d\theta \right )
\omega _0(r)e^{-|r|^2}r_1\cdots r_d\, dr
\\[1ex]
=\pi ^{-d}\int _{[0,\infty )^d} \left (\sum _\beta a_\beta r^{\alpha +\beta}
\left ( \int _{\Delta _d}e^{i\scal {\beta -\alpha}\theta } \, d\theta  \right ) \right )
\omega _0(r)e^{-|r|^2}r_1\cdots r_d\, dr
\\[1ex]
= 2^dI_{(\omega _0)}(2\alpha ) a_\alpha .
\end{multline*}
Since $I_{(\omega _0)}(2\alpha )>0$, we get $a_\alpha =0$ for every $\alpha$.
Consequently, $F$ is identically zero, and the proof is complete.
\end{proof}

\par

We may now prove Theorem \ref{mainthm} in the important special case that $\mascB =L^2$ and $\omega$ is rotation invariant in every coordinate. 

\par

\begin{prop}\label{mainthmspecial}
If $\omega \in \mascP  _{Q}^0(\cc {d})$ is rotation invariant in each coordinate, then $A_0
(\omega ,L^2)=A(\omega ,L^2)$.
\end{prop}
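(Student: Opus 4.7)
By Proposition \ref{mainstep1prop} we already know that $\mathfrak V$ is an isometric injection from $M(\omega ,L^2)$ into $A(\omega ,L^2)$, so the task is entirely about surjectivity: given $F\in A(\omega ,L^2)=A^2_{(\omega )}(\cc d)$, we must exhibit some $f\in M(\omega ,L^2)$ with $\mathfrak Vf=F$. The plan is to build a preimage by expanding $F$ in the orthonormal basis supplied by Lemma \ref{basislemma}, and transporting that expansion back to $M(\omega ,L^2)$ via a Hermite basis.

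The key idea is to use the classical identity $\mathfrak Vh_\alpha (z)=z^\alpha /\sqrt{\alpha !}$, which identifies the normalized monomials $e_\alpha (z)=z^\alpha /\sqrt{\alpha !}$ as an ONB of the unweighted Fock space. In the weighted situation, Lemma \ref{basislemma} gives that $\{c_\alpha z^\alpha\}_{\alpha \in \mathbf N^d}$ with $c_\alpha =\nm{z^\alpha}{A^2_{(\omega )}}^{-1}$ is an ONB of $A^2_{(\omega )}$. This suggests setting
$$
f_\alpha \equiv c_\alpha \sqrt{\alpha !}\, h_\alpha ,\qquad \alpha \in \mathbf N^d,
$$
so that formally $\mathfrak Vf_\alpha =c_\alpha z^\alpha$. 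First I would verify directly from \eqref{bargstft2} and the definition of $\nm \cdot {M(\omega ,L^2)}$ that $h_\alpha \in M(\omega ,L^2)$ and that $\nm{f_\alpha}{M(\omega ,L^2)}=1$: the integrand $|V_\phi h_\alpha (x,\xi )|^2\omega (x,\xi )^2$ is the product of a polynomial in $(x,\xi )$ with $e^{-(|x|^2+|\xi |^2)/2}\omega (x,\xi )^2$, and since $\omega \in \mascP _Q^0$ satisfies $\omega (x,\xi )^2\le C_\ep e^{\ep (|x|^2+|\xi |^2)}$ for every $\ep >0$, one may choose $\ep <1/4$ to get integrability. A change of variables then reveals that this integral equals $c_\alpha ^{-2}/\alpha !$, i.e.\ that $\nm{f_\alpha}{M(\omega ,L^2)}=1$. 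Because $\mathfrak V$ is isometric, the inner product identity $(f_\alpha ,f_\beta )_{M(\omega ,L^2)}=(\mathfrak Vf_\alpha ,\mathfrak Vf_\beta )_{A^2_{(\omega )}}=\delta _{\alpha ,\beta}$ then shows that $\{f_\alpha\}$ is an orthonormal system in $M(\omega ,L^2)$.

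With these building blocks in place, the final step is the reconstruction. Given $F\in A^2_{(\omega )}$, Lemma \ref{basislemma} yields unique coefficients $b_\alpha \in \mathbf C$ such that
$$
F=\sum _\alpha b_\alpha c_\alpha z^\alpha ,\qquad \sum _\alpha |b_\alpha |^2 = \nm F{A^2_{(\omega )}}^2<\infty .
$$
I would then set $f_N\equiv \sum _{|\alpha |\le N}b_\alpha f_\alpha$. By orthonormality of $\{f_\alpha\}$ in $M(\omega ,L^2)$, the sequence $\{f_N\}$ is Cauchy, and by Proposition \ref{modbanach} the space $M(\omega ,L^2)$ is a Banach space, so $f_N\to f$ for some $f\in M(\omega ,L^2)$ with $\nm f{M(\omega ,L^2)}=\nm F{A^2_{(\omega )}}$. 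Applying the isometry $\mathfrak V$ to both sides and using $\mathfrak Vf_\alpha =c_\alpha z^\alpha$ yields $\mathfrak Vf=F$, completing the proof.

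The main obstacle is the first step: establishing $h_\alpha \in M(\omega ,L^2)$ with the precise norm $\nm{h_\alpha}{M(\omega ,L^2)}=\sqrt{\alpha !}/c_\alpha $. Everything else — the basis expansion, the Cauchy argument, the identification $\mathfrak Vf=F$ — reduces to manipulations with orthonormal bases once we have an isometric correspondence at the level of basis elements. The verification of this norm identity is essentially a translation of the rotation-invariant polar-coordinate calculation carried out in the proof of Lemma \ref{basislemma} through the explicit relation \eqref{bargstft1}--\eqref{bargstft2} between $\mathfrak V$ and $V_\phi$.
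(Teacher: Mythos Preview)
Your proof is correct and follows essentially the same route as the paper: use $\mathfrak Vh_\alpha=z^\alpha/\sqrt{\alpha!}$, transport the orthonormal basis $\{c_\alpha z^\alpha\}$ of $A^2_{(\omega)}$ from Lemma~\ref{basislemma} back to $M^2_{(\omega)}$, and invoke Proposition~\ref{modbanach} for completeness. One small remark: what you call ``the main obstacle'' is not an obstacle at all. You do not need to verify $\nm{f_\alpha}{M(\omega,L^2)}=1$ by a direct polar-coordinate integration, because the isometry in Proposition~\ref{mainstep1prop} (which at the level of norms is just the pointwise identity \eqref{bargstft1}--\eqref{bargstft2}) already gives $\nm{h_\alpha}{M(\omega,L^2)}=\nm{z^\alpha/\sqrt{\alpha!}}{A^2_{(\omega)}}=1/(c_\alpha\sqrt{\alpha!})$, and finiteness of the right-hand side is exactly the computation $I_{(\omega_0)}(2\alpha)<\infty$ in Lemma~\ref{basislemma}.
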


\par

\begin{proof}
We use the same notations as in the proof of Lemma \ref{basislemma}.
The image under the Bargmann transform $\mathfrak V$ of the hermite
function $h_\alpha$ is $z^\alpha /(\alpha !)^{1/2}$. Since $M^2_{(\omega )}$
is a Banach space in view of Proposition \ref{modbanach}, and  $\mathfrak V$
is isometric and injective from $M^2_{(\omega )}$ to $A^2_{(\omega )}$, it
follows from Lemma \ref{basislemma} that
$$
\{ (\alpha !)^{1/2}c_\alpha h_\alpha \} _{\alpha \in \mathbf N^d}
$$
is an orthonormal basis of $M^2_{(\omega )}$, and that $\mathfrak V$ is
bijective from  $M^2_{(\omega )}$ to $A^2_{(\omega )}$.
\end{proof}

\par

\begin{rem}
Lemma \ref{basislemma} and Proposition \ref{mainthmspecial} give equalities
between weighted $l^p$-norms of the Taylor coefficients and weighted $L^p$
norm of corresponding entire functions, when the involved weights are rotation
invariant and $p=2$. In general it is difficult to find such equalities between
coefficients and functions in other situations when the weights are not rotation
invariant, or $p$ is not equal to $2$. We refer to \cite{BiGa} for positive results
in this directions.
\end{rem}

\par

\begin{proof}[Proof of Theorem \ref{mainthm}]
By Proposition \ref{mainstep1prop} it follows that the map $f\mapsto
\mathfrak Vf$ is an isometric injective map from $M(\omega ,\mascB )$
to $A(\omega ,\mascB )$. We have to show that this map is
surjective.

\par

Therefore assume that $F\in A(\omega ,\mascB )$. Since $M(\omega _1,\mascB )\subseteq M(\omega _2,\mascB )$, as $\omega _2\le C\omega _1$, it follows from Theorem \ref{analident} and Proposition \ref{mainthmspecial} that
there is an element $f\in M^2_{(\omega _1)}(\rr d)\subseteq \mathcal S'_{1/2}(\rr d)$ such that $F=\mathfrak
Vf$, for some $\omega _1$. We have
$$
\nm f{M(\omega ,\mascB )} =\nm {\mathfrak Vf}{A(\omega ,\mascB )} =\nm F{A(\omega ,\mascB )}<\infty .
$$
Hence, $f\in M(\omega ,\mascB )$, and the result follows. The proof
is complete.
\end{proof}

\par

As a consequence of Theorems \ref{analident} (2) and \ref{mainthm},
we may identify the space $\Upsilon (\rr d)$ in Remark \ref{Upsilspace}
with unions of modulation spaces. More precisely we have the following.

\par

\begin{prop}\label{capcupmodsp}
Let $\mascB $ be a mixed quasi-norm space on $\rr {2d}$. Then
\begin{equation*}%\label{cuprelation}
\bigcup M(\omega ,\mascB )=\Upsilon (\rr d),
\end{equation*}
where the union is taken over all rotation invariant $\omega \in \mascP _{Q}^0(\rr {2d})$.

\par

Furthermore, if $0< \gamma <2$, $0<\ep < \min (\gamma ,2-\gamma )$ and
$\omega (x,\xi ) =Ce^{c(|x|^{\gamma}+|\xi |^{\gamma})}$, for some constants
$C>0$ and $c\in \mathbf R$ which are independent of $x,\xi \in \rr d$,
then the following is true:
\begin{enumerate}
\item if $c>0$, then $\maclS _{1/(\gamma +\ep )}(\rr d)\subseteq
M(\omega ,\mascB )\subseteq \maclS _{1/(\gamma -\ep )}(\rr d)$;

\vrum

\item if $c<0$, then $\maclS _{1/(\gamma -\ep )}'(\rr d)\subseteq
M(\omega ,\mascB )\subseteq \maclS _{1/(\gamma +\ep )}'(\rr d)$.
\end{enumerate}
\end{prop}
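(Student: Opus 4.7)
The plan is to reduce both the union identity and the Gelfand-Shilov containments to pointwise estimates on $V_\phi f$, using Theorems \ref{mainthm} and \ref{analident} to pass from $\mascB$-estimates to $L^\infty$-estimates, and Proposition \ref{stftGelfand2} to convert such estimates into Gelfand-Shilov membership. The family $\Omega =\sets {\sigma _N\omega}{N\in \mathbf Z}$ is admissible for every rotation invariant $\omega \in \mascP _Q^0(\rr {2d})$ by Example \ref{exadmweights}. Hence for $f\in M(\omega ,\mascB )$ with such an $\omega$, Theorem \ref{mainthm} gives $\mathfrak Vf\in A(\omega ,\mascB )$, Theorem \ref{analident} gives $\mathfrak Vf\in A(\omega _2,L^\infty )$ for some $\omega _2=\sigma _N\omega$, and unwinding via \eqref{bargstft2} and \eqref{UVdef} produces the pointwise bound $|V_\phi f(x,\xi )|\le C/\omega _2(x,\xi )$. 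Since $\omega _2\in \mascP _Q^0$, for each $\ep >0$ we have $\omega _2\ge C_\ep ^{-1}e^{-\ep (|x|^2+|\xi |^2)}$, so $f\in \Upsilon (\rr d)$. Conversely, given $f\in \Upsilon (\rr d)$, the continuous function $V_\phi f$ satisfies the hypothesis of Lemma \ref{PQGweights}, producing a rotation invariant $\omega _0\in \mascP _Q^0(\rr {2d})$ with $|V_\phi f|\le \omega _0$. Pick $\omega _1(x,\xi )=\eabs {x,\xi }^{-N}\in \mascP _Q^0(\rr {2d})$ with $N$ large enough that a routine iteration through the mixed norm layers places $\omega _1$ in $\mascB$, and set $\omega =\omega _1/\omega _0$. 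Since $\mascP _Q^0(\rr {2d})$ is a group under multiplication, $\omega$ is rotation invariant and in $\mascP _Q^0(\rr {2d})$, and $|V_\phi f\cdot \omega |\le \omega _1\in \mascB$ gives $f\in M(\omega ,\mascB )$.

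For assertion (1), set $s=1/(\gamma +\ep )$ and $s'=1/(\gamma -\ep )$. If $f\in \maclS _s(\rr d)$, Proposition \ref{stftGelfand2}(1) supplies $\delta >0$ with $|V_\phi f(x,\xi )|\le C\exp (-\delta (|x|^{\gamma +\ep }+|\xi |^{\gamma +\ep }))$; multiplying by $\omega$ and using that $c|x|^\gamma \le (\delta /2)|x|^{\gamma +\ep }$ outside a bounded set yields $V_\phi f\cdot \omega \in \mascB$. Conversely, the pointwise estimate of the first paragraph reads $|V_\phi f(x,\xi )|\le C\eabs {x,\xi }^{-N}e^{-c(|x|^\gamma +|\xi |^\gamma )}$, and the inequality $c|x|^\gamma \ge (c/2)|x|^{\gamma -\ep }$ outside a bounded set (together with absorbing the polynomial factor) gives $|V_\phi f(x,\xi )|\le C'\exp (-(c/2)(|x|^{\gamma -\ep }+|\xi |^{\gamma -\ep }))$, whence $f\in \maclS _{s'}(\rr d)$ by Proposition \ref{stftGelfand2}(1).

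Assertion (2) proceeds analogously via parts (2) and (3) of Proposition \ref{stftGelfand2}. For $f\in \maclS '_{s'}(\rr d)$ one has $|V_\phi f|\le C\exp (\delta (|x|^{\gamma -\ep }+|\xi |^{\gamma -\ep }))$, and with $c<0$ the product $V_\phi f\cdot \omega$ enjoys net decay $\exp (-(|c|/2)(|x|^\gamma +|\xi |^\gamma ))$, hence lies in $\mascB$. Conversely, the pointwise estimate becomes $|V_\phi f|\le C\eabs {x,\xi }^{-N}\exp (|c|(|x|^\gamma +|\xi |^\gamma ))$; for each $\ep '>0$ the bound $|c||x|^\gamma \le \ep '|x|^{\gamma +\ep }$ holds outside a bounded set, so Proposition \ref{stftGelfand2}(3) places $f$ in $\maclS '_s(\rr d)$. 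The strict inequality $\ep <\min (\gamma ,2-\gamma )$ ensures both that $1/(\gamma \pm \ep )$ is a valid Gelfand-Shilov index and that the required exponential comparisons hold. The one technically delicate step is producing $\omega _1\in \mascB \cap \mascP _Q^0(\rr {2d})$ that is rotation invariant; all other steps are elementary exponential comparisons or direct applications of the machinery already in place.
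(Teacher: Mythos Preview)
Your proof is correct and follows essentially the same route as the paper. The paper's proof is more terse: it observes at the outset that Theorems \ref{analident} and \ref{mainthm} let one replace $\mascB$ by $L^\infty$ throughout (the union over rotation invariant $\omega\in\mascP_Q^0$ is independent of $\mascB$), after which both inclusions become one-line consequences of the definition of $\mascP_Q^0$ and Lemma \ref{PQGweights}. You unfold this reduction explicitly in the first inclusion, and in the second inclusion you avoid the full reduction to $L^\infty$ by inserting the polynomial weight $\omega_1=\eabs{\cdo}^{-N}$ to force membership in $\mascB$ directly; this is a harmless variation on the same idea, since $\{\sigma_N\omega\}$ is exactly the admissible family the paper implicitly invokes. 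For parts (1) and (2) the paper simply defers to Proposition \ref{stftGelfand2}; your explicit exponential comparisons are what the reader would supply.
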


\par

\begin{proof}
By Theorems \ref{analident} (2) and \ref{mainthm}, we may assume that $\mascB =L^\infty$.
By the assumptions on $\omega$ we get $M^\infty _{(\omega )}\subseteq \Upsilon$.

\par

Let $f\in \Upsilon (\rr d)$. Then it follows from Proposition \ref{stftGelfand2}, Lemma
\ref{PQGweights} and the fact that $\mascP _{Q}^0$ is a group under multiplications that
$$
|V_\phi f| \le 1/\omega
$$
for some $\omega \in \mascP _{Q}^0(\rr {2d})$, where $\phi(x)=\pi ^{-d/4}e^{-|x|^2/2}$. 
This is the same as $f\in M^\infty _{(\omega )}$, and the first part follows.

\par

The assertions (1) and (2) follow by similar arguments in combination
of Proposition \ref{stftGelfand2} and are left for the reader. The proof is complete.
\end{proof}

\par

%%%%%%%%%%%%%%%%%%%%%%%%%%%%%
\section{Basic properties for spaces of analytic functions
and modulation spaces}\label{sec4}
%%%%%%%%%%%%%%%%%%%%%%%%%%%%%

\par

In this section we establish basic properties for the spaces
$A(\omega ,\mascB )$ and $M(\omega ,\mascB )$ when $\omega$
is an appropriate weight and $\mascB $ is a mixed quasi-norm
space. In view of Theorem \ref{mainthm} any property of
$A(\omega ,\mascB )$ carry over to $M(\omega ,\mascB )$,
and vice versa. We start by proving that these spaces are
quasi-Banach spaces. Then we prove that if $\nu _2(\mascB )
<\infty$, then $P(\cc d)$ is dense in $A(\omega ,\mascB )$,
and that the dual of $A(\omega ,\mascB )$ can be identified
with $A(1/\omega ,\mascB ')$ through a unique extension of
the $A^2$ form on $P(\cc d)$. A straight-forward consequence
of the latter results is that $P(\cc d)$ is dense in $A(\omega ,\mascB )$
with respect to the weak$^*$-topology, when $\nu _1(\mascB )>1$.
(Recall Subsection \ref{subsec1.4} for the definitions of $\nu _
(\mascB )$ and $\nu _2(\mascB _2)$.) Thereafter we introduce
the concept of narrow convergence to get convenient density
properties for certain $\mascB $ with $\nu _1(\mascB )=1$ and
$\nu _2(\mascB )=\infty$. Finally we formulate corresponding
results for modulation spaces.

\par

A cornerstone of these investigations concerns the projection operator
\begin{equation}\label{A2projection}
(\Pi _AF)(z) =\int F(w)e^{(z,w)}\, d\mu  (w),
\end{equation}
related to the reproducing formula \eqref{reproducing}. Here recall
that $d\mu (z)=\pi ^{-d}e^{-|z|^2}\, d\lambda (z)$, where $d\lambda (z)$
is the Lebesgue measure on $\cc d$. The minimal assumption on $F$
is that it should be locally integrable on $\cc d$ and satisfy
\begin{equation}\label{condanal}
\nm {F\cdot e^{N|\cdo |-|\cdo |^2}}{L^p}<\infty \quad \text{for every} \quad N \ge 0,
\end{equation}
where $p\in (0,\infty ]$ is fixed. We note that \eqref{condanal} is fulfilled for $p=1$ if $F\in L^1_{loc}(\cc d)$ and satisfies
\begin{equation}\label{condadjoint}
\int |F(z)|e^{-\gamma |z|^2}\, d\lambda (z) < \infty \quad \text{for some}  \quad \gamma <1.
\end{equation}

\par

\begin{lemma}\label{projlemma}
Let $\gamma ,\delta >0$ and let $F\in L^1_{loc}(\cc d)$. Then the following is true:
\begin{enumerate}
\item if $p\in [1,\infty ]$ and $F$ satisfies \eqref{condanal},
then $\Pi _AF$ is an entire function on $\cc d$;

\vrum

\item if $p\in (0,\infty ]$ and $F$ satisfies \eqref{condanal}
and in addition is entire, then $\Pi _AF=F$;

\vrum

\item if $4\delta (1-\gamma )\ge 1$, then for some constant $C>0$ it holds
$$
\nm {(\Pi _AF) \cdot e^{-\delta |\cdo |^2}}{L^1}\le C\nm {Fe^{-\gamma |\cdo |^2}}{L^1}
$$
when $F\in L^1_{loc}(\cc d)$ and satisfies \eqref{condadjoint};

\vrum

\item if $\gamma <3/4$ and \eqref{condadjoint} is fulfilled, then
\begin{equation}\label{adjformula}
(F,G)_{B^2} = (F, \Pi _A G)_{B^2} = (\Pi _AF,G)_{B^2},
\end{equation}
for every polynomial $G$ (which is analytic) on $\cc d$.
\end{enumerate}
\end{lemma}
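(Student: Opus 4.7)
The plan is to handle the four parts in sequence, exploiting that the kernel $e^{(z,w)}$ is holomorphic in $z$ and antiholomorphic in $w$, together with standard Gaussian integrals and the orthogonality of monomials under $d\mu$. For part (1), I would prove directly that $\Pi _AF$ is entire by differentiation under the integral. For $z$ in a ball $B_r(z_0)\subset \cc d$, the bound $|e^{(z,w)}|\le e^{(|z_0|+r)|w|}$ combined with \eqref{condanal} (using H{\"o}lder's inequality when $p>1$) supplies an integrable majorant for the integrand and for its $z$-derivatives. Since $\partial _{\overline{z}_j}e^{(z,w)}\equiv 0$, interchanging derivatives with the integral yields $\overline{\partial }(\Pi _AF)=0$, hence $\Pi _AF\in A(\cc d)$.

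For part (2), I would show that the Taylor series of $\Pi _AF$ at the origin coincides with that of $F$, which (since both sides are entire by (1)) forces $\Pi _AF=F$. By Cauchy's formula applied coordinatewise, the $\alpha$-th Taylor coefficient of $\Pi _AF$ at $0$ equals
\[
\frac {1}{\alpha !}\partial _z^\alpha (\Pi _AF)(0) = \frac {1}{\alpha !}\int F(w)\overline{w}^\alpha \, d\mu (w),
\]
so the task reduces to verifying $\int F(w)\overline{w}^\alpha \, d\mu (w) = \alpha !\, a_\alpha$, where $F(w)=\sum _\beta a_\beta w^\beta$. Passing to polar coordinates $w_j=r_je^{i\theta _j}$ and truncating the integration region to the polydisc $|w_j|\le R$ for all $j$, on which the Taylor series converges absolutely and uniformly, one may interchange the truncated sum with the angular integral; \eqref{I0def} then kills every term except $\beta =\alpha$, and the resulting radial integral
\[
2^d a_\alpha \int _{[0,R]^d}\prod _{j=1}^d r_j^{2\alpha _j+1}e^{-|r|^2}\, dr
\]
converges to $\alpha !\, a_\alpha$ as $R\to \infty$. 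Dominated convergence on the original integral (with majorant supplied by \eqref{condanal}, via H{\"o}lder when $p\neq 1$) then yields the desired identity.

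For part (3), use $|(\Pi _AF)(z)|\le \pi ^{-d}\int |F(w)|e^{|z||w|-|w|^2}\, d\lambda (w)$, multiply by $e^{-\delta |z|^2}$, integrate in $z$, and apply Tonelli to swap the $z$- and $w$-integrations. Completing the square in $|z|$ shows
\[
\int _{\cc d}e^{|z||w|-\delta |z|^2}\, d\lambda (z)\le C\eabs w^{2d-1}e^{|w|^2/(4\delta )},
\]
and the hypothesis $4\delta (1-\gamma )\ge 1$ is precisely what makes $|w|^2/(4\delta )-|w|^2\le -\gamma |w|^2$, so the polynomial factor $\eabs w^{2d-1}$ is absorbed into $e^{-\gamma |w|^2}$. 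For part (4), the second equality in \eqref{adjformula} is immediate from (2), since $\Pi _AG=G$ for the polynomial $G$. For the first equality, expand $(\Pi _AF,G)_{B^2}$ as a double integral in $w$ and $z$ and swap the order; Fubini is applicable because, after the Gaussian computation of (3) with $\delta =1$ against the bound $|G(z)|\lesssim \eabs z^N$, the absolute-value integrand reduces to $C\int |F(w)|\eabs w^{N+2d-1}e^{-3|w|^2/4}\, d\lambda (w)<\infty $, which holds by \eqref{condadjoint} since $\gamma <3/4$. Using $\overline{(z,w)}=(w,z)$, the inner $z$-integral equals $\overline{\Pi _AG(w)}=\overline{G(w)}$ by (2), yielding $(\Pi _AF,G)_{B^2}=(F,G)_{B^2}$. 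The main obstacle is the tail control in part (2), which requires carefully exploiting the full strength of \eqref{condanal} uniformly in $|\alpha |$ to justify the passage $R\to \infty$.
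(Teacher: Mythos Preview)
Your overall strategy matches the paper's: parts (1), (2), and (4) proceed by differentiation under the integral, comparison of Taylor coefficients at the origin via polar coordinates and \eqref{I0def}, and Fubini together with the reproducing formula applied to the polynomial $G$. Two technical points need repair, however.

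In part (3) your bound $|e^{(z,w)}|\le e^{|z||w|}$ is lossy and produces the polynomial factor $\eabs w^{2d-1}$. At the borderline $4\delta (1-\gamma )=1$ you have $|w|^2/(4\delta )-|w|^2=-\gamma |w|^2$ exactly, so there is no slack to absorb that factor, and the claimed inequality fails as stated. The paper avoids this by using $|e^{(z,w)}|=e^{\operatorname{Re}(z,w)}$ and completing the square in $w$ rather than in $z$:
\[
|F(w)e^{(z,w)-|w|^2}|e^{-\delta |z|^2}\le |F(w)|e^{-\gamma |w|^2}\, e^{-(1-\gamma )\, |w-(2(1-\gamma ))^{-1}z|^2},
\]
after which the $z$-integral is a pure shifted Gaussian with constant value, and the estimate holds on the full range $4\delta (1-\gamma )\ge 1$ with no polynomial remainder. (Your argument for (4) is unaffected, since there $\gamma <3/4$ gives strict inequality and the polynomial is harmlessly absorbed.)

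In part (2), ``via H{\"o}lder when $p\neq 1$'' only works for $p\ge 1$; for $0<p<1$ H{\"o}lder runs the wrong way and gives no $L^1$ majorant for $|F(w)\overline w^{\alpha}|e^{-|w|^2}$. The paper handles this by first reducing to $p=1$ via Theorem~\ref{analident}, whose input is the subharmonicity of $|F|^r$ for entire $F$ (Lemma~\ref{analmeans}); you could equally invoke that mean-value inequality directly to pass from the $L^p$ bound in \eqref{condanal} to an $L^1$ bound.
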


\par

\begin{proof}
By H{\"o}der's inequality we may assume that $p=1$ when proving (1).
Let $E(z,w)= F(w)e^{(z,w)-|w|^2}$. The condition \eqref{condanal} implies
that if $z\in K$, where $K\subseteq \cc d$ is compact, then for each
multi-index $\alpha$ we have that $\partial ^\alpha _zE(z,w)$  is uniformly
bounded in $L^1$ with respect to $w$. The assertion (1) is now a
consequence of the fact that $z\mapsto E(z,w)$ is entire.

\par

Since $\sets {e^{t|z|-|z|^2}\in L^\infty _{loc}(\cc d)}{t\in \textbf R}$ is an
admissible family of weights, we may assume that $p=1$ in view of
Theorem \ref{analident}, when proving (2). The assertion then follows
from the same arguments as for the proof of Lemma A.2 in \cite{SiT2}.
In order to be self-contained we give here a proof.

\par

For every mutli-index $\alpha$ we have
\begin{align}
(\partial _z^\alpha E)(z,w) &= e^{(z,w)-|w|^2}\overline w^\alpha F(w)
\notag %\label{Gzder}
\intertext{and}
(\partial _z^\alpha \partial _{\overline z}E)(z,w) &= 0.\notag
\end{align}
Hence, by the assumptions it follows that the map $w\mapsto
(\partial _z^\alpha E)(z,w)$ belongs to $L^1(\cc d)$ for every $z\in \cc d$,
and that $F_0\equiv \Pi _AF$ in \eqref{A2projection} is analytic with 
derivatives
\begin{equation}\label{F1der}
(\partial ^\alpha F_0)(z) \equiv \int _{\cc d} e^{(z,w)}\overline w^\alpha F(w)\, d\mu (w).
\end{equation}
In particular we have
\begin{equation}\tag*{(\ref {F1der})$'$}
(\partial ^\alpha F_0)(0) \equiv \int _{\cc d} \overline w^\alpha F(w)\, d\mu (w).
\end{equation}

\par

We have to prove that $F_0=F$. Since both $F$ and $F_0$ are entire functions it suffices to prove
\begin{equation*}%\label{F1Feq}
\partial ^\alpha F_0(0)=\partial ^\alpha F(0),
\end{equation*}
for every multi-index $\alpha$.

\par

If  $w=(r_1e^{i\theta _1},\dots ,r_de^{i\theta _d})$, where
$r=(r_1,\dots ,r_d)\in [0,\infty )^d$ and $\theta =(\theta _1,
\dots ,\theta _d)\in \Delta _d=[0,2\pi ]^d$, then \eqref{fTaylor}
and \eqref{F1der}$'$ give
\begin{multline}\label{F1dercomp1}
\partial ^\alpha F_0(0) = \int _{\cc d} \overline w^\alpha F(w)\, d\mu  (w)
\\[1ex]
= \pi ^{-d}\int _{[0,\infty )^d}\Big (\int _{\Delta _d} r^\alpha e^{-i\scal
\theta \alpha}F(r_1e^{i\theta _1},\dots ,r_de^{i\theta _d})r_1\cdots
r_de^{-|r|^2}\, d\theta \Big )\, dr
\\[1ex]
=\pi ^{-d} \int _{[0,\infty )^d}  r^\alpha r_1\cdots r_de^{-|r|^2} J_\alpha (r)\, dr ,
\end{multline}
where
\begin{equation*}%\label{Jintdef}
J_\alpha (r) = \int _{\Delta _d} e^{-i\scal \theta \alpha}F(r_1e^{i\theta _1},\dots ,r_de^{i\theta _d})\,
d\theta = \int _{\Delta _d}\Big ( \sum _\beta a_\beta
r^\beta e^{i\scal \theta {\beta -\alpha}}\Big )\, d\theta 
\end{equation*}
By \eqref{perfunct2} it follows that we may interchange the order
of summation and integration. This gives
\begin{equation}\label{Jcomp}
J_\alpha (r) = \sum _\beta a_\beta r^\beta \int _{\Delta _d}
e^{i\scal \theta {\beta -\alpha}}\, d\theta =(2\pi )^da_\alpha r^\alpha ,
\end{equation}
in view of \eqref{I0def}

\par

By inserting \eqref{Jcomp} into \eqref{F1dercomp1} and
taking $u_j=r_j^2$ as new variables of integration, \eqref{fTaylor} gives
\begin{multline}\label{F0eqF}
\partial ^\alpha F_0(0) =2 ^{d}a_\alpha \int _{[0,\infty )^d}  r^{2\alpha}
r_1\cdots r_de^{-|r|^2}\, dr
\\[1ex]
= a_\alpha \int _{[0,\infty )^d}  u^{\alpha}e^{-(u_1+\cdots +u_d)}\, du
= a_\alpha \alpha ! =\partial ^\alpha F(0).
\end{multline}
This proves (2).

\par

The assertion (3) follows from the inequality
$$
|F(w)e^{(z,w)-|w|^2}e^{-\delta |z|^2}|\le |F(w)|e^{-\gamma |w|^2}
e^{-(1-\gamma )|w-(1-\gamma)^{-1}z/2|^2},
$$
and (4) is obtained by choosing $\delta >1$ in the latter estimate, giving that
$$
(z,w)\mapsto F(w)\overline{G(z)}e^{(z,w)-|w|^2}e^{-|z|^2}
$$
belongs to $L^1(\cc d\times \cc d)$ when $G$ is a polynomial. The relation
\eqref{adjformula} is now an immediate consequence of the reproducing
formula \eqref{reproducing} applied on $G$, and Fubbini's theorem.
\end{proof}

\par

\begin{rem}\label{derivrepr}
We note that if $F\in A(\cc d)$ and satisfies \eqref{condanal}, then
\begin{align*}
z^\alpha F(z) &= \int w^\alpha F(w)e^{(z,w)}\, d\mu  (w)
\intertext{and}
\partial ^\alpha F(z) &= \int \overline{w}^\alpha F(w)e^{(z,w)}\, d\mu (w),
\end{align*}
giving that
\begin{equation}\label{eq3.*}
\partial ^\alpha F(0)=(F,z^\alpha )_{A^2}
\end{equation}
(see also \cite{B1}).

\par

In fact, the first formula follows by replacing $F$ by $z^\alpha F$ in the reproducing formula and using (2) in Lemma \ref{projlemma}. For the second formula we note that the condition \eqref{condanal} and reproducing formula give
\begin{multline*}
\partial ^\alpha F(z) = \partial ^\alpha \Big (\int  F(w)e^{(z,w)}\, d\mu  (w) \Big )
\\[1ex]
=\int \partial _z^\alpha \big ( F(w)e^{(z,w)}\big )\, d\mu  (w) = \int \overline{w}^\alpha F(w)e^{(z,w)}\, d\mu  (w),
\end{multline*}
and the result follows.
\end{rem}

\par

\begin{rem}\label{F1eqF2rem}
It follows from the the proof, and especially \eqref{F0eqF}, of the previous lemma that if $F_1,F_2\in A(\cc d)$, \eqref{condanal} holds for $F=F_j$ and $p=1$, $j=1,2$, and that
$$
(F_1,G)_{A^2}=(F_2,G)_{A^2},\qquad G\in P(\cc d),
$$
then $F_1=F_2$.
\end{rem}

\par

Next we prove that $A(\omega ,\mascB )$ and $M(\omega ,\mascB )$
are Banach spaces when $\nu _1(\mascB )\ge 1$.

\par

\begin{thm}\label{Bspacethm}
Let $\omega _1\in \mascP _Q^0(\cc d)$, $\omega _2\in \mascP _Q(\cc d)$ and $\mascB$ be a mixed quasi-norm space on $\cc d$.
Then the following is true:
\begin{enumerate}
\item $M(\omega _1,\mascB )$ and $A(\omega _2,\mascB )$ are quasi-Banach spaces;

\vrum

\item if in addition $\nu _1(\mascB )\ge 1$, then $M(\omega _1,\mascB )$
and $A(\omega _2,\mascB )$ are Banach spaces.
\end{enumerate}
\end{thm}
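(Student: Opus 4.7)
The plan is to prove the $A$-side first and then transfer to modulation spaces via the Bargmann isometry from Theorem \ref{mainthm}. For the Banach case, Proposition \ref{modbanach} already covers the modulation space half almost directly, since any $\omega _1\in \mascP _Q^0(\cc d)$ satisfies \eqref{omegaussbound} by part (2) of Definition \ref{defweights}. So the genuinely new work lies (a) in producing the completeness of $A(\omega _2,\mascB )$ for arbitrary $\omega _2\in \mascP _Q$ and arbitrary mixed quasi-norm space $\mascB $, and (b) in handling the quasi-Banach regime $\nu _1(\mascB )<1$ for $M(\omega _1,\mascB )$, which we will also reduce to the $A$-side.

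First I would observe that $B(\omega _2,\mascB )$ is itself a quasi-Banach space. Indeed, by definition $\nm F{B(\omega _2,\mascB )}=\nm {(U_{\mathfrak V}^{-1}F)\omega _2}{\mascB }$, and since $U_{\mathfrak V}$ from \eqref{UVdef} is a bijective multiplier/dilation on $L^r_{loc}$ and $\omega _2$ is locally bounded away from $0$ and $\infty$, the map $F\mapsto (U_{\mathfrak V}^{-1}F)\omega _2$ identifies $B(\omega _2,\mascB )$ with the weighted mixed quasi-Banach space $\mascB $, which is complete by the standard Lebesgue theory (with quasi-triangle constant inherited from the worst $p_j$).

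The main step is then to prove that $A(\omega _2,\mascB )$ is closed in $B(\omega _2,\mascB )$. Given a Cauchy sequence $\{F_n\}\subset A(\omega _2,\mascB )$ with $B$-limit $F$, I must check that $F$ is entire. For this I apply the analytic mean-value inequality of Lemma \ref{analmeans} (in the form used in the proof of Theorem \ref{analident}) to the differences $F_n-F_m$: with $r=\nu _1(\mascB )$, repeatedly using \eqref{modanalmv} over small polydiscs and combining with the Gaussian two-sided control \eqref{Gaussest} on $\omega _2$, one obtains, for every compact $K\subset \cc d$, an estimate of the form
\begin{equation*}
\sup _{z\in K}|F_n(z)-F_m(z)|\le C_K\nm {F_n-F_m}{B(\omega _2,\mascB )}.
\end{equation*}
Thus $\{F_n\}$ is locally uniformly Cauchy on $\cc d$, its pointwise limit agrees with $F$ almost everywhere, and by Weierstrass's theorem $F\in A(\cc d)$. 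Combined with $F\in B(\omega _2,\mascB )$, this gives $F\in A(\omega _2,\mascB )$, proving completeness. The upgrade to a genuine norm when $\nu _1(\mascB )\ge 1$ is automatic, since then $\mascB $, and hence $B(\omega _2,\mascB )$, carries a norm rather than a quasi-norm.

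Finally, I transfer to the modulation space. By Theorem \ref{mainthm} the Bargmann transform $\mathfrak V$ is an isometric bijection from $M(\omega _1,\mascB )$ onto $A(\omega _1,\mascB )$ whenever $\omega _1\in \mascP _Q^0(\cc d)$, so the (quasi-)completeness of $A(\omega _1,\mascB )$ established above carries over verbatim to $M(\omega _1,\mascB )$, yielding both (1) and (2) for the modulation space. In the Banach case one also has the independent proof through Proposition \ref{modbanach}, which applies since \eqref{omegaussbound} is built into the definition of $\mascP _Q^0$. The main obstacle I anticipate is making the pointwise estimate on compact sets uniform enough to deduce locally uniform Cauchyness from convergence in a weighted mixed quasi-norm with $r<1$; the trick is to apply \eqref{modanalmv} not to $F_n-F_m$ but to sufficiently many copies so that $nr\ge 1$, converting the $L^r$ mean-value bound into a genuine integrable majorant of $|F_n(z)-F_m(z)|^{nr}$ over a small disc around $z$.
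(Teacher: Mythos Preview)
Your overall strategy matches the paper's: both reduce the $M$-side to the $A$-side via Theorem \ref{mainthm} and then argue that a Cauchy sequence in $A(\omega _2,\mascB )$ is locally uniformly Cauchy, so its $B$-limit is entire. The difference lies in how the locally uniform control is obtained. The paper first uses Theorem \ref{analident} to embed $A(\omega _2,\mascB )\hookrightarrow A(\omega _0,L^1)$ for a suitable $\omega _0\in\mascP _Q$, and then appeals to the reproducing kernel through $\Pi _A$: one shows $\sup _K|F_j-\Pi _AF|\le C_K\nm{F_j-F}{B(\omega _0,L^1)}\to 0$, whence $F=\Pi _AF\in A(\cc d)$. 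Your route bypasses both the $L^1$ embedding and the projection, getting the pointwise bound straight from the subharmonic mean-value inequality, which is a bit more elementary; the paper's route has the advantage that the same $\Pi _A$ machinery is reused heavily in Sections \ref{sec4}--\ref{sec5}.

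The obstacle you flag at the end is not real. Lemma \ref{analmeans} already gives $|G(z)|^r\le C\int _{B_\rho (z)}|G|^r\,d\lambda$ for \emph{every} $r>0$, with no need for $nr\ge 1$. On a fixed compact set the weight $e^{-|\cdo |^2/2}\omega _2(2^{1/2}\overline{\cdo})$ is bounded two-sidedly (this uses only local boundedness of $\omega _2$ and $1/\omega _2$, not the full Gaussian control \eqref{Gaussest}), and the local embedding $\mascB \hookrightarrow L^r_{loc}$ from \eqref{Lebincl} converts the local $L^r$ mean into $\nm{G}{B(\omega _2,\mascB )}^r$. So the estimate $\sup _K|F_n-F_m|\le C_K\nm{F_n-F_m}{B(\omega _2,\mascB )}$ follows directly, and Weierstrass finishes the argument as you indicate.
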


\par

\begin{proof}
By Theorem \ref{mainthm} it suffices to prove the result for $A(\omega _2,\mascB )$. Since the 
statement is invariant under dilations, we may assume that \eqref{Gaussest} holds for $\omega =
\omega _2$ and $c=1/8$. Furthermore, since it is obvious that $\nm \cdo{B(\omega _2,\mascB )}$ is a norm when $\nu _1(\mascB )\ge 1$, it suffices to prove (1).

\par

By Theorem \ref{analident} it follows that $A(\omega _2,\mascB )$
is continuously embedded in $A(\omega _0,L^1)$, for some choice
of $\omega _0\in \mascP _Q$. Furthermore, it follows from the proof of
Theorem \ref{analident} that we may choose $\omega _0$ such that it
satisfies \eqref{Gaussest} with $c=1/6$. Consequently, any $F$
in $A(\omega _0,L^1)$ fulfills \eqref{condanal} with $p=1$.

\par

Now let $(F_j)_{j=1}^\infty$ be a Cauchy sequence in $A(\omega _2,\mascB )$. Since both
$B(\omega _2,\mascB )$ and $B(\omega _0,L^1)$ are quasi-Banach spaces,
it exists an element $F\in B(\omega _2,\mascB )\cap B(\omega _0,L^1)$ such
that $F_j\to F$ in $B(\omega _2,\mascB )$ and $B(\omega _0,L^1)$ as $j\to \infty$.

\par

We have to prove that $F\in A(\cc d)$. By the assumptions and Lemma \ref{projlemma} it
follows that $F_0=\Pi _AF$ in \eqref{A2projection} defines an analytic function, and that
$$
F_j(z)=\int F_j(w)e^{(z,w)}\, d\mu (w)
$$
for every $j$. Furthermore, for each compact set
$K\subseteq \cc d$ there is a constant $C>0$ such that
\begin{multline}\label{Cauchyest1}
\sup _K |F_j(z)-F_0(z)|
\le  \pi ^{-d}\int |F_j(w) -F(w)|e^{-|w|^2+C|w|}\, d\lambda (w)
\\[1ex]
= \pi ^{-d}\int \Big | (F_j(w) -F(w))e^{-|w|^2/2}\omega _0(2^{1/2}\overline w)\Big |
\cdot (e^{-|w|^2/2+C|w|}\omega  _0(2^{1/2}\overline w)^{-1})\, d\lambda (w)
\\[1ex]
\le C_{\omega _0} \int | F_j(w) -F(w)|e^{-|w|^2/2}\omega
_0(2^{1/2}\overline w)\, d\lambda (w) = C\nm {F_j-F}{B(\omega _0,L^1)},
\end{multline}
where
$$
C_{\omega _0}=\underset {w\in \cc d}\essup \big (e^{-|w |^2/2+C|w|}/\omega _0(2^{1/2}\overline w) \big )<\infty ,
$$
and $C$ is a constant. Since the right-hand side of \eqref{Cauchyest1}
turns to zero as $j\to \infty$, it follows that $F_j\to F_0$ locally uniformly
as $j\to \infty$. This proves that $F=F_0$, which is analytic, and the result follows.
\end{proof}

\par

\subsection{Density and duality properties} Next we prove that  if $\mascB $ is a mixed norm space 
with $\nu _2(\mascB )<\infty$, and that the weight $\omega \in \mascP _{Q}^0(\cc d)$ in 
addition should be dilated suitable,
then the set $P(\cc d)$ of polynomials on $\cc d$ is dense in
$A(\omega ,\mascB )$. Furthermore, in this situation we also prove that the
dual of $A(\omega ,\mascB )$ can be identified with $A(1/\omega ,\mascB ')$,
through a unique extention of the $A^2$ form on $P(\cc d)$.

\par

An important part of these considerations concerns possibilities to
approximate elements $F$ in $A(\omega ,\mascB )$ with their
dilations $F(\lambda \cdo )$ for $0<\lambda <1$. We note that
the latter functions belong to
\begin{equation}\label{APdef}
A_P(\cc d)\equiv \sets {F\in A(\cc d)}{F\cdot e^{-(1-\ep )|z|^2/2}\in
\mascB \ \text{for some}\ \ep >0},
\end{equation}
and that
\begin{equation*}%\label{APincl}
P(\cc d)\subseteq A_P(\cc d)\subseteq A(\omega ,\mascB ),\quad \text{when}
\quad \omega \in \mascP _Q^0(\cc d).
\end{equation*}
This is a straight-forward consequence of Theorem \ref{analident} and the definitions.

\par

\begin{prop}\label{APprop}
The set $A_P(\cc d)$ in \eqref{APdef} is independent of the mixed
quasi-norm space $\mascB$ on $\cc d$. Furthermore, if $\mascB$
is a mixed norm space on $\cc d$ and $\omega \in \mascP
_Q^0(\cc d)$, then $P(\cc d)$ is dense in $A_P(\cc d)$ with
respect to the topology in $A(\omega ,\mascB )$.
\end{prop}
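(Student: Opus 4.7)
For the independence of $\mascB$ I will identify $A_P(\cc d)$ with a set of the form $E_2(\Omega _P,\mascB )\cap A(\cc d)$ and invoke Theorem \ref{analident}. Concretely, take
$$
\Omega _P\equiv \{ \, c\eabs z^{N}e^{-(1-\ep )|z|^2/2}\, :\, c>0,\ N\in \mathbf Z,\ 0<\ep <1\, \} .
$$
A direct calculation with the logarithmic form of $(\ref{modrelax})'$ shows $\Omega _P\subseteq \mascP _Q(\cc d)$, and the choice $\omega _0(z)=c_0\eabs z^{-2d-1}$ (rotation invariant, decreasing, in $L^\infty _{loc}\cap L^1$) witnesses admissibility in the sense of Definition \ref{defweightfam}, because multiplying or dividing by $\omega _0$ only shifts $N$. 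The identification $A_P(\cc d)=E_2(\Omega _P,\mascB )\cap A(\cc d)$ is elementary: any polynomial factor $\eabs z^N$ in a weight from $\Omega _P$ is absorbed by slightly increasing $\ep$, since $\eabs z^{-N}e^{(\ep -\ep ')|z|^2/2}$ is uniformly bounded whenever $\ep <\ep '<1$. Theorem \ref{analident} then delivers the independence of $\mascB$.

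For the density claim, given $F\in A_P(\cc d)$, a weight $\omega \in \mascP _Q^0(\cc d)$ and a mixed norm space $\mascB$, I produce polynomial approximants by a two-step dilation procedure. By the first part applied with $\mascB =L^\infty$, we may upgrade $F$ to a pointwise bound $|F(z)|\le Ce^{(1-\ep )|z|^2/2}$ for some $\ep >0$. Set $F_\lambda (z)=F(\lambda z)$ for $\lambda \in (0,1)$. The first step is the dilation continuity $F_\lambda \to F$ in $A(\omega ,\mascB )$ as $\lambda \to 1^-$: combining $|F_\lambda -F|\le 2Ce^{(1-\ep )|z|^2/2}$ with $\omega (2^{1/2}\overline z)\le C_\delta e^{2\delta |z|^2}$ (valid for any $\delta >0$ since $\omega \in \mascP _Q^0$), the integrand defining the $A(\omega ,\mascB )$ norm of $F_\lambda -F$ is uniformly bounded by $C'e^{-\eta |z|^2}$ for some $\eta >0$, provided $\delta <\ep /4$. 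Since $F_\lambda \to F$ locally uniformly and the envelope decays Gaussian-fast, splitting into $|z|\le R$ and $|z|>R$ drives the norm to zero in any mixed norm space, including those with $L^\infty$ factors.

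The second step is that, for each fixed $\lambda \in (0,1)$, the Taylor partial sums $P_N^{(\lambda )}(z)=\sum _{|\alpha |\le N}\lambda ^{|\alpha |}a_\alpha z^\alpha$ of $F_\lambda$ converge to $F_\lambda$ in $A(\omega ,\mascB )$. The essential input is a dimension-free tail estimate: Cauchy--Schwarz combined with the multinomial identity $\sum _\alpha \lambda ^{2|\alpha |}|z^\alpha |^2/\alpha !=e^{\lambda ^2|z|^2}$ yields
$$
|F_\lambda (z)-P_N^{(\lambda )}(z)|\le E_N^{1/2}e^{\lambda ^2|z|^2/2},\qquad E_N\equiv \sum _{|\alpha |>N}|a_\alpha |^2\alpha !.
$$
Since $F\in A^2(\cc d)$ (the Gaussian bound on $F$ is far more than enough) and $\{z^\alpha /\sqrt{\alpha !}\}$ forms an orthonormal basis of $A^2(\cc d)$ by Lemma \ref{basislemma}, $E_N\to 0$ as $N\to \infty$. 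Multiplying the pointwise bound by $e^{-|z|^2/2}\omega (2^{1/2}\overline z)$ and picking $\delta <(1-\lambda ^2)/4$ produces a dominating function of the form $E_N^{1/2}C_\delta e^{-\kappa |z|^2}$ with $\kappa >0$, whose $\mascB$ norm tends to zero as $N\to \infty$. A standard $\ep _0/2$ argument combining Steps A and B then yields polynomials converging to $F$ in $A(\omega ,\mascB )$.

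The main obstacle in this plan lies in Step B, namely obtaining the right pointwise remainder estimate. A naive Cauchy formula on polydisks $|w_j|=r$ forces the exponent $c d r^2$ into the bound on $M_r=\max _{|w_j|=r}|F(w)|$ (because $|w|^2=dr^2$ on the torus), and the resulting envelope cannot beat $e^{|z|^2/2}$ in any dimension $d\ge 2$ when $\ep$ is close to zero; this would prevent the norm estimate for any weight $\omega$ with nontrivial growth. The resolution is to abandon polydisk estimates entirely and use the Fock space orthogonality $\nm{z^\alpha}{A^2}^2=\alpha !$ via Cauchy--Schwarz, which produces the sharp dimension-independent envelope $e^{\lambda ^2|z|^2/2}$ whose exponent is strictly below $1/2$ precisely because $\lambda <1$. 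The dilation in Step A is then needed precisely to activate this bound.
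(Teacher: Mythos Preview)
Your argument is correct. For the first claim you follow the paper's route exactly (Theorem \ref{analident} applied to an admissible family built from the weights $e^{-(1-\ep )|z|^2/2}$); your insertion of the polynomial factors $\eabs z^N$ is a harmless device that makes the admissibility condition of Definition \ref{defweightfam} cleanly verifiable with a polynomially decaying $\omega _0$, whereas the paper simply asserts admissibility of the bare Gaussian family.

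For the density claim the approaches diverge. The paper uses the first part together with Theorem \ref{analident} to reduce everything to the single norm $A(e^{\ep _0|\cdo |^2},L^1)$ and then defers to an external result, \cite[Prop.~3.2]{SiT2}. Your proof is self-contained: the dilation step $F_\lambda \to F$ is handled by a dominated-convergence argument against a fixed Gaussian envelope (the subgaussian bound on $\omega \in \mascP _Q^0$ lets you choose $\delta <\ep /4$), and the approximation of $F_\lambda$ by its Taylor polynomials is driven by the Parseval identity $\nm F{A^2}^2=\sum _\alpha |a_\alpha |^2\alpha !$ combined with the Cauchy--Schwarz bound that produces the sharp envelope $e^{\lambda ^2|z|^2/2}$. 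This buys you an argument that works directly in any mixed norm space $\mascB$ (including those with $L^\infty$ factors) without first reducing to $L^1$, and avoids the external citation; the paper's reduction, by contrast, lets it offload the explicit estimate entirely. Your remark about the failure of naive polydisk Cauchy estimates in dimension $d\ge 2$ is to the point and identifies exactly why the $A^2$-based bound is the right tool here.
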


\par

\begin{proof}
The first part follows immediately from Theorem
\ref{analident}, and the observation that 
$$
\Omega =\sets {e^{-(1-\ep )|z|^2/2}}{0 < \ep < 1,\ z\in \cc d}
$$
is an admissible family of weights on $\cc d$.

\par

The first part then shows that we may assume that $\mascB =L^1$ and $\omega =e^{\ep _0|\cdo |^2}$
for some small $\ep _0>0$ which depends on $\lambda >0$, when proving the second part.
The result is then an immediate consequence of \cite[Prop. 3.2]{SiT2}.
The proof is complete.
\end{proof}

\par

\begin{rem}
By similar arguments, using Proposition \ref{harmident} instead of
Theorem \ref{analident}, it follows that
$$
\sets {f\in \mathcal H(\rr d)}{f\cdot e^{-(1-\ep )|x|^2/2}\in \mascB \ \text{for some}\ \ep >0}
$$
is independent of the mixed norm space $\mascB $ on $\rr d$. Here recall that $\mathcal H(\rr d)$ is the set of harmonic functions on $\rr d$.
\end{rem}

\par

Our result on duality is the following.

\par

\begin{thm}\label{dualthm}
Let $\omega \in \mascP _Q^0(\cc d)$ be dilated suitable and $\mascB $ be a mixed norm space on $\cc d$ such that $\nu _2(\mascB )<\infty$. Then the following is true:
\begin{enumerate}
\item the $A^2$ form on $P(\cc d)$ extends uniquely to a continuous sesqui-linear form on 
$A(\omega ,\mascB )\times A(1/\omega ,\mascB ')$;

\vrum

\item the dual of $A(\omega ,\mascB )$ can be identified by $A(1/\omega ,\mascB ')$ 
through the extension of the $A^2$ form on $P(\cc d)$.
\end{enumerate}
\end{thm}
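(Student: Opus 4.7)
My plan proceeds in two stages, with the projection operator $\Pi _A$ from Lemma \ref{projlemma} as the central tool. For (1), the continuous extension rests on H{\"o}lder's inequality for mixed norm spaces. Writing
\begin{equation*}
(F,G)_{A^2}=\pi ^{-d}\int F(z)\overline{G(z)}e^{-|z|^2}\, d\lambda (z)
\end{equation*}
and splitting the integrand as $\bigl(F(z)\omega (2^{1/2}\overline z)e^{-|z|^2/2}\bigr)\cdot\overline{\bigl(G(z)\omega (2^{1/2}\overline z)^{-1}e^{-|z|^2/2}\bigr)}$, the first factor has $\mascB $-norm equivalent to $\nm F{A(\omega ,\mascB )}$ and the second has $\mascB '$-norm equivalent to $\nm G{A(1/\omega ,\mascB ')}$ via \eqref{BLpnorm} (up to the change of variables encoded by $U_{\mathfrak V}^{-1}$). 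Applying H{\"o}lder for mixed norm spaces then gives
\begin{equation*}
|(F,G)_{A^2}|\le C\nm F{A(\omega ,\mascB )}\nm G{A(1/\omega ,\mascB ')}
\end{equation*}
on $P(\cc d)\times P(\cc d)$. For uniqueness it suffices that $P(\cc d)$ be dense in $A(\omega ,\mascB )$, which I would obtain by the dilation trick: for $F\in A(\omega ,\mascB )$ and $0<\lambda <1$ set $F_\lambda (z)=F(\lambda z)$. The dilated suitability of $\omega$ controls $\omega (2^{1/2}\overline z)/\omega (2^{1/2}\lambda \overline z)$ by a factor that tends to $1$ as $\lambda \to 1-$, and $\nu _2(\mascB )<\infty$ then upgrades this pointwise control to norm convergence $F_\lambda \to F$ in $A(\omega ,\mascB )$ via dominated convergence. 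Since each $F_\lambda \in A_P(\cc d)$, Proposition \ref{APprop} finishes the polynomial approximation.

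For (2), given $\ell \in A(\omega ,\mascB )^*$, I would use Hahn-Banach to extend $\ell$ to a continuous linear functional $\widetilde \ell$ on the ambient space $B(\omega ,\mascB )$. Since $\nu _2(\mascB )<\infty$, the $L^2$-dual of $\mascB $ is $\mascB '$, and transporting this duality through $U_{\mathfrak V}^{-1}$ in the definition of $B(\omega ,\mascB )$ produces $H\in B(1/\omega ,\mascB ')$ with $\widetilde \ell (F)=(F,H)_{A^2}$ for all $F\in B(\omega ,\mascB )$. Put $G:=\Pi _AH$. Lemma \ref{projlemma} (1) and (3) show that $G$ is entire and satisfies \eqref{condanal}, and combining Lemma \ref{projlemma} (2) with the self-adjointness identity \eqref{adjformula} then yields $(F,G)_{A^2}=(\Pi _AF,H)_{A^2}=(F,H)_{A^2}$ first for $F\in P(\cc d)$, then for all $F\in A(\omega ,\mascB )$ by the density established in the previous paragraph. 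Hence $\ell =(\cdot ,G)_{A^2}$, provided $G\in A(1/\omega ,\mascB ')$.

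This last inclusion is the heart of the argument and the main obstacle. Unwinding \eqref{A2projection} together with \eqref{BLpnorm}, it amounts to proving that $\Pi _A$ is bounded from $B(1/\omega ,\mascB ')$ into itself, which in turn reduces to bounding on $\mascB '$ the integral operator with kernel
\begin{equation*}
(z,w)\longmapsto e^{-|z-w|^2/2}\,\frac{\omega (2^{1/2}\overline w)}{\omega (2^{1/2}\overline z)}\, e^{i\Im (z\overline w)}.
\end{equation*}
The dilated suitability condition on $\omega$ is precisely the tool needed to absorb the weight ratio into a Gaussian of arbitrarily small exponent, after which a Schur-type test applied to each Lebesgue exponent inside the mixed norm $\mascB '$ delivers the boundedness. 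Because weights in $\mascP _Q^0$ need not be moderate, classical convolution estimates are unavailable, so it is the Gaussian decay of the Bargmann kernel together with the scaling control provided by the dilated suitability of $\omega$ that makes the argument go through.
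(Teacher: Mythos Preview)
Your treatment of part (1) and the density argument is essentially the paper's own: H{\"o}lder for the continuous extension, then $F_\lambda(z)=F(\lambda z)\to F$ in $A(\omega,\mascB)$ via dilated suitability and $\nu_2(\mascB)<\infty$, followed by Proposition \ref{APprop}. No issues there.

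The problem is in part (2), specifically your claim that a Schur-type test on the kernel $e^{-|z-w|^2/2}\,\omega(2^{1/2}\overline w)/\omega(2^{1/2}\overline z)$ proves $\Pi_A$ bounded on $B(1/\omega,\mascB')$. Dilated suitability is a \emph{radial} condition: it compares $\omega(x)$ with $\omega(\lambda x)$ for $\lambda$ close to $1$, nothing more. It does \emph{not} bound $\omega(w)/\omega(z)$ in terms of $|z-w|$, and the other structural conditions on $\mascP_Q^0$ (namely \eqref{modrelax}$'$, which holds only for $|y|\le c/|x|$, and the subgaussian bound \eqref{Gaussest}) are likewise too weak. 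For a concrete failure, take $\omega(z)=e^{c|z|^\gamma}$ with $1<\gamma<2$: then along $w=z+h$ with $|h|$ of order $1$ the ratio behaves like $e^{c\gamma|z|^{\gamma-1}\scal{z/|z|}{h}}$, and after multiplying by $e^{-|h|^2/2}$ and integrating in $h$ you pick up a factor $e^{C|z|^{2(\gamma-1)}}$, which diverges. So the Schur test genuinely fails here, and this is exactly the regime the theorem is meant to cover.

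The paper circumvents this entirely. Instead of proving $\Pi_A H\in A(1/\omega,\mascB')$ directly, it works with the quotient $C(1/\omega,\mascB')=B(1/\omega,\mascB')/N(1/\omega,\mascB')$, which by Lemma \ref{kernellink} is isomorphic to $(A(\omega,\mascB))'$. One then shows, via a Hahn--Banach argument in the \emph{weak$^*$} topology on this quotient, that the natural injection of $A(1/\omega,\mascB')$ is onto: any weak$^*$-continuous functional vanishing on the image of $A(1/\omega,\mascB')$ must come from some $F\in A(\omega,\mascB)$ with $(z^\alpha,F)_{A^2}=0$ for all $\alpha$, hence $F=0$ by \eqref{eq3.*}. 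In fact the boundedness of $\Pi_A$ on $B(\omega,\mascB)$ is \emph{deduced} from the duality theorem afterwards (Theorem \ref{projthm}), via the open mapping theorem applied to the identification just established --- so the logical order is the reverse of what you propose.
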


\par

Here we recall that $\mathscr B'=L^{p'}(V)$, when $\mathscr B=L^p(V)$,
and $p\in [1,\infty ]^n$ and $V$ are given by \eqref{Vparrays}.
%Also note that in contrast to most of the results in the present and previous sections, the 
%assumption on the space $\mascB$ in Theorem \ref{dualthm} is more restrictive in the sense that  it 
%is assumed that $\mascB$ should be a mixed norm space 

\par

We also have the following result on density, which is strongly
connected to the proof of Theorem \ref{dualthm}.

%We need some preparations for the proof of Theorem \ref{dualthm}.
%Especially  we need the following result on density
%properties for $A(\omega ,\mascB )$. 

\par

\begin{thm}\label{densthm}
Let $\omega \in \mascP _{Q}^0(\cc d)$ be dilated suitable, $\mascB $ be a 
mixed quasi-norm space on $\cc d$ such that $\nu _2(\mascB )<\infty$, and let $F\in
A(\omega ,\mascB )$ and $G\in A(\omega ,\mascB ')$. Then the following is 
true:
\begin{enumerate}
\item $P(\cc d)$ is dense in $A(\omega ,\mascB )$. If in addition $\nu _1(\mascB )\ge 1$, then
$P(\cc d)$ is dense in $A(\omega ,\mascB ')$ with respect to the weak$^*$-topology;

\vrum

\item if $0<\lambda <1$, then $F(\lambda \cdo )\to F$ in $A(\omega ,\mascB )$.
If in addition $\nu _1(\mascB )\ge 1$, then $G(\lambda \cdo )\to G$ with respect
to the weak$^*$-topology in $A(\omega ,\mascB ')$ .
\end{enumerate}
\end{thm}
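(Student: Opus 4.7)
The plan is to prove (2) first, deduce the norm density part of (1) from Proposition \ref{APprop}, and obtain the weak$^*$ assertions via Theorem \ref{dualthm}. For $F\in A(\omega ,\mascB )$ and $\lambda \in (0,1)$, set $F_\lambda (z)\equiv F(\lambda z)$. A preliminary check via Theorem \ref{analident} and the subgaussian bounds on $\omega \in \mascP _Q^0$ shows $F_\lambda \in A_P(\cc d)$ (see \eqref{APdef}): indeed $F$ satisfies $|F(z)|\le Ce^{(1/2+\delta )|z|^2}$ for arbitrarily small $\delta >0$ by passing to an $A(\omega _0,L^\infty )$ estimate, so $|F(\lambda z)|e^{-(1-\ep )|z|^2/2}$ is bounded once $\ep $ is chosen small relative to $1-\lambda ^2$. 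Granted (2), Proposition \ref{APprop} then produces polynomials approximating $F_\lambda $ in $A(\omega ,\mascB )$, and a diagonal choice gives the first statement of (1).

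The core step is to show $\nm {F_\lambda -F}{A(\omega ,\mascB )}\to 0$ as $\lambda \to 1-$. I would work with the representation \eqref{BLpnorm} of the $B(\omega ,\mascB )$ quasi-norm and produce a $\lambda $-uniform $\mascB $-integrable majorant of the family $|e^{-|z|^2/2}F(\lambda z)\omega (2^{1/2}\overline z)|$, $\lambda \in (1-\theta ,1)$. The identity
$$
e^{-|z|^2/2}\omega (2^{1/2}\overline z) = e^{-|\lambda z|^2/2}\omega (2^{1/2}\lambda \overline z) \cdot e^{-(1-\lambda ^2)|z|^2/2}\cdot \frac {\omega (2^{1/2}\overline z)}{\omega (2^{1/2}\lambda \overline z)},
$$
combined with the dilated suitable condition \eqref{addomegcond} with some $\ep <1$, which gives $\omega (2^{1/2}\overline z)/\omega (2^{1/2}\lambda \overline z)\le C_\ep e^{\ep (1-\lambda ^2)|z|^2/2}$, yields the pointwise bound
$$
|e^{-|z|^2/2}F(\lambda z)\omega (2^{1/2}\overline z)| \le C_\ep |e^{-|\lambda z|^2/2}F(\lambda z)\omega (2^{1/2}\lambda \overline z)| \cdot e^{-(1-\ep )(1-\lambda ^2)|z|^2/2}.
$$
After the change of variables $w=\lambda z$, the right-hand side is dominated, up to a constant $C_\ep $, by the $\mascB $-integrand of $F$, uniformly in $\lambda $. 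Combining this majorant with the locally uniform convergence $F_\lambda \to F$ and iterating dominated convergence through the nested Lebesgue exponents of $\mascB $ (permitted because $\nu _2(\mascB )<\infty $) yields the norm convergence.

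The weak$^*$ statements in (1) and (2) are handled through Theorem \ref{dualthm}. When $\nu _1(\mascB )\ge 1$, the predual of $A(\omega ,\mascB ')$ is $A(1/\omega ,\mascB )$, obtained by applying Theorem \ref{dualthm} with $\omega $ replaced by $1/\omega $ (the dilated suitable condition is preserved under this inversion). The already-proven norm density of polynomials in $A(1/\omega ,\mascB )$ converts into weak$^*$ density in $A(\omega ,\mascB ')$ via the $A^2$-pairing. For the second half of (2), the same majorant argument gives uniform boundedness of $\{G(\lambda \cdot )\}$ in $A(\omega ,\mascB ')$, and testing the pairing $(H,G(\lambda \cdot ))_{A^2}$ against polynomials $H\in P(\cc d)$ (which are dense in the predual) reduces the convergence to a straightforward DCT on a finite sum, showing $G(\lambda \cdot )\to G$ weak$^*$.

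The main obstacle is the construction of the $\lambda $-uniform majorant. The success of this hinges on the parameter $\ep <1$ in the dilated suitable condition, which leaves just enough room in the Gaussian factor to absorb the quotient $\omega (2^{1/2}\overline z)/\omega (2^{1/2}\lambda \overline z)$ as $\lambda \to 1-$; without that margin, the two factors would only match at $\lambda =1$, and the dominated convergence argument would collapse. A secondary technical point is the iteration of dominated convergence through the mixed-norm layers, which is the reason for the hypothesis $\nu _2(\mascB )<\infty $ in the statement.
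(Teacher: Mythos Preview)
Your norm-convergence argument for $F(\lambda \cdot )\to F$ follows the paper's line (packaged there as Lemma \ref{Flambdalim}), but the dominated-convergence step does not close as written. The pointwise bound you derive gives a $\lambda$-\emph{dependent} dominating function $C_\ep G(\lambda z)$ with $G(z)=|F(z)|e^{-|z|^2/2}\omega (2^{1/2}\overline z)$; after the substitution $w=\lambda z$ this yields only a uniform \emph{norm} bound $\nm {F(\lambda \cdot )}{A(\omega ,\mascB )}\le C\nm F{A(\omega ,\mascB )}$, not a fixed $\mascB$-majorant in the variable $z$, so ordinary (iterated) dominated convergence does not apply. The paper repairs this via the generalized Lebesgue theorem (Lemma \ref{genLeb}): take $g_\lambda =C_\ep G(\lambda \cdot )+G$, note $g_\lambda \to (C_\ep +1)G$ pointwise with $\nm {g_\lambda}{\mascB}\to (C_\ep +1)\nm G{\mascB}$, and conclude. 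With this fix your derivation of norm density from Proposition \ref{APprop} matches the paper.

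Your route to the weak$^*$ assertions differs from the paper's and carries a real gap. Both your identification $A(\omega ,\mascB ')=A(1/\omega ,\mascB )^*$ (Theorem \ref{dualthm} with $\omega$ replaced by $1/\omega$) and your use of norm density of $P(\cc d)$ in the predual $A(1/\omega ,\mascB )$ require $1/\omega$ to be dilated suitable; the hypothesis grants this only for $\omega$, and the parenthetical ``the dilated suitable condition is preserved under this inversion'' is unjustified---inequality \eqref{addomegcond} is not symmetric under $\omega \mapsto 1/\omega$. The paper avoids this asymmetry by a direct computation for the second half of (2): it writes $(G-G(\lambda \cdot ),F)_{A^2}$, shifts the dilation onto $F$ through a change of variable in the $A^2$ integral, and then applies \eqref{addomegcond} for $\omega$ (not $1/\omega$) together with Lemma \ref{genLeb} to drive the resulting expression to zero. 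The second half of (1) is then obtained from this by approximating $G(\lambda ^2\cdot )\in A_P(\cc d)$ by polynomials, using Theorem \ref{analident} to reduce the remaining error to a weight-independent $L^1$ estimate where Proposition \ref{APprop} applies.
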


\par

We start by proving the first parts of (1) and (2) in Theorem \ref{densthm}.
Thereafter we prove Theorem \ref{dualthm}, and finally we prove the last
parts of (1) and (2) in Theorem \ref{densthm}.

\par

Some preparations for the proofs are needed. We start by recalling the
following generalization of Lebesgue's theorem. 

\par

\begin{lemma}\label{genLeb}
Let $p\in (0,\infty )$, $d\mu$ be a positive measure, and let $f_j,f,g_j,g\in L^p(d\mu )$
be such that $f_j\to f$ and $g_j\to g$ pointwise a.{\,}e. as $j\to \infty$, $|f_j|\le g_j$,
$|f|\le g$ and $\nm {g_j}{L^p(d\mu )}\to \nm g{L^p(d\mu )}$ as $j\to \infty$. Then
$f_j\to f$ in $L^p(d\mu )$ as $j\to \infty$.
\end{lemma}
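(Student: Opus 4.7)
The plan is to mimic the classical proof of the generalized dominated convergence theorem (sometimes called the Scheffé--Vitali extension of Lebesgue's theorem), where the hypothesis $\nm {g_j}{L^p(d\mu)} \to \nm g{L^p(d\mu)}$ replaces the assumption of a single fixed dominating function. The key trick is to construct an auxiliary sequence of nonnegative integrable functions whose pointwise limit is known, and then apply Fatou's lemma to extract information about $\int |f_j - f|^p\, d\mu$.

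First I would fix a constant $C = C_p > 0$ such that the pointwise bound $|a - b|^p \le C(|a|^p + |b|^p)$ holds for all scalars $a,b$. For $p \ge 1$ one may take $C = 2^{p-1}$ via convexity of $t\mapsto t^p$, and for $0 < p < 1$ one has $|a-b|^p \le |a|^p + |b|^p$ so $C = 1$ works. Applying this with the domination hypotheses $|f_j|\le g_j$ and $|f|\le g$ yields, a.e.,
\begin{equation*}
|f_j - f|^p \le C(g_j^p + g^p).
\end{equation*}
Consequently the functions
\begin{equation*}
h_j \equiv C(g_j^p + g^p) - |f_j - f|^p
\end{equation*}
are nonnegative a.e., belong to $L^1(d\mu)$, and converge pointwise a.e.\ to $2Cg^p$ as $j\to\infty$ (since $f_j\to f$ and $g_j\to g$ pointwise a.e.).

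Next I would apply Fatou's lemma to $(h_j)$ and combine it with the convergence of the $L^p$ norms of $g_j$. Explicitly,
\begin{equation*}
\int 2Cg^p\, d\mu \le \liminf_{j\to\infty} \int h_j\, d\mu = C\lim_{j\to\infty}\int g_j^p\, d\mu + C\int g^p\, d\mu - \limsup_{j\to\infty}\int |f_j - f|^p\, d\mu.
\end{equation*}
The hypothesis $\nm {g_j}{L^p}\to \nm g{L^p}$ means $\int g_j^p\, d\mu \to \int g^p\, d\mu$, so the first two terms on the right equal $2C\int g^p\, d\mu$, which is finite since $g \in L^p(d\mu)$. Cancelling this finite quantity yields
\begin{equation*}
\limsup_{j\to\infty}\int |f_j - f|^p\, d\mu \le 0,
\end{equation*}
which, together with nonnegativity of the integrand, gives $\int |f_j - f|^p\, d\mu \to 0$, i.e.\ $f_j \to f$ in $L^p(d\mu)$.

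The argument is essentially routine once one has identified the correct auxiliary sequence $h_j$; there is no real obstacle. The only minor point needing care is splitting into the two regimes $p\ge 1$ and $0<p<1$ for the elementary inequality $|a-b|^p \le C(|a|^p+|b|^p)$, and verifying that the finiteness of $\int g^p\, d\mu$ legitimately allows cancellation in the Fatou inequality.
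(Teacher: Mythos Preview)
Your proof is correct and follows essentially the same approach as the paper: both apply Fatou's lemma to the nonnegative sequence $C(g^p+g_j^p)-|f-f_j|^p$, with the paper choosing the single constant $C=2^{\max(1,p)}$ rather than splitting into cases, and leaving the details to the reader.
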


\par

\begin{proof}
Let $q=\max (1,p)$. Then the result follows by applying Fatou's
lemma on $2^q(g^p+g_j^p) - |f - f_j|^p$. The details are left for the reader.
\end{proof}

\par

The first part of Theorem \ref{densthm} (2) is an immediate consequence of the following lemma.

\par

\begin{lemma}\label{Flambdalim}
Let $0<\omega _0\in L^\infty _{loc}(\rr d)$ be such that
$$
\omega _0(x)\le C\omega _0(\lambda x),\quad x\in \rr d,\ 1-\theta <\lambda <1,
$$
for some constants $\theta \in (0,1)$ and $C>0$, and let and
$\mascB $ be a mixed quasi-norm space on $\rr d$ with $\nu _2(\mascB )<\infty$. 
Also let $f\in L^r_{loc}(\rr d)$ be such that $f
\cdot \omega _0\in \mascB $, where $r=\nu _1(\mascB )$. Then $f(\lambda \cdo )
\omega _0\in \mascB $ when $1-\theta <\lambda <1$, and
\begin{equation}\label{limitform}
\lim _{\lambda \to 1-}\nm {(f-f(\lambda \cdo ))\omega _0}{\mascB }=0.
\end{equation}
\end{lemma}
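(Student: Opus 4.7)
The plan is a three-step density argument: establish uniform boundedness of the dilation operator; verify the limit on continuous compactly supported test functions; combine via an $\ep /3$-splitting.

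Setting $g:=f\omega _0$, the hypothesis $\omega _0(x)\le C\omega _0(\lambda x)$ gives $|f(\lambda x)\omega _0(x)|\le C|g(\lambda x)|$. Since $\nu _2(\mascB )<\infty$, every exponent in $\mascB =L^{p_1}(V_1;\dots ;L^{p_n}(V_n))$ is finite, so iterated change of variables in each factor produces
$$
\nm {g(\lambda \cdo )}{\mascB }=\lambda ^{-\sigma }\nm g{\mascB },\qquad \sigma :=\sum _{j=1}^n (\dim V_j)/p_j <\infty .
$$
Hence $\nm {f(\lambda \cdo )\omega _0}{\mascB }\le C\lambda ^{-\sigma }\nm {f\omega _0}{\mascB }$, which proves the first claim of the lemma and shows that the linear operator $R_\lambda h:=h(\lambda \cdo )\omega _0$ on the weighted space $\{h:h\omega _0\in \mascB \}$ satisfies $\nm {R_\lambda h}{\mascB }\le M\nm {h\omega _0}{\mascB }$ with $M:=C(1-\theta )^{-\sigma }$, uniformly for $\lambda \in (1-\theta ,1)$.

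For $\phi \in C_c(\rr d)$ I would verify \eqref{limitform} directly: for $\lambda \ge 1/2$ both $\phi$ and $\phi (\lambda \cdo )$ are supported in the fixed compact set $K=2\supp \phi$, on which $\omega _0$ is bounded and $\chi _K\in \mascB $ (by $\nu _2(\mascB )<\infty$). Uniform continuity of $\phi$ gives $\nm {\phi -\phi (\lambda \cdo )}{L^\infty }\to 0$, so
$$
\nm {(\phi -\phi (\lambda \cdo ))\omega _0}{\mascB }\le \nm {\omega _0}{L^\infty (K)}\nm {\phi -\phi (\lambda \cdo )}{L^\infty }\nm {\chi _K}{\mascB }\to 0.
$$

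To pass to general $f$, I approximate in the weighted norm. Truncating, $f\chi _{B_R(0)}\omega _0\to f\omega _0$ in $\mascB$ as $R\to \infty$ by mixed-norm dominated convergence (using $\nu _2(\mascB )<\infty$). On the ball $B_R(0)$ the weight $\omega _0$ is two-sidedly bounded (standing weight property), so $f\chi _{B_R(0)}\in \mascB $, and density of $C_c$ in $\mascB $ yields $\phi \in C_c(B_{R+1}(0))$ with $\nm {(f-\phi )\omega _0}{\mascB }<\ep /(M+1)$. The triangle splitting
$$
(f-f(\lambda \cdo ))\omega _0=(f-\phi )\omega _0+(\phi -\phi (\lambda \cdo ))\omega _0+R_\lambda (\phi -f),
$$
combined with $\nm {R_\lambda (\phi -f)}{\mascB }\le M\nm {(\phi -f)\omega _0}{\mascB }$ and the convergence on $C_c$, gives $\limsup _{\lambda \to 1-}\nm {(f-f(\lambda \cdo ))\omega _0}{\mascB }\le \ep$, proving the limit. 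The main obstacle is the density step: with $\omega _0$ only in $L^\infty _{loc}$ and carrying only a one-sided dilation bound, one cannot freely identify $\mascB$ with the weighted space, and the compact truncation is what rescues the argument by localizing to a region where $\omega _0$ behaves like a standard weight.
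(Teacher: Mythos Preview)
Your proof is correct and follows essentially the same strategy as the paper: bound the dilation via $|f(\lambda x)\omega_0(x)|\le C|f(\lambda x)\omega_0(\lambda x)|$ and a change of variables, then pass to a dense subclass. The paper carries this out for $\mascB=L^p$ only (leaving the mixed-norm case to the reader), reduces by ``straight-forward approximations'' to $f\in C_0(\rr d)$ and $\omega_0\in C(\rr d)$, and then concludes by invoking the generalized dominated-convergence Lemma~\ref{genLeb} with $g_\lambda=|f(\lambda\cdo)|$, $g=|f|$, $d\mu=\omega_0\,dx$.

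The only substantive difference is the endgame: you turn the uniform operator bound $\nm{R_\lambda h}{\mascB}\le M\nm{h\omega_0}{\mascB}$ into an $\ep/3$ argument, whereas the paper avoids that bookkeeping by using Lemma~\ref{genLeb} directly on the (already nice) $f$. Your route is slightly more elementary and self-contained; the paper's is shorter once Lemma~\ref{genLeb} is available. One small caveat: your appeal to ``standing weight property'' to get a local lower bound for $\omega_0$ is not literally contained in the hypothesis $0<\omega_0\in L^\infty_{loc}$ of the lemma, though it holds for all weights actually used in the paper (cf.\ Subsection~\ref{subsec1.1}); the paper's own ``straight-forward approximations'' step is at the same level of detail.
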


\par

\begin{proof}
%We shall use similar arguments as in the proof of Lebesgue's
%theorem in the literature.
We  only consider the case when $\mathscr B=L^p(\rr d)$,
$p\in (0,\infty )$. The straight-forward modifications to the
general case are  left for the reader.

\par

By the assumptions we have
%%%
%\begin{equation}\label{}
$$
|f(\lambda x)\omega _0(x)|\le C|f(\lambda x)\omega _0(\lambda x)|,
$$
%\end{equation}
%%%
and it follows that $\nm {f(\lambda \cdo )\omega _0}{L^p}\le
C\lambda ^{-d/p}\nm {f\omega _0}{L^p}<\infty$,
by a simple change of variables in the integral. Hence
$\{ f(\lambda \cdo )\omega _0 \} _{1-\theta <\lambda <1}$
is a bounded set in $L^p$.

\par

By straight-forward approximations, it follows that we may
assume that $f\in C_0(\rr d)$ and $\omega _0\in C(\rr d)$
when proving \eqref{limitform}. Then the result follows if we choose
$g_\lambda (x)=|f(\lambda x )|$, $g(x) = |f(x)|$ and $d\mu (x)
= \omega _0(x)\, dx$ in Lemma \ref{genLeb}. The proof is complete.
\end{proof}
%
%
%
%Since
%$$
%2^p(|f(x)|^p+|f(\lambda x)|^p)-|f(x)-f(\lambda x) |^p\ge 0,
%$$
%Fatou's lemma gives
%%%
%\begin{multline*}
%2^{p+1}\int |f(x)\omega _0(x)|^p\, dx
%\\[1ex]
%=\int \lim _{\lambda \to 1-} \big ( 2^p|f(x)|^p+2^p|f(\lambda x)|^p-|f(x)-f(\lambda x) |^p \big )
%\omega _0(x)^p\, dx
%\\[1ex]
%\le \liminf _{\lambda \to 1-} \int  \big ( 2^p|f(x)|^p+2^p|f(\lambda x)|^p-|f(x)-f(\lambda x)|^p \big )
%\omega _0(x)^p\, dx
%\\[1ex]
%= \liminf _{\lambda \to 1-} \int ( 2^p|f(x)|^p+2^p|f(\lambda x)|^p)\omega _0(x)\, dx -\limsup _
%{\lambda \to 1-}\int |f(x)-f(\lambda x) |^p \omega _0(x)^p\, dx
%\\[1ex]
%= 2^{p+1}\int |f(x)\omega _0(x)|^p\, dx - \limsup _{\lambda \to 1-}\int |f(x)-f(\lambda x)|^p \omega 
%_0(x)^p\, dx ,
%\end{multline*}
%%%
%where the last equality follows by Lemma \ref{genLeb}. This gives \eqref{limitform}, and the 
%proof is complete.
%\end{proof}

\par

\begin{proof}[Proof of the first parts of Theorem \ref{densthm}]
By Lemma \ref{Flambdalim} it suffices to prove that if $F\in A(\omega ,\mascB )$ and
$\lambda <1$ with $1-\lambda$ small enough, then
$\nm {F(\lambda \cdo ) -G}{A(\omega ,\mascB )}$ can
be made arbitrarily small as $G\in P(\cc d)$.

\par

Let $s\in \mathbf R$ be chosen such that $\lambda <s<1$.
Then it follows from the assumptions that
$$
C^{-1}\omega (z)e^{-|z|^2/2}\le e^{-s|z|^2/2}\le
C\omega (\lambda z)e^{-\lambda ^2|z|^2/2},
$$
for some constant $C$. In particular, we have
\begin{align}
\nm {F(\lambda \cdo ) -G}{A(\omega ,\mascB )} &\le
C\nm {(F(\lambda \cdo ) -G)e^{-s|\cdo |^2/2}}{\mascB}\label{differandnorms}
\intertext{and}
\nm {F(\lambda \cdo ) e^{-s|\cdo |^2/2}}{\mascB} &< \infty ,\label{Flambdaest}
\end{align}
for some constant $C>0$.

\par

By \eqref{Flambdaest} it follows that $F(\lambda \cdo )$ belongs to the set
$$
\mathcal A_{p,t}(\cc d)\equiv \sets {F\in A(\cc d)}{ \nm
{F\cdot  e^{-t|\cdo |^2/2}}{\mascB} < \infty},
$$
when $t=s$. Since the same is true for any choice of
$t\in (\lambda ,s)$, $\mathcal A_{p,t}$ increases with
$t$, and that $\sets {e^{-t|\cdo |^2/2}}{\lambda <t<s}$
is an admissible family of weights, it follows from Theorem
\ref{analident} that \eqref{differandnorms} and \eqref{Flambdaest}
hold after $s$ has been replaced by an appropriate
$t\in (\lambda ,s)$, and $p=(1,\dots ,1)$.
Since $P(\cc d)$ is dense in $\mathcal A_{p,t}(\cc d)$
with $p=(1,\dots ,1)$, in view of 
\cite[Proposition 3.2]{SiT2}, it follows that the right-hand side
of \eqref{differandnorms} can be made arbitrarily small, and
the assertion follows. This completes the proof
of the first parts of Theorem \ref{densthm} (1) and (2).
\end{proof}

\par

Next we turn to the proof of Theorem \ref{dualthm}. First we note that if $A(\omega ,\mascB )$ is the 
same as in Theorem \ref{dualthm} and $l\in (A(\omega ,\mascB ))'$, then
\begin{equation}\label{dualform}
l(F) = \int F(z)\overline {G(z)}\, d\mu  (z),\qquad F\in A(\omega ,\mascB ),
\end{equation}
for some $G\in B(1/\omega ,\mascB ')$.
(Note that the mixed norm space $\mascB '$ is the dual to $\mascB$ when
$\nu _2(\mascB )<\infty$ and the $L^2$ form is used.) In fact, it follows from
the definitions that there is a constant $C\ge 0$ such that
\begin{equation}\label{formest1}
| l(F)|\le C\nm F{B(\omega ,\mascB )}
\end{equation}
when $F\in A(\omega ,\mascB )$. By Hahn-Banach's theorem it follows that $l$ is 
extendable to a linear continuous form on $B(\omega ,\mascB )$ and that \eqref{formest1} 
still holds for $F\in B(\omega ,\mascB )$. The formula \eqref{dualform} now follows from 
well-known results in measure theory.

\par

From now on we let $l_G$ be the continuous linear form on
$A(\omega ,\mascB )$, defined by \eqref{dualform}
when $G\in B(1/\omega ,\mascB ')$. Then it follows from the previous investigations that the map
$G\mapsto l_G$ is surjective from $B(1/\omega ,\mascB ')$ to $(A(\omega ,\mascB ))'$.

\par

In what follows we link the kernel of the latter map with the kernel
%$N(\omega ,\mathscr B)$ to the projection operator
%$\Pi _A$. This means that
$$
N(\omega ,\mascB )\equiv \sets {F\in B(\omega ,\mascB )}{\Pi _A F=0}
$$
of the projection operator. Here we note that every
$F$ in $B(\omega ,\mascB )$ for $\omega \in \mascP _Q^0(\cc d)$ and mixed norm
space $\mascB $, fulfill the required properties in Lemma \ref{projlemma}.

\par

\begin{lemma}\label{kernellink}
Let $\mascB $ be a mixed norm space on $\cc d$ and
assume that $\omega \in \mascP _Q^0(\cc d)$ is dilated suitable.
Then the following is true:
\begin{enumerate}
\item $N(\omega ,\mascB )$ is a closed subspace of $B(\omega ,\mascB )$;

\vrum

\item if $\nu _2(\mascB )<\infty$, then the kernel of the map $G\mapsto l_G$, from $B
(1/\omega ,\mascB ')$ to $(A(\omega ,\mascB ))'$ is equal to $N(1/\omega ,\mascB  ')
$.
\end{enumerate}
\end{lemma}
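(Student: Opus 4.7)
For (1), I reduce to showing that $\Pi_A\colon B(\omega,\mascB)\to A(\cc d)$ is continuous when the target carries the topology of local uniform convergence; then $N(\omega,\mascB)=\Pi_A^{-1}(\{0\})$ is the preimage of a closed set, hence closed. For the continuity estimate, fix a compact set $K\subseteq\cc d$ with $R=\sup_K|z|$. For $z\in K$ one has
$$
|\Pi_A F(z)|\le \pi^{-d}\int \bigl(|F(w)|\omega(2^{1/2}\overline w)e^{-|w|^2/2}\bigr)\cdot\bigl(\omega(2^{1/2}\overline w)^{-1}e^{R|w|-|w|^2/2}\bigr)\, d\lambda(w).
$$
The first factor has $\mascB$-norm a constant multiple of $\|F\|_{B(\omega,\mascB)}$. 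For the second factor, the Gaussian bound $\omega(x)^{-1}\le Ce^{c|x|^2}$ with $c<1/4$ (available because $\omega\in\mascP_Q^0$, cf.~\eqref{Gaussest}) makes it strictly Gaussian-decaying at infinity, hence in $\mascB'$. Mixed-norm H\"older then gives $\sup_K|\Pi_AF|\le C_K\|F\|_{B(\omega,\mascB)}$, so $\Pi_A$ is continuous and $N(\omega,\mascB)$ is closed.

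For (2), I will combine Lemma \ref{projlemma}(4) with the density statement in Theorem \ref{densthm}(1). The first step is to check that any $G\in B(1/\omega,\mascB')$ satisfies both \eqref{condanal} with $p=1$ and \eqref{condadjoint} for some $\gamma\in(1/2,3/4)$. The argument is the symmetric version of the one used in (1): factor $|G(z)|e^{-\gamma|z|^2}$ as the $\mascB'$-function $|G(z)|\omega(2^{1/2}\overline z)^{-1}e^{-|z|^2/2}$ times the function $\omega(2^{1/2}\overline z)e^{(1/2-\gamma)|z|^2}$, which is strictly Gaussian-decaying (hence in $\mascB$) because $\omega(x)\le Ce^{c|x|^2}$ for every $c>0$ while $\gamma>1/2$. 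The routine pointwise bound $|\Pi_AG(z)|\le Ce^{|z|^2/(4(1-\gamma))}$ then shows $\Pi_AG$ satisfies \eqref{condanal} as well, so Lemmas \ref{projlemma}(1) and \ref{projlemma}(4) both apply to $G$.

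Given these preparations, the equivalence $l_G\equiv 0\Longleftrightarrow \Pi_AG=0$ is nearly immediate. If $\Pi_AG=0$, then for any polynomial $F$, Lemma \ref{projlemma}(4) gives $l_G(F)=(F,G)_{A^2}=(F,\Pi_AG)_{A^2}=0$; since $\omega$ is dilated suitable and $\nu_2(\mascB)<\infty$, Theorem \ref{densthm}(1) makes $P(\cc d)$ dense in $A(\omega,\mascB)$, and continuity of $l_G$ then forces $l_G\equiv 0$. Conversely, if $l_G\equiv 0$, setting $F=z^\alpha$ and applying Lemma \ref{projlemma}(4) yields $(z^\alpha,\Pi_AG)_{A^2}=0$ for every $\alpha\in\mathbf N^d$; conjugating and using the identity $\partial^\alpha H(0)=(H,z^\alpha)_{A^2}$ from \eqref{eq3.*} (valid for $H=\Pi_AG$ since it is entire and satisfies \eqref{condanal}) forces all Taylor coefficients of $\Pi_AG$ at the origin to vanish, so $\Pi_AG\equiv 0$. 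The only real obstacle is the integrability bookkeeping for $G$: this is where the Gaussian-type bounds from $\mascP_Q^0$, mixed-norm H\"older duality, and the threshold $\gamma<3/4$ demanded by Lemma \ref{projlemma}(4) must all be reconciled; the rest is reproducing formulas plus density.
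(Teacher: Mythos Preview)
Your proof is correct and follows essentially the same route as the paper: for (1) you establish the local-uniform continuity estimate $\sup_K|\Pi_AF|\le C_K\|F\|_{B(\omega,\mascB)}$ via H\"older and the Gaussian bounds on $\omega$, exactly as the paper does (the paper phrases it as a sequential argument rather than as a preimage of a closed set, but the content is identical); for (2) you use Lemma~\ref{projlemma}(4) to move $\Pi_A$ across the pairing, invoke density of $P(\cc d)$ from the first part of Theorem~\ref{densthm}(1), and conclude via the Taylor-coefficient identity \eqref{eq3.*}, which is precisely the paper's argument. Your explicit verification that $G\in B(1/\omega,\mascB')$ satisfies \eqref{condadjoint} with some $\gamma\in(1/2,3/4)$ is a detail the paper leaves implicit but is indeed needed to invoke Lemma~\ref{projlemma}(4).
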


\par

\begin{proof}
Assume that $F_j\in N(\omega ,\mascB )$, $j\ge 1$, converges to
$F\in B(\omega ,\mascB )$ in $B(\omega ,\mascB )$, as $j\to \infty$.
If $K\subseteq \cc d$ is compact, then for some constant $C_K$,
depending on $K$, H{\"o}lder's inequality gives
$$
\sup _{z\in K}|\Pi _A F| = \sup _{z\in K}|\Pi _A (F-F_j)|\le
C_K\nm {F-F_j}{B(\omega ,\mascB )}\to 0
$$
as $j\to \infty$. This proves (1).

\par

(2) By the first part of Theorem \ref{densthm} (1) it suffices to prove
that $l_G(F)=0$ for every polynomial $F$ on $\cc d$, if and only if
$G\in N(1/\omega ,\mascB ')$. By Lemma \ref{projlemma} (4) it follows that
\begin{equation}\label{formident1}
l_G(F) = l_{\Pi _AG}(F) = (F,G)_{A^2},\qquad F\in P(\cc d),
\end{equation}
and $G\in B(1/\omega ,\mascB ')$. This proves that $l_G =0$
when $G\in N(1/\omega ,\mascB ')$.

\par

On the other hand, if $l_G(F)=0$ for every polynomial $F$, then
\eqref{eq3.*} and \eqref{formident1} show that the entire function $\Pi _AG$ satisfies
$$
(\partial ^\alpha \Pi _AG)(0) =0,
$$
for every $\alpha$. This implies that $\Pi _AG =0$, i.{\,}e.
$G\in N(1/\omega ,\mascB ')$, and the proof is complete.
%
%
%However, this 
%equivalence is an immediate consequence of \eqref{eq3.*}, the fact that $\Pi _AG$ is analytic, and the equalities $l_G(F)=(F,G)_{A^2}=(F,\Pi 
%_AG)_{A^2}$, when $F$ is a polynomial (cf. Lemma \ref{projlemma}). The proof is 
%complete.
\end{proof}

\par

\begin{rem}\label{remGenLemKern}
Let $
\omega \in \mascP _Q^0(\cc d)$, $\mathscr B$ be a mixed norm space on $\cc d$, and 
let $A_*(\omega ,\mathscr B)$ be the completion of $P(\cc d)$ under the norm $\nm \cdo{A
(\omega ,\mathscr B)}$. Then the same arguments as in the proof of Lemma \ref{kernellink} 
give that the kernel of the map $G\mapsto l_G$, from $B
(1/\omega ,\mascB ')$ to $(A_*(\omega ,\mascB ))'$ is equal to $N(1/\omega ,\mascB  ')
$.

\par

We note that this generalizes Lemma \ref{kernellink} (2), since if in addition
$\nu _2(\mathscr B)<\infty$, then $A_*(\omega ,\mathscr B)=A(\omega ,\mathscr B)$
in view of the first part of Theorem \ref{densthm} (1).
\end{rem}

\par

As a consequence of Lemma \ref{kernellink} it follows that if $\nu _2(\mascB )<\infty$, then the surjective and continuous map $G\mapsto l_G$ from $B(1/\omega ,\mascB ')$ to $(A(\omega ,\mascB ))'$ induces a homeomorphism from the quotent space
$$
C(1/\omega ,\mascB ') \equiv B(1/\omega ,\mascB ')/N(1/\omega ,\mascB ')
$$
to $(A(\omega ,\mascB ))'$. Here note that Lemma \ref{kernellink} implies that $C(1/\omega ,\mascB ')$ is a Banach space under the usual quotent topology.

\par

\begin{proof}[Proof of Theorem \ref{dualthm}]
Let $C_w(1/\omega ,\mascB ')$ be equal to $C(1/\omega ,\mascB ')$ equipped by the induced weak$^*$-topology on $(A(\omega ,\mascB ))'$. For each $G\in B(1/\omega ,\mascB ')$ we write $G^*=G\mod N(1/\omega ,\mascB ')$ for its image in $C(1/\omega ,\mascB ')$ under the quotent map. Then it follows that $C_w(1/\omega ,\mascB ')$ is a local convex topological vector space, and that the separating vector space of linear functionals on $C_w(1/\omega ,\mascB ')$ is equal to
\begin{equation*}%\label{weakfunctionals}
\sets {\Lambda}{\Lambda (G^*)=\overline {l_G(F)}\  \text{for some}\  F\in A(\omega ,\mascB )}.
\end{equation*}
Note here that the equality $\Lambda (G^*)=\overline {l_G(F)}$ makes sense since Lemma \ref{kernellink} gives $l_{G_1}(F)=l_{G_2}(F)$ when $G_1,G_2\in B(1/\omega ,\mascB )$ are two different representatives of $G\mod N(1/\omega ,\mascB ')$ and $F\in A(\omega ,\mascB )$.

\par

Since $\Pi _A(F)=F$ when $F\in A(1/\omega ,\mascB ')$, it follows that the map $G\mapsto  G\mod N(1/\omega ,\mascB ')$ from $A(1/\omega ,\mascB ')$ to $C(1/\omega ,\mascB ')$ is continuous and injective. Let $C_0(1/\omega ,\mascB ')$ be the image of this map. The result follows if we prove that $C_0(1/\omega ,\mascB ')=C(1/\omega ,\mascB ')$.

\par

By Hahn-Banach's theorem it suffices to prove that if $\Lambda$ is a linear and continuous functional on $C_w(1/\omega ,\mascB ')$ which is zero on $C_0(1/\omega ,\mascB ')$, then $\Lambda $ is identically zero. Since the dual of $C_w(1/\omega ,\mascB ')$ is equal to $A(\omega ,\mascB )$ when using the $A^2$ form, we have
$$
\Lambda (G^*) =\Lambda _F(G^*) \equiv (G,F)_{A^2},
$$
for some $F\in A(\omega ,\mascB )$.

\par

Now $\Lambda _F(G^*)=0$ when $G\in A(1/\omega ,\mascB ')$. In particular $\Lambda _F(z^\alpha )=(z^\alpha ,F)_{A^2}=0$ for every multi-index $\alpha$. Since $\partial ^\alpha F(0)=(F,z^\alpha )_{A^2}$ in view of Remark \ref{derivrepr}, it follows that the entire function $F$ is zero together with all its derivatives at origin. This implies that $F$ is identically zero, and hence, $\Lambda$ is zero. This proves the result.
\end{proof}

\par

\begin{rem}
By Theorem \ref{dualthm} and its proof it follows that if $\omega \in \mascP _Q^0(\cc d)$ is dilated suitable and $\mascB $ is a mixed norm space such that $\nu _2(\mascB )<\infty$, then the mappings
$$
G\mapsto l_G\qquad \text{and}\qquad G\mapsto G^*
$$
from $A(1/\omega,\mascB ')$ to $(A(\omega,\mascB ))'$ and from $A(1/\omega,
\mascB ')$ to $C(1/\omega,\mascB ')$ respectively, are bijective and continuous. Hence 
these mappings are homeomorphisms by the open mapping theorem. In particular, the norms in respective space are equivalent, giving that for some 
$C>0$ it holds
$$
C^{-1}\nm {l_G}{(A(\omega ,\mascB ))'} \le \nm {G^*}{C(1/\omega,\mascB ')}\le \nm G{A
(1/\omega,\mascB ')}\le C\nm {l_G}{(A(\omega ,\mascB ))'},
$$
when $G\in A(1/\omega, \mascB ')$.
\end{rem}

\par

\begin{proof}[The end of the proof of Theorem \ref{densthm}]
We start to prove the second part of (2). Let $F\in A(1/\omega ,\mascB )$ be fixed, and choose the vector spaces $V_j\subseteq \cc d$ and $p\in [1,\infty )$ such that $\mascB =L^p(V)$ when $V=(V_1,\dots V_n)$. Then it follows by straight-forward computations that
\begin{multline*}
|(G-G(\lambda \cdo ),F)_{A^2}|
\\[1ex]
=\left |  \pi ^{-d}\int \big (  G(z)e^{-|z|^2/2}  \big )   \big (
\overline {F(z)}e^{-|z|^2/2} - \lambda ^{-2d} \overline {F(z/\lambda )}
e^{-(2-\lambda ^2)|z|^2/(2\lambda ^2)} \big ) d\lambda (z) \right |
\\[1ex]
\le C\nm {S(F \cdot e^{-|\cdo|^2/2}- \lambda ^{-2d}F(\cdo /\lambda )
e^{-(2-\lambda ^2)|\cdo |^2/(2\lambda ^2)})/\omega }{L^p(V)},
\end{multline*}
where $C=C_0\nm G{A(\omega ,\mascB ')}<\infty$, for some constant $C_0>0$,
and $S$ is the operator in \eqref{Sdef}.
By taking $2^{-1/2}\overline z/\lambda$ as new variables of integration we get
\begin{multline}\label{differest}
|(G-G(\lambda \cdo ),F)_{A^2}|
\\[1ex]
\le C_1\lambda ^{c}\nm {(F (\lambda \cdo )e^{-\lambda ^2|\cdo |^2/2}-
\lambda ^{-2d}F\cdot e^{-(1-\lambda ^2/2)|\cdo |^2})/\omega
(S^{-1}(\lambda \cdo ))}{L^p(V)},
\end{multline}
for some constants $c$ and $C_1$.

\par

Now we set
$$
\Phi _\lambda (z)= (F (\lambda z )e^{-\lambda ^2|z |^2/2}-
\lambda ^{-2d}F(z) e^{-(1-\lambda ^2/2)|z|^2})/\omega (2^{1/2}
\lambda \overline z ) 
$$
for the last integrand. By \eqref{addomegcond}, we get $|\Phi
_\lambda | \le \Psi _\lambda$, where
$$
\Psi _\lambda (z) = |F (\lambda z )|e^{-\lambda ^2|z |^2/2}/\omega
(2^{1/2}\lambda \overline z) + C\lambda ^{-2d}|F(z)| e^{-|z|^2/2})/\omega
(2^{1/2}\overline z),
$$
provided the constant $C$ is chosen sufficiently large. Since
$\Phi _\lambda \to 0$ pointwise, and 
$$
\Psi _\lambda (z)\to (C+1)F(z)e^{-|z|^2/2}/\omega (2^{1/2}\overline z)
$$
in $L^p(V)$ as $\lambda \to 1-$ in view of Lemma \ref{Flambdalim},
it follows from Lemma \ref{genLeb} that $\Phi _\lambda \to 0$ in
$L^p(V)$ as $\lambda \to 1-$. This implies that the right-hand side of
\eqref{differest} turns to zero as $\lambda \to 1-$, and the second part of (2) follows.

\par

It remains to prove the second part of (1). Let $G\in A(\omega ,\mascB ')$
and $F\in A(1/\omega ,\mascB )$.
By the second part of (2) it suffices to prove that for some
$0<\lambda <1$ and some polynomials $G_j$ we have
\begin{equation}\label{Glambdaweak}
|(G(\lambda ^2\cdo )-G_j,F)_{A^2}|\to 0\quad \text{as}\quad j\to \infty .
\end{equation}

\par

Therefore, let $G_j$ be a sequence of polynomials on $\cc d$.
By Proposition \ref{APprop} we get
$G(\lambda \cdo )e^{-|\cdo |^2/2}\in L^1(\cc d)$, and
\begin{multline}\label{Glambdaest}
|(G(\lambda ^2\cdo )-G_j,F)_{A^2}|\le \nm {G(\lambda ^2\cdo ) -
G_j}{A(\omega ,\mascB ')}\nm F{A(1/\omega ,\mascB )}
\\[1ex]
\le C\nm {(G(\lambda \cdo )-G_j(\cdo /\lambda ))e^{-|\cdo
|^2/2}}{L^1}\nm F{A(1/\omega ,\mascB )},
\end{multline}
for some constant $C$. In the last inequality we have applied
Theorem \ref{analident} with $\mascB _1=L^1$ and $\mascB _2
=\mascB$ on the estimates
\begin{multline*}
|G(\lambda w )-G_j(w /\lambda )|e^{-|w|^2/(2\lambda ^2)}\omega (2^{1/2}\overline w /\lambda )
\\[1ex]
\le C |G(\lambda w )-G_j(w /\lambda )|e^{-|w|^2/(2\lambda )}\le
C|G(\lambda w )-G_j(w /\lambda )|e^{-|w|^2/2},
\end{multline*}
for some constant $C$.

\par

Consequently, if $\omega _0=1$ and $G_j$ are chosen
such that $G_j(\cdo /\lambda )\to G(\lambda \cdo )$
in $A(\omega _0,L^1)$ as $j\to \infty$, then \eqref{Glambdaweak}
follows from \eqref{Glambdaest}. The proof is complete.
\end{proof}

\par

\subsection{Narrow convergence}

\par

Next we introduce the narow convergence for $A(\omega ,\mathscr B)$. We note that Theorem
\ref{densthm} give no explicit possibilities to approximate elements in $A(\omega ,\mascB )$
with polynomials when $\nu _1(\mascB )=1$ and $\nu _2(\mascB )=\infty$. In this context,
the narrow convergence makes such approximations possible in some of these situations. The
assumptions on the involved weight functions and $\mascB $ is that the pair $(\mascB ,\omega )$
should be narrowly feasible (cf. Definition \ref{feasiblpair}).

\par

In order to define the narrow convergence we introduce the functional 
$$
J_{F,\omega }(\zeta _2)\equiv \sup _{\zeta _1\in V_1}\Big ( |S(F(z)e^{-|z|^2/2})|\omega 
(z)  \Big ),\qquad
z=(\zeta _1,\zeta _2)\in V_1 \oplus V_1^\bot \simeq \cc d,
$$
when $F\in A(\omega ,\mascB )$. Here we recall that $S$ is the dilation operator,
given by \eqref{Sdef}.

\par

\begin{defn}\label{narrowconvdef}
Let $(\mascB ,\omega )$ be a narrowly feasible space weight pair on $\cc d$,
let $p$ and $V$ be the same as in Definition \ref{feasiblpair}, and let
$q=(p_2,\dots ,p_n)$ and $U=(V_2,\dots V_{n})$. Also let $F,F_j\in
A(\omega ,\mascB )$, $j\ge 1$. Then $F_j$ is said to converge to
$F$ \emph{narrowly} as $j\to \infty$, if the following conditions are fulfilled:
\begin{enumerate}
\item $S(F_je^{-|\cdo |^2/2})\omega \to S(Fe^{-|\cdo |^2/2})\omega$ in
$\mathcal S'_{1/2}(\cc d)$ as $j\to \infty$;

\vrum

\item $J_{F_j,\omega ,p}\to J_{F,\omega ,p}$ in $L^q(U)$ as $j\to \infty$.
%, for some $\omega _0\in \mascP _Q^0(\cc d)$ which is equivalent to $\omega$.
\end{enumerate}
\end{defn}

\par

The following result gives motivations for introducing the narrow convergence.

\par

\begin{thm}\label{narrowthm}
Let $(\mascB ,\omega )$ be a narrowly feasible pair on $\cc d$. Then the following is true:
\begin{enumerate}
\item $P(\cc d)$ is dense in $A(\omega ,\mascB )$ with respect to the narrow convergence;

\vrum

\item if $F\in A(\omega ,\mascB )$ and $0<\lambda <1$, then $F(\lambda \cdo )\to F$ narrowly as $\lambda \to 1-$.
\end{enumerate}
\end{thm}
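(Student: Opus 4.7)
The plan is to prove (2) first and derive (1) from it via Proposition \ref{APprop} and a diagonal argument. Set $H(w)=|S(Fe^{-|\cdo|^2/2})(w)|\omega(w)$ and $H_\lambda(w)=|S(F(\lambda\cdo)e^{-|\cdo|^2/2})(w)|\omega(w)$, so that these are, up to the factor $(2\pi)^{d/2}$, the integrands in $\nm F{A(\omega,\mascB)}$ and $\nm{F(\lambda\cdo)}{A(\omega,\mascB)}$, and $J_{F,\omega}(\zeta_2)=\sup_{\zeta_1\in V_1}H(\zeta_1,\zeta_2)$, with the analogous identity for $J_{F(\lambda\cdo),\omega}$.

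The crucial step is the pointwise estimate
\begin{equation}\label{keyest}
H_\lambda(w) \le C_\lambda H(\lambda w),\qquad w\in\cc d,
\end{equation}
with $\{C_\lambda\}$ bounded as $\lambda\to 1-$. Writing $SG(w)=G(\tau w)$ with $|\tau w|^2=|w|^2/2$, a substitution yields $H_\lambda(w)/H(\lambda w)=e^{-(1-\lambda^2)|w|^2/4}\,\omega(w)/\omega(\lambda w)$. Since the pair is narrowly feasible (case (3) of Definition \ref{feasiblpair}), the condition \eqref{addomegcond2} applied with $V=\{0\}$ and $\ep=1$ to an equivalent continuous weight $\omega_0$ of $\omega$ gives $\omega_0(w)/\omega_0(\lambda w)\le(1+\eta_\lambda)e^{(1-\lambda^2)|w|^2/4}$ with $\eta_\lambda\to 0$; transferring from $\omega_0$ back to $\omega$ costs only a fixed multiplicative constant, and the two exponentials cancel, yielding \eqref{keyest}. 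Taking $\sup_{\zeta_1\in V_1}$ and using $\lambda V_1=V_1$ then produces $J_{F(\lambda\cdo),\omega}(\zeta_2)\le C_\lambda J_{F,\omega}(\lambda\zeta_2)$.

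For condition (1) of Definition \ref{narrowconvdef}, pointwise convergence of the (complex-valued) density follows from entireness of $F$, and a $\lambda$-uniform Gaussian majorant of the form $Ce^{N|w|-c|w|^2}$ (obtained from $\omega(w)\le C_\ep e^{\ep|w|^2}$ for small $\ep$ together with a standard bound $|F(z)|\le C_Ne^{N|z|}$ coming from Lemma \ref{projlemma}) lets dominated convergence dispose of pairings against any $\fy\in\maclS_{1/2}$. For condition (2), pointwise convergence of $H_\lambda\to H$ combined with a Fatou-type argument on the supremum gives $\liminf_{\lambda\to 1-}J_{F(\lambda\cdo),\omega}(\zeta_2)\ge J_{F,\omega}(\zeta_2)$; since in the narrowly feasible case $q=(p_2,\dots,p_n)$ has only finite exponents, Lemma \ref{Flambdalim} yields $J_{F,\omega}(\lambda\cdo)\to J_{F,\omega}$ in $L^q(U)$, and hence pointwise a.e.\ along a subsequence $\lambda_n\to 1-$. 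The sandwich
\begin{equation*}
J_{F,\omega}(\zeta_2)\le\liminf_n J_{F(\lambda_n\cdo),\omega}(\zeta_2)\le\limsup_n J_{F(\lambda_n\cdo),\omega}(\zeta_2)\le C_{\lambda_n}J_{F,\omega}(\lambda_n\zeta_2)
\end{equation*}
then forces pointwise a.e.\ convergence of the $J$-functionals. Applying Lemma \ref{genLeb} with dominator $g_\lambda=C_\lambda J_{F,\omega}(\lambda\cdo)$, whose $L^q(U)$ norm converges to that of its limit by Lemma \ref{Flambdalim}, yields the required $L^q(U)$ convergence, and a routine subsequence argument promotes this to the full limit $\lambda\to 1-$, completing (2).

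For (1), each $F(\lambda\cdo)$ lies in $A_P(\cc d)$, so Proposition \ref{APprop} furnishes polynomials $P_{\lambda,k}\to F(\lambda\cdo)$ in the $A(\omega,\mascB)$ norm. Norm convergence in $A(\omega,\mascB)$ implies narrow convergence: for the distributional part one uses H\"older duality against $\maclS_{1/2}$-test functions, while the pointwise inequality $|J_{F_k,\omega}-J_{F,\omega}|\le J_{F_k-F,\omega}$ combined with $\nm{J_{F_k-F,\omega}}{L^q(U)}=(2\pi)^{d/2}\nm{F_k-F}{A(\omega,\mascB)}$ takes care of the $J$-condition. A diagonal choice $P_n=P_{\lambda_n,k_n}$ with $\lambda_n\to 1-$ and $\nm{P_n-F(\lambda_n\cdo)}{A(\omega,\mascB)}<1/n$ produces polynomials converging narrowly to $F$. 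The hard part is the key estimate \eqref{keyest}: the exact matching between the Gaussian factor $e^{-|\tau w|^2/2}=e^{-|w|^2/4}$ in the Bargmann density and the growth parameter $\ep=1$ permitted by \eqref{addomegcond2} is precisely what makes narrow feasibility the correct framework and what forces the particular calibration of Definition \ref{defdilsuit}.
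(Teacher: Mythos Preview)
Your overall plan---prove (2) via the key inequality $J_{F(\lambda\cdo),\omega}\le C_\lambda\, J_{F,\omega}(\lambda\,\cdo)$ together with Lemmas \ref{Flambdalim} and \ref{genLeb}, and then deduce (1) from Proposition \ref{APprop} and a diagonal argument---is exactly the paper's, and your treatment of (1) is correct.

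There is, however, a genuine gap in (2). Because you transfer the bound in \eqref{addomegcond2} from the equivalent weight $\omega_0$ back to $\omega$, you obtain only that $\{C_\lambda\}$ is \emph{bounded} (by roughly $C^2(1+\eta_\lambda)$, with $C$ the equivalence constant between $\omega$ and $\omega_0$), not that $C_\lambda\to 1$. Your sandwich along a subsequence then gives
\[
J_{F,\omega}(\zeta_2)\ \le\ \liminf_n J_{F(\lambda_n\cdo),\omega}(\zeta_2)\ \le\ \limsup_n J_{F(\lambda_n\cdo),\omega}(\zeta_2)\ \le\ C_*\,J_{F,\omega}(\zeta_2)
\]
for some $C_*>1$, and this does \emph{not} force pointwise a.e.\ convergence of $J_{F(\lambda_n\cdo),\omega}$ to $J_{F,\omega}$. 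Without that, the hypothesis ``$f_j\to f$ pointwise a.e.'' of Lemma \ref{genLeb} is unverified and the $L^q(U)$ convergence does not follow. The paper's remedy is to assume $\omega=\omega_0$ from the outset; then \eqref{addomegcond2} applies directly to $\omega$ and yields $C_\lambda\to 1$, so the upper and lower bounds in the sandwich coincide a.e.\ along the subsequence and Lemma \ref{genLeb} applies cleanly. You should make this reduction rather than transferring back to $\omega$.

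A smaller point: the growth bound $|F(z)|\le C_Ne^{N|z|}$ is not available for $F\in A(\omega,\mascB)$ with $\omega\in\mascP_Q^0$ (such $F$ may grow like $e^{c|z|^2}$ for $c$ slightly above $1/2$), and Lemma \ref{projlemma} does not provide it. Your dominated-convergence argument for condition (1) of Definition \ref{narrowconvdef} still goes through, but the correct majorant comes from the $L^\infty$ embedding in Theorem \ref{analident} together with the subgaussian estimates \eqref{Gaussest} on $\omega$, which supply a Gaussian (not linear-exponential) dominator that is still killed by any $\fy\in\maclS_{1/2}$.
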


\par

\begin{proof}
We start by proving (2). We may assume that $\omega =\omega _0$, where $\omega _0$ is the same as in Definition \ref{narrowconvdef}. Then
$$
S(F(\lambda \cdo ) e^{-|\cdo |^2/2})\omega  \, \to S(F\,  e^{-|\cdo |^2/2})\omega
$$
pointwise and in $\mathcal S_{1/2}'(\rr d)$ as $\lambda \to 1-$.

\par

Furthermore, if $z=(\zeta _1,\zeta _2)\in V_1\oplus V_1^\bot =\cc d$, then
$$
J_{F(\lambda \cdo ),\omega ,p}(\zeta _2)\le C_\lambda J_{F,\omega ,p}(\lambda \zeta _2),
$$
where
$$
C_\lambda =\sup _{z\in \cc d}\Big ( \frac {\omega (z)e^{-(1-\lambda ^2)|z|^2/4}}{\omega  (\lambda z)}\Big ),
$$
in view of the definition of $S$. Since $C_\lambda \to 1$ as $\lambda \to 1-$, and
$J_{F,\omega ,p}(\lambda \cdo )\to J_{F,\omega ,p}$
in $L^q(U)$ as $\lambda \to 1-$, in view of Lemma
\ref{Flambdalim}, it follows from Lemma \ref{genLeb}
that $J_{F(\lambda \cdo ),\omega ,p}\to J_{F,\omega ,p}$
in $L^q(U)$ as $\lambda \to 1-$. This proves (2).

\par

It remains to prove (1). Let $F\in A(\omega ,\mascB )$ and $0<\lambda <1$.
By Cantor's diagonal principle it suffices to prove that there is a sequence
$F_j$ of polynomials which converges to $F(\lambda \cdo )$ narrowly as
$j\to \infty$. Since
$$
\nm {J_{F(\lambda \cdo ),\omega ,p}-J_{F_j,\omega ,p}}{L^q(U)}\le
\nm {F(\lambda \cdo )-F_j}{A(\omega ,\mascB )},
$$
it suffices to prove that $F_j\to F(\lambda \cdo )$ in $A(\omega ,\mascB )$.
However, this fact is an immediate consequence of \eqref{addomegcond},
Proposition \ref{APprop} and (2). The proof is complete.
\end{proof}

\par

\begin{prop}\label{propnarrowseq}
Let $(\mathscr B,\omega )$ be a narrowly feasible pair on $\cc d$, $F_j,F\in A(\omega ,\mathscr B)$, $j=1,2,\dots$,
be such that $F_j\to F$ narrowly as $j\to \infty$, and let $G\in B(1/\omega ,\mathscr B')$. Then
$$
(F_j,G)_{B^2} \to (F,G)_{B^2}\quad \text{as}\quad j\to \infty .
$$
\end{prop}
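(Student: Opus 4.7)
The plan is to show that $(F_j - F, G)_{B^2} = \int_{\cc d}(F_j - F)\overline G\, d\mu \to 0$ by combining a fiber decomposition over $V_1$ with Lebesgue's dominated convergence on slices and a Brezis--Lieb-style argument in the remaining coordinates.

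\textbf{Fiber decomposition.} Using $\cc d = V_1 \oplus V_1^\perp$ with $V_1^\perp = V_2 \oplus \cdots \oplus V_n$, I would write
$$(F_j - F, G)_{B^2} = \int_{V_1^\perp} I_j(\zeta_{2:n})\, d\zeta_{2:n}, \qquad I_j(\zeta_{2:n}) := \int_{V_1} (F_j - F)\overline G\, e^{-|z|^2}\, \frac{d\zeta_1}{\pi^d}.$$
Because $p_1 = \infty$ and $p_1' = 1$ for a narrowly feasible pair, I would factor the integrand as $[(F_j - F)\omega e^{-|z|^2/2}]\cdot[\overline G\, e^{-|z|^2/2}/\omega]$ and observe that the second factor lies in $L^1(V_1)$ for a.{\,}e.~$\zeta_{2:n}$ with $L^1(V_1)$-norm $H_G(\zeta_{2:n})$ satisfying $\|H_G\|_{L^{q'}(U)} \le C\|G\|_{B(1/\omega,\mathscr B')} < \infty$.

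\textbf{Pointwise convergence of the slice integrals.} I would use part (1) of narrow convergence, giving $S(F_j e^{-|\cdot|^2/2})\omega \to S(F e^{-|\cdot|^2/2})\omega$ in $\maclS'_{1/2}(\cc d)$, together with the analyticity of $F_j, F$ and Lemma \ref{analmeans}, plus the uniform $L^q(U)$-boundedness of $\{J_{F_j,\omega,p}\}$ inherited from part (2), to upgrade to locally uniform convergence $F_j \to F$ on $\cc d$. For a.{\,}e.~$\zeta_{2:n}$, passing to a subsequence along which $J_{F_j,\omega,p}(\zeta_{2:n}) \to J_{F,\omega,p}(\zeta_{2:n})$ pointwise (possible since $J_{F_j,\omega,p} \to J_{F,\omega,p}$ in $L^q(U)$), the slice integrand in $I_j(\zeta_{2:n})$ is dominated by an $L^1(V_1)$-function and tends to zero pointwise, so Lebesgue's dominated convergence gives $I_j(\zeta_{2:n}) \to 0$ along the subsequence. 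The subsequence principle then promotes this to pointwise convergence of the full sequence.

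\textbf{Variable dominator and conclusion.} With
$$g_j(\zeta_{2:n}) := (J_{F_j,\omega,p} + J_{F,\omega,p})H_G, \qquad g := 2J_{F,\omega,p}\, H_G,$$
the fiber estimate yields $|I_j| \le C g_j$, while mixed-norm H\"older
$$\|g_j - g\|_{L^1(V_1^\perp)} \le \|J_{F_j,\omega,p} - J_{F,\omega,p}\|_{L^q(U)}\,\|H_G\|_{L^{q'}(U)} \to 0$$
follows from part (2) of narrow convergence. Applying Lemma \ref{genLeb} with $p = 1$ then yields $I_j \to 0$ in $L^1(V_1^\perp)$, hence $(F_j - F, G)_{B^2} = \int I_j\, d\zeta_{2:n} \to 0$.

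\textbf{Main obstacle.} The hard part is that the partial sup $J_{F_j - F,\omega,p}$ \emph{does not} tend pointwise to zero --- concentration phenomena like $F_j(z) = e^{jz_1 - j^2/2}$ keep it of order one even when $F_j \to 0$ pointwise --- so there is no way to dominate $|I_j|$ by $J_{F_j - F}\cdot H_G$ and finish by a direct application of dominated convergence on $V_1^\perp$. One must instead exploit the genuine cancellation \emph{inside} the fiber integral via slicewise DCT, where the $L^\infty(V_1)$-envelope $J_{F_j,\omega,p}$ (which does not go to zero) pairs against the $L^1(V_1)$-companion coming from $G$ (which stays integrable), a balance possible only because of the $p_1 = \infty$, $p_1' = 1$ duality encoded in the narrow feasibility condition.
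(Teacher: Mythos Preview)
Your argument is correct, but it is more elaborate than necessary, and the paper's proof is considerably shorter. Rather than performing a fiber decomposition and a two-stage application of dominated convergence (first on the $V_1$-slices, then on $V_1^\perp$), the paper applies Lemma~\ref{genLeb} just once, on all of $\cc d$, to the full integrand $f_j = S(F_j\overline Ge^{-|\cdo|^2})$. The dominators are simply $g_j(z)=J_{F_j,\omega}(\zeta_2)\,|S(Ge^{-|\cdo|^2/2})(z)|/\omega(z)$, and H{\"o}lder in the mixed-norm structure gives $g_j\to g$ in $L^1(\cc d)$ directly from part~(2) of narrow convergence. No intermediate slicewise step is needed: the ``genuine cancellation inside the fiber integral'' that you flag as the main obstacle is automatically captured by applying the generalized Lebesgue theorem to the product $F_j\overline G$ itself, since pointwise $F_j\overline G\to F\overline G$ and the dominator $g_j$ (which need not tend to zero) is only required to converge in $L^1$.

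Two remarks. First, you are right that one must justify pointwise convergence $F_j\to F$; the paper simply asserts it, and your argument via analyticity, the uniform bound from $\{J_{F_j,\omega}\}\subset L^q(U)$, and the $\maclS_{1/2}'$ limit is the correct way to fill this in. Second, your sentence ``the subsequence principle then promotes this to pointwise convergence of the full sequence'' is not quite right as written: the subsequence principle works for numerical sequences, not for a.e.\ convergence of function sequences (different subsequences may have different null sets). This is harmless here, since all you actually need is the numerical statement $(F_j-F,G)_{B^2}\to 0$, and there the subsequence principle applies cleanly---but it should be invoked at that final stage, not at the level of $I_j(\zeta_{2:n})$.
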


\par

\begin{proof}
We may assume that $\omega =\omega _0$, where $\omega _0$ is the
same as in Definition \ref{narrowconvdef}, and we let $p$, $q$, $U$ and
$V$ be the same as in Definition \ref{narrowconvdef}. It follows from the
assumptions that
\begin{equation}\label{FjFpointw}
\lim _{j\to \infty}F_j(z)\overline {G(z)}e^{-|z|^2}= F(z) \overline{G(z)}e^{-|z|^2},
\end{equation}
and that
\begin{equation}\label{FjHjest}
\begin{aligned}
|S(F_j\, \overline {G}e^{-|\cdo |^2})(z)| &\le J_{F_j,\omega _0,p}(\zeta _2)
|S(G e^{-|\cdo |^2/2})(z)|/\omega (z),
\\[1ex]
|S(F\, \overline {G}e^{-|\cdo |^2})(z)| &\le J_{F,\omega _0,p}(\zeta _2)
|S(G e^{-|\cdo |^2/2})(z)|/\omega (z).
\end{aligned}
\end{equation}
Furthermore, by H{\"o}lder's inequality we get
$$
J_{F_j,\omega _0,p}(\zeta _2) |S(G e^{-|\cdo |^2/2})(z)|/\omega (z)
\to J_{F,\omega _0,p}(\zeta _2) |S(G e^{-|\cdo |^2/2})(z)|/\omega (z)
$$
in $L^1(\cc d)$ as $j\to \infty$. A combination of
\eqref{FjFpointw}, \eqref{FjHjest} and Lemma \ref{genLeb} now
implies that $S(F_j\overline {G}e^{-|\cdo |^2})\to S(F \overline{G}e^{-|\cdo |^2})$ in
$L^1(\cc d)$ as $j\to \infty$, which implies
$$
(F_j,G)_{B^2} = \int F_j(z)\overline {G(z)}\, d\mu (z) \to \int F(z)\overline {G(z)}\, d\mu (z) = (F,G)_{B^2}\quad \text{as}\quad j\to \infty .
$$
The proof is complete.
\end{proof}

\par

\subsection{General properties for modulation spaces}
We finish the section by using Theorem \ref{mainthm} in order to carry over
basic results on $A(\omega ,\mathscr B)$ spaces into corresponding
result for modulation spaces.

\par

The following result is an immediate consequences of Theorems
\ref{mainthm},  \ref{dualthm} and \ref{densthm}. Here and in what follows
we let $\mathscr S_0(\rr d)$ be the vector space which consists of all
finite linear combinations of Hermite functions.

\par

\begin{thm}\label{densdualmodthm}
Let $\omega \in \mascP _Q^0(\cc d)$ be dilated suitable
and $\mascB $ be a mixed norm space on $\rr {2d}$
such that $\nu _2(\mascB )<\infty$. Then the following is true:
\begin{enumerate}
\item the $L^2$ form on $\mathcal S_{1/2}(\rr d)$ extends uniquely
to a continuous sesqui-linear form on 
$M(\omega ,\mascB )\times M(1/\omega ,\mascB ')$;

\vrum

\item the dual of $M(\omega ,\mascB )$ can be identified by $M(1/\omega ,\mascB ')$ 
through the extension of the $L^2$ form on $\mathcal S_{1/2}(\rr d)$;

\vrum

\item $\mathscr S_0(\rr d)$ is dense in $M(\omega ,\mascB )$, and dense
in $M(\omega ,\mascB ')$ with respect to the weak$^*$-topology.
\end{enumerate}
\end{thm}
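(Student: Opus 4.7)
The plan is to transfer each of the three claims from the analytic setting to the modulation space setting via the Bargmann transform, since Theorem~\ref{mainthm} tells us that $\mathfrak V$ is an isometric bijection from $M(\omega ,\mascB )$ onto $A(\omega ,\mascB )$ (and similarly from $M(1/\omega ,\mascB ')$ onto $A(1/\omega ,\mascB ')$). Two identifications make the transfer essentially automatic. First, $\mathfrak V h_\alpha =z^\alpha /\sqrt {\alpha !}$ sends the Hermite basis to the monomial basis, so $\mathfrak V$ carries $\mathscr S_0(\rr d)$ bijectively onto $P(\cc d)$. Second, since $\mathfrak V$ is unitary from $L^2(\rr d)$ onto $A^2(\cc d)$ by Bargmann's theorem, Parseval gives $(f,g)_{L^2}=(\mathfrak Vf,\mathfrak Vg)_{A^2}$ whenever the left-hand side is defined classically, in particular on $\mathcal S_{1/2}(\rr d)\times \mathcal S_{1/2}(\rr d)$.

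For (3) in the norm topology, the density of $\mathscr S_0(\rr d)$ in $M(\omega ,\mascB )$ is the image under $\mathfrak V^{-1}$ of the density of $P(\cc d)$ in $A(\omega ,\mascB )$ established in Theorem~\ref{densthm}(1); this uses only that $\mathfrak V^{-1}$ is a topological isomorphism. The weak$^*$-density of $\mathscr S_0(\rr d)$ in $M(\omega ,\mascB ')$ requires that $\mathfrak V^{-1}$ be a weak$^*$-homeomorphism between $A(\omega ,\mascB ')$ and $M(\omega ,\mascB ')$, which is a consequence of (2); so I would carry out (1) and (2) first and then pull back the weak$^*$-density statement from Theorem~\ref{densthm}(1).

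For (1), I define the sesqui-linear form by
\[
(f,g)_{L^2}\equiv (\mathfrak Vf,\mathfrak Vg)_{A^2},\qquad f\in M(\omega ,\mascB ),\ g\in M(1/\omega ,\mascB '),
\]
where the right-hand side is the continuous extension of the $A^2$ form on $P(\cc d)$ supplied by Theorem~\ref{dualthm}(1). Continuity is immediate from the isometry of $\mathfrak V$ together with continuity of the extended $A^2$ pairing, and the Parseval identity above shows that this coincides with the genuine $L^2$ pairing on $\mathcal S_{1/2}(\rr d)\times \mathcal S_{1/2}(\rr d)$. Uniqueness of the extension follows from the norm density part of (3), whose proof does not itself require (1). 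For (2), any continuous linear functional $\Lambda$ on $M(\omega ,\mascB )$ corresponds via $\Lambda =\widetilde \Lambda \circ \mathfrak V^{-1}$ to a continuous linear functional $\widetilde \Lambda$ on $A(\omega ,\mascB )$; by Theorem~\ref{dualthm}(2) there is a unique $G\in A(1/\omega ,\mascB ')$ with $\widetilde \Lambda (F)=(F,G)_{A^2}$ in the extended sense, and then $g\equiv \mathfrak V^{-1}G\in M(1/\omega ,\mascB ')$ represents $\Lambda$ through the form built in (1). The principal point to watch is the internal logical ordering (norm density $\to$ uniqueness in (1) $\to$ (2) $\to$ weak$^*$-density in (3)) and the compatibility of the $L^2$ extension in (1) with the one implicit in (2); however both pairings are by construction the pull-back under $\mathfrak V$ of the same $A^2$ extension, so this compatibility is automatic.
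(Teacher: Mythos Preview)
Your proposal is correct and is exactly the argument the paper has in mind: the paper does not give a separate proof but simply states that the result is an immediate consequence of Theorems~\ref{mainthm}, \ref{dualthm} and \ref{densthm}. You have supplied precisely the missing transfer details---$\mathfrak V$ carries $\mathscr S_0(\rr d)$ onto $P(\cc d)$ via $\mathfrak V h_\alpha =z^\alpha /\sqrt{\alpha !}$, the $L^2$ form pulls back to the $A^2$ form by Bargmann's unitarity, and the duality and density statements then transport along the isometric bijection of Theorem~\ref{mainthm}---together with the correct logical ordering (norm density first, so that uniqueness in (1) is available before (2), and weak$^*$-density last).
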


\par

The definition of narrow convergence for elements in certain modulation spaces is the following. Here the functional which corresponds to $J_{F,\omega }$ is
$$
H_{f,\omega }(\zeta _2)\equiv \sup _{\zeta _1\in V_1}\big (
|V_\phi f(x,\xi )|\omega (x,\xi )  \big ),\qquad
z=(\zeta _1,\zeta _2)\in V_1 \oplus V_1^\bot = \rr {2d},
$$

\par

\begin{defn}\label{narrowconvmoddef}
Let $(\mascB ,\omega )$ be a narrowly feasible pair on $\rr {2d}$, let $p$ and $V$ be the same as in Definition \ref{feasiblpair}, and let $q=(p_2,\dots ,p_n)$ and $U=(V_2,\dots V_{n})$. Also let $f,f_j\in M(\omega ,\mascB )$, $j\ge 1$. Then $f_j$ is said to converge to $f$ \emph{narrowly} as $j\to \infty$, if the following conditions are fulfilled:
\begin{enumerate}
\item $f_j\to f$ in $\mathcal S'_{1/2}(\rr d)$ as $j\to \infty$;

\vrum

\item $H_{f_j,\omega ,p}\to H_{f,\omega ,p}$ in $L^q(U)$ as $j\to \infty$.
%, for some $\omega _0\in \mascP _Q^0(\rr {2d})$ which is equivalent to $\omega$.
\end{enumerate}
\end{defn}

\par

By \eqref{bargstft2} it follows that $H_{f,\omega} = (2\pi )^{-d/2}J_{\mathfrak Vf,\omega}$.
Consequently, $f_j\to f$ narrowly in $M(\omega ,\mathscr B)$
as $j\to \infty$, if and only if $\mathfrak Vf_j\to \mathfrak V f$
narrowly in $A(\omega ,\mathscr B)$ as $j\to \infty$.
The following result is therefore an immediate consequence of
Theorems \ref{mainthm}, \ref{narrowthm}
and Proposition \ref{propnarrowseq}.

\par

\begin{thm}\label{narrowmodthm}
Let $(\mascB ,\omega )$ be a narrowly feasible pair on $\rr {2d}$. 
Also let $f_j,f\in M(\omega ,\mathscr B)$, $j=1,2,\dots$,
be such that $f_j\to f$ narrowly as $j\to \infty$, and let $g\in
M(1/\omega ,\mathscr B')$. Then the following is true:
\begin{enumerate}
\item $\mathscr S_0(\rr d)$ is dense in $M(\omega ,\mascB )$
with respect to the narrow convergence;

\vrum

\item $(f_j,g)_{L^2}\to (f,g)_{L^2}$ as $j\to \infty$.
\end{enumerate}
\end{thm}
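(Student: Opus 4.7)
The plan is to push everything through the Bargmann transform to the analytic side, where the analogous statements have already been proved as Theorem~\ref{narrowthm} and Proposition~\ref{propnarrowseq}. Two ingredients make this translation routine. First, by Theorem~\ref{mainthm} the map $\mathfrak V:M(\omega,\mascB)\to A(\omega,\mascB)$ is an isometric bijection (and similarly for $1/\omega$ and $\mascB'$). Second, as already recorded just before the statement, $H_{f,\omega}=(2\pi)^{-d/2}J_{\mathfrak Vf,\omega}$, and by \eqref{bargstft1}--\eqref{bargstft2} together with Proposition~\ref{stftGelfand1} the operator $\mathfrak V$ restricts/extends to a topological isomorphism on $\maclS_{1/2}'$-type spaces. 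Combining these two points, $f_j\to f$ narrowly in $M(\omega,\mascB)$ if and only if $\mathfrak Vf_j\to\mathfrak Vf$ narrowly in $A(\omega,\mascB)$.

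For (1), recall from the proof of Proposition~\ref{mainthmspecial} that $\mathfrak Vh_\alpha=z^\alpha/(\alpha!)^{1/2}$, so $\mathfrak V(\mathscr S_0(\rr d))=P(\cc d)$. Given $f\in M(\omega,\mascB)$, Theorem~\ref{narrowthm}(1) supplies a sequence $F_j\in P(\cc d)$ with $F_j\to\mathfrak Vf$ narrowly in $A(\omega,\mascB)$. Setting $f_j=\mathfrak V^{-1}F_j\in\mathscr S_0(\rr d)$ and invoking the equivalence of narrow convergences from the previous paragraph gives $f_j\to f$ narrowly in $M(\omega,\mascB)$, which is (1).

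For (2), the point is to identify the $L^2$-pairing with the $B^2$-pairing on the Bargmann side. Because $\mathfrak V:L^2(\rr d)\to A^2(\cc d)=B^2(\cc d)$ is isometric, one has $(f,g)_{L^2}=(\mathfrak Vf,\mathfrak Vg)_{B^2}$ whenever $f,g\in\mathscr S_0(\rr d)$. By \eqref{BLpnorm} and H{\"o}lder's inequality applied with the duality $(\mascB)'=\mascB'$ and the trivial cancellation $\omega\cdot(1/\omega)=1$, the sesquilinear form $(F,G)\mapsto(F,G)_{B^2}$ is continuous on $A(\omega,\mascB)\times A(1/\omega,\mascB')$, so this identification extends unambiguously from $\mathscr S_0\times\mathscr S_0$ to $M(\omega,\mascB)\times M(1/\omega,\mascB')$ and furnishes the meaning of the $L^2$-pairing used in the statement. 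Now apply Proposition~\ref{propnarrowseq} with $F_j=\mathfrak Vf_j$, $F=\mathfrak Vf$ and $G=\mathfrak Vg$: the narrow convergence $\mathfrak Vf_j\to\mathfrak Vf$ yields $(\mathfrak Vf_j,\mathfrak Vg)_{B^2}\to(\mathfrak Vf,\mathfrak Vg)_{B^2}$, which translates to $(f_j,g)_{L^2}\to(f,g)_{L^2}$.

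The main delicacy is the interpretation of $(f,g)_{L^2}$ in (2): because a narrowly feasible pair has $\nu_2(\mascB)=\infty$, the extension theorem Theorem~\ref{densdualmodthm} does not directly apply to deliver the pairing. The clean workaround is the one above, namely to \emph{define} the pairing through the Bargmann correspondence and check by H{\"o}lder that this produces a continuous sesquilinear form on $M(\omega,\mascB)\times M(1/\omega,\mascB')$ that agrees with the usual $L^2$ inner product on $\mathscr S_0\times\mathscr S_0$; everything else then follows mechanically from the analytic-side results.
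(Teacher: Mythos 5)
Your proposal follows exactly the route the paper takes: the paper notes $H_{f,\omega}=(2\pi)^{-d/2}J_{\mathfrak Vf,\omega}$, concludes that narrow convergence in $M(\omega,\mascB)$ is equivalent to narrow convergence of the Bargmann transforms in $A(\omega,\mascB)$, and then declares the theorem an immediate consequence of Theorems \ref{mainthm}, \ref{narrowthm} and Proposition \ref{propnarrowseq}. Your additional care in justifying the meaning of the $(f,g)_{L^2}$ pairing when $\nu_2(\mascB)=\infty$ is a useful elaboration of a point the paper leaves implicit, but the argument is the same.
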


\par

In Section \ref{sec6} we shall use these results to form a pseudo-differential
calculus involving symbols and target distributions in background of the
modulation space theory presented here.

\par

%%%%%%%%%%%%%%%%%%%%%%%%%%%%%
\section{Some consequences}\label{sec5}
%%%%%%%%%%%%%%%%%%%%%%%%%%%%%

\par

In this section we show some consequences of the results in the previous sections. We start 
by establishing continuity properties of $\Pi _A$ on appropriate $B(\omega ,\mathscr B)$ 
spaces. From these results it follows 
that $A(\omega ,\mathscr B)$ is a retract of $B(\omega ,\mathscr B)$ under $\Pi _A$. Thereafter
we use this property for establishing interpolation properties for $A(\omega ,\mathscr B)$, and
continuity properties of Toeplitz operators.

\par

\subsection{Continuity properties of $\Pi _A$ on $B(\omega ,\mathscr B)$} We start with the 
following result related to Lemma \ref{projlemma} (3).

\par

\begin{thm}\label{projthm}
Let $\mascB $ be a mixed norm space on $\rr {2d}$ and assume that $\omega \in
\mascP _Q^0(\rr {2d})$ is dilated suitable. Then the map $\Pi _A$ is continuous from $B(\omega ,\mathscr B)$ to $A(\omega ,\mathscr B)$. In particular, $A(\omega ,\mathscr B)$ is a retract of $B(\omega ,\mathscr B)$ under $\Pi _A$.
\end{thm}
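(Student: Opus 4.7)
The plan is to reduce the problem to the boundedness of a Schur-type integral operator on $\mascB$ via a pointwise estimate, and then to obtain the retract statement essentially for free from Lemma \ref{projlemma}.

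First I would check that $\Pi_AF$ is an entire function for every $F\in B(\omega,\mascB)$. Since $\omega\in\mascP_{Q}^0(\rr{2d})$ satisfies \eqref{Gaussest} with arbitrarily small $c>0$ and $F\cdot e^{-|\cdo|^2/2}\omega(2^{1/2}\overline{\cdo})\in\mascB\subseteq L^r_{loc}$ with $r=\nu_1(\mascB)\ge 1$, an application of \eqref{Lebincl} combined with the subgaussian behaviour of $1/\omega$ shows that $F$ fulfils \eqref{condanal} for $p=1$. Hence Lemma \ref{projlemma}(1) gives $\Pi_AF\in A(\cc d)$.

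Next I would derive the pointwise convolution-type estimate that is the technical core. Starting from the algebraic identity
\begin{equation*}
\operatorname{Re}(z,w)-\tfrac 12|z|^2-|w|^2=-\tfrac 12|z-w|^2-\tfrac 12|w|^2,
\end{equation*}
one gets
\begin{equation*}
|(\Pi_AF)(z)|\,e^{-|z|^2/2}\le \pi^{-d}\int |F(w)|\,e^{-|w|^2/2}\,e^{-|z-w|^2/2}\,d\lambda(w).
\end{equation*}
Multiplying by $\omega(2^{1/2}\overline z)$ and setting $g(w)=|F(w)|e^{-|w|^2/2}\omega(2^{1/2}\overline w)$ yields
\begin{equation*}
|(\Pi_AF)(z)|\,e^{-|z|^2/2}\omega(2^{1/2}\overline z)\le \int K(z,w)\,g(w)\,d\lambda(w),\qquad K(z,w)=\pi^{-d}\tfrac{\omega(2^{1/2}\overline z)}{\omega(2^{1/2}\overline w)}\,e^{-|z-w|^2/2}.
\end{equation*}
The task is then to show that the integral operator $T_K$ with kernel $K$ is bounded on $\mascB$. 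I would verify the Schur/interpolation estimates
$$
\sup_z\int K(z,w)\,d\lambda(w)<\infty,\qquad \sup_w\int K(z,w)\,d\lambda(z)<\infty,
$$
which, by interpolation between $L^1$ and $L^\infty$ followed by the fact that mixed norm spaces are iterated $L^{p_j}$ spaces over $L^{p_{j-1}}$, give boundedness on every mixed norm space. The estimate \eqref{addomegcond} with $\ep=1$ (together with the symmetric local control \eqref{modrelax}$'$ and the subgaussian growth \eqref{Gaussest}) controls the ratio $\omega(2^{1/2}\overline z)/\omega(2^{1/2}\overline w)$: in the regime where $|z-w|$ is small relative to $|z|$ one parametrises $w$ as a radial dilation of $z$ modulo a tangential perturbation, so that dilated suitability and \eqref{modrelax}$'$ make the ratio bounded; in the complementary regime the factor $e^{-|z-w|^2/2}$ dominates the (subgaussian) growth of the ratio.

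Once $\nm{\Pi_AF}{B(\omega,\mascB)}\le C\nm F{B(\omega,\mascB)}$ is established, the first part of the theorem is proved since $\Pi_AF\in A(\cc d)$ forces $\Pi_AF\in A(\omega,\mascB)$ with an equivalent norm. For the retract statement, observe that the inclusion $A(\omega,\mascB)\hookrightarrow B(\omega,\mascB)$ is continuous by definition of the $A$-norm, while Lemma \ref{projlemma}(2) shows that $\Pi_A$ acts as the identity on $A(\omega,\mascB)$. Thus $\Pi_A$ is a continuous left inverse of the inclusion, which is precisely the retract property. The main obstacle is the kernel estimate in Step 2: the dilated-suitable condition controls $\omega$ only along radial scalings, whereas the Schur test asks for a comparison of $\omega$ at two points $z,w$ with arbitrary geometric position; stitching \eqref{addomegcond}, \eqref{modrelax}$'$ and the subgaussian envelope \eqref{Gaussest} into a single uniform bound is the delicate step.
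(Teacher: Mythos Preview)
Your approach has a genuine gap at precisely the point you flag as ``the delicate step'': the Schur conditions
\[
\sup_z\int K(z,w)\,d\lambda(w)<\infty,\qquad \sup_w\int K(z,w)\,d\lambda(z)<\infty
\]
simply fail for the superexponential weights that the theorem is meant to cover. Take $\omega(z)=e^{|z|^{3/2}}$, which by Proposition~\ref{weightfamincl} lies in $\mascP_D^0\subseteq\mascP_Q^0$ and is (strongly) dilated suitable. Then, writing $w=z+u$,
\[
\int K(z,w)\,d\lambda(w)\;\asymp\;\int e^{|z|^{3/2}-|z+u|^{3/2}}e^{-|u|^2/2}\,d\lambda(u),
\]
and along $u=-t\,z/|z|$ with $0<t\ll|z|$ the exponent is $\tfrac32 t|z|^{1/2}-\tfrac12 t^2+O(t^2/|z|)$, which is maximised at $t\sim\tfrac32|z|^{1/2}$ with value $\sim\tfrac98|z|$. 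Hence the row integral is at least of order $e^{c|z|}$ and diverges as $|z|\to\infty$; by symmetry the column integral diverges too. The conditions \eqref{addomegcond}, \eqref{modrelax}$'$ and \eqref{Gaussest} cannot rescue this: \eqref{modrelax}$'$ only controls perturbations of size $|y|\le c/|x|$, while \eqref{addomegcond} with $\ep=1$ gives $\omega(z)/\omega(\lambda z)\lesssim e^{(1-\lambda^2)|z|^2/2}$, which is \emph{weaker} than the Gaussian factor $e^{-(1-\lambda)^2|z|^2/2}$ coming from $|z-\lambda z|$ when $\lambda$ is close to $1$. In short, once $\omega$ grows faster than exponentially, the kernel $K(z,w)$ is not Schur-bounded, and your argument breaks down.

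The paper's proof avoids any direct kernel estimate. Instead it is purely functional-analytic: using Remark~\ref{remGenLemKern}, both $A(\omega,\mascB)$ and the quotient $B(\omega,\mascB)/N(\omega,\mascB)$ are identified (as Banach spaces) with the same subspace of $(A_*(1/\omega,\mascB'))'$ via $F\mapsto l_F$. The open mapping theorem then yields
\[
\nm{F_0}{A(\omega,\mascB)}\le C\inf_{G\in N(\omega,\mascB)}\nm{F+G}{B(\omega,\mascB)}
\]
whenever $F-F_0\in N(\omega,\mascB)$ and $F_0\in A(\omega,\mascB)$. Finally one checks, by comparing Taylor coefficients at the origin (Remark~\ref{derivrepr}), that the analytic representative $F_0$ coincides with $\Pi_AF$, which gives $\nm{\Pi_AF}{A(\omega,\mascB)}\le C\nm F{B(\omega,\mascB)}$. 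The point is that this argument never needs a two-point estimate $\omega(z)/\omega(w)$; it leverages the duality and density results of Section~\ref{sec4}, for which the dilated-suitable hypothesis is exactly tailored.
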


\par

\begin{proof}
By Remark \ref{remGenLemKern} it follows that the mappings
$$
F\mapsto L_F\quad \text{and} \quad F\mod N(\omega ,\mathscr B)
$$
from $A(\omega ,\mathscr B)$ and $B(\omega ,\mathscr B)/N(\omega ,\mathscr B)$
respectively to
$$
\sets {l_F}{F\in B(\omega ,\mathscr B)}\subseteq (A_*(1/\omega ,\mathscr B'))'
$$
are well-defined, continuous and bijective. By the open mapping theorem it follows
that the inverse of the map $F\mapsto F\mod N(\omega ,\mathscr B)$ is continuous and bijective, and that
\begin{equation}\label{Fmodest}
\nm {F_0}{A(\omega ,\mathscr B)}\le C\inf _{G\in N(\omega ,\mathscr B)}\nm
{F+G}{B(\omega ,\mathscr B)},\qquad F-F_0\in N(\omega ,\mathscr B),\quad F_0\in A(\omega ,\mascB ),
\end{equation}
for some constant $C$.

\par

Now let $F\in B(\omega ,\mathscr B)$, set $F_1=\Pi _AF$, and choose
$F_0\in A(\omega ,\mascB )$
such that $l_{F}=l_{F_0}$. Then $l_{F_0}=l_{F_1}$ on $P(\cc d)$, and Lemma
\ref{projlemma} and Remark \ref{derivrepr} shows that $\partial ^\alpha
F_0(0) = \partial ^\alpha F_1(0)$ for every multi-index $\alpha$.
Consequently, $F_0(z)=F_1(z)$, since both $F_0$ and $F_1$
are entire. Furthermore, $F-F_0\in N(\omega ,\mathscr B)$,
since the restriction of $\Pi _A$ on $A(\omega ,\mathscr B)$
is the identity map.

\par

By \eqref{Fmodest} we now get
$$
\nm {\Pi _AF}{A(\omega ,\mathscr B)} = \nm {F_0}{A(\omega ,\mathscr B)}
\le C\inf _{G\in N(\omega ,\mathscr B)}\nm {F+G}{B(\omega ,\mathscr B)}
\le C\nm F{B(\omega ,\mathscr B)},
$$
and the assertion follows.
\end{proof}

\par

We recall that (real and complex) interpolation properties carry over
under retracts. These properties also include interpolation techniques
with more than two spaces involved. (Cf. \cite{AsKr1, AsKr2, BLo}.) In
particular, the following result is an immediate consequence of Theorem \ref{projthm}.
Here recall that  $(B_1,B_2)_{[\theta ]}$ denotes the complex interpolation space
with respect to $\theta \in [0,1]$, when $(B_1,B_2)$ is a compatible pair.
%Here and in what follows we 
%set
%$$
%1/p=(1/p_1,\dots ,1/p_n),\quad tp=(tp_1,\dots ,tp_n)\quad \text{and}
%\quad p+q=(p_1+q_1,\dots ,p_n+q_n),
%$$
%when $t\in \mathbf R$, $p=(p_1,\dots ,p_n)\in [1,\infty ]$
%and $q=(q_1,\dots ,q_n)\in [1,\infty ]$.

\par

\begin{prop}\label{interprop}
Any interpolation property valid for the $B(\omega ,\mascB)$ spaces also holds for the $A
(\omega ,\mascB)$ spaces, when the weights $\omega$ are dilated suitable and belong to 
$\mascP _Q^0(\cc d)$, and $\mascB$ are mixed norm spaces on $\cc d$. In particular, if $0\le \theta 
\le 1$, $(\omega ,\mascB _1)$ and $(\omega ,\mascB _2)$ are feasible pairs on $\cc d$, and 
that $\mascB=(\mascB _1, \mascB _2)_{[\theta ]}$, for $\theta \in [0,1]$, then
$$
(B(\omega ,\mascB _1),B(\omega ,\mascB _2))_{[\theta ]} = B(\omega ,\mascB )
\quad
\text{and}
\quad
(A(\omega ,\mascB _1),A(\omega ,\mascB _2))_{[\theta ]} = A(\omega ,\mascB ).
$$ 
\end{prop}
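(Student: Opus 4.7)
The plan is to exploit the retract structure provided by Theorem \ref{projthm}. The natural inclusion $\iota _{\mascB} : A(\omega ,\mascB ) \hookrightarrow B(\omega ,\mascB )$ is tautologically isometric, while $\Pi _A : B(\omega ,\mascB ) \to A(\omega ,\mascB )$ is continuous with $\Pi _A \circ \iota _{\mascB} = \mathrm{id}_{A(\omega ,\mascB )}$. The qualitative part of the statement is then a direct consequence of the standard principle that every interpolation functor preserves retracts: given a compatible couple $(A(\omega ,\mascB _1), A(\omega ,\mascB _2))$ sitting as a retract inside $(B(\omega ,\mascB _1), B(\omega ,\mascB _2))$, the functorial extensions of $\iota$ and $\Pi _A$ realize any interpolation space obtained from the $A$-couple as a retract of the corresponding interpolation space of the $B$-couple (see e.{\,}g. \cite{AsKr1, AsKr2, BLo}).

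For the explicit complex interpolation identity I would proceed in two steps. First, I would establish the ambient identity
$$
(B(\omega ,\mascB _1), B(\omega ,\mascB _2))_{[\theta ]} = B(\omega ,\mascB ).
$$
By Definition \ref{thespaces}, the map $F \mapsto (U_{\mathfrak V}^{-1}F)\, \omega$ is an isometric isomorphism between $B(\omega ,\mascB _j)$ and a closed subspace of $\mascB _j$ for $j=1,2$, and similarly for $\mascB$. Hence the identity reduces to the classical Calder\'on interpolation identity $(\mascB _1, \mascB _2)_{[\theta ]} = \mascB$ for mixed norm Lebesgue spaces (cf. \cite{BLo}); since the weight is absorbed once and for all into the isometry, no moderateness assumption on $\omega$ intervenes.

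Second, I would transfer the identity to the $A$-spaces by functoriality. The inclusions $\iota _{\mascB _j}$ interpolate to a continuous injection
$$
(A(\omega ,\mascB _1), A(\omega ,\mascB _2))_{[\theta ]} \hookrightarrow (B(\omega ,\mascB _1), B(\omega ,\mascB _2))_{[\theta ]} = B(\omega ,\mascB ),
$$
whose image, by the closedness of $A(\cc d)$ under pointwise limits together with continuity of the interpolation arrow, lies inside $A(\omega ,\mascB )$. Conversely, $\Pi _A$ interpolates to a continuous map $B(\omega ,\mascB ) \to (A(\omega ,\mascB _1), A(\omega ,\mascB _2))_{[\theta ]}$, and the retract relation $\Pi _A \circ \iota _{\mascB _j} = \mathrm{id}$ in each coordinate forces both compositions to be the identity on their respective spaces. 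The equality of the two sides, with equivalence of norms, follows. The principal obstacle I anticipate is the careful bookkeeping in the first step: one must check that the subspaces of $\mascB _1$ and $\mascB _2$ arising as images of $B(\omega ,\mascB _j)$ form a genuinely compatible Banach couple whose complex interpolation space coincides with the image of $B(\omega ,\mascB )$. Once this is in place, the retract machinery delivers the rest without further work.
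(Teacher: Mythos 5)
Your argument is correct and coincides with the paper's, which likewise derives the proposition as an immediate consequence of the retract structure furnished by Theorem \ref{projthm} together with the standard principle that interpolation functors preserve retracts and the Calder\'on identity for the ambient $B$-spaces. The bookkeeping you worry about at the end is actually vacuous: $F\mapsto (U_{\mathfrak V}^{-1}F)\,\omega$ is one and the same isometric bijection of $B(\omega ,\mascB _j)$ onto all of $\mascB _j$ for $j=1,2$, so compatibility of the couple and the transfer of the interpolation identity are automatic.
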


\par

Next we discuss further density properties of $A(\omega ,\mascB )$. We recall that $A_*(\omega ,\mascB )$ denotes the 
completion of $P(\cc d)$ under the norm $\nm \cdo {B(\omega ,\mascB )}$, when $\omega \in \mascP _Q(\cc d)$ and $\mascB$ 
is a mixed norm space on $\cc d$. We also let $B_*(\omega ,\mascB )$ be the completion of $C_0^\infty (\cc d)$ under the norm 
$\nm \cdo {B(\omega ,\mascB )}$. The following result links the $A_*(\omega ,\mascB )$ with $B_*(\omega ,\mascB )$.

\par

\begin{prop}\label{AstBstcap}
Let $\omega \in \mascP _Q^0(\cc d)$ be dilated suitable, and let $\mathscr B$ be a mixed norm space on $\cc d$. Then
$$
A(\cc d) \bigcap B_*(\omega ,\mascB ) = A(\omega ,\mascB ) \bigcap B_*(\omega ,\mascB ) = A_*(\omega ,\mascB ).
$$
\end{prop}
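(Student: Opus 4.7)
The first equality is immediate: Definition \ref{thespaces}(2) gives $A(\omega,\mascB) = A(\cc d) \cap B(\omega,\mascB)$, and $B_*(\omega,\mascB) \subseteq B(\omega,\mascB)$ since the latter is complete, so intersecting $A(\cc d) \cap B_*(\omega,\mascB)$ with $B(\omega,\mascB)$ has no effect. The main content is therefore $A(\omega,\mascB) \cap B_*(\omega,\mascB) = A_*(\omega,\mascB)$, which I would prove by establishing two inclusions.

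For $A_*(\omega,\mascB) \subseteq A(\omega,\mascB) \cap B_*(\omega,\mascB)$, I would show every polynomial $P$ lies in both factors. Membership in $A(\omega,\mascB)$ is clear: $P$ is entire, and the subgaussian bound $\omega(z) \le C_\ep e^{\ep|z|^2}$ (valid for any $\ep>0$ when $\omega \in \mascP_Q^0$) combined with the Gaussian factor $e^{-|z|^2/4}$ appearing in $U_{\mathfrak V}^{-1}$ (cf. \eqref{BLpnorm}) makes $(U_{\mathfrak V}^{-1}P)\omega$ Gaussian-decaying, hence in $\mascB$. Membership in $B_*(\omega,\mascB)$ is obtained by approximating $P$ by $P\chi_R$ for $C_0^\infty$ cutoffs $\chi_R \to 1$ locally: $(U_{\mathfrak V}^{-1}(P-P\chi_R))\omega$ is pointwise dominated by a fixed Gaussian-decaying function, so standard dominated convergence handles any $L^{p_j}$ slot with $p_j<\infty$, while for $L^\infty$ slots the essential supremum over $\{|z|\ge R\}$ of a Gaussian-decaying function tends to zero as $R\to\infty$.

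For the harder inclusion $A(\omega,\mascB) \cap B_*(\omega,\mascB) \subseteq A_*(\omega,\mascB)$, I would combine the continuity of $\Pi_A$ with Proposition \ref{APprop}. Given $F$ in the left-hand side, choose $F_n \in C_0^\infty(\cc d)$ with $F_n \to F$ in $B(\omega,\mascB)$. By Theorem \ref{projthm}, $\Pi_A F_n \to \Pi_A F$ in $A(\omega,\mascB)$. To identify $\Pi_A F$ with $F$, I need $F$ to satisfy \eqref{condanal}: by Theorem \ref{analident}, $F \in A(\omega_0,L^\infty)$ for some $\omega_0$ in the admissible family containing $\omega$, giving $|F(z)| \le C e^{|z|^2/2}/\omega_0(2^{1/2}\overline z)$, and since $1/\omega_0 \le C_\delta e^{\delta|z|^2}$ for every $\delta>0$, one gets $|F(z)|e^{N|z|-|z|^2} \le C_\delta' e^{N|z|-(\tfrac12-2\delta)|z|^2}$, which is in $L^p$ for small $\delta$ and any $N$. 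Lemma \ref{projlemma}(2) then yields $\Pi_A F = F$, so $\Pi_A F_n \to F$ in $A(\omega,\mascB)$. Each $\Pi_A F_n$ is entire with at most exponential growth (the integral defining $\Pi_A F_n$ runs over the compact support of $F_n$), hence $\Pi_A F_n \in A_P(\cc d)$, and Proposition \ref{APprop} supplies polynomials $P_{n,k} \to \Pi_A F_n$ in $A(\omega,\mascB)$. A diagonal extraction produces polynomials converging to $F$ in $A(\omega,\mascB) \hookrightarrow B(\omega,\mascB)$, so $F \in A_*(\omega,\mascB)$.

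The main technical obstacle is the verification of the growth condition \eqref{condanal} for arbitrary $F \in A(\omega,\mascB)$, needed before Lemma \ref{projlemma}(2) can be applied to conclude $\Pi_A F = F$. The lower Gaussian bound in Definition \ref{defweights} together with the admissible-family argument of Theorem \ref{analident} are precisely what make this estimate work, while the rest of the proof is a continuity-and-density chain: $\Pi_A$ on $B_*(\omega,\mascB)$, then Proposition \ref{APprop} inside $A_P(\cc d)$, then a diagonal extraction.
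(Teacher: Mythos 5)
Your proof is correct and follows essentially the same route as the paper: the same splitting into the two inclusions, the same cutoff argument showing polynomials lie in $B_*(\omega ,\mascB )$, and the same chain $\Pi _A$-continuity (Theorem \ref{projthm}) $\to$ $\Pi _AF_j\in A_P(\cc d)$ $\to$ Proposition \ref{APprop} for the reverse inclusion. The only difference is expository: you spell out the verification of \eqref{condanal} needed to conclude $\Pi _AF=F$ via Lemma \ref{projlemma} (2), a step the paper's proof uses implicitly when writing $\nm {F-\Pi _AF_j}{B(\omega ,\mascB )}=\nm {\Pi _A(F-F_j)}{B(\omega ,\mascB )}$.
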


\par

\begin{proof}
Since $A(\omega ,\mascB )\subseteq A(\cc d)$, the result follows if we prove
\begin{equation}\label{AstBstincl}
A_*(\omega ,\mascB )\subseteq   A (\omega ,\mascB ) \bigcap B_*(\omega ,\mascB )
\quad \text{and}\quad
A(\cc d)\bigcap B_*(\omega ,\mascB )\subseteq A_*(\omega ,\mascB ).
\end{equation}
First we prove the first inclusion in \eqref{AstBstincl}. Since
$A_*(\omega ,\mascB ) \subseteq A(\omega ,\mascB )$, it
suffices to prove that $A_*(\omega ,\mascB ) 
\subseteq B_*(\omega ,\mascB )$.

\par

Let $F\in A_*(\omega ,\mascB )$. Then there is a sequence $F_j\in P(\cc d)$
such that $\nm {F-F_j}{A(\omega ,\mascB )}\to 0$ as 
$j\to \infty$. Hence \eqref{AstBstincl} follows if we prove that
for each $G\in P(\cc d)$, there are elements $G_j\in C_0^\infty (\cc 
d)$ such that $\nm {G-G_j}{B(\omega ,\mascB )}\to 0$ as $j\to \infty$.

\par

Let $\fy _j\in C_0^\infty (\cc d)$ be chosen such that $0\le \fy _j\le 1$ and $\fy _j(z)=1$ when
$|z|\le j$, and let $G_j=\fy _jG$. By H{\"o}lder's inequality, there is a constant $C$ such that
$$
\nm {G-G_j}{B(\omega ,\mascB )} \le C\sup _{z\in \cc d}\big (|G(z)-G_j(z)|e^{-|z|^2/4}\big )
\le C\big (\sup _{|z|\ge j}|G(z)|e^{-|z|^2/4}\big ) \to 0
$$
as $j\to \infty$. This gives the first inclusion in \eqref{AstBstincl}.

\par

In order to prove the second inclusion we instead assume that
$F\in A(\cc d )\cap B_* (\omega ,\mascB )$, and we
let $F_j\in C_0^\infty (\cc d)$ be a sequence such that
\begin{equation}\label{Fjlim2}
\nm {F-F_j}{B(\omega ,\mascB )}\to 0\quad \text{as}\quad  j\to \infty .
\end{equation}
By straight-forward computations it follows that $|\Pi _AF_j(z)|\le C_j
e^{C_j|z|}$, for some constants $C_j>0$, which implies that 
$\Pi _AF_j\in A_P(\cc d) \subseteq A_*(\omega ,\mascB )$.
Hence Theorem \ref{projthm} and \eqref{Fjlim2} give
$$
\nm {F-\Pi _AF_j}{B(\omega ,\mascB )} = \nm {\Pi _A(F-F_j)}{B(\omega ,\mascB )}
\le C\nm {F-F_j}{B(\omega ,\mascB )}\to 0\quad 
\text{as}\quad  j\to \infty ,
$$
for some constant $C$. Since any element in $A_P(\cc d)$ can
be approximated by elements in $P(\cc d)$
with respect to the norm $A(\omega ,\mascB )$, in view of Proposition \ref{APprop}, the result follows.
The proof is complete.
\end{proof}

\par

We may now extend \eqref{adjformula}. In fact, we have the following.

\par

\begin{prop}\label{adjprop}
Let $(\mascB ,\omega )$ be a feasible pair on $\cc d$, and let $F\in A(\omega ,\mascB )$ and
$G\in B(1/\omega ,\mascB ')$. Then \eqref{adjformula} holds.
%%%
%\begin{equation}\tag*{()$'$}
%(F,G)_{B^2} = (F, \Pi _A G)_{B^2} = (\Pi _AF,G)_{B^2}.
%\end{equation}
%%%
\end{prop}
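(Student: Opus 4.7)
The plan is to split the identity \eqref{adjformula} into its two parts and dispatch the easy one first. Since $F\in A(\omega ,\mascB )$ is entire, the reproducing property gives $\Pi _AF=F$, so $(\Pi _AF,G)_{B^2}=(F,G)_{B^2}$ is immediate. The substantive assertion is therefore
\begin{equation*}
(F,G)_{B^2}=(F,\Pi _AG)_{B^2},
\end{equation*}
equivalently $(F,H)_{B^2}=0$ for every $H=G-\Pi _AG$ lying in the closed subspace $N(1/\omega ,\mascB ')=\{H\in B(1/\omega ,\mascB '):\Pi _AH=0\}$; note that $\Pi _AG\in A(1/\omega ,\mascB ')$ by Theorem \ref{projthm}, so this decomposition is legitimate.

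My starting point is Lemma \ref{projlemma}(4). Any $H\in B(1/\omega ,\mascB ')$ satisfies the integrability condition \eqref{condadjoint} for some $\gamma \in (1/2,3/4)$: this follows from the subgaussian bound $1/\omega \le C_\varepsilon e^{\varepsilon |\cdo |^2}$ (valid for every $\varepsilon >0$ since $\omega \in \mascP _Q^0$), Hölder's inequality between $\mascB '$ and $\mascB$, and the fact that a suitable Gaussian with exponent strictly less than $1/2-\varepsilon$ sits in $\mascB$. Conjugating the statement of Lemma \ref{projlemma}(4) then yields $(P,H)_{B^2}=(P,\Pi _AH)_{B^2}=0$ for every polynomial $P\in P(\cc d)$. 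By Hölder and the continuity of $\Pi _A$ from Theorem \ref{projthm}, the bilinear form
\begin{equation*}
T(F,G)\equiv (F,G)_{B^2}-(F,\Pi _AG)_{B^2}
\end{equation*}
is jointly continuous on $A(\omega ,\mascB )\times B(1/\omega ,\mascB ')$, and we have just seen that $T(P,G)=0$ for all polynomial $P$ and all $G\in B(1/\omega ,\mascB ')$.

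To complete the argument I would extend the identity $T(F,G)=0$ from polynomials to all of $A(\omega ,\mascB )$ by a density argument that depends on which of the three cases of Definition \ref{feasiblpair} the pair $(\mascB ,\omega )$ falls into. When $\nu _2(\mascB )<\infty$, Theorem \ref{densthm}(1) supplies norm-dense polynomial approximants and one concludes by norm continuity of $T$. When $(\mascB ,\omega )$ is narrowly feasible, Theorem \ref{narrowthm}(1) supplies narrow-dense polynomial approximants and Proposition \ref{propnarrowseq} delivers the corresponding convergence of the pairings. When $\nu _1(\mascB )>1$, polynomials need not be norm-dense; the plan here is a two-stage approximation $F\rightsquigarrow F_\lambda =F(\lambda \cdo )\rightsquigarrow P_j$, where for $\lambda <1$ the genuinely Gaussian decay of $F_\lambda$ places it in $A_P(\cc d)$, so Proposition \ref{APprop} yields polynomials $P_j\to F_\lambda$ in the $A(\omega ,\mascB )$-norm, whence $T(F_\lambda ,G)=0$ by the previous step.

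The main obstacle I foresee is the final passage $T(F_\lambda ,G)\to T(F,G)$ as $\lambda \to 1^-$ in the $\nu _1(\mascB )>1$ case, since Lemma \ref{Flambdalim} (which requires $\nu _2(\mascB )<\infty$) is unavailable. My plan is to handle this directly on the integral representation
\begin{equation*}
T(F,G)-T(F_\lambda ,G)=\int _{\cc d}\bigl (F(z)-F(\lambda z)\bigr )\overline {H(z)}\, d\mu (z),
\end{equation*}
using the dilation suitability of $\omega$ with $\varepsilon =1$ to build a uniform-in-$\lambda$ integrable envelope, then invoking the dominated convergence principle of Lemma \ref{genLeb} together with the pointwise convergence $F(\lambda z)\to F(z)$. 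The delicate weight bookkeeping --- coupling the Gaussian $e^{-|z|^2}$ from $d\mu$ with $\omega$ and $1/\omega$ so that Hölder between $\mascB$ and $\mascB '$ provides an integrable dominator independent of $\lambda$ near $1$ --- is where most of the labor will lie.
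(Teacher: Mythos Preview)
Your treatment of the cases $\nu _2(\mascB )<\infty$ and narrowly feasible is the paper's argument: polynomial approximation of $F$ (in norm, respectively narrowly), together with Lemma \ref{projlemma}(4), Theorem \ref{projthm}, and Proposition \ref{propnarrowseq}.

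For the case $\nu _1(\mascB )>1$ you take a different route, and it is precisely where the paper's argument is cleaner. You approximate $F$ via $F_\lambda =F(\lambda \cdo )$ and then by polynomials, and correctly flag the passage $T(F_\lambda ,G)\to T(F,G)$ as the obstacle (Lemma \ref{Flambdalim} is unavailable since $\nu _2(\mascB )$ may equal $\infty$). The paper sidesteps this entirely by switching to the other side of the pairing: $\nu _1(\mascB )>1$ forces $\nu _2(\mascB ')<\infty$, so $B(1/\omega ,\mascB ')=B_*(1/\omega ,\mascB ')$ and $G$ can be approximated \emph{in norm} by $G_j\in C_0^\infty (\cc d)$. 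For such $G_j$ the identity $(F,G_j)_{B^2}=(F,\Pi _AG_j)_{B^2}$ is a direct Fubini computation, legitimate because compact support of $G_j$ gives $|\Pi _AG_j(z)|\le Ce^{C|z|}$; the limit in $j$ then uses only H{\"o}lder and the continuity of $\Pi _A$ from Theorem \ref{projthm}. Your dominated-convergence plan can likely be pushed through (the uniform $A(\omega ,\mascB )$-bound on $F_\lambda$ furnished by dilation suitability, combined with a norm approximation of $H=G-\Pi _AG$ by compactly supported functions on the $\mascB '$ side, yields an $\varepsilon /3$ argument), but the moment you exploit $\nu _2(\mascB ')<\infty$ to approximate on the $G$ side you have in effect rediscovered the paper's shortcut --- and at that point the two-stage dilation of $F$ becomes superfluous.
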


\par

\begin{proof}
First assume that $\nu _2(\mathscr B)<\infty$. Then $A(\omega ,\mascB )=A_*(\omega ,\mascB )$. Let $F_j\in P(\cc d)$ be such that $F_j\to F$ in $A(\omega ,\mascB )$
as $j\to \infty$. Since $\Pi _AF=F$ and $\Pi _AF_j=F_j$, it follows from Lemma
\ref{projlemma} and Theorem \ref{projthm} that
\begin{equation}\label{scalarcomp1}
(\Pi _AF,G)_{B^2}=(F,G)_{B^2}=\lim _{j\to \infty}(F_j,G)_{B^2}
=\lim _{j\to \infty}(F_j,\Pi _AG)_{B^2} = (F,\Pi _AG)_{B^2},
\end{equation}
and the result follows in this case.

\par

Next we consider the case when $\nu _1(\mathscr B)>1$. Then $B(1/\omega ,\mascB ') = B_*(1/\omega ,\mascB ')$, and the result follows by similar arguments after
approximating $G$ with elements in $C^\infty _0$ and using the fact that
$$
|(\Pi _AG)(z)|\le Ce^{C|z|}\quad \text{for some}  \quad C >0,
$$
when $G\in C_0^\infty$ which is needed when applying Lemma
\ref{projlemma} (cf. \eqref{condadjoint}).

\par

Finally, if $(\mascB ,\omega )$ is narrowly feasible, then we choose
a sequence $F_j\in P(\cc d)$ which converges to $F$ narrowly as
$j\to \infty$. By Lemma \ref{projlemma}, Proposition \ref{propnarrowseq}
and Theorem \ref{projthm}, it follows that \eqref{scalarcomp1} holds
also in this case. The proof is complete.
\end{proof}

\par

The next result concerns convenient equivalent norms on $A(\omega ,\mascB)$.

\par

\begin{prop}\label{normequivprop}
Let $(\mascB , \omega )$ be a feasible pair on $\cc d$, and let
$$
\nmm F \equiv \sup |(F,G)_{A^2}|,\qquad F\in A(\omega ,\mathscr B),
$$
where the supremum is taken over all $G\in A(1/\omega ,\mascB ' )$ such that $\nm G{A(1/\omega ,\mascB ')}\le 1$.
Then $\nmm \cdo$ is a norm on $A(\omega ,\mascB )$ which is equivalent to $\nm \cdo{A(\omega ,\mascB )}$.
\end{prop}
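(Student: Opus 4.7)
I would prove the two inequalities separately.

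For the upper bound $\nmm F\le C\nm F{A(\omega,\mascB)}$, the integral defining the pairing can be factored as
$$(F,G)_{A^2}=\pi^{-d}\int\bigl[F(z)e^{-|z|^2/2}\omega(2^{1/2}\overline z)\bigr]\,\overline{\bigl[G(z)e^{-|z|^2/2}\omega(2^{1/2}\overline z)^{-1}\bigr]}\,d\lambda(z),$$
so that, by \eqref{BLpnorm} and its analogue for general mixed quasi-norm spaces, the two bracketed factors lie in $\mascB$ and $\mascB'$ with norms comparable to $\nm F{A(\omega,\mascB)}$ and $\nm G{A(1/\omega,\mascB')}$ respectively. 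H\"older's inequality in the mixed-norm space then yields $|(F,G)_{A^2}|\le C\nm F{A(\omega,\mascB)}\nm G{A(1/\omega,\mascB')}$, and the desired inequality follows on taking the supremum over the unit ball of $A(1/\omega,\mascB')$.

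For the lower bound $\nm F{A(\omega,\mascB)}\le C\nmm F$ in cases (1) and (2) of Definition \ref{feasiblpair}, I would combine Theorem \ref{projthm} with Hahn--Banach. Setting $\Phi(z)=F(z)e^{-|z|^2/2}\omega(2^{1/2}\overline z)\in\mascB$, the standard H\"older dual pairing in $\mascB$ supplies $\Psi\in\mascB'$ with $\nm\Psi{\mascB'}\le 1$ realising $\int\Phi\overline\Psi\,d\lambda\ge c\nm\Phi\mascB\asymp c'\nm F{A(\omega,\mascB)}$. The auxiliary, generally non-analytic function $G_0(z):=\pi^d\Psi(z)e^{|z|^2/2}\omega(2^{1/2}\overline z)$ then lies in $B(1/\omega,\mascB')$ with a uniform bound on its norm, and by Theorem \ref{projthm} its projection $G:=\Pi_AG_0$ lies in $A(1/\omega,\mascB')$ with $\nm G{A(1/\omega,\mascB')}\le C$. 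The crucial step is to invoke Proposition \ref{adjprop}: it yields $(F,G)_{A^2}=(F,\Pi_AG_0)_{A^2}=(F,G_0)_{A^2}$, and a direct computation, using that $\omega$ is real-valued, shows $(F,G_0)_{A^2}=\int\Phi\overline\Psi\,d\lambda$. Combining these estimates gives $\nmm F\ge c''\nm F{A(\omega,\mascB)}$.

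The narrowly feasible case (3) of Definition \ref{feasiblpair} is where I anticipate the main obstacle: $\mascB$ is then $L^\infty$ at the innermost level and $L^1$ at the outermost, so Hahn--Banach in $\mascB$ does not straightforwardly produce a norm-realising $\Psi\in\mascB'$, and moreover the dual pair $(\mascB',1/\omega)$ is no longer feasible so Theorem \ref{projthm} is not directly available on the testing side. I would handle this case by narrow approximation: by Theorem \ref{narrowthm} there is a sequence of polynomials $F_j\to F$ narrowly; condition (2) of Definition \ref{narrowconvdef} gives $\nm{F_j}{A(\omega,\mascB)}\to\nm F{A(\omega,\mascB)}$, while Proposition \ref{propnarrowseq} combined with Lemma \ref{genLeb} applied uniformly in the unit ball of $A(1/\omega,\mascB')$ delivers $\nmm{F_j}\to\nmm F$. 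Since each polynomial $F_j$ satisfies the lower bound by a direct Gaussian--Hahn--Banach argument in auxiliary feasible pairs in which it lives, passing to the limit completes the proof.
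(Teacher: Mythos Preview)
Your argument for the upper bound and for the lower bound in cases (1) and (2) is essentially the paper's proof, only with the test element written out explicitly: pick $G_0\in B(1/\omega,\mascB')$ with $\nm{G_0}{B(1/\omega,\mascB')}\le 1$ (converse H\"older), project with $\Pi_A$ (Theorem~\ref{projthm}), and identify $(F,G_0)_{B^2}=(F,\Pi_AG_0)_{B^2}$ (Proposition~\ref{adjprop}). So far, so good.

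The separate treatment of case (3), however, is both unnecessary and incomplete. Your two worries are misplaced. First, the converse of H\"older's inequality is valid in every mixed norm space $L^p(V)$ with $p\in[1,\infty]^n$, so a near-norming $\Psi\in\mascB'$ exists even when $p_1=\infty$. Second, Theorem~\ref{projthm} does not require the pair $(\mascB',1/\omega)$ to be feasible; its hypotheses are only that the space be a mixed norm space and the weight be dilated suitable, and these are the same conditions in all three cases. More to the point, Proposition~\ref{adjprop} is stated for \emph{all} feasible pairs, including the narrowly feasible ones: its proof already absorbs the narrow-convergence argument (approximating $F$ by polynomials and invoking Proposition~\ref{propnarrowseq}). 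Thus the paper's proof covers case (3) with exactly the same three lines as the other cases, with no case split needed.

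Your proposed workaround for case (3) also has a genuine gap: from $(F_j,G)_{A^2}\to(F,G)_{A^2}$ for each fixed $G$ you only get $\nmm F\le\liminf_j\nmm{F_j}$, not $\nmm{F_j}\to\nmm F$; passing the supremum over the unit ball through the limit requires uniformity that Proposition~\ref{propnarrowseq} does not provide. And the ``direct Gaussian--Hahn--Banach argument in auxiliary feasible pairs'' for polynomials would have to produce a constant independent of $j$, which is not explained.
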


\par

\begin{proof}
By H{\"o}lder's inequality we get $\nmm F \le \nm F{A(\omega ,\mascB )}$. We have to prove that
\begin{equation}\label{oppnormest}
\nm F{A(\omega ,\mascB )}\le C\nmm F,\qquad F\in A(\omega ,\mascB ),
\end{equation}
for some constant $C$ which is independent of $F\in A(\omega ,\mascB )$.

\par

%First we consider the case when $\nu _2(\mascB )<\infty$.
Let $\ep >0$ be fixed, and let $\Omega$ be the set of all
$G\in B(1/\omega ,\mascB ')$ such that $\nm G{B(1/\omega ,\mascB ')}\le 1$. Then the converse of H{\"o}lder's
inequality and Proposition \ref{adjprop} give
$$
\nm F{A(\omega ,\mascB )} =\sup _{G\in \Omega}|(F,G)_{B^2}|\le |(F,G_0)_{B^2}|+\ep = |(F,\Pi _AG_0)_{B^2}|+\ep ,
$$
for some choice of $G_0\in \Omega$. Since  $\Pi _A G_0\in A(1/\omega ,\mascB ')$ and
$\nm {\Pi _AG_0}{A(1/\omega ,\mascB ')}\le C\nm {G_0}{B(1/\omega ,\mascB ')}$ for some constant $C$, by Theorem 
\ref{projthm}, we obtain
$$
\nm F{A(\omega ,\mascB )} \le |(F,G_0)_{B^2}|+\ep = |(F,\Pi _AG_0)_{B^2}|+\ep \le C\sup |(F,G)_{B^2}|+\ep ,
$$
where the supremum is taken over all $G\in \Omega \cap A(1/\omega ,\mascB ')$. Since $\ep >0$
was arbitrary chosen, \eqref{oppnormest} follows. The proof is complete.
\end{proof}

\par

%We shall also show some consequences for Berezin-Toeplitz operators. Though there is already a djungle of different families of weight functions, we need to introduce two more such classes. 
%
%, and start by recalling their definition.
%
%\par
%
%\begin{defn}\label{defBerToeplop}
%Let $\omega ,\omega _0 \in \mascP _G^0(\cc d)$ be dilated suitable, $\mathscr B$ be mixed norm space on $\cc d$, and let $a\in 
%L^1_{loc}(\cc d)$ such that $(S^{-1}a)/\omega _0$, and let $S$ be as in \eqref{Sdef}. Then
%the Berezin-Toeplitz operator $\operatorname T_{\mathfrak V}(a )$ is
%the continuous operator on $A_{\mathscr S}'(\cc d)$, given by the
%formula
%$$
%\operatorname T_{\mathfrak V}(a )F = \Pi _A((S^{-1}a )F).
%$$
%\end{defn}
%
%\par

\subsection{Consequences for modulation spaces and Toeplitz operators}

\par

Next we use Theorem \ref{mainthm} to carry over the previous results for $A(\omega ,
\mascB )$ spaces into modulation spaces. First we need to investigate how $\Pi _A$ is 
linked to the composition $V_\phi \circ V_\phi ^*$, where $V_\phi ^*$ is the (Hilbert-)
adjoint of $V_\phi$. Here we let $V_\phi ^*F$ be the unique element 
in $\mathcal S_{1/2}'(\rr d)$ which satisfies
$$
(V_\phi ^*F,g)_{L^2(\rr d)} = (F,V_\phi g)_{L^2(\rr {2d})},\quad g\in \mathscr S_0(\rr d),
$$
when $F\in \mathcal S_{1/2}'(\rr {2d})$. We also let $\Pi _M$ be the continuous operator on $\mathcal S_{1/2}'(\rr {2d})$ given by $\Pi _M=V_\phi \circ V_\phi ^*$, and note that $\Pi _M$ is a projection from $\mathcal S_{1/2}'(\rr {2d})$ onto $V_\phi (\mathcal S_{1/2}'(\rr {d}))$.

\par

\begin{lemma}\label{PiAPiMlemma}
Let $\omega \in \mathscr P_Q^0(\cc d)$, and let $\mascB$ be a mixed norm space on $\rr {2d}$. Then
$$
\Pi _A = U_{\mathfrak V} \circ \Pi _M \circ U_{\mathfrak V}^{-1},
$$
on $B(\omega ,\mascB )$, where $U_{\mathfrak V}$ is given by \eqref{UVdef}.
\end{lemma}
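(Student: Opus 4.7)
The strategy is to first establish the identity in the Hilbert-space setting on $B^2(\cc d)=L^2(d\mu)$ via a unitary-conjugation argument, and then extend to arbitrary $F\in B(\omega,\mascB)$ by recognizing both sides as entire functions and matching their Taylor coefficients at the origin.

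First I would verify by a direct change-of-variables that $U_{\mathfrak V}$ defined in \eqref{UVdef} is a unitary map from $L^2(\rdd,dxd\xi)$ onto $B^2(\cc d)$: the exponential prefactor $e^{(|x|^2+|\xi|^2)/2}$ in \eqref{UVdef} exactly compensates the Gaussian weight $e^{-|z|^2}$ hidden in $d\mu$ after substituting $z=x+i\xi\mapsto (2^{1/2}x,-2^{1/2}\xi)$, and the numerical constants absorb the Jacobian $2^d$ of the dilation. In particular $U_{\mathfrak V}^*=U_{\mathfrak V}^{-1}$, so the factorization $\mathfrak V=U_{\mathfrak V}\circ V_\phi$ yields $\mathfrak V^*=V_\phi^*\circ U_{\mathfrak V}^{-1}$. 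Since $\mathfrak V$ is Bargmann's isometric bijection $L^2(\rd)\to A^2(\cc d)$, the composition $\mathfrak V\mathfrak V^*$ is the orthogonal projection of $B^2$ onto $A^2$, which by the reproducing formula \eqref{reproducing} coincides with $\Pi_A$. Combining,
\begin{equation*}
\Pi_A \;=\; \mathfrak V\mathfrak V^* \;=\; U_{\mathfrak V}(V_\phi V_\phi^*)U_{\mathfrak V}^{-1} \;=\; U_{\mathfrak V}\,\Pi_M\,U_{\mathfrak V}^{-1}
\end{equation*}
as a bounded-operator identity on $B^2(\cc d)$.

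To extend to $F\in B(\omega,\mascB)$, I would check that both sides are entire functions and then match them coefficient-by-coefficient at the origin. The Gaussian estimate \eqref{Gaussest} for $\omega\in\mascP_Q^0$ ensures that $F$ satisfies \eqref{condanal}, so Lemma \ref{projlemma}(1) gives $\Pi_A F\in A(\cc d)$. The right-hand side $G:=U_{\mathfrak V}\Pi_M U_{\mathfrak V}^{-1}F$ also lies in $A(\cc d)$: $U_{\mathfrak V}^{-1}F\in\maclS'_{1/2}(\rdd)$, $\Pi_M$ maps $\maclS'_{1/2}(\rdd)$ into $V_\phi(\maclS'_{1/2}(\rd))$, and $U_{\mathfrak V}$ carries the latter space into $A(\cc d)$ via the factorization $\mathfrak V=U_{\mathfrak V}\circ V_\phi$ together with Theorem \ref{mainthm}. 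By Remark \ref{derivrepr} we have $\partial^\alpha(\Pi_A F)(0)=(F,z^\alpha)_{B^2}$, and the same formula for $G$ reduces the problem to showing $(G,z^\alpha)_{B^2}=(F,z^\alpha)_{B^2}$ for every multi-index $\alpha$. Since $z^\alpha\in A^2$ is fixed by $\Pi_A$, applying the Step-1 identity to the polynomial $z^\alpha\in B^2$ gives $U_{\mathfrak V}\Pi_M U_{\mathfrak V}^{-1}z^\alpha = z^\alpha$. The composite $U_{\mathfrak V}\Pi_M U_{\mathfrak V}^{-1}$ is self-adjoint on $B^2$ as a unitary conjugation of the self-adjoint projection $\Pi_M$, so transferring it across the pairing yields $(G,z^\alpha)_{B^2}=(F,z^\alpha)_{B^2}$, and all Taylor coefficients match, forcing $G=\Pi_A F$.

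The principal obstacle is ensuring that the $B^2$ pairings $(F,z^\alpha)_{B^2}$ and $(G,z^\alpha)_{B^2}$, together with the adjoint manipulation above, are legitimate when $F$ only lies in $B(\omega,\mascB)$ rather than $B^2$. This is handled by observing that $z^\alpha e^{-|z|^2}$ decays faster than any exponential, so it dominates against the Gaussian-type growth permitted by \eqref{Gaussest} and \eqref{condanal}; hence these integrals converge absolutely for both $F$ and $G$, and the adjoint transfer from the honest $B^2$ inner product to the weaker pairing is valid by a direct Fubini argument.
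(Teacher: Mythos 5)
Your argument is correct and follows essentially the same route as the paper's proof: both reduce the identity to pairing against the polynomials $z^\alpha =\mathfrak V((\alpha !)^{1/2}h_\alpha )$, transfer the operator across the $B^2$ form using the unitarity of $U_{\mathfrak V}\colon L^2\to B^2$ together with the adjoint relation defining $V_\phi ^*$ (your ``self-adjointness of $T$''), and conclude from the fact that an entire function satisfying \eqref{condanal} is determined by its $A^2$-pairings with monomials (Remark \ref{F1eqF2rem}, i.{\,}e. Taylor coefficients at the origin). Your preliminary identification $\Pi _A=\mathfrak V\mathfrak V^*$ on $B^2$ is a pleasant conceptual framing but is not needed as a separate step; the paper carries out the polynomial-pairing computation directly for general $F\in B(\omega ,\mascB )$.
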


\par

\begin{proof}
Let $F\in B(\omega ,\mascB )$, $F_0=U_{\mathfrak V}^{-1}F$, $g\in \mathscr S_0(\rr d)$, 
$G_0=V_\phi g$ and $G=U_{\mathfrak V}G_0$. Then $\Pi _MG_0=G_0$, and
\begin{multline*}
(F,G)_{B^2} = (U_{\mathfrak V}F_0,U_{\mathfrak V}G_0)_{B^2} = (F_0,G_0) _{L^2} = 
(F_0,V_\phi g) _{L^2}
\\[1ex]
= (V_\phi ^*F_0,g) _{L^2}
= ( \Pi _MF_0,V_\phi g) _{L^2} = ( \Pi _MF_0,G_0) _{L^2}
\\[1ex]
= (U_{\mathfrak V}(\Pi _M F_0),G)_{B^2} = ((U_{\mathfrak V}\circ
\Pi _M \circ U_{\mathfrak V}^{-1})F,G)_{B^2},
\end{multline*}
where
$U_{\mathfrak V}(\Pi _M F_0)=\mathfrak V(V_\phi ^*F_0)$ is analytic and satisfies \eqref
{condanal}. Furthermore, $(F,G)_{B^2}=(\Pi _AF,G)_{B^2}$ in view of Lemma \ref
{projlemma} (4). A combination of these equalities now gives
$$
((U_{\mathfrak V}\circ \Pi _M \circ U_{\mathfrak V}^{-1})F,G)_{B^2} = (\Pi _AF,G)_{B^2}.
$$
Since also $\Pi _AF$ is analytic it follows from Remark \ref{F1eqF2rem} that
$$
(U_{\mathfrak V}\circ \Pi _M \circ U_{\mathfrak V}^{-1})F = \Pi _AF,
$$
and the result follows.
\end{proof}

\par

The following result is now an immediate consequence of Theorems
\ref{mainthm} and  \ref{projthm}, and Lemma \ref{PiAPiMlemma}.
Here and in what follows we let $\mascB (\omega )$
be the set of all $f\in L^1(\rr d)$ such that $\nm f{\mascB (\omega )}\equiv \nm {f\omega}
{\mascB}$ is finite, when $\mascB$ is a mixed norm space on $\rr d$ and $\omega \in 
\mascP _Q(\rr d)$. In this situation we also set $\mascB '(\omega)=(\mascB ')(\omega )$.

\par

\begin{thm}\label{projthm2}
Let $\mascB $ be a mixed norm space on $\cc d$ and assume that $\omega \in \mascP _Q^0(\cc d)$ is dilated suitable. Then the following is true:
\begin{enumerate}
\item $\Pi _M$ is continuous from $\mathscr B(\omega )$ to
$V_\phi (M(\omega ,\mascB ))$;

\vrum

\item $V_\phi ^*$ is continuous from $\mathscr B(\omega )$ to
$M(\omega ,\mathscr B)$.
\end{enumerate}
In particular, $V_\phi (M(\omega ,\mascB ))$ is a retract of $B\mascB (\omega )$
under $\Pi _M$.
\end{thm}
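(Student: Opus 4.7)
\emph{Proof plan.} The strategy is to transport the continuity of $\Pi _A$, already established in Theorem \ref{projthm}, back to the time-frequency side by conjugation with the intertwining identity of Lemma \ref{PiAPiMlemma}.

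First I would observe that $U_{\mathfrak V}$ in \eqref{UVdef} is, directly from the definition of the $B(\omega ,\mascB )$ norm, an isometric bijection from $\mascB (\omega )$ onto $B(\omega ,\mascB )$: for $f\in \mascB (\omega )$,
$$
\nm {U_{\mathfrak V}f}{B(\omega ,\mascB )} = \nm {(U_{\mathfrak V}^{-1}U_{\mathfrak V}f)\, \omega }{\mascB } = \nm {f\, \omega }{\mascB } = \nm f{\mascB (\omega )}.
$$
Combining this with $\mathfrak V = U_{\mathfrak V}\circ V_\phi $ from \eqref{bargstft1} and Theorem \ref{mainthm}, it follows that $V_\phi $ is an isometric bijection from $M(\omega ,\mascB )$ onto $V_\phi (M(\omega ,\mascB )) = U_{\mathfrak V}^{-1}(A(\omega ,\mascB ))\subseteq \mascB (\omega )$.

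Next, for (1), I would apply Theorem \ref{projthm} to obtain the continuity of $\Pi _A\colon B(\omega ,\mascB )\to A(\omega ,\mascB )$, and use Lemma \ref{PiAPiMlemma} to rewrite
$$
\Pi _M = U_{\mathfrak V}^{-1}\circ \Pi _A\circ U_{\mathfrak V} ,
$$
giving a continuous map from $\mascB (\omega )$ into $U_{\mathfrak V}^{-1}(A(\omega ,\mascB )) = V_\phi (M(\omega ,\mascB ))$. Statement (2) then drops out: since $\Pi _M = V_\phi \circ V_\phi ^*$ and $V_\phi $ is injective on $\mathcal S'_{1/2}(\rr d)$, for $F\in \mascB (\omega )$ the distribution $V_\phi ^*F$ is characterized by $V_\phi (V_\phi ^*F) = \Pi _MF\in V_\phi (M(\omega ,\mascB ))$, so $V_\phi ^*F\in M(\omega ,\mascB )$ with
$$
\nm {V_\phi ^*F}{M(\omega ,\mascB )} = \nm {V_\phi (V_\phi ^*F)}{\mascB (\omega )} = \nm {\Pi _MF}{\mascB (\omega )}\le C\, \nm F{\mascB (\omega )}.
$$

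Finally, for the retract assertion it remains to verify that $\Pi _M$ restricted to $V_\phi (M(\omega ,\mascB ))$ is the identity, which amounts to $V_\phi ^*V_\phi = \mathrm{id}$ on $M(\omega ,\mascB )$. This follows from Moyal's identity $(V_\phi f,V_\phi g)_{L^2(\rr {2d})} = \nm \phi {L^2}^2\, (f,g)_{L^2(\rr d)}$ together with $\nm \phi {L^2}=1$ for $\phi (x)=\pi ^{-d/4}e^{-|x|^2/2}$. There is no genuine obstacle here; the only care needed is in the bookkeeping of the three isometries (for $U_{\mathfrak V}$, for $V_\phi $ onto its image, and for $\mathfrak V$) so that the target of $\Pi _M$ is correctly identified with the closed subspace $V_\phi (M(\omega ,\mascB ))$ of $\mascB (\omega )$.
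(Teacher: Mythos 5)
Your proposal is correct and follows essentially the same route as the paper, which simply declares the theorem an immediate consequence of Theorem \ref{mainthm}, Theorem \ref{projthm} and Lemma \ref{PiAPiMlemma}; you have filled in exactly the bookkeeping the paper leaves implicit (the isometry of $U_{\mathfrak V}$ from $\mascB (\omega )$ onto $B(\omega ,\mascB )$, the conjugation $\Pi _M=U_{\mathfrak V}^{-1}\circ \Pi _A\circ U_{\mathfrak V}$, and the projection property via Moyal's identity). No gaps.
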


\par

The next results are immediate consequences of Theorem \ref{mainthm},
Propositions \ref{interprop}, \ref{adjprop} and \ref{normequivprop},
and Lemma \ref{PiAPiMlemma}.

\par

\begin{prop}\label{interprop2}
Any interpolation property valid for the $B(\omega ,\mascB)$ spaces also hold for the $M
(\omega ,\mascB)$ spaces, when the weights $\omega$ are dilated suitable and belong to 
$\mascP _Q^0(\rr {2d})$, and $\mascB$ are mixed norm space on $\rr {2d}$. In particular, if $0\le 
\theta \le 1$, $(\omega ,\mascB _1)$ and $(\omega ,\mascB _2)$ are feasible pairs on $\rr 
{2d}$, and that $\mascB=(\mascB _1, \mascB _2)_{[\theta ]}$, for $\theta \in [0,1]$, then
$$
(M(\omega ,\mascB _1),M(\omega ,\mascB _2))_{[\theta ]} = M(\omega ,\mascB ).
$$ 
\end{prop}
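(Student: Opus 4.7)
The plan is to deduce Proposition \ref{interprop2} directly from Proposition \ref{interprop} by transporting interpolation statements along the Bargmann isometry. The key input is Theorem \ref{mainthm}: for every mixed (quasi-)norm space $\mascB$ and every $\omega \in \mascP _Q^0(\rr {2d})$, the map $\mathfrak V \colon M(\omega ,\mascB ) \to A(\omega ,\mascB )$ is an isometric bijection. Consequently, if $\mascB _1$ and $\mascB _2$ are mixed norm spaces and $(\omega ,\mascB _j)$ are feasible pairs, both $M(\omega ,\mascB _j)$ are Banach spaces (Theorem \ref{Bspacethm}) and embed continuously into the common Hausdorff topological vector space $\mathcal S_{1/2}'(\rr d)$; likewise $A(\omega ,\mascB _j)$ embed into $A(\cc d) \subseteq \mathcal S_{1/2}'(\cc d)$. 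Therefore $(M(\omega ,\mascB _1),M(\omega ,\mascB _2))$ and $(A(\omega ,\mascB _1),A(\omega ,\mascB _2))$ are compatible Banach couples, and $\mathfrak V$ is an isometric isomorphism between these couples.

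With this in hand, the first assertion is immediate: any interpolation functor is, by definition, invariant under isomorphisms of Banach couples, so any interpolation property valid for $(A(\omega ,\mascB _1),A(\omega ,\mascB _2))$ transfers identically to $(M(\omega ,\mascB _1),M(\omega ,\mascB _2))$ via $\mathfrak V$. By Proposition \ref{interprop}, all interpolation properties of the $A(\omega ,\mascB )$ spaces are in turn inherited from those of the $B(\omega ,\mascB )$ spaces (since $A(\omega ,\mascB )$ is a retract of $B(\omega ,\mascB )$ under $\Pi _A$). Composing the two transfers yields the first statement.

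For the concrete complex interpolation identity, I apply Proposition \ref{interprop} to obtain
$$
(A(\omega ,\mascB _1),A(\omega ,\mascB _2))_{[\theta ]} = A(\omega ,\mascB ),\qquad \mascB =(\mascB _1,\mascB _2)_{[\theta ]},
$$
and then pull this equality back through the isometric bijection $\mathfrak V^{-1}$. Since $\mathfrak V$ preserves norms on each endpoint space, it commutes with the complex interpolation functor $(\cdot ,\cdot )_{[\theta ]}$, so
$$
(M(\omega ,\mascB _1),M(\omega ,\mascB _2))_{[\theta ]}
= \mathfrak V^{-1}\bigl ((A(\omega ,\mascB _1),A(\omega ,\mascB _2))_{[\theta ]}\bigr )
= \mathfrak V^{-1}(A(\omega ,\mascB )) = M(\omega ,\mascB ).
$$

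The only genuine step requiring care is verifying the compatibility of the two Banach couples and checking that $\mathfrak V$ intertwines the norms on the sum and intersection spaces; this follows from Theorem \ref{mainthm} and the embedding of each $M(\omega ,\mascB _j)$ into $\mathcal S_{1/2}'(\rr d)$, so no new technical obstacle appears. Everything else is a direct corollary of Proposition \ref{interprop} via the Bargmann isometry.
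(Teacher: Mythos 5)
Your argument is correct and is essentially the paper's own: the paper declares Proposition \ref{interprop2} an immediate consequence of Theorem \ref{mainthm} and Proposition \ref{interprop}, i.e. precisely the transfer of interpolation properties along the isometric bijection $\mathfrak V$ between the couples $(M(\omega ,\mascB _1),M(\omega ,\mascB _2))$ and $(A(\omega ,\mascB _1),A(\omega ,\mascB _2))$ that you spell out. Your explicit verification of the compatibility of the couples and of the functoriality of complex interpolation under this isomorphism only makes the paper's ``immediate consequence'' precise.
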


\par

\begin{prop}\label{adjprop2}
Let $(\mascB ,\omega )$ be a feasible pair on $\rr {2d}$, and let
$f\in M(\omega ,\mascB )$ and $G\in \mascB '(1/\omega )$. Then
\begin{equation*}%\tag*{(\ref{adjformula})$'$}
(V_\phi f,G)_{L^2(\rr {2d})} = (f, V_\phi ^* G)_{L^2(\rr d)}.
\end{equation*}
\end{prop}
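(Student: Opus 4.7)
The plan is to reduce to the Bargmann-side analogue, Proposition \ref{adjprop}, via the intertwining $\mathfrak V = U_{\mathfrak V}\circ V_\phi$ (cf.~\eqref{bargstft1}) together with Lemma \ref{PiAPiMlemma}. The bridge between the two sides is the change-of-variables identity
\begin{equation*}
(U_{\mathfrak V} F_0, U_{\mathfrak V} H_0)_{B^2} \;=\; (F_0, H_0)_{L^2(\rr {2d})},
\end{equation*}
valid whenever both sides are defined. I would verify this by substituting \eqref{UVdef} into the $B^2$ pairing: the phases $e^{-i\scal x\xi}$ cancel against their conjugates, the Gaussians $e^{(|x|^2+|\xi|^2)/2}$ combine to $e^{|z|^2}$ and annihilate the $e^{-|z|^2}$ in $d\mu(z)$, and the Jacobian of the substitution $(y,\eta) = (2^{1/2}x, -2^{1/2}\xi)$ exactly balances the remaining constants. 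Applied with $F_0 = V_\phi f$ and $H_0 = G$, this gives $(V_\phi f, G)_{L^2(\rr {2d})} = (\mathfrak V f, U_{\mathfrak V} G)_{B^2}$.

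Since $\mathfrak V f \in A(\omega, \mascB)$ by Theorem \ref{mainthm}, and $U_{\mathfrak V} G \in B(1/\omega, \mascB')$ directly from Definition \ref{thespaces}, Proposition \ref{adjprop} yields $(\mathfrak V f, U_{\mathfrak V} G)_{B^2} = (\mathfrak V f, \Pi_A U_{\mathfrak V} G)_{B^2}$. Lemma \ref{PiAPiMlemma} combined with $\Pi_M = V_\phi V_\phi^*$ then gives $\Pi_A U_{\mathfrak V} G = U_{\mathfrak V}(V_\phi V_\phi^* G) = \mathfrak V V_\phi^* G$. Applying the dictionary of the first paragraph in reverse to $H_0 = V_\phi V_\phi^* G$ collapses the resulting chain to
\begin{equation*}
(V_\phi f, G)_{L^2(\rr {2d})} \;=\; (\mathfrak V f, \mathfrak V V_\phi^* G)_{B^2} \;=\; (V_\phi f, V_\phi V_\phi^* G)_{L^2(\rr {2d})}.
\end{equation*}

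It remains to identify this with $(f, V_\phi^* G)_{L^2(\rr d)}$. For $f \in \mathscr S_0(\rr d)$, the defining relation of $V_\phi^*$ combined with the Moyal identity $V_\phi^* V_\phi = \mathrm{id}$ on $L^2$ (where $\nm \phi {L^2} = 1$) gives the identity at once. The extension to arbitrary $f \in M(\omega, \mascB)$ proceeds by density: the left-hand side is continuous in $f$ on $M(\omega, \mascB)$ by H\"older on mixed-norm spaces via the first paragraph, while Theorem \ref{projthm2}(2) places $V_\phi^* G$ in $M(1/\omega, \mascB')$, so that the extended $L^2$ pairing from Theorem \ref{densdualmodthm} is continuous in $f$ as well. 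Density of $\mathscr S_0$ is supplied by Theorem \ref{densdualmodthm} when $\nu_2(\mascB) < \infty$, and by narrow density (Theorem \ref{narrowmodthm}) combined with narrow continuity of both sides (via Proposition \ref{propnarrowseq} on the $B^2$ side) in the narrowly feasible case. The principal obstacle is precisely this last case: when $(\mascB,\omega)$ is only narrowly feasible, neither a direct duality statement nor a dilated-suitable property for $1/\omega$ is immediately available, and one must carefully check that feasibility is compatible with the passage $(\mascB,\omega) \mapsto (\mascB',1/\omega)$ to the extent needed for $V_\phi^* G$ to live in a modulation space where the extended $L^2$ form applies.
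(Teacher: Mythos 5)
Your proof is correct and follows essentially the same route the paper indicates for this result: transfer to the Bargmann side via $U_{\mathfrak V}$, apply Proposition \ref{adjprop}, and convert $\Pi _A$ into $\Pi _M=V_\phi \circ V_\phi ^*$ by Lemma \ref{PiAPiMlemma}. The closing worry about the narrowly feasible case is unnecessary, since Proposition \ref{adjprop} already covers all feasible pairs and the final identification $(V_\phi f,V_\phi V_\phi ^*G)_{L^2}=(f,V_\phi ^*G)_{L^2}$ is precisely the extended $L^2$ form (equivalently, the $B^2$ pairing of the Bargmann transforms), so no separate density argument is required.
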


\par

\begin{prop}\label{normequivprop2}
Let $(\mascB , \omega )$ be a feasible pair on $\rr {2d}$, and let
$$
\nmm f \equiv \sup |(f,g)_{L^2}|,\qquad f\in M(\omega ,\mathscr B),
$$
where the supremum is taken over all $g\in M(1/\omega ,\mascB ' )$
such that $\nm g{M(1/\omega , \mascB ')}\le 1$.
Then $\nmm \cdo$ is a norm on $M(\omega ,\mascB )$ which is
equivalent to $\nm \cdo{M(\omega , \mascB )}$.
\end{prop}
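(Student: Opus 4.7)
The plan is to reduce the statement to Proposition \ref{normequivprop} on the analytic side by transporting through the Bargmann transform. By Theorem \ref{mainthm}, the map $\mathfrak V$ is an isometric bijection from $M(\omega,\mascB)$ onto $A(\omega,\mascB)$, and similarly from $M(1/\omega,\mascB')$ onto $A(1/\omega,\mascB')$. Hence $\nm f{M(\omega,\mascB)} = \nm{\mathfrak Vf}{A(\omega,\mascB)}$, and the unit ball of $M(1/\omega,\mascB')$ corresponds bijectively to the unit ball of $A(1/\omega,\mascB')$.

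The key identification to set up is $(f,g)_{L^2(\rr d)} = (\mathfrak Vf,\mathfrak Vg)_{B^2(\cc d)}$ for $f\in M(\omega,\mascB)$ and $g\in M(1/\omega,\mascB')$. On the one hand, Proposition \ref{adjprop2} yields $(f,g)_{L^2(\rr d)} = (V_\phi f,V_\phi g)_{L^2(\rr{2d})}$ since $\nm \phi {L^2}=1$. On the other hand, a straight-forward change of variables in \eqref{bargstft1} rewrites $(\mathfrak Vf,\mathfrak Vg)_{B^2}$ as exactly the same STFT inner product, with the Jacobian cancelling against the normalizing constants hidden in $d\mu$. This routine bookkeeping is the one computation that needs to be checked.

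With this identification, the supremum defining $\nmm f$ equals
$$
\sup \sets {|(\mathfrak Vf,G)_{B^2}|}{G\in A(1/\omega,\mascB'),\ \nm G{A(1/\omega,\mascB')}\le 1},
$$
which is precisely the functional $\nmm \cdo$ of Proposition \ref{normequivprop} evaluated at $\mathfrak Vf$. That proposition already gives its equivalence with $\nm{\mathfrak Vf}{A(\omega,\mascB)}$, and pulling back through the Bargmann isometry completes the proof.

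Alternatively, one may mimic the proof of Proposition \ref{normequivprop} directly on the modulation side, with Proposition \ref{adjprop2} replacing Proposition \ref{adjprop} and Theorem \ref{projthm2} replacing Theorem \ref{projthm}. The easy inequality $\nmm f\le \nm f{M(\omega,\mascB)}$ comes from $(f,g)_{L^2}=(V_\phi f,V_\phi g)_{L^2}$ and H\"older in the mixed-norm space. For the reverse, converse H\"older applied to $V_\phi f\cdot \omega \in \mascB$ produces $H\in \mascB'$ with $\nm H{\mascB'}\le 1$ and $\nm f{M(\omega,\mascB)}\le |(V_\phi f, H\omega)_{L^2}|+\ep$; then Proposition \ref{adjprop2} rewrites this pairing as $|(f,V_\phi^*(H\omega))_{L^2}|$, and Theorem \ref{projthm2} ensures $V_\phi^*(H\omega)\in M(1/\omega,\mascB')$ with norm uniformly controlled by $\nm H{\mascB'}$. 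This last step is where feasibility of the pair $(\mascB,\omega)$ is genuinely used, and is the only real obstacle.
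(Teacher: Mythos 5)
Your first route is exactly the paper's: Proposition \ref{normequivprop2} is stated there as an immediate consequence of Theorem \ref{mainthm} and Proposition \ref{normequivprop}, and your identification $(f,g)_{L^2}=(\mathfrak Vf,\mathfrak Vg)_{B^2}$ (Moyal's identity plus the change of variables in \eqref{bargstft2}, with $\nm \phi{L^2}=1$) is the correct bookkeeping that makes the transfer work. The proposal is correct and essentially identical to the paper's argument.
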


\par

Next we consider Toeplitz operators and Berezin-Toeplitz operators. Let $a\in \mathcal S_{1/2}(\rr 
{2d})$ be  fixed. Then the \emph{Toeplitz operator} $\tp (a)$ is the linear and continuous operator on 
$\mathcal S_{1/2}(\rr d)$, defined by the formula
\begin{equation}\label{toeplitzdef}
(\tp  (a)f,g)_{L^2(\rr d)} = (a\, V_\phi f,V_\phi g)_{L^2(\rr {2d})}.
\end{equation}
There are several characterizations and several ways to extend the definition of 
Toeplitz and Berezin-Toeplitz operators (see e.{\,}g. \cite{Bau, Berezin71,
Cob01, CG03, Dau, Fo, GT1, GT2, SiT2, Toft2, To8B}
and the references therein). For example, the definition of $\tp  
(a)$ is uniquely extendable to every $a\in \mathcal S_{1/2}'(\rr {2d})$,
and then $\tp (a)$ is  continuous on $\mathcal S_{1/2}(\rr d)$. Furthermore,
it follows from \eqref{toeplitzdef} that $\tp (a)$ is uniquely extendable
to a continuous operator from $\Upsilon (\rr d)$ to $\mathcal S_{1/2}'(\rr d)$.

\par

Toeplitz operators  arise in pseudo-differential calculus in \cite{Fo,Le},
in the theory of quantization 
(Berezin quantization) in \cite{Berezin71}), and in signal processing in \cite{Dau}
(under the name of  time-frequency localization operators or STFT multipliers).
There are also strong connection between such operators and Berezin-Toeplitz
operators which we shall consider now.

\par

Let $\mascB _1,\mascB _2$ be mixed norm spaces on $\rr {2d}\simeq \cc d$,
$\omega _1,\omega _2\in \mascP _Q^0(\rr {2d})\simeq
\mascP _Q^0(\cc {d})$, $a\in \mascB _1(\omega _1)$, and $S$ be as in \eqref{Sdef}. Then
the \emph{Berezin-Toeplitz operator} $\operatorname T_{\mathfrak V}(a )$ is
the operator from $A(\omega _2,\mascB _2)$ to $A(\cc d)$, given by the formula
\begin{equation}\label{defBerToeplop}
\operatorname T_{\mathfrak V}(a )F = \Pi _A((S^{-1}a )F),\qquad F\in
A(\omega _2,\mascB _2).
\end{equation}

\par

It follows from \eqref{toeplitzdef} that if $a\in \mathcal S_{1/2}'(\rr {2d})$ and
$f\in \mathcal S_{1/2}'(\rr d)$, then
\begin{equation}\label{toeplitz2}
(V_\phi \circ \tp _\phi (a))f = \Pi _M(a\cdot F_0),\quad \text{where}\quad F_0=V_\phi f .
\end{equation}
Hence
\begin{equation}\label{Toeplconj}
\operatorname T_{\mathfrak V}(a ) \circ \mathfrak V = \mathfrak V \circ \tp (a),
\end{equation}
for appropriate $a\in \mathcal S_{1/2}'(\rr {2d})$, by  Lemma \ref{PiAPiMlemma}.

\par

We have now the following result. Here we recall that if
$p_j=(p_{j,1},\dots ,p_{j,n})\in [1,\infty ]^n$, $j=0,1,2$, then
$1/p_1+1/p_2=1/p_0$ means that $1/{p_{1,k}} +
1/{p_{2,k}}=1/{p_{0,k}}$ for every $k=1,\dots ,n$.

\par

\begin{prop}
Let $\mascB _j=L^{p_j}(V)$, $j=0,1,2$, be mixed norm space
such that $1/p_1+1/p_2=1/p_0$, and let $\omega _j\in
\mascP _D^0(\rr {2d})\simeq \mascP _D^0(\cc d)$ for $j=0,1,2$. Also let
$a\in \mascB _1(\omega _1)$. Then the following is true:
\begin{enumerate}
\item the Toeplitz operator $\tp (a)$ is continuous from $M(\omega _2,\mascB _2)$
to $M(\omega _0,\mascB _0)$;

\vrum

\item the Berezin-Toeplitz operator $\operatorname T_{\mathfrak V} (a)$
is continuous from $A(\omega _2,\mascB _2)$
to $A(\omega _0,\mascB _0)$.
\end{enumerate}
\end{prop}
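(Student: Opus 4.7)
The plan is to reduce everything to the Bargmann picture and then to exploit the continuity of the projection $\Pi _A$ from Theorem \ref{projthm}. First, the intertwining identity \eqref{Toeplconj} combined with Theorem \ref{mainthm} shows that assertions (1) and (2) are equivalent: since $\mathfrak V$ is an isometric bijection from $M(\omega _j,\mascB _j)$ onto $A(\omega _j,\mascB _j)$ for $j=0,2$, the relation $\tp (a)=\mathfrak V^{-1}\circ \operatorname T_{\mathfrak V}(a)\circ \mathfrak V$ transfers boundedness (and indeed operator norms) from one side to the other. So it suffices to prove (2).

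For (2), I would factor $\operatorname T_{\mathfrak V}(a)F=\Pi _A\bigl((S^{-1}a)F\bigr)$ through the multiplication operator $M_{S^{-1}a}: F\mapsto (S^{-1}a)F$ followed by the projection $\Pi _A$. Since $\omega _0\in \mascP _D^0(\rr {2d})$ is dilated suitable and $\mascB _0$ is a mixed norm space, Theorem \ref{projthm} provides
$$
\nm{\Pi _AH}{A(\omega _0,\mascB _0)}\le C\nm H{B(\omega _0,\mascB _0)}
$$
for all $H\in B(\omega _0,\mascB _0)$. So the problem reduces to showing $M_{S^{-1}a}$ maps $A(\omega _2,\mascB _2)$ continuously into $B(\omega _0,\mascB _0)$.

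The key computation is the pointwise identity
$$
|U_{\mathfrak V}^{-1}((S^{-1}a)F)(x,\xi )|=|a(x,\xi )|\cdot |U_{\mathfrak V}^{-1}F(x,\xi )|,
$$
which follows from the explicit form \eqref{UVdef} of $U_{\mathfrak V}$: the dilation $(x,\xi )\mapsto (2^{-1/2}x,-2^{-1/2}\xi )$ built into $U_{\mathfrak V}^{-1}$ exactly cancels the dilation $S^{-1}$ applied to $a$, while the Gaussian and phase factors commute with pointwise multiplication. Combined with the natural weight compatibility $\omega _0\le C\omega _1\omega _2$ (which I take to be implicit in the statement, as is customary for such composition results; compare Proposition \ref{p1.4}), this gives
$$
|U_{\mathfrak V}^{-1}((S^{-1}a)F)|\, \omega _0 \le C\, (|a|\omega _1)\cdot (|U_{\mathfrak V}^{-1}F|\omega _2).
$$
Applying H\"older's inequality for mixed norm spaces with $1/p_1+1/p_2=1/p_0$ then yields
$$
\nm{(S^{-1}a)F}{B(\omega _0,\mascB _0)}\le C\nm a{\mascB _1(\omega _1)}\nm{F}{A(\omega _2,\mascB _2)},
$$
and composing with the estimate from Theorem \ref{projthm} finishes (2), hence (1).

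The main technical obstacle is the iterated H\"older inequality for the mixed norm space $L^p(V)$: applying H\"older coordinate by coordinate in the direct-sum decomposition $V_1\oplus \cdots \oplus V_n$ gives the product estimate, but one must check that the conjugate-exponent arithmetic cooperates at each step, and that the weight is cleanly split across the two factors. The only other subtlety is justifying the pointwise identity above, which is a careful bookkeeping exercise with \eqref{UVdef}, \eqref{Sdef}, and the formula $S^{-1}a(x,\xi )=a(2^{1/2}x,-2^{1/2}\xi )$; once done, everything assembles mechanically.
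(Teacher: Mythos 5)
Your proof is correct and follows essentially the same route as the paper's, whose entire argument is the one line that assertion (2) follows from the definitions and H\"older's inequality and that (1) is then a consequence of (2) and \eqref{Toeplconj}; you have simply supplied the details that line suppresses, namely the pointwise cancellation of $S^{-1}$ against the dilation inside $U_{\mathfrak V}^{-1}$, the mixed-norm H\"older estimate for $1/p_1+1/p_2=1/p_0$, and the continuity of $\Pi _A$ from Theorem \ref{projthm} (available since $\omega _0\in \mascP _D^0$ is dilated suitable). You are also right to flag that a weight compatibility such as $\omega _0\le C\omega _1\omega _2$ must be read into the statement, since with three unrelated weights the claim cannot hold; the paper omits this hypothesis, and your explicit treatment of it is the correct reading.
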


\par

\begin{proof}
The assetion (2) follows immediately from the definitions and H{\"o}lder's
inequality, and (1) is then an immediate consequence of (2) and
\eqref{Toeplconj}. The proof is complete. 
\end{proof}

\par

%%%%%%%%%%%%%%%%%%%%%%%
\section{Pseudo-differential operators}\label{sec6}
%%%%%%%%%%%%%%%%%%%%%%%

\par

In this section we state results on pseudo-differential
operators in background of the modulation space theory
from the previous sections.
The proofs are in general omitted, since the results follows
by the same arguments as in \cite{To8A, To8B, To9}
in combination with the results in previous sections.

\par

\subsection{General properties of pseudo-differential operators}

\par

We start with the definition of pseudo-differential operators. Let
$t\in \mathbf R$ be fixed and let $a\in \mathcal S_{1/2}(\rr {2d})$. Then the
pseudo-differential operator $\op _t(a)$
with symbol $a$ is the linear and continuous operator on $\mathcal S_{1/2}(\rr d)$,
defined by the formula
\begin{equation}\label{e0.5}
(\op _t(a)f)(x)
=
(2\pi  ) ^{-d}\iint a((1-t)x+ty,\xi )f(y)e^{i\scal {x-y}\xi }\,
dyd\xi .
\end{equation}
The definition of $\op _t(a)$ extends to each $a\in \mathcal S_{1/2}'(\rr
{2d})$, and then $\op _t(a)$ is continuous from $\mathcal S_{1/2}(\rr d)$ to
$\mathcal S_{1/2}'(\rr d)$. (Cf. e.{\,}g. \cite {CPRT10}, and to some extent in \cite{Ho1}.) 
More precisely, for
any $a\in \mathcal S_{1/2}'(\rr {2d})$, the operator $\op _t(a)$
is defined as the linear and continuous
operator from $\mathcal S_{1/2}(\rr d)$ to $\mathcal S_{1/2}'(\rr d)$ with
distribution kernel given by
\begin{equation}\label{atkernel}
K_{a,t}(x,y)=(\mathscr F_2^{-1}a)((1-t)x+ty,x-y).
\end{equation}
Here $\mathscr F_2F$ is the partial Fourier transform of $F(x,y)\in
\mathcal S_{1/2}'(\rr {2d})$ with respect to the $y$ variable. This
definition makes
sense, since the mappings $\mathscr F_2$ and $F(x,y)\mapsto
F((1-t)x+ty,y-x)$ are homeomorphisms on $\mathcal S_{1/2}'(\rr
{2d})$.

\par

On the other hand, let $T$ be an arbitrary linear and continuous
operator from $\maclS (\rr d)$ to $\maclS '(\rr d)$. Then it follows from
Theorem 2.2 in \cite{LozPerTask} that for some
$K =K_T\in \maclS '_{1/2}(\rr {2d})$ we have
$$
(Tf,g)_{L^2(\rr d)} = (K,g\otimes \overline f )_{L^2(\rr {2d})},
$$
for every $f,g\in \maclS (\rr d)$. Now by letting $a$ be defined by
\eqref{atkernel} after replacing $K_{a,t}$ with $K$ it follows that
$T=\op _t(a)$. Consequently, the map $a\mapsto \op _t(a)$ is bijective from
$\maclS _{1/2}'(\rr {2d})$ to $\mathscr L(\maclS (\rr d),\maclS '(\rr d))$.

\par

If $t=1/2$, then $\op _t(a)$ is equal to the Weyl
quantization $\op ^w(a)$ of $a$. If instead $t=0$, then the standard
(Kohn-Nirenberg) representation $a(x,D)$ is obtained.

\par

In particular, if $a\in \maclS _{1/2}'(\rr {2d})$ and $s,t\in
\mathbf R$, then there is a unique $b\in \maclS _{1/2}'(\rr {2d})$ such that
$\op _s(a)=\op _t(b)$. By straight-forward applications of Fourier's
inversion  formula, it follows that
\begin{equation}\label{pseudorelation}
\op _s (a)=\op _t(b) \quad \Longleftrightarrow \quad b(x,\xi )=e^{i(t-s)\scal
{D_x}{D_\xi}}a(x,\xi ).
\end{equation}
(Cf. Section 18.5 in \cite{Ho1}.) Note here that the right-hand side makes sense,
because $e^{i(t-s)\scal
{D_x }{D_\xi }}$ on the Fourier transform side is a multiplication by
the function $e^{i(t-s)\scal x \xi }$, which is a continuous
operation on $\maclS _{1/2}'(\rr {2d})$, in view of the definitions.

\par

Let $t\in \mathbf R$ and $a\in \mathcal S_{1/2}'(\rr {2d})$ be
fixed. Then $a$ is called a rank-one element with respect to $t$, if
the corresponding pseudo-differential operator is of rank-one,
i.{\,}e.
\begin{equation}\label{trankone}
\op _t(a)f=(f,f_2)_{L^2} f_1, 
\end{equation}
for some $f_1,f_2\in \mathcal S_{1/2}'(\rr d)$. Here $f\in \mathcal S_{1/2}(\rr d)$. By
straight-forward computations it follows that \eqref{trankone}
is fulfilled, if and only if $a=(2\pi
)^{d/2}W_{f_1,f_2}^{t}$, where  the $W_{f_1,f_2}^{t}$ $t$-Wigner
distribution, defined by the formula
\begin{equation*}%\label{wignertdef}
W_{f_1,f_2}^{t}(x,\xi ) \equiv \mathscr F(f_1(x+t\cdo
)\overline{f_2(x-(1-t)\cdo )} )(\xi ),
\end{equation*}
which takes the form
$$
W_{f_1,f_2}^{t}(x,\xi ) =(2\pi )^{-d/2} \int
f_1(x+ty)\overline{f_2(x-(1-t)y) }e^{-i\scal y\xi}\, dy,
$$
when $f_1,f_2\in \mathcal S_{1/2}(\rr d)$. By combining these facts
with \eqref{pseudorelation}, it follows that
\begin{equation*}%\label{wignertransf}
W_{f_1,f_2}^{t} = e^{i(t-s)\scal {D_x }{D_{\xi}}} W_{f_1,f_2}^{s},
\end{equation*}
for each $f_1,f_2\in \mathcal S_{1/2}'(\rr d)$ and $s,t\in \mathbf R$. Since
the Weyl case is important to us, we set
$W_{f_1,f_2}^{t}=W_{f_1,f_2}$ when $t=1/2$. Then
$W_{f_1,f_2}$ is the usual (cross-) Wigner distribution of $f_1$ and
$f_2$.

\medspace

Next we discuss the Weyl
product, twisted convolution and related objects. Let $a,b\in
\mathcal S_{1/2}'(\rr {2d})$ be appropriate. Then the Weyl product $a\wpr
b$ between $a$
and $b$ is the function or distribution which fulfills $\op ^w(a\wpr
b) = \op ^w(a)\circ \op ^w(b)$, provided the right-hand side
makes sense. More generally, if $t\in \mathbf R$, then the product $\wpr
_t$ is defined by the formula
\begin{equation}\label{wprtdef}
\op _t(a\wpr _t b) = \op _t(a)\circ \op _t(b),
\end{equation}
provided the right-hand side makes sense as a continuous operator from
$\mathcal S_{1/2}(\rr d)$ to $\mathcal S_{1/2}'(\rr d)$.

\par

The Weyl product can also, in a convenient way, be expressed in terms
of the symplectic Fourier transform and twisted convolution. More
precisely, the \emph{symplectic Fourier transform} for $a \in
\mathcal{S} _{1/2}(\rr {2d})$ is defined by the formula
\begin{equation*}
(\mathscr{F}_\sigma a) (X)
= \pi^{-d}\int a(Y) e^{2 i \sigma(X,Y)}\,  dY.
\end{equation*}
Here $\sigma$ is the symplectic form, which is defined by
$$
\sigma(X,Y) = \scal y \xi -
\scal x \eta ,\qquad X=(x,\xi )\in \rr {2d},\ Y=(y,\eta )\in \rr {2d},
$$
where $\langle \cdot,\cdot \rangle$ denotes the usual scalar product
on $\rr d$, as before.

\par

It follows that ${\mathscr{F}_\sigma}$ is continuous on
$\mathcal S_{1/2}(\rr {2d})$, and extends as
usual to a homeomorphism on $\mathcal {S}_{1/2}'(\rr {2d})$, and to a
unitary map on $L^2(\rr {2d})$. Furthermore, $\mathscr{F}_\sigma^{2}$
is the identity operator.

\par

Let $a,b\in \mathcal S_{1/2}(\rr {2d})$. Then the \emph{twisted
convolution} of $a$ and $b$ is defined by the formula
\begin{equation*}%\label{twist1}
(a \ast _\sigma b) (X)
= (2/\pi)^{d/2} \int a(X-Y) b(Y) e^{2 i \sigma(X,Y)}\, dY.
\end{equation*}
The definition of $*_\sigma$ extends in different ways. For example,
it extends to a continuous multiplication on $L^p_{(v)}(\rr {2d})$ when $p\in
[1,2]$ when $v\in \mathscr P _E(\rr {2d})$ is submultiplicative (cf. \cite{To9}),
and to a continuous map from $\mathcal S_{1/2}'(\rr {2d})\times
\mathcal S_{1/2}(\rr {2d})$ to $\mathcal S_{1/2}'(\rr {2d})\cap C^\infty (\rr {2d})$. If $a,b \in
\mathcal {S}_{1/2}'(\rr {2d})$, then $a \wpr b$ makes sense if and only if $a
*_\sigma \widehat b$ makes sense, and then
\begin{equation}\label{tvist1}
a \wpr b = (2\pi)^{-d/2} a \ast_\sigma (\mathscr F_\sigma {b}).
\end{equation}
We also remark that for the twisted convolution we have
\begin{equation}\label{weylfourier1}
\mathscr F_\sigma (a *_\sigma b) = (\mathscr F_\sigma a) *_\sigma b =
\check{a} *_\sigma (\mathscr F_\sigma b),
\end{equation}
where $\check{a}(X)=a(-X)$ (cf. \cite{To1,To3}). A
combination of \eqref{tvist1} and \eqref{weylfourier1} gives
\begin{equation*}%\label{weyltwist2}
\mathscr F_\sigma (a\wpr b) = (2\pi )^{-d/2}(\mathscr F_\sigma
a)*_\sigma (\mathscr F_\sigma b).
\end{equation*}

\par

For admissible $a,b,c\in \mathcal S_{1/2}'(\rr {2d})$, it follows by straight-forward computations that
\begin{alignat*}{3}
(a_1*_\sigma a_2,b) &=  & (a_1,b*_\sigma \widetilde a_2) &= (a_2,\widetilde
a_1*_\sigma b),& \qquad (a_1*_\sigma a_2)*_\sigma b &= a_1*_\sigma
(a_2*_\sigma b)
\\[1ex]
(a_1\wpr a_2,b) &= & (a_1,b\wpr \overline{a_2}) &= (a_2,\overline
{a_1}\wpr b),& \qquad (a_1\wpr a_2)\wpr b &= a_1\wpr
(a_2\wpr b).
\end{alignat*}

\par

\subsection{Pseudo-differential operators and modulation spaces}

\par

Next we consider questions on Weyl quantizations
of pseudo-differential operators in the context of modulation space theory.
It is then convenient to use the symplectic
Fourier transform and the symplectic short-time Fourier transform, instead
of corresponding "ordinary" transformations. Here the symplectic short-time Fourier
transform of $a \in \mathcal {S}_{1/2}'(\rr {2d})$ with respect to the
window function $\Psi \in \mathcal{S'}_{1/2}(\rr {2d})$ is defined
by
\begin{equation}\nonumber
\mathcal V_{\Psi} a(X,Y) = \mathscr{F}_\sigma \big( a\, \Psi (\cdo -X)
\big) (Y),\quad X,Y \in \rr {2d}.
\end{equation}
Let $(\mascB,\omega)$ be an admissible pair on $\rr {4d}$. Then
$\mathcal M(\omega ,\mascB )$ denotes the modulation spaces of
Gelfand-Shilov distributions on $\rr {2d}$, where
the symplectic short-time Fourier
transform is used instead of the usual short-time Fourier transform
and the window function $\Psi (X)$ here
above is equal to
$$
\Phi (X) = (2/\pi )^{d/2}e^{-|X|^2},
$$
in the definitions of the norms. In a way similar as for the usual
modulation spaces, we set
$$
\mathcal M^{p,q}_{(\omega )}(\rr {2d})=\mathcal M(\omega ,L^{p,q}(\rr {4d})),
$$
when $p,q\in [1,\infty ]$. It follows that any property valid for the
modulation spaces in previous sections
carry over to spaces of the form $\mathcal M(\omega ,\mascB )$.

\par

The choice of window function is motivated by the simple
form that the following results attain.
For the proof we refer to the results in previous sections in combination
with the proof of Proposition 4.1 in \cite{To8A}.

\par

\begin{prop}\label{p5.3}
Let $p_j,q_j,p,q\in [1,\infty ]$ such that $p\le p_j,q_j$ $\le
q$, for $j=1,2$, and that
\begin{equation*}%\label{e5.6}
1/p_1+1/p_2=1/q_1+1/q_2=1/p+1/q.
\end{equation*}
Also let $\omega _1,\omega _2\in \mascP _Q^0(\rr {2d})$ and
$\omega \in \mathscr P_Q^0(\rr {4d})$  be such that
$(L^{p_j,q_j}(\rr {2d}),\omega _j)$, $j01,2$, and
$(L^{p,q}(\rr {4d}),\omega )$ are admissible pairs and satisfy
\begin{equation*}
\omega (X,Y) \le C\omega _1(X-Y)\omega _2(X+Y).
\end{equation*}
Then the map $(f_1,f_2)\mapsto W_{\! f_1,f_2}$ from $\mathcal S_{1/2}'(\rr
d)\times \mathcal S_{1/2}'(\rr d)$ to $\maclS _{1/2}'(\rr {2d})$ restricts to a
continuous mapping from $M^{p_1,q_1}_{(\omega _1)}(\rr d)\times
M^{p_2,q_2}_{(\omega _2)}(\rr d)$ to $\mathcal M^{p,q}_{(\omega
)}(\rr {2d})$, and
\begin{equation*}%\label{e5.7}
\nm {W_{\! f_1,f_2}}{\mathcal M^{p,q}_{(\omega )}}
\le
C\nm {f_1}{M^{p_1,q_1}_{(\omega _1)}} \nm
{f_2}{M^{p_2,q_2}_{(\omega _2)}},
\end{equation*}
where the constant $C$ is independent of $f_j\in M^{p_j,q_j}_{(\omega _j)}(\rr d)$, $j=1,2$.
\end{prop}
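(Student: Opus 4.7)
My plan is to reduce the statement to a pointwise factorization identity for the symplectic short-time Fourier transform of a Wigner distribution, and then finish with a Hölder/Young-type inequality in mixed-norm Lebesgue spaces. The core of the argument is entirely formal once one has the identity; the weight hypothesis $\omega(X,Y)\le C\omega_1(X-Y)\omega_2(X+Y)$ is precisely designed to make this formal argument work.

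First I would record the key algebraic identity. A direct computation, using the definitions of $W_{f_1,f_2}$ and of $\mathscr F_\sigma$ together with the fact that $\Phi(X)=(2/\pi)^{d/2}e^{-|X|^2}$ factors as a tensor product of Gaussians, yields an identity of the form
\begin{equation*}
\bigl|\mathcal V_{\Phi}W_{f_1,f_2}(X,Y)\bigr|
= C\,\bigl|V_\phi f_1(X-Y)\bigr|\cdot \bigl|V_\phi f_2(X+Y)\bigr|,
\end{equation*}
where $\phi(x)=\pi^{-d/4}e^{-|x|^2/2}$ is the standard Gaussian on $\mathbf R^d$ and $C>0$ is a universal constant. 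This is essentially the content of the opening computation in the proof of Proposition~4.1 in \cite{To8A}; the only thing to check here is that the identity makes sense (and holds) at the level of $\mathcal S_{1/2}'$-distributions, which follows from Proposition~\ref{stftGelfand1} and the fact that $W_{f_1,f_2}\in\mathcal S_{1/2}'(\rr{2d})$ when $f_1,f_2\in\mathcal S_{1/2}'(\rr d)$.

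Next I would apply the weight hypothesis and change variables. Multiplying the identity above by $\omega(X,Y)$ and using $\omega(X,Y)\le C\omega_1(X-Y)\omega_2(X+Y)$ gives
\begin{equation*}
\bigl|\mathcal V_{\Phi}W_{f_1,f_2}(X,Y)\,\omega(X,Y)\bigr|
\le C\,\bigl|V_\phi f_1(X-Y)\,\omega_1(X-Y)\bigr|\cdot \bigl|V_\phi f_2(X+Y)\,\omega_2(X+Y)\bigr|.
\end{equation*}
The substitution $U=X-Y$, $V=X+Y$ is a linear bijection on $\rr{2d}\times\rr{2d}$ with constant Jacobian, so taking the $L^{p,q}(\rr{4d})$ norm of the left-hand side reduces to estimating a product $|G_1(U)|\cdot|G_2(V)|$ in a mixed-norm space, where $G_j$ has $M^{p_j,q_j}_{(\omega_j)}$-controlled size.

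Finally, the numerical conditions $p\le p_j,q_j\le q$ together with $1/p_1+1/p_2=1/q_1+1/q_2=1/p+1/q$ are exactly the classical exponent relations under which a Hölder-type inequality for mixed-norm $L^{p,q}$ spaces yields $\|G_1(U)G_2(V)\|_{L^{p,q}(dUdV)}\le \|G_1\|_{L^{p_1,q_1}}\|G_2\|_{L^{p_2,q_2}}$; this is a standard fact from \cite{BLo} and is used exactly as in the Weyl-product estimates of \cite{To8A,To9}. Combined with the previous estimate, this produces the desired bound, and the well-definedness of $W_{f_1,f_2}$ as an element of $\mathcal M^{p,q}_{(\omega)}$ follows by density of $\mathscr S_0\otimes\mathscr S_0$ using Theorem~\ref{densdualmodthm} (in the cases where $\nu_2<\infty$) or by the narrow-convergence tools of Theorem~\ref{narrowmodthm} (in the remaining cases). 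The one point that requires genuine care, rather than routine calculation, is verifying that the factorization identity is valid for arbitrary $f_1,f_2\in\mathcal S_{1/2}'(\rr d)$ in the $\mathcal P_Q^0$-weight setting, since the proofs in \cite{To8A,To9} were written for $\mathcal P_E$-weights; here one invokes Proposition~\ref{stftGelfand1} to justify evaluating both sides as honest distributions and then uses the density results of Section~\ref{sec4} to pass from Schwartz data to general Gelfand--Shilov data.
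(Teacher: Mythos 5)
Your proposal follows essentially the same route as the paper, which for this proposition simply defers to the proof of Proposition 4.1 in \cite{To8A} combined with the results of the earlier sections: the factorization of $\mathcal V_{\Phi}W_{f_1,f_2}$ into a product of short-time Fourier transforms evaluated at $X-Y$ and $X+Y$ (valid because $\Phi =W_{\phi ,\phi}$), domination via the weight hypothesis, the mixed-norm H{\"o}lder/Young estimate under the stated exponent relations, and the observation that Proposition \ref{stftGelfand1} justifies everything at the level of $\mathcal S_{1/2}'$ so that the $\mascP _Q^0$ generality costs nothing. The one point to tighten in your write-up is the claim that the substitution $U=X-Y$, $V=X+Y$ reduces matters to $\nm {G_1(U)G_2(V)}{L^{p,q}(dU\, dV)}\le \nm {G_1}{L^{p_1,q_1}}\nm {G_2}{L^{p_2,q_2}}$: mixed norms do not transform that simply under this rotation of coordinates (the inner and outer integration variables get mixed, and each $L^{p_j,q_j}$ is itself a mixed norm over $\rr d\times \rr d$), so the step really is the Minkowski--H{\"o}lder computation carried out in \cite{To8A,HTW}, which you correctly cite but summarize with an inequality that is not literally the one being used.
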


\par

We now arrive on the following continuity result for pseudo-differential operators.
Again we omit the proof, since the result is a straight-forward consequence of
Proposition 4.12 in \cite{To8A} and its proof in combinations with the results
on modulation spaces in previous sections.

\par

\begin{thm}\label{p5.4}
Let $p,q,p_j,q_j\in [1,\infty ]$ for
$j=1,2$, be such that
\begin{equation}\label{e5.4lebexp}
\frac 1{p_2}-\frac 1{p_1}=\frac 1{q_2}-\frac 1{q_1}=\frac
1{p}+\frac 1{q}-1,\quad q\le p_2,q_2\le p.
\end{equation}
Also let $\omega \in
\mathscr P_Q^0(\rr {4d})$ and $\omega _1,\omega _2\in
\mathscr P_Q^0(\rr {2d})$ be such that $(L^{p_j,q_j}(\rr {2d}),\omega _j)$
and $(L^{p,q}(\rr {4d}),\omega )$ are admissible pairs and satisfy
\begin{equation}\label{e5.9}
\frac {\omega _2(X-Y)}{\omega _1
(X+Y)} \le C \omega (X,Y),
\end{equation}
for some constant $C>0$. If $a\in \mathcal M^{p,q}_{(\omega )}(\rr
{2d})$, then $\op ^w(a)$ from $\mathcal S_{1/2}(\rr d)$ to $\mathcal S_{1/2}'(\rr
d)$ extends uniquely to a continuous map from
$M^{p_1,q_1}_{(\omega _1)}(\rr d)$ to $M^{p_2,q_2}_{(\omega _2)}(\rr
d)$.

\par

Moreover, if in addition $a$ belongs to the closure of $\mathscr S_0(\rr {2d})$
under the norm $\nm \cdo {\mathcal M ^{p,q}_{(\omega )}}$, then
$$
\op ^w(a)\, :\, M^{p_1,q_1}_{(\omega _1)}(\rr d)\to
M^{p_2,q_2}_{(\omega _2)}(\rr d)
$$
is compact.
\end{thm}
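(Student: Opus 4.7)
The plan is to reduce the boundedness to a dual-pairing estimate and then invoke Proposition \ref{p5.3}. For $a,f_1,f_2\in \maclS_{1/2}$, the Weyl calculus yields the identity
\begin{equation*}
(\op^w(a)f_1,f_2)_{L^2(\rr d)}=(a,W_{f_2,f_1})_{L^2(\rr{2d})}
\end{equation*}
(up to a harmless universal constant). The symplectic analog of Proposition \ref{normequivprop2} expresses $\nm{\op^w(a)f_1}{M^{p_2,q_2}_{(\omega_2)}}$ as a supremum of $|(\op^w(a)f_1,f_2)_{L^2}|$ over unit-ball elements $f_2\in M^{p_2',q_2'}_{(1/\omega_2)}$, so the theorem reduces to the trilinear bound
\begin{equation*}
|(a,W_{f_2,f_1})_{L^2(\rr{2d})}|\le C\nm{a}{\mathcal M^{p,q}_{(\omega)}}\nm{f_1}{M^{p_1,q_1}_{(\omega_1)}}\nm{f_2}{M^{p_2',q_2'}_{(1/\omega_2)}}.
\end{equation*}
Invoking the symplectic duality between $\mathcal M^{p,q}_{(\omega)}$ and $\mathcal M^{p',q'}_{(1/\omega)}$ furnished by the analog of Theorem \ref{densdualmodthm}, the task collapses to membership $W_{f_2,f_1}\in \mathcal M^{p',q'}_{(1/\omega)}$ with the corresponding norm estimate.

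This last step is exactly Proposition \ref{p5.3} applied with the roles of the two factors interchanged, target exponents $(p',q')$ and target weight $1/\omega$. The required identity $\tfrac{1}{p_2'}+\tfrac{1}{p_1}=\tfrac{1}{q_2'}+\tfrac{1}{q_1}=\tfrac{1}{p'}+\tfrac{1}{q'}$ is a direct rewriting of \eqref{e5.4lebexp}; the ordering $p'\le p_2',\,p_1\le q'$ (and symmetrically for the $q_j$'s) follows from $q\le p_2,\,q_2\le p$ together with this identity; and the weight hypothesis $1/\omega(X,Y)\le C\omega_1(X+Y)/\omega_2(X-Y)$ is just \eqref{e5.9} rearranged. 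Unique extension from test functions to the full spaces uses Theorem \ref{densdualmodthm}~(3) (strong density of $\mathscr S_0$ when the exponents are finite) and Theorem \ref{narrowmodthm} together with weak${}^*$ density in the endpoint cases.

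For compactness, choose $a_n\in \mathscr S_0(\rr{2d})$ with $a_n\to a$ in $\mathcal M^{p,q}_{(\omega)}$. The boundedness already established applied to $a-a_n$ gives $\op^w(a_n)\to \op^w(a)$ in the operator norm $M^{p_1,q_1}_{(\omega_1)}\to M^{p_2,q_2}_{(\omega_2)}$, so it suffices to show each $\op^w(a_n)$ is compact. Since the cross-Wigner distributions $W^{1/2}_{h_\alpha,h_\beta}$ of Hermite functions on $\rr d$ span $\mathscr S_0(\rr{2d})$ level-by-level (each $W^{1/2}_{h_\alpha,h_\beta}$ is itself a polynomial times a Gaussian on $\rr{2d}$ of total degree $|\alpha|+|\beta|$), $a_n$ decomposes as a finite combination of such cross-Wigners, and the corresponding Weyl operator reads $f\mapsto \sum c_{\alpha\beta}(f,h_\beta)_{L^2}h_\alpha$, which is finite rank and therefore trivially compact between any pair of modulation spaces.

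The main technical obstacle lies not in the pseudo-differential calculus itself but in verifying that the density, duality and norm-equivalence theorems of Sections \ref{sec4}--\ref{sec5}, stated there for ordinary modulation spaces $M(\omega,\mascB)$ on $\rr d$, transport to the symplectic counterparts $\mathcal M(\omega,\mascB)$ on $\rr{2d}$ with only cosmetic changes (Fourier transform replaced by its symplectic version, $V_\phi$ replaced by $\mathcal V_\Phi$, Hermite functions replaced by symplectic Hermite-type functions). Once this transport is granted, every ingredient of the argument above is in place.
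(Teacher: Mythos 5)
Your treatment of the boundedness assertion is correct and is in substance the route the paper itself intends: the paper omits the proof and defers to \cite[Prop.~4.12]{To8A}, whose argument is exactly your reduction $(\op ^w(a)f_1,f_2)_{L^2}=c\, (a,W_{f_2,f_1})_{L^2}$, followed by H{\"o}lder's inequality in the symplectic modulation norms, the norm equivalence of Proposition \ref{normequivprop2}, and Proposition \ref{p5.3} applied with dual exponents and weights. Your bookkeeping for \eqref{e5.4lebexp} and \eqref{e5.9} under dualization is accurate.

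The compactness part, however, contains a genuine error. You assert that the cross-Wigner distributions $W_{h_\alpha ,h_\beta}$ of Hermite functions on $\rr d$ span $\mathscr S_0(\rr {2d})$ level-by-level. They do not: a direct computation gives $W_{h_0,h_0}(x,\xi )=\pi ^{-d}e^{-(|x|^2+|\xi |^2)}$, and in general $W_{h_\alpha ,h_\beta}$ is a polynomial of degree $|\alpha |+|\beta |$ times the Gaussian $e^{-|X|^2}$, whereas $\mathscr S_0(\rr {2d})$ consists of polynomials times $e^{-|X|^2/2}$. A polynomial multiple of $e^{-|X|^2}$ is never a finite linear combination of polynomial multiples of $e^{-|X|^2/2}$, so an element $a_n\in \mathscr S_0(\rr {2d})$ does \emph{not} decompose as a finite sum of cross-Wigner distributions, and $\op ^w(a_n)$ is not finite rank (its kernel is a polynomial times a Gaussian containing a cross term in $(x,y)$). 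Consequently the finite-rank approximation you invoke is not available as stated.

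To close the gap you would need to replace the approximating class: show that the linear span of $\{ W_{h_\alpha ,h_\beta}\}$ is dense in the closure of $\mathscr S_0(\rr {2d})$ with respect to $\nm \cdo {\mathcal M^{p,q}_{(\omega )}}$. A natural route is the symbol-to-kernel correspondence of the type in Proposition \ref{kernelweyl}, under which $\operatorname{span}\{ W_{h_\alpha ,h_\beta}\}$ corresponds to $\mathscr S_0(\rr d)\otimes \mathscr S_0(\rr d)=\mathscr S_0(\rr {2d})$ on the kernel side, combined with the density results of Section \ref{sec4} for the transformed weight; when $p$ or $q$ is infinite one must additionally verify that each $W_{h_\alpha ,h_\beta}$ lies in the closure of $\mathscr S_0(\rr {2d})$ and vice versa. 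None of this is automatic for weights in $\mascP _Q^0$ that may grow almost like Gaussians, and it is precisely the step your argument skips.
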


\par

Next we consider algebraic properties of modulation spaces under twisted convolution and Weyl product.
These investigations are based on the following lemma together with the observations
$$
\Phi \wpr \Phi =\pi ^{-d}\Phi\quad \text{and}\quad \Phi *_\sigma \Phi = 2^d\Phi
$$
(cf. \cite{To3}). We refer to \cite{To2,To9} for its proof.

\par

\begin{lemma}\label{cornerstone}
Assume that $a_1\in \Upsilon (\rr {2d})$, $a_2 \in \mathcal
S_{1/2}(\rr {2d})$, $\Phi (X)=(2/\pi )^{d/2}e^{-|X|^2}$ and $X,Y\in \rr
{2d}$. Then the following is true:
\begin{enumerate}
\item the map
$$
Z\mapsto e^{2 i \sigma(Z,Y)} (\mathcal V_{\Phi} a_1) (X-Y+Z,Z) \,
(\mathcal V_{\Phi} a_2)(X+Z,Y-Z)
$$
belongs to $L^1(\rr {2d})$, and
\begin{multline*}
\mathcal V_{\Phi} (a_1 \wpr a_2) (X,Y)
\\[1ex]
=
\int  e^{2 i \sigma(Z,Y)} (\mathcal V_{\Phi} a_1)(X-Y+Z,Z) \,
(\mathcal V_{\Phi} a_2)
(X+Z,Y-Z) \, dZ\text ;
\end{multline*}

\vrum

\item the map
$$
Z\mapsto e^{2 i \sigma(X,Z-Y)} (\mathcal V_{\Phi} a_1) (X-Y+Z,Z) \,
(\mathcal V_{\Phi} a_2)(Y-Z,X+Z)
$$
belongs to $L^1(\rr {2d})$, and
\begin{multline*}
\mathcal V_{\Phi } (a_1 *_\sigma a_2) (X,Y)
\\[1ex]
=
\int  e^{2 i \sigma(X,Z-Y)} (\mathcal V_{\Phi} a_1)(X-Y+Z,Z) \,
(\mathcal V_{\Phi}a_2)(Y-Z,X+Z) \, dZ.
\end{multline*}
\end{enumerate}
\end{lemma}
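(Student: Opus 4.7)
The plan is to establish both identities first for the dense case $a_1,a_2\in\maclS_{1/2}(\rr{2d})$ by direct algebraic computation, then verify the asserted $L^1$ integrability for $a_1\in\Upsilon$ and $a_2\in\maclS_{1/2}$ by combining opposite exponential bounds on $\mathcal V_\Phi a_1$ and $\mathcal V_\Phi a_2$, and finally extend the identities to $a_1\in\Upsilon$ by a dominated-convergence argument. I will outline (1); the argument for (2) is parallel (or can be obtained from (1) via the identity $a_1\wpr a_2 = (2\pi)^{-d/2}a_1*_\sigma \mathscr{F}_\sigma a_2$ from \eqref{tvist1}, using $\mathscr F_\sigma\maclS_{1/2}=\maclS_{1/2}$ and $\mathscr F_\sigma\Phi=\Phi$).

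For the dense case, I would exploit the self-reproducing properties of $\Phi$ under the symplectic STFT. Since $\mathscr{F}_\sigma^2=\mathrm{Id}$, the identity $\mathcal V_\Phi a(X,\cdot)=\mathscr{F}_\sigma(a\,\Phi(\cdo-X))$ inverts to $a(W)\Phi(W-X)=\mathscr{F}_\sigma(\mathcal V_\Phi a(X,\cdo))(W)$; integrating against $\Phi(W-X)\,dX$ and using $\int\Phi(W-X)^2\,dX = \textrm{const.}$ yields a Gaussian reconstruction formula for $a(W)$ in terms of $\mathcal V_\Phi a$. Substituting this reconstruction for both $a_1$ and $a_2$ into $a_1\wpr a_2$ and using the covariance of $\wpr$ under phase-space translations together with the Gaussian composition rule $\Phi\wpr\Phi=\pi^{-d}\Phi$, the fourfold Gaussian integral collapses, and applying $\mathcal V_\Phi$ to the simplified expression produces exactly the kernel of (1). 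The symplectic phase $e^{2i\sigma(Z,Y)}$ in (1) (respectively $e^{2i\sigma(X,Z-Y)}$ in (2)) arises from tracking how the symplectic form enters the composition law of shifted Gaussians under $\wpr$ (resp. $*_\sigma$).

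For the $L^1$ integrability of the $Z$-integrand (for fixed $X,Y$), I would invoke the relevant bounds from Proposition \ref{stftGelfand2} and Remark \ref{Upsilspace} transcribed to the symplectic STFT with window $\Phi$: since $a_1\in\Upsilon(\rr{2d})$, for every $\varepsilon>0$ there exists $C_\varepsilon$ such that $|\mathcal V_\Phi a_1(U,V)|\le C_\varepsilon e^{\varepsilon(|U|^2+|V|^2)}$, while $a_2\in\maclS_{1/2}(\rr{2d})$ gives $|\mathcal V_\Phi a_2(U,V)|\le C_\delta e^{-\delta(|U|^2+|V|^2)}$ for some $\delta>0$. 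Plugging these into the integrands in (1) and (2) and choosing $\varepsilon$ small compared to $\delta$, the Gaussian decay from $\mathcal V_\Phi a_2$ dominates the Gaussian growth from $\mathcal V_\Phi a_1$ in the $Z$-variable, giving integrability (in fact with dominating function integrable in $Z$ uniformly on compact sets in $(X,Y)$).

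To extend the identity from $a_1\in\maclS_{1/2}$ to $a_1\in\Upsilon$ I would approximate $a_1$ by a net $a_{1,n}\in\maclS_{1/2}(\rr{2d})$ such that $\mathcal V_\Phi a_{1,n}\to\mathcal V_\Phi a_1$ pointwise while remaining bounded by $C_\varepsilon e^{\varepsilon(|U|^2+|V|^2)}$. Both sides of the identity depend continuously on $a_1$ in this sense: the right-hand sides converge by the dominated convergence established in the previous paragraph, and the left-hand sides $\mathcal V_\Phi(a_{1,n}\wpr a_2)$ and $\mathcal V_\Phi(a_{1,n}*_\sigma a_2)$ converge pointwise because the definitions of $\wpr$ and $*_\sigma$ on $\mathcal S'_{1/2}\times\maclS_{1/2}$ are given by test-function pairings against Schwartz-type objects built from $a_2$. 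The main obstacle will be Step 1, the direct reduction of $\mathcal V_\Phi(a_1\wpr a_2)$ to the asserted kernel form: keeping track of all phase factors that arise from shifting Gaussians in and out of the Weyl product, and verifying that they assemble into precisely $e^{2i\sigma(Z,Y)}\mathcal V_\Phi a_1(X-Y+Z,Z)\mathcal V_\Phi a_2(X+Z,Y-Z)$, is the technical heart of the proof and is where the specific normalization of $\Phi$ together with $\Phi\wpr\Phi=\pi^{-d}\Phi$ and $\Phi*_\sigma\Phi=2^d\Phi$ is indispensable.
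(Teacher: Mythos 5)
The paper does not actually prove this lemma: it is imported verbatim from \cite{To2,To9}, and the only in-paper ingredients are the two identities $\Phi\wpr\Phi=\pi^{-d}\Phi$ and $\Phi*_\sigma\Phi=2^d\Phi$ quoted just before the statement. Your overall strategy is the same as the one used in those references: establish the formula for test-function symbols by an explicit Gaussian computation, get absolute convergence of the $Z$-integral from opposing Gaussian bounds, and pass to $a_1\in\Upsilon$ by a domination argument. The integrability step in your outline is correct: the defining property of $\Upsilon$ gives $|\mathcal V_{\Phi}a_1(U,V)|\le C_\ep e^{\ep (|U|^2+|V|^2)}$ for every $\ep>0$ (Proposition \ref{stftGelfand2}\,(2)--(3) transported to the symplectic STFT, which differs from the ordinary one only by an invertible linear change of the second variable), while $a_2\in\maclS _{1/2}$ gives $|\mathcal V_{\Phi}a_2(U,V)|\le Ce^{-\delta (|U|^2+|V|^2)}$ by Proposition \ref{stftGelfand2}\,(1); for fixed $X,Y$ the integrand is then $O\big (e^{(3\ep -\delta )|Z|^2}\big )$ up to a constant depending on $X,Y$, so taking $\ep$ small relative to $\delta$ gives $L^1$.

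The gap is that the only nontrivial content of the lemma --- the exact identity --- is never derived. Saying that after substituting the STFT inversion formula for both symbols ``the fourfold Gaussian integral collapses'' is a statement of intent, not a proof: one has to compute $\big (e^{2i\sigma (\cdo ,Y_1)}\Phi (\cdo -X_1)\big )\wpr \big (e^{2i\sigma (\cdo ,Y_2)}\Phi (\cdo -X_2)\big )$ explicitly, identify the resulting phase and the Gaussian factor in $X_1-X_2$ and $Y_1-Y_2$, and then perform two of the four integrations to land on a single $dZ$-integral with precisely the arguments $(X-Y+Z,Z)$ and $(X+Z,Y-Z)$ and the phase $e^{2i\sigma (Z,Y)}$. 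The ``crossed'' arguments $(Y-Z,X+Z)$ in (2) are exactly the kind of detail this bookkeeping must produce and cannot be guessed; if you instead want to deduce (2) from (1) via \eqref{tvist1}, you need the exchange formula for $\mathcal V_{\Phi}\circ \mathscr F_\sigma$ (valid because $\mathscr F_\sigma \Phi =\Phi$), which contributes its own phase and swap of variables and must be written out. Finally, the approximation of $a_1\in \Upsilon$ by elements of $\maclS _{1/2}$ with the \emph{uniform} bound $C_\ep e^{\ep (|U|^2+|V|^2)}$ is not automatic: it follows from Lemma \ref{PQGweights} (which places $a_1$ in some $M^\infty _{(\omega )}$ with $1/\omega$ dominated by every $e^{\ep |\cdo |^2}$) together with the weak$^*$ density of finite Hermite sums in $M^\infty _{(\omega )}$, and this construction should be made explicit rather than assumed.
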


\par

The following two theorems now follows by combining the results in
previous sections and using similar arguments as in the
proofs of Theorem 0.3$'$ in \cite{HTW} and Theorem 2.2
in \cite{To9}. The details are left for the reader. Here and in what follows,   
the involved Lebesgue exponents should satisfy conditions of the form
\begin{align}
\frac {1}{p_1}+\frac {1}{p_2}-\frac {1}{p_0} &= 1-\Big (\frac
{1}{q_1}+\frac {1}{q_2}-\frac {1}{q_0}\Big ),\label{pqformulas1}
\\[1ex]
0\le \frac {1}{p_1}+\frac {1}{p_2}-\frac
{1}{p_0} &\le \frac {1}{p_j},\frac {1}{q_j}\le \frac {1}{q_1}+\frac
{1}{q_2}-\frac {1}{q_0} ,\quad j=0,1,2,\label{pqformulas2}
\intertext{and}
0\le \frac {1}{q_1}+\frac {1}{q_2}-\frac
{1}{q_0} &\le \frac {1}{p_j},\frac {1}{q_j}\le \frac {1}{p_1}+\frac
{1}{p_2}-\frac {1}{p_0},\quad j=0,1,2.\label{pqformulas3}
\end{align}
Furthermore, the involved weight functions should satisfy
\begin{equation}\label{vikt1}
\omega_0(X,Y) \leq C \omega_1(X-Y+Z,Z)
\omega_2(X+Z,Y-Z),\quad \! \! X,Y,Z\in \rr {2d},
\end{equation}
or
\begin{equation}\label{vikt2}
\omega_0(X,Y) \leq C \omega_1(X-Y+Z,Z)
\omega_2(Y-Z,X+Z),\quad \! \! X,Y,Z\in \rr {2d}.
\end{equation}

\par

\begin{thm}\label{algthm1}
Let $\omega _j\in \mathscr P_Q^0(\rr {4d})$
and $p_j, q_j \in
[1,\infty]$ be such that $(L^{p_j,q_j}(\rr {4d}),\omega _j)$
are admissible pairs for $j=0,1,2$, and
\eqref{pqformulas1}, \eqref{pqformulas2} and \eqref{vikt1} hold.
Then the map $(a_1,a_2)\mapsto a_1\wpr a_2$ on  $\mathscr  S_0(\rr{2d})$
extends uniquely to a continuous map from $\mathcal M_{(\omega
_1)}^{p_1,q_1}(\rr {2d}) \times \mathcal M_{(\omega _2)}^{p_2,q_2}(\rr
{2d})$ to $\mathcal M_{(\omega _0)} ^{p_0,q_0}(\rr {2d})$, and
$$
\| a_1 \wpr a_2 \|_{\mathcal M_{(\omega _0)}^{p_0,q_0} }
\leq C \| a_1 \|_{\mathcal M_{(\omega _1)}^{p_1,q_1} } \| a_2
\|_{\mathcal M_{(\omega _2)}^{p_2,q_2} },
$$
where the constant $C$ is independent of $a_j\in
\mathcal M_{(\omega _j)}^{p_j,q_j}(\rr {2d})$, $j=1,2$.
\end{thm}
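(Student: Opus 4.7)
The plan is to follow the strategy of Theorem~2.2 in \cite{To9}, adapted to the enlarged modulation space framework developed in Sections~\ref{sec3}--\ref{sec5}. The cornerstone is Lemma~\ref{cornerstone}~(1), which expresses the symplectic STFT of the Weyl product as a convolution-type integral:
\begin{equation*}
\mathcal V_\Phi(a_1\wpr a_2)(X,Y) = \int e^{2i\sigma(Z,Y)}\,(\mathcal V_\Phi a_1)(X-Y+Z,Z)\,(\mathcal V_\Phi a_2)(X+Z,Y-Z)\,dZ .
\end{equation*}
First I would establish the estimate for Schwartz symbols $a_1,a_2\in \mathscr S_0(\rr{2d})$. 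Multiplying the integrand by $\omega_0(X,Y)$ and applying the weight condition \eqref{vikt1}, one reduces matters to estimating an integral of the product
\begin{equation*}
|(\mathcal V_\Phi a_1)(X-Y+Z,Z)\omega_1(X-Y+Z,Z)| \cdot |(\mathcal V_\Phi a_2)(X+Z,Y-Z)\omega_2(X+Z,Y-Z)|
\end{equation*}
in the mixed norm $L^{p_0,q_0}(\rr{4d})$ with respect to $(X,Y)$.

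The integral estimate is then carried out by a sequence of changes of variables (e.{\,}g. $X_1=X-Y+Z$, $Y_1=Z$, $X_2=X+Z$, $Y_2=Y-Z$, with one residual variable left as the integration variable) followed by iterated applications of Hölder's and Minkowski's (generalized Young's) inequalities on each of the four one-dimensional factors making up the mixed norm on $\rr{4d}$. The assumptions \eqref{pqformulas1} and \eqref{pqformulas2} are precisely the exponent balances needed so that this chain of inequalities closes, yielding
\begin{equation*}
\nm{a_1\wpr a_2}{\mathcal M^{p_0,q_0}_{(\omega_0)}} \le C\,\nm{a_1}{\mathcal M^{p_1,q_1}_{(\omega_1)}}\,\nm{a_2}{\mathcal M^{p_2,q_2}_{(\omega_2)}}
\end{equation*}
for all $a_1,a_2\in\mathscr S_0(\rr{2d})$. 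The bookkeeping here is essentially mechanical once the integrand is written in the shifted variables; this mirrors the computations used for Theorem~0.3$'$ of \cite{HTW} and Theorem~2.2 of \cite{To9}.

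The substantive step is to pass from Schwartz symbols to general symbols in $\mathcal M^{p_j,q_j}_{(\omega_j)}$. When $p_j,q_j<\infty$ for $j=1,2$, this is immediate since $\mathscr S_0$ is norm-dense in these spaces by Theorem~\ref{densdualmodthm}~(3). When some exponent equals $\infty$, the norm-density fails, and here I expect the main obstacle to arise. The remedy is to invoke the narrow convergence theory of Section~\ref{sec4}: under the hypothesis that $(L^{p_j,q_j}(\rr{4d}),\omega_j)$ is a feasible (or narrowly feasible) pair, Theorem~\ref{narrowmodthm}~(1) provides approximating sequences $a_j^{(n)}\in\mathscr S_0$ converging narrowly to $a_j$, and one verifies via Theorem~\ref{narrowmodthm}~(2) together with the integral formula above that $a_1^{(n)}\wpr a_2^{(n)}$ tends to $a_1\wpr a_2$ in the appropriate weak sense. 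The uniqueness of the extension then follows from testing against a dense family in the predual $\mathcal M^{p_0',q_0'}_{(1/\omega_0)}$, which is again supplied by Theorem~\ref{densdualmodthm}. Combining the Schwartz-level estimate with this extension procedure completes the proof.
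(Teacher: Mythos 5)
Your proposal matches the paper's intended argument: the paper itself omits the details and states that the result follows from Lemma \ref{cornerstone} combined with the arguments of Theorem 0.3$'$ in \cite{HTW} and Theorem 2.2 in \cite{To9}, together with the density and narrow-convergence results of Section \ref{sec4} — which is exactly the route you take. Your filling-in of the Hölder/Young bookkeeping and the extension step via Theorem \ref{densdualmodthm} and Theorem \ref{narrowmodthm} is the correct way to supply the omitted details.
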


\par

\begin{thm}\label{algthm2}
Let $\omega _j\in \mathscr P_Q^0(\rr {4d})$
and $p_j, q_j \in
[1,\infty]$ be such that $(L^{p_j,q_j}(\rr {4d}),\omega _j)$
are admissible pairs for $j=0,1,2$, and
\eqref{pqformulas1}, \eqref{pqformulas3} and \eqref{vikt2} hold.
Then the map $(a_1,a_2)\mapsto a_1*_\sigma a_2$
on $\mathscr S_0(\rr {2d})$ extends uniquely to a continuous map from
$\mathcal W_{(\omega _1)}^{p_1,q_1}(\rr {2d}) \times \mathcal
W_{(\omega _2)}^{p_2,q_2}(\rr {2d})$ to $\mathcal W_{(\omega
_0)}^{p_0,q_0}(\rr {2d})$, and
$$
\| a_1 *_\sigma a_2 \|_{\mathcal W_{(\omega _0)}^{p_0,q_0} }
\leq C \| a_1 \|_{\mathcal W_{(\omega _1)}^{p_1,q_1} } \| a_2
\|_{\mathcal W_{(\omega _2)}^{p_2,q_2} },
$$
where the constant $C$ is independent of $a_j\in \mathcal W_{(\omega
_j)}^{p_j,q_j}(\rr {2d})$, $j=1,2$.
\end{thm}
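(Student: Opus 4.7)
The plan is to follow the blueprint of Theorem 2.2 in \cite{To9} (and Theorem 0.3$'$ in \cite{HTW}), but using the cornerstone Lemma \ref{cornerstone}(2) and the machinery of narrow convergence for $\mathcal M(\omega,\mascB)$ spaces developed in Sections \ref{sec3} and \ref{sec4}. The statement to extend is bilinear, so it suffices to produce the norm estimate
$$
\nm {a_1*_\sigma a_2}{\mathcal W^{p_0,q_0}_{(\omega _0)}} \le C \nm {a_1}{\mathcal W^{p_1,q_1}_{(\omega _1)}} \nm {a_2}{\mathcal W^{p_2,q_2}_{(\omega _2)}}
$$
for $a_1,a_2\in \mathscr S_0(\rr {2d})$, and then extend to the full spaces.

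First, I would apply Lemma \ref{cornerstone}(2) to $\Phi (X)=(2/\pi)^{d/2}e^{-|X|^2}$, take absolute values, and use the assumption \eqref{vikt2} to obtain the pointwise majorisation
$$
|\mathcal V_\Phi (a_1*_\sigma a_2)(X,Y)|\,\omega _0(X,Y)
\le C\int F_1(X-Y+Z,Z)\,F_2(Y-Z,X+Z)\,dZ ,
$$
where $F_j(U,V)=|\mathcal V_\Phi a_j(U,V)|\omega _j(U,V)$. The symplectic phase and the Gaussian self-reproduction identity $\Phi *_\sigma \Phi =2^d\Phi$ disappear at the level of absolute values. Next, I would perform the change of variables $W=X-Y+Z$ in the inner integral so the right-hand side becomes a convolution-type object in the pairs $(X,Y)$: after this substitution the two factors take the form $F_1(W,W-X+Y)$ and $F_2(X-W,W+Y)$, and the integrand is now seen as a twisted tensorial convolution in $(X,Y)$-variables suited to the $L^{p,q}_{\tw}$-norm (where the $q$-norm is inside and the $p$-norm outside).

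Then the heart of the argument is a combined Young/H\"older inequality on mixed Lebesgue spaces: one estimates the $L^{q_0}(dY)$-norm first (yielding a Young-type convolution inequality with indices $1/q_1+1/q_2-1/q_0$), then the outer $L^{p_0}(dX)$-norm (yielding the companion inequality with indices $1/p_1+1/p_2-1/p_0$). The index conditions \eqref{pqformulas1} and \eqref{pqformulas3} are precisely what is needed so that the mixed Young/H\"older estimate closes, with the relation between the two deficits \eqref{pqformulas1} reflecting the reversed role of the two arguments (which is what distinguishes Theorem \ref{algthm2} from Theorem \ref{algthm1}).

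Finally, I would extend from Schwartz symbols to arbitrary $a_j\in \mathcal W^{p_j,q_j}_{(\omega _j)}$. When all $p_j,q_j<\infty$, Theorem \ref{densdualmodthm}(3) gives density of $\mathscr S_0$; when some exponents are infinite, I would use the narrow convergence of Theorem \ref{narrowmodthm}(1) together with Proposition \ref{propnarrowseq} to pass to the limit in the bilinear form, exactly as in the classical case. The main obstacle is the bookkeeping in the second step: verifying, through the substitution and the twisted order of norms in $L^{p,q}_{\tw}$, that Young's inequality on the $Y$-slice and H\"older's inequality on the $X$-slice combine to give precisely the indices in \eqref{pqformulas3} (and not \eqref{pqformulas2}), together with keeping the weight factor $\omega _0(X,Y)$ under control via \eqref{vikt2} after the change of variables. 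Once this index/weight matching is done, the remaining steps are routine consequences of the density, duality and narrow-convergence results of Section \ref{sec4}.
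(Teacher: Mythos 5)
Your proposal is correct and follows essentially the same route as the paper, which itself only sketches the argument by referring to Theorem 0.3$'$ in \cite{HTW} and Theorem 2.2 in \cite{To9} combined with the results of Sections \ref{sec3} and \ref{sec4}: the pointwise majorisation from Lemma \ref{cornerstone}(2) together with \eqref{vikt2}, the Young/H{\"o}lder bookkeeping on the twisted mixed norms governed by \eqref{pqformulas1} and \eqref{pqformulas3}, and the extension via density and narrow convergence are exactly the intended ingredients. No gaps worth flagging.
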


\par

\begin{rem}\label{weightconcondrem}
We note that $\omega _j$, $j=0,1,2$, fulfills all the required
properties in Theorem \ref{algthm1}, if
\begin{equation}\label{weightconcond}
\begin{aligned}
\omega _0(X,Y) &= \frac {\nu _3(X-Y)}{\nu _1(X+Y)},\qquad
\omega _1(X,Y)=\frac {\nu _2(X-Y)}{\nu _1(X+Y)},
\\[1ex]
\omega _0(X,Y) &= \frac {\nu _3(X-Y)}{\nu _2(X+Y)}.
\end{aligned}
\end{equation}
for some appropriate $\nu _1,\nu _2, \nu _3\in \mascP _Q^0(\rr {2d})$. Note here that such types of conditions appears for $\omega$ in Theorem \ref{p5.4}.
\end{rem}

\par

\subsection{Examples on calculi of pseudo-differential operators}

\par

Next we give some examples on symbol classes and continuity properties
for corresponding pseudo-differential operators. We are especially focused
on symbol classes of the form $\mathcal M^{\infty ,1}_{(\omega )}(\rr {2d})$, because
they are to some extend linked to certain classical symbol classes in the
pseudo-differential calculus. We have for example that
\begin{equation}\label{classicsymbmod}
S^{(\omega )}(\rr {2d}) =\bigcap _{N\ge 0}\mathcal M^{\infty ,1}_{(1/\omega _N)}(\rr {2d}),
\end{equation}
where $\omega \in \mascP (\rr {2d})$ and $\omega _N(X,Y)=\omega
(X)\eabs Y^{-N}$. Here $S^{(\omega )}(\rr {2d})$ is the symbol class
which consists of all $a\in C^\infty (\rr {2d})$ such that
$(\partial ^\alpha a)/\omega \in L^\infty (\rr {2d})$
for every multi-index $\alpha$. (Cf. \cite[Rem. 2.18]{HTW}.)

\par

We also remark that \eqref{classicsymbmod} can be used to carry over properties
valid for modulation spaces into $S^{(\omega )}$ spaces. For example, in
\cite[Rem. 2.18]{HTW} it is proved that Theorem \ref{algthm1} and
\eqref{classicsymbmod} imply $S^{(\omega _1)}
\wpr S^{(\omega _2)}\subseteq S^{(\omega _1\omega _1)}$ when
$\omega _1,\omega _2\in \mascP$. (See \cite[Sec. 18.5]{Ho1}
for an alternative proof of the latter fact.)

\medspace

As a consequence of Theorems  \ref{p5.4} and \ref{algthm1}, and
Remark \ref{weightconcondrem} we have the following. 

\par

\begin{prop}\label{contalgprop}
Let $p,q\in [1,\infty ]$, $\nu _1,\nu _2,\nu _3\in \mascP _D^0(\rr {2d})$ and
$\omega _0,\omega _1,\omega _2\in \mascP _D^0(\rr {4d})$ be such
that \eqref{weightconcond} is fulfilled. Then the following is true:
\begin{enumerate}
\item if $a_j\in \mathcal M^{\infty ,1}_{(\omega _j)}(\rr {2d})$, then the mappings
$$
\op ^w(a_1)\, :\, M^{p,q}_{(\nu _1)}(\rr d)\to M^{p,q}_{(\nu _2)}(\rr d),\quad
\op ^w(a_2)\, :\, M^{p,q}_{(\nu _2)}(\rr d)\to M^{p,q}_{(\nu _3)}(\rr d)
$$
are continuous;

\vrum

\item the map $(a_1,a_2)\mapsto a_1\wpr a_2$ is continuous from
$\mathcal M^{\infty ,1}_{(\omega _1)}(\rr {2d})\times  \mathcal M^{\infty ,1}_{(\omega _2)}(\rr {2d})$
to $\mathcal M^{\infty ,1}_{(\omega _0)}(\rr {2d})$.
\end{enumerate}
\end{prop}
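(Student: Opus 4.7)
The plan is to deduce both assertions directly from Theorems \ref{p5.4} and \ref{algthm1} by specializing the symbol Lebesgue exponents to $(\infty ,1)$ and checking that the remaining hypotheses on exponents and weights reduce to those supplied by \eqref{weightconcond}.

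For (1) I would invoke Theorem \ref{p5.4} twice. With the symbol exponents (called $p,q$ in that theorem) taken equal to $\infty$ and $1$, condition \eqref{e5.4lebexp} forces $\tfrac{1}{p_2}-\tfrac{1}{p_1}=\tfrac{1}{q_2}-\tfrac{1}{q_1}=\tfrac{1}{\infty}+\tfrac{1}{1}-1=0$, so I would take $p_1=p_2=p$ and $q_1=q_2=q$ with the exponents $p,q$ from the proposition; the accompanying side condition $1\le p,q\le \infty$ holds automatically. The weight compatibility \eqref{e5.9} that needs to be verified for $\op ^w(a_1)\colon M^{p,q}_{(\nu _1)}\to M^{p,q}_{(\nu _2)}$ reduces to $\nu _2(X-Y)/\nu _1(X+Y)\le C\omega _1(X,Y)$, which holds with $C=1$ from the very definition of $\omega _1$ in \eqref{weightconcond}; the verification for $\op ^w(a_2)$ is identical with the triple $(\nu _1,\nu _2,\omega _1)$ replaced by $(\nu _2,\nu _3,\omega _2)$.

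For (2) I would apply Theorem \ref{algthm1} with $(p_j,q_j)=(\infty ,1)$ for every $j=0,1,2$. Then \eqref{pqformulas1} reads $0-0=1-(1+1-1)$, i.e.\ $0=0$, and the nested bounds \eqref{pqformulas2} collapse to the tautologies $0\le 0\le 0$ and $1\le 1$. The only nontrivial hypothesis that remains is the weight estimate \eqref{vikt1}, and this is precisely what Remark \ref{weightconcondrem} asserts to follow from \eqref{weightconcond}. Theorem \ref{algthm1} then supplies the claimed continuity of the Weyl product.

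The one step that is not automatic is the passage from \eqref{weightconcond} to \eqref{vikt1}; it amounts to substituting the shifted arguments $(X-Y+Z,Z)$ and $(X+Z,Y-Z)$ into the expressions for $\omega _1$ and $\omega _2$ and estimating the resulting product against $\omega _0(X,Y)=\nu _3(X-Y)/\nu _1(X+Y)$ uniformly in $Z$. However this verification is already recorded in Remark \ref{weightconcondrem}, so it can be cited rather than redone. Modulo this appeal to the remark, both assertions of the proposition are essentially corollaries of the main composition theorems of this section.
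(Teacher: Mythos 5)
Your proposal follows the paper's own route exactly: the paper derives this proposition as an immediate consequence of Theorems \ref{p5.4} and \ref{algthm1} together with Remark \ref{weightconcondrem}, with precisely the specializations of the Lebesgue exponents that you spell out. The one caveat is that Remark \ref{weightconcondrem} asserts rather than records the passage from \eqref{weightconcond} to \eqref{vikt1}; that step is a one-line telescoping computation, since $\omega _1(X-Y+Z,Z)\, \omega _2(X+Z,Y-Z)$ produces matching factors $\nu _2(X-Y+2Z)$ in numerator and denominator and collapses to $\nu _3(X-Y)/\nu _1(X+Y)=\omega _0(X,Y)$, giving \eqref{vikt1} with $C=1$ --- but only after one reads the second display of \eqref{weightconcond} as defining $\omega _2$ and matches the labels of $\omega _1,\omega _2$ to the order of the factors in $a_1\wpr a_2$ (with the printed labeling the telescoping works for $a_2\wpr a_1$ instead), so this detail is worth writing out rather than merely citing.
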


\par

\begin{cor}\label{contalgcor}
Let $p,q\in [1,\infty ]$, $\nu \in \mascP _D^0(\rr {2d})$ and
\begin{equation}\label{weightconcond2}
\omega (X,Y)=\frac {\nu (X-Y)}{\nu (X+Y)}.
\end{equation}
Then the following is true:
\begin{enumerate}
\item if $a\in \mathcal M^{\infty ,1}_{(\omega )}(\rr {2d})$, then $\op ^w(a)$
is continuous on $M^{p,q}_{(\nu )}(\rr d)$;

\vrum

\item $(\mathcal M^{\infty ,1}_{(\omega )}(\rr {2d}),\wpr )$ is an algebra.
\end{enumerate}
\end{cor}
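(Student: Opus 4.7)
\textbf{Proof proposal for Corollary \ref{contalgcor}.} The plan is to derive both assertions as direct specializations of Proposition \ref{contalgprop}. For (1), I would set $\nu _1=\nu _2=\nu$ in Proposition \ref{contalgprop}(1); then $\omega _1(X,Y)=\nu (X-Y)/\nu (X+Y)=\omega (X,Y)$, so that any $a\in \mathcal M^{\infty ,1}_{(\omega )}(\rr {2d})$ falls under the hypothesis with $\omega _1$ replaced by $\omega$, and the continuity statement becomes $\op ^w(a):M^{p,q}_{(\nu )}(\rr d)\to M^{p,q}_{(\nu )}(\rr d)$. For (2), I would set $\nu _1=\nu _2=\nu _3=\nu$ in Proposition \ref{contalgprop}(2); all three of $\omega _0,\omega _1,\omega _2$ collapse to $\omega$, and the proposition then gives that the Weyl product is continuous from $\mathcal M^{\infty ,1}_{(\omega )}(\rr {2d})\times \mathcal M^{\infty ,1}_{(\omega )}(\rr {2d})$ into $\mathcal M^{\infty ,1}_{(\omega )}(\rr {2d})$, which is exactly the algebra property.

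Before quoting Proposition \ref{contalgprop} I need to verify its weight hypotheses, namely that $\omega \in \mascP _D^0(\rr {4d})$. This is the one non-bookkeeping step. Starting from $\nu \in \mascP _D^0(\rr {2d})$, I would use that $\mascP _D^0$ is closed under the map $\omega \mapsto 1/\omega$ and under multiplication (a fact that is recorded in Section \ref{sec2}, together with the corresponding stability for $\mascP _Q^0$). Hence it suffices to show that each of the two pullbacks $(X,Y)\mapsto \nu (X\pm Y)$ from $\rr {4d}$ to $(0,\infty )$ lies in $\mascP _D^0(\rr {4d})$. Equivalently, using Lemma \ref{logweightslemma}, I would verify that the function $\fy (X,Y)=\log \nu (X\pm Y)$ on $\rr {4d}$ inherits the subgaussian bound \eqref{equivGauss} and the local bound \eqref{equivmodrel}$'$, as well as the global dilation conditions \eqref{equivaddomcond} and \eqref{equivaddomcond2}.

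The subgaussian estimate is immediate because $|X\pm Y|^2\le 2(|X|^2+|Y|^2)$. The local condition \eqref{equivmodrel}$'$ for the pullback follows from the corresponding condition for $\nu$ on $\rr {2d}$ by noting that the linear maps $\Lambda _\pm :(X,Y)\mapsto X\pm Y$ are surjective with operator norm $\sqrt 2$, so that $|\Lambda _\pm(x,y)|\le \sqrt 2\,|(x,y)|$ and the scale constants $c$, $R$ transfer with only a numerical adjustment. The dilation conditions \eqref{equivaddomcond} and \eqref{equivaddomcond2} are invariant under such linear substitutions because $\lambda (X\pm Y)=\Lambda _\pm (\lambda (X,Y))$ and $(1-\lambda ^2)|X\pm Y|^2\le 2(1-\lambda ^2)|(X,Y)|^2$, so the estimates for $\nu$ pull back, at the cost of absorbing a fixed factor of $2$ into $\ep$.

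The main (and essentially only) obstacle is this verification that $\omega \in \mascP _D^0(\rr {4d})$; once that is in place, both statements follow immediately from Proposition \ref{contalgprop} by the choices of $\nu _j$ indicated above. The multiplicative/reciprocal stability of $\mascP _D^0$ takes care of combining the pullbacks of $\nu$ and $1/\nu$, and no further computation is required.
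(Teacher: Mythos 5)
Your proposal is correct and follows the paper's own proof exactly: the paper simply sets $\nu _1=\nu _2=\nu _3=\nu$ and $\omega _0=\omega _1=\omega _2=\omega$ in Proposition \ref{contalgprop}. Your additional verification that the pullbacks $(X,Y)\mapsto \nu (X\pm Y)$ place $\omega$ in $\mascP _D^0(\rr {4d})$ is a sound supplement to a hypothesis the paper leaves implicit, but it does not change the route of the argument.
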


\par

\begin{proof}
The result follows by letting $\nu _1=\nu _2 = \nu _3 =\nu$ and
$\omega _0=\omega _1=\omega _2 = \omega$ in Proposition \ref{contalgprop}.
\end{proof}

\par

\begin{example}
Let $\nu (X)=e^{c|X|^\gamma}$, for $0\le \gamma <2$ and some constant
$c\in \mathbf R$. In this case, $\omega$ in
\eqref{weightconcond2} is given by
$$
\omega (X,Y) =e^{c(|X-Y|^\gamma -|X+Y|^\gamma )}.
$$

\par

In the case $\gamma \le 1$ one may use the inequality
$\omega (X,Y)\le e^{2|c|\cdot |Y|^\gamma}$ to conclude
that $\op ^w(a)$ is continuous on $M^{p,q}_{(\nu )}(\rr d)$, when
$a\in \mathcal M^{\infty ,1}_{(\omega _0)}$ and
$\omega _0(X,Y) = e^{2|c|\cdot |Y|^\gamma}$.

\par

More generally, let $\nu _j(X)=e^{c_j|X|^\gamma}$, for $0\le \gamma <2$ and some constants
$c_j\in \mathbf R$, $j=1,2$. In this case, $\omega _1$ in
\eqref{weightconcond} is given by
$$
\omega _1(X,Y) =e^{c_2|X-Y|^\gamma -c_1|X+Y|^\gamma }.
$$

\par

In the case $\gamma \ge 1$, $c_1=2^{\gamma -1}$ and $c_2=1$ we have
$$
|X-Y|^\gamma -2^{\gamma -1} |X+Y| \le (|X+Y|+2|Y|)^\gamma -2^{\gamma -1}|X+Y|\le 2^{2\gamma -1} |Y|^\gamma ,
$$
and $\omega (X,Y)\le e^{2^{2\gamma -1}|Y|^\gamma}$. Hence, if
$$
\omega _0(X,Y)=e^{2^{2\gamma -1}|Y|^\gamma},\quad \nu _1(X)=e^{2^{\gamma -1}|X|^\gamma},
\quad \nu _2(X)=e^{|X|^\gamma},
$$
and $a\in \mathcal M^{\infty ,1}_{(\omega _0)}$, then $\op ^w(a)$
is continuous from $M^{p,q}_{(\nu _1)}$ to $M^{p,q}_{(\nu _2)}$;
\end{example}

\par

\begin{example}
Let $\nu _j(X)=\eabs X^{c_j\eabs X}$ or $\nu _j(X)=\Gamma {\eabs X}^{c_j}$ for some constant $c_j\in \mathbf R$, $j=1,2$.
In this case, $\omega _1$ in \eqref{weightconcond} is given by
\begin{align*}
\omega _1(X,Y) &= \eabs {X-Y}^{c_2\eabs {X-Y}}\eabs {X+Y}^{-c_1\eabs {X+Y}}
\intertext{or}
\omega _1(X,Y) &= \big (\Gamma ( \eabs {X-Y})\big )^{c_2} \big (\Gamma ( \eabs {X+Y})\big )^{-c_1}.
\end{align*}
Hence, if $a\in \mathcal M^{\infty ,1}_{(\omega _1)}$, then $\op ^w(a)$
is continuous from $M^{p,q}_{(\nu _1)}$ to $M^{p,q}_{(\nu _2)}$.

\par

We note that different situations appear depending on the sign on $c_1$ and $c_2$:
\begin{enumerate}
\item if $c_1>0$ and $c_2>0$, then the weights
$\nu _j(X)$, $j=1,2$, turn rapidly to infinity at infinity. This implies
that the target space $M^{p,q}_{(\nu _1)}$ as well as the image
space $M^{p,q}_{(\nu _2)}$ are small, in the sense that their
elements turns rapidly to zero at infinity, fulfill hard restrictions
on oscillations at infinity, and are extendable to entire functions on $\cc d$.

\par

The corresponding weight $\omega _1$ turns rapidly to zero as
$X=Y$ and $|X|\to \infty$, while $\omega _1$ turns rapidly to
infinity as $X=-Y$ and $|X|\to \infty$.
%Roughly speaking, this
%means that if $|a|$ turns rapidly to $\infty$ at infinity, in some directions,
%then $a$ has to fulfill strong regularity conditions in those directions.
%If instead $a$ has heavy singularities in some direction, then the symbol $a$ has to
%turn rapidly to zero at infinity in those directions;

\vrum

\item if $c_1<0$ and $c_2<0$ (i.{\,}e. the adjoint situation
comparing to (1)), then the target space $M^{p,q}_{(\nu _1)}$
as well as the image
space $M^{p,q}_{(\nu _2)}$ are large, in the sense that their
elements are allowed to turns rapidly to infinity at infinity, with small restrictions
on oscillations and singularities at infinity.

\par

The corresponding weight $\omega _1$ turns rapidly to zero as
$X=-Y$ and $|X|\to \infty$, while $\omega _1$ turns rapidly to
infinity as $X=Y$ and $|X|\to \infty$;

\vrum

\item if $c_1<0$ and $c_2>0$, then the target space $M^{p,q}_{(\nu _1)}$
is large and the image
space $M^{p,q}_{(\nu _2)}$ is small.

\par

The corresponding weight $\omega _1$ turns rapidly to infinity at infinity. Hence,
the symbols in $\mathcal M^{\infty ,1}_{(\omega _1)}$ turn rapidly to zero at infinity,
fulfill hard restrictions
on oscillations at infinity, and are extendable to entire functions on $\cc {2d}$;

\vrum

\item if $c_1>0$ and $c_2<0$, then the target space $M^{p,q}_{(\nu _1)}$
is small and the image
space $M^{p,q}_{(\nu _2)}$ is large.

\par

The corresponding weight $\omega _1$ turns rapidly to zero at infinity. Hence,
the symbols in $\mathcal M^{\infty ,1}_{(\omega _1)}$ are allowed to turn rapidly
to infinity at infinity, with small restrictions
on oscillations and singularities at infinity.
\end{enumerate}
\end{example}

\par

\subsection{The case of moderate weights}

\par

It follows from the general results in previous sections that almost all results on
pseudo-differential operators in \cite{To5,To8A} can be extended to include
weights in the class $\mascP _E$. In what follows we state these extensions,
and leave most of the verifications for the reader.

\par

We start with the following general form of Feichtinger-Gr{\"o}chenig's kernel theorem.
The proof is the same as in \cite[Prop. 4.7]{To8A}, where Theorem 2.2 in \cite{LozPerTask}
should be applied instead of the classical Schwartz kernel theorem.

\par

\begin{prop}\label{schwartzgrochenig}
Let $d=d_1+d_2$, $\omega _j\in \mathscr P_E(\rr {2d_j})$ for
$j=1,2$ and let $\omega \in \mascP _E(\rr {d}\oplus \rr {d})$ be such that
\begin{equation*}%\label{omegakvot}
\omega (x,y,\xi,\eta )=\omega _2(x,\xi )/\omega _1(y,-\eta ).
\end{equation*}
Also let $T$ be a linear and continuous map from $\maclS _{1/2}(\rr {d_1})$
to $\maclS _{1/2}'(\rr {d_2})$. Then $T$ extends to a
continuous mapping from $M^1_{(\omega _1)}(\rr {d_1})$ to $M^\infty
_{(\omega _2)}(\rr {d_2})$, if and only if it exists an element $K\in
M^\infty _{(\omega )}(\rr d)$ such that
\begin{equation*}%\label{opkernel}
(Tf)(x)=\scal {K(x,\cdot )}{f}.
\end{equation*}
\end{prop}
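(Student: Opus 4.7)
The plan is to follow the standard derivation of Feichtinger--Gr\"ochenig's kernel theorem, replacing the use of the classical Schwartz kernel theorem by Theorem 2.2 in \cite{LozPerTask}. First, the latter result provides, for any continuous linear $T \colon \maclS_{1/2}(\rr{d_1}) \to \maclS_{1/2}'(\rr{d_2})$, a unique kernel $K \in \maclS_{1/2}'(\rr d)$ with $(Tf)(x) = \scal{K(x,\cdot )}{f}$. Thus the existence direction of the kernel is free, and the whole content of the proposition is the equivalence between the continuity statement $T\colon M^1_{(\omega _1)}\to M^\infty _{(\omega _2)}$ and the modulation-space condition $K\in M^\infty _{(\omega )}$.

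The second step is to identify the short-time Fourier transform of $K$ with respect to the tensor Gaussian $\Phi(x,y)=\phi (x)\phi (y)$, where $\phi$ is the normalized Gaussian in \eqref{phidef}. A direct unwinding of \eqref{defstft} shows that, up to a modulating phase factor of modulus one,
\begin{equation*}
|V_\Phi K(x,y,\xi ,\eta )| = c\, |(T\pi (y,-\eta )\phi ,\pi (x,\xi )\phi )_{L^2}|,
\end{equation*}
where $\pi (z,\zeta )g(t)=e^{i\scal t\zeta}g(t-z)$ is the usual time-frequency shift. Consequently, $K\in M^\infty _{(\omega )}$ with the weight $\omega (x,y,\xi ,\eta )=\omega _2(x,\xi )/\omega _1(y,-\eta )$ is equivalent to the uniform Gabor matrix bound
\begin{equation*}
|(T\pi (y,-\eta )\phi ,\pi (x,\xi )\phi )_{L^2}|\le C\,\omega _1(y,-\eta )/\omega _2(x,\xi).
\end{equation*}

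The third step converts this Gabor matrix bound into the continuity statement. Since $\omega _j\in \mascP _E(\rr {2d_j})$, Proposition \ref{p1.4} provides the duality $(M^1_{(\omega _2)})' \cong M^\infty _{(1/\omega _2)}$ and the density of $\maclS _{1/2}$ in $M^1_{(\omega _j)}$. Hence $T\colon M^1_{(\omega _1)}\to M^\infty _{(\omega _2)}$ continuously if and only if the sesquilinear form $(f,g)\mapsto (Tf,g)_{L^2}$ extends boundedly to $M^1_{(\omega _1)}\times M^1_{(1/\omega _2)}$. Inserting the Gabor atomic expansion $f=c\iint V_\phi f(y,\eta)\pi (y,\eta )\phi \, dy d\eta$, valid in $M^1_{(\omega _1)}$, and the analogous one for $g$, one rewrites $(Tf,g)_{L^2}$ as a double $L^1_{(\omega _1)}\times L^1_{(1/\omega _2)}$ pairing with kernel $(T\pi (y,\eta )\phi ,\pi (x,\xi )\phi )_{L^2}$. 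A standard Schur-type argument (elementary $L^1$--$L^\infty$ duality) then shows that boundedness of the form is precisely the weighted matrix bound above, thereby closing the equivalence.

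The main obstacle is the bookkeeping in the second step: namely verifying the precise formula linking $V_\Phi K$ to the Gabor matrix coefficients of $T$, with the correct signs and phase factors. In particular, the asymmetric sign that appears in the weight, $\omega _1(y,-\eta )$ rather than $\omega _1(y,\eta )$, is forced by the complex conjugation in the kernel pairing $\scal K{g\otimes \overline f}$ combined with the partial Fourier transform convention in \eqref{tensorsftf}. Once this identification is correctly set up, the remaining ingredients---Gelfand-Shilov kernel theorem, Gabor expansion and Schur duality---slot together exactly as in \cite[Prop.~4.7]{To8A}, and the details can be left for the reader, as the paper indicates.
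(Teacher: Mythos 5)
Your proposal is correct and follows essentially the same route as the paper, which simply invokes the proof of \cite[Prop.~4.7]{To8A} with the Gelfand--Shilov kernel theorem (Theorem 2.2 of \cite{LozPerTask}) substituted for the Schwartz kernel theorem: kernel existence, the identification of $V_\Phi K$ with the Gabor matrix $(T\pi (y,-\eta )\phi ,\pi (x,\xi )\phi )_{L^2}$, and the passage to continuity via the $M^1$ atomic expansion and the duality $(M^1_{(1/\omega _2)})'=M^\infty _{(\omega _2)}$ from Proposition \ref{p1.4}. The sign bookkeeping you single out is exactly the point that produces $\omega _1(y,-\eta )$ in the weight, and your verification of it is consistent with the conventions in \eqref{defstft}.
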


\par

For the proof of the following result we refer to \cite[Prop. 4.8]{To8A} and its proof.

\par

\begin{prop}\label{kernelweyl}
Let $t\in \mathbf R$, $a\in \maclS _{1/2}'(\rr {2d})$, and let $K\in \maclS _{1/2}'
(\rr {2d})$ be the distribution kernel for the pseudo-differential
operator $\op _t(a)$. Also let $p\in [1,\infty]$, and $\omega
,\omega _0\in \mathscr P_E(\rr {2d}\oplus \rr {2d})$ be such that
$$
\omega (x,\xi ,\eta ,y) = \omega _0(x-ty, x + (1-t)y ,-\xi +(1-t)\eta , \xi +t\eta ).
$$
Then $a\in M^p_{(\omega )}(\rr {2d})$ if and only if $K\in
M^p_{(\omega _0)}(\rr {2d})$. Moreover, if $\phi \in \maclS _{1/2}(\rr
{2d})$ and
$$
\psi (x,y)=\int {\phi ((1-t)x+ty,\xi )}e^{i\scal {x-y}\xi }\,
d\xi ,
$$
then $\nm a{M^{p,\phi }_{(\omega )}}= \nm K{M^{p,\psi
}_{(\omega _0)}}$.
\end{prop}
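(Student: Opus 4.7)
The approach is to compute the short-time Fourier transform $V_\psi K$ explicitly in terms of $V_\phi a$ via a linear change of variables on the phase space $\mathbf R^{4d}$, and then verify that this change of variables is exactly the one encoded in the hypothesized weight identity.

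First I would exploit the structural fact that the whole construction factors through the partial Fourier transform $\mathscr F_2^{-1}$ and a linear substitution. Namely, the kernel relation \eqref{atkernel} reads $K=U_t\circ \mathscr F_2^{-1}a$, where $U_t$ is the volume-preserving substitution $(U_tF)(x,y)=F((1-t)x+ty,\,x-y)$; and the definition of $\psi$ in the statement is precisely $\psi=(2\pi)^{d/2}U_t\circ \mathscr F_2^{-1}\phi$. This parallel structure is what forces the two modulation norms to coincide.

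Next, I would insert the kernel formula into the defining integral
\[
V_\psi K(x_0,y_0,\xi_0,\eta_0)=(2\pi)^{-d}\iint K(x,y)\,\overline{\psi(x-x_0,y-y_0)}\,e^{-i(\scal{x}{\xi_0}+\scal{y}{\eta_0})}\,dx\,dy,
\]
and change variables to $u=(1-t)x+ty$, $v=x-y$ (Jacobian $1$), reducing the integrand to $b(u,v)\,\overline{g(u-u_0,v-v_0)}$ with $b=\mathscr F_2^{-1}a$ and $g=\mathscr F_2^{-1}\phi$, and phase $\scal{u}{\xi_0+\eta_0}+\scal{v}{t\xi_0-(1-t)\eta_0}$. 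Performing the inner Fourier inversion in the $\mathscr F_2^{-1}$ integrals (a single $\delta$-identification removes one variable) yields, after taking moduli,
\[
|V_\psi K(x_0,y_0,\xi_0,\eta_0)|\;=\;(2\pi)^{d/2}\,|V_\phi a\bigl(L(x_0,y_0,\xi_0,\eta_0)\bigr)|,
\]
for a specific unimodular linear map $L\colon \mathbf R^{4d}\to \mathbf R^{4d}$ whose position and frequency blocks each have determinant of modulus $1$. A direct check shows that $L^{-1}$ realizes exactly the substitution $(x',y',\xi',\eta')\mapsto(x'-ty',x'+(1-t)y',-\xi'+(1-t)\eta',\xi'+ty')$ (up to the reflection/sign conventions induced by $\mathscr F_2^{-1}$, which are absorbed into moduli). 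Consequently the hypothesis $\omega(\cdot)=\omega_0(L\cdot)$ becomes the intertwining relation that matches the two weighted integrals point by point.

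Finally, since $|\det L|=1$ and the $(2\pi)^{d/2}$ factor is exactly the one built into the definition of $\psi$, the change-of-variables formula in the weighted $L^p$ integral gives
\[
\nm{K}{M^{p,\psi}_{(\omega_0)}}=\nm{V_\psi K\cdot\omega_0}{L^p(\mathbf R^{4d})}=\nm{V_\phi a\cdot(\omega_0\circ L)}{L^p(\mathbf R^{4d})}=\nm{a}{M^{p,\phi}_{(\omega)}},
\]
and the equivalence $a\in M^p_{(\omega)}\iff K\in M^p_{(\omega_0)}$ is immediate from the $M^1_{(v)}$-independence of the window in Proposition \ref{p1.4}(1). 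The case $p=\infty$ requires no modification since the computation is pointwise. The main obstacle is the painstaking bookkeeping: carefully tracking the $(2\pi)^{d/2}$ normalizations and the phase factor $e^{-i\scal{v_0}{\tau_0}}$ that arises from Fourier inversion (which is killed by the modulus), and verifying that the entries of the matrix $L$ recovered from the STFT computation match, coordinate by coordinate, the linear map prescribed by the weight identity in the statement — this is where the seemingly asymmetric roles of $t$ and $1-t$ in the formula for $\omega$ pay off.
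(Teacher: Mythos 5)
Your proposal is correct and takes essentially the same route as the paper, which gives no self-contained argument but defers to \cite[Prop.~4.8]{To8A}; that proof is precisely your computation of $V_\psi K$ as $V_\phi a$ composed with a unimodular linear map (via the factorization $K=U_t\mathscr F_2^{-1}a$, $\psi =(2\pi )^{d/2}U_t\mathscr F_2^{-1}\phi$ and Parseval in the second variable), followed by the change of variables in the weighted $L^p$ integral and window independence from Proposition \ref{p1.4}(1). The one place to be careful is the bookkeeping you defer: with the kernel normalized as in \eqref{atkernel} the displayed factor $(2\pi )^{d/2}$ does not cancel by itself (it cancels against the $(2\pi )^{-d/2}$ implicit in \eqref{e0.5}), and a sign flip in a frequency argument is not literally ``absorbed into moduli'' of the STFT, so the entries of $L^{-1}$ really must be matched, coordinate by coordinate and with signs, against the stated weight identity.
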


%
%$$
%|(V_\psi K)(x-ty, x + (1-t)y ,-\xi +(1-t)\eta , \xi +t\eta )| = |(V_\fy a)(x,\xi ,\eta ,y)|
%$$
%

\par

The next result shows that pseudo-differential operators with symbols in modulation are to
some extent invariant under the choice of $t$ in \eqref{e0.5}.
We refer to \cite[Prop. 1.7]{To8A} for the proof. Here we let $S_\Phi$ be the
linear and continuous map on $\maclS _{1/2}(\rr d)$ and on $\maclS _{1/2}'(\rr d)$, defined
by the formula
\begin{equation}\label{sfiavb}
f\mapsto S_\Phi f\equiv (e^{i\Phi}\otimes \delta _{V_2})*f,
\end{equation}
where $\delta _{V_2}$ is the delta function on the vector space
$V_2\subseteq \rr d$ and $\Phi$ is a real-valued and non-degenerate
quadratic form on $V_1=V_2^\bot$.

\par

\begin{prop}\label{p1.45}
Let $\phi \in \maclS _{1/2}(\rr d)$, $\omega \in \mathscr P_E(\rr
{2d})$, $p,q\in [1,\infty ]$, $V_1,V_2\subseteq \rr d$
be vector spaces such that $V_2=V_1^\bot$. Also let $\Phi$ be
a real-valued and non-degenerate
quadratic form on $V_1$, and let $A_\Phi /2$ be the corresponding
matrix. If $\xi =(\xi _1,\xi _2)$ where $\xi _j \in V_j$ for $j=1,2$,
then
\begin{equation*}%\label{normekv2}
\begin{aligned}
\nm {S_\Phi f}{M^{p,q,\phi}_{(\omega _\Phi)}} &= \nm
{f}{M^{p,q,\psi}_{(\omega )}},\quad \text{where}\quad f\in \maclS _{1/2}'(\rr d),
\\[1ex]
\omega _\Phi (x,\xi ) &= \omega  (x-A_\Phi ^{-1}\xi _1,\xi) \quad
\text{and}\quad \psi =S_\Phi \phi .
\end{aligned}
\end{equation*}
In particular, the following are true:
\begin{enumerate}
\item[{\rm{(1)}}] the map \eqref{sfiavb} on $\maclS _{1/2}'(\rr d)$
restricts to a homeomorphism from $M^{p,q}_{(\omega )}(\rr d)$ to
$M^{p,q}_{(\omega _\Phi)}(\rr d)$;

\vrum

\item[{\rm{(2)}}] if $t\in \mathbf R$, $\omega _0\in \mathscr P_E(\rr
{2d}\oplus \rr {2d})$, and
$$
\omega _t(x,\xi,\eta ,y)= \omega _0(x-ty,\xi -t\eta ,y,\eta ),
$$
then the map $e^{it\scal {D_x}{D_\xi}}$ on $\maclS _{1/2}'(\rr {2d})$
restricts to a homeomorphism from $M^{p,q}_{(\omega
_0)}(\rr {2d})$ to $M^{p,q}_{(\omega _t)}(\rr {2d})$.
\end{enumerate}
\end{prop}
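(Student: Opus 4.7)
The heart of the proposition is the identity $\nm{S_\Phi f}{M^{p,q,\phi}_{(\omega _\Phi)}} = \nm{f}{M^{p,q,\psi}_{(\omega )}}$, which I would prove by a direct computation of $V_\phi (S_\Phi f)$ and a completion of the square. Writing $S_\Phi f = (e^{i\Phi}\otimes \delta _{V_2})*f$ and collapsing the $V_2$-direction with the delta, then swapping the order of integration and translating the integration variable in $V_1$, one obtains
\begin{equation*}
V_\phi (S_\Phi f)(x,\xi ) = \int _{V_1} e^{i\Phi (u) - i\scal{u}{\xi _1}}\, V_\phi f(x - u,\xi )\, du .
\end{equation*}
Setting $b = A_\Phi ^{-1}\xi _1 \in V_1$, symmetry of $A_\Phi$ gives $\Phi (u) - \scal{u}{\xi _1} = \Phi (u - b) - \Phi (b)$, so the substitution $v = u - b$ yields
\begin{equation*}
V_\phi (S_\Phi f)(x,\xi ) = e^{-i\Phi (A_\Phi ^{-1}\xi _1)}\int _{V_1} e^{i\Phi (v)}\, V_\phi f\bigl(x - A_\Phi ^{-1}\xi _1 - v,\xi \bigr)\, dv .
\end{equation*}
An entirely parallel direct computation starting from the definition of $V_\psi f$ with $\psi = S_\Phi \phi$ (the identity $\Phi (-v)=\Phi (v)$ is used to reconcile signs with the conjugated window) identifies the remaining integral with $V_\psi f(x - A_\Phi ^{-1}\xi _1,\xi )$. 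Taking moduli cancels the unimodular chirp, so $|V_\phi (S_\Phi f)(x,\xi )| = |V_\psi f(x - A_\Phi ^{-1}\xi _1,\xi )|$; for each fixed $\xi $, the translation $x\mapsto x + A_\Phi ^{-1}\xi _1$ in the $V_1$-component then absorbs the weight shift in $\omega _\Phi (x,\xi ) = \omega (x - A_\Phi ^{-1}\xi _1,\xi )$, producing the claimed norm identity.

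For (1), I would note that $\psi = S_\Phi \phi$ lies in $\maclS _{1/2}(\rr d)$ because convolution with $e^{i\Phi}\otimes \delta _{V_2}$ corresponds on the Fourier side to multiplication by a chirp supported on $V_1$, which preserves the Gaussian decay of $\hat \phi$; and $\omega _\Phi \in \mascP _E(\rr{2d})$ whenever $\omega \in \mascP _E(\rr{2d})$, since it is obtained from $\omega$ by a linear substitution. Window-independence of modulation spaces for exponential-type weights (Proposition \ref{p1.4}(1)) then makes $\nm{\cdot }{M^{p,q,\psi}_{(\omega )}}$ and $\nm{\cdot }{M^{p,q,\phi}_{(\omega _\Phi )}}$ equivalent to their standard-window counterparts, so the main identity yields continuity of $S_\Phi : M^{p,q}_{(\omega )}\to M^{p,q}_{(\omega _\Phi )}$; applying the same identity with $\Phi$ replaced by $-\Phi$ produces a two-sided continuous inverse, hence the homeomorphism.

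For (2), I would specialize the main identity to $V_1 = \rr{2d}$, $V_2 = \{ 0\}$, choosing $\Phi$ to be the quadratic form on $\rr{2d}$ for which $S_\Phi$ coincides, up to a unimodular constant, with $e^{it\scal{D_x}{D_\xi }}$ acting on $\maclS _{1/2}'(\rr{2d})$. Since $e^{it\scal{D_x}{D_\xi }}$ is the Fourier multiplier by $e^{it\scal{\eta }{y}}$ in the dual variables $(\eta ,y)$ to $(x,\xi )$, a short computation identifies $A_\Phi ^{-1}(\eta ,y) = (ty, t\eta )$, producing exactly the substitution $(x,\xi )\mapsto (x - ty,\xi - t\eta )$ appearing in the definition of $\omega _t$. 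The main obstacle here is bookkeeping the conversion between the standard STFT (implicit in the main identity) and the symplectic STFT used in the definition of $\mathcal M(\omega ,\mascB )$: the two differ by a coordinate permutation on $\rr{2d}\oplus \rr{2d}$, and it is precisely that permutation which turns the expected $\omega _0(x-ty,\xi -t\eta ,\eta ,y)$ into the stated $\omega _0(x-ty,\xi -t\eta ,y,\eta )$. Once this identification is secured, (2) follows from (1) applied to symbols on $\rr{2d}$.
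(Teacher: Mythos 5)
The paper offers no proof of its own here: it simply refers to \cite[Prop.~1.7]{To8A}, and the argument there is precisely the computation you set up --- expand $V_\phi (S_\Phi f)$, shift the integration variable, complete the square at $b=A_\Phi ^{-1}\xi _1$, and recognize the residual convolution as a change of window together with a translation of the $V_1$-component of $x$ that is absorbed by $\omega _\Phi$. Your reductions of (1) to window-independence plus inversion by $S_{-\Phi}$, and of (2) to the case $V_1=\rr {2d}$, $V_2=\{0\}$ with $\Phi$ the chirp whose Fourier multiplier is $e^{it\scal \eta y}$, are also the intended ones. So the route is right; two points need tightening.

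First, the sign of the chirp in the window. With the paper's conventions, $\overline{(S_\Phi \phi )(y-x')}=\int _{V_1}e^{-i\Phi (w)}\overline{\phi (y-x'-w)}\, dw$, so
$V_{S_\Phi \phi }f(x',\xi )=\int _{V_1}e^{-i\Phi (v)}V_\phi f(x'-v,\xi )\, dv$:
conjugating the window flips the sign of the chirp, and the substitution $v\mapsto -v$ together with $\Phi (-v)=\Phi (v)$ does not restore it. The integral you actually produce, $\int _{V_1}e^{+i\Phi (v)}V_\phi f(x-b-v,\xi )\, dv$, is therefore $V_{S_{-\Phi}\phi}f(x-A_\Phi ^{-1}\xi _1,\xi )$, so the exact identity comes out with window $S_{-\Phi}\phi$ rather than $S_\Phi \phi$. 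This is immaterial for (1) and (2) (both $S_{\pm \Phi}\phi$ are admissible windows and $-\Phi$ is again non-degenerate), but as stated your identification of the remaining integral is off by this conjugation. Second, your justification that $\psi$ is a legitimate window only controls $|\widehat \psi |=|\widehat \phi \, |$; membership in $\maclS _{1/2}$ requires Gaussian decay of $\psi$ itself as well, and what Proposition \ref{p1.4}\,(1) actually needs is $\psi \in M^1_{(v)}\setminus 0$ for a submultiplicative $v$ moderating $\omega$. The cleanest repair is to apply your own identity with the Gaussian window $\phi _0$, for which $S_{\mp \Phi}\phi _0$ is an explicit complex Gaussian in $\maclS _{1/2}$, and then invoke the STFT characterization in Proposition \ref{stftGelfand2}; this shows $S_\Phi$ preserves $\maclS _{1/2}$, and a fortiori that $\psi \in M^1_{(v)}$ since $v\circ T$ remains of exponential type for any invertible linear $T$. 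With these two adjustments the argument is complete.
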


\par

By combining Propositions \ref{schwartzgrochenig}--\ref{kernelweyl}
we get the following result (cf. \cite[Thm. 4.6]{To8A}).

\par

\begin{thm}\label{p5.4B}
Let $t\in \mathbf R$ and $p,q,p_j,q_j\in [1,\infty ]$ for
$j=1,2$, be such that \eqref{e5.4lebexp} holds.
Also let $\omega \in
\mathscr P_E(\rr {4d})$ and $\omega _1,\omega _2\in
\mathscr P_E(\rr {2d})$ be such that
\begin{equation}\label{e5.9B}
\frac {\omega _2(x-ty,\xi +(1-t)\eta )}{\omega _1
(x+(1-t)y,\xi -t\eta )} \le C \omega (x,\xi ,\eta ,y),
\end{equation}
for some constant $C>0$. If $a\in M^{p,q}_{(\omega )}(\rr
{2d})$, then $\op _t(a)$ from $\mathcal S_{1/2}(\rr d)$ to $\mathcal S_{1/2}'(\rr
d)$ extends uniquely to a continuous map from
$M^{p_1,q_1}_{(\omega _1)}(\rr d)$ to $M^{p_2,q_2}_{(\omega _2)}(\rr
d)$.

\par

Moreover, if in addition $a$ belongs to the closure of $\mathscr S_0(\rr {2d})$
under the norm $\nm \cdo {M ^{p,q}_{(\omega )}}$, then
$$
\op _t (a)\, :\, M^{p_1,q_1}_{(\omega _1)}(\rr d)\to
M^{p_2,q_2}_{(\omega _2)}(\rr d)
$$
is compact.
\end{thm}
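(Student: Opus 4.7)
The plan is to follow the scheme of Proposition 4.12 in \cite{To8A}, combining the kernel theorem (Proposition \ref{schwartzgrochenig}) with the kernel--symbol dictionary for pseudo-differential operators (Proposition \ref{kernelweyl}), and then appealing to interpolation and duality for modulation spaces in the extended weight class $\mascP_E$. First I would establish the endpoint case $(p_1,q_1)=(1,1)$, $(p_2,q_2)=(\infty,\infty)$, $(p,q)=(\infty,\infty)$. By Proposition \ref{schwartzgrochenig} with $d_1=d_2=d$, continuity of $\op_t(a)$ from $M^1_{(\omega_1)}(\rr d)$ to $M^\infty_{(\omega_2)}(\rr d)$ is equivalent to its distribution kernel $K_{a,t}$ lying in $M^\infty_{(\omega_K)}(\rr{2d})$ with $\omega_K(x,y,\xi,\eta)=\omega_2(x,\xi)/\omega_1(y,-\eta)$. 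By Proposition \ref{kernelweyl}, this in turn is equivalent to $a\in M^\infty_{(\omega_a)}(\rr{2d})$ where $\omega_a$ is the pullback of $\omega_K$ through the change of variables $(x,y,\xi,\eta)\mapsto (x-ty,\,x+(1-t)y,\,-\xi+(1-t)\eta,\,\xi+t\eta)$; a direct computation shows that $\omega_a$ agrees, up to absorbing a sign change in $\xi$ that is harmless inside the $L^\infty$ norm, with the left-hand side of \eqref{e5.9B}. Thus the hypothesis $a\in M^\infty_{(\omega)}$ together with \eqref{e5.9B} yields the endpoint continuity.

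The dual endpoint $(p_1,q_1)=(\infty,\infty)\to (p_2,q_2)=(1,1)$ with $p=q=1$ is then obtained by passing to the formal adjoint $\op_t(a)^*=\op_{1-t}(\bar a)$ and invoking the duality identification in Theorem \ref{densdualmodthm}(2). The condition \eqref{e5.9B} is invariant, after the obvious relabeling, under $\omega_j\mapsto 1/\omega_j$ and $t\mapsto 1-t$, so this case reduces to the one already treated. The intermediate cases follow by complex interpolation: Proposition \ref{interprop2} applies because $\mascP_E\subseteq\mascP_Q^0$ by Proposition \ref{weightfamincl} and every element of $\mascP_E$ is dilated suitable, so the pairs $(L^{p_j,q_j},\omega_j)$ and $(L^{p,q},\omega)$ are automatically feasible. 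The exponent relations \eqref{e5.4lebexp} together with the bounds $q\le p_2,q_2\le p$ describe precisely the line segment between the two endpoints in the parameter cube, and \eqref{e5.9B} is stable under geometric-mean interpolation of the weights on both sides.

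For compactness, if $a\in\mathscr S_0(\rr{2d})$ is a finite Hermite sum, then $K_{a,t}\in\mathscr S(\rr{2d})$ is Schwartz-class, hence $\op_t(a)$ is a trace-class, in particular compact, operator between any two modulation spaces in our family. For $a$ in the $M^{p,q}_{(\omega)}$-closure of $\mathscr S_0$, choose approximants $a_n\to a$ in $M^{p,q}_{(\omega)}$; the continuity bound proved above shows that $\op_t(a_n)\to\op_t(a)$ in the operator norm from $M^{p_1,q_1}_{(\omega_1)}$ to $M^{p_2,q_2}_{(\omega_2)}$, and compactness is preserved under operator-norm limits.

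The principal technical obstacle will be the verification in the interpolation step that $(L^{p,q},\omega)$ stays feasible at every point on the interpolation segment and that the pointwise weight condition \eqref{e5.9B} genuinely interpolates, rather than merely holding at the endpoints. Because $\mascP_E$ is a group under multiplication and each of its members is dilated suitable, geometric means of such weights remain in $\mascP_Q^0$, which is exactly the class the interpolation machinery of Section \ref{sec4} is set up to handle; this reduces the obstacle to a careful but essentially routine bookkeeping of the weight transforms under the dualization and interpolation.
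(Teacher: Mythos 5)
Your endpoint case is sound and is exactly how the paper obtains Theorem \ref{omvthm5.2}: Propositions \ref{schwartzgrochenig} and \ref{kernelweyl} convert the $M^1_{(\omega _1)}\to M^\infty _{(\omega _2)}$ bound into the statement $a\in M^\infty _{(\omega )}$ with the weight transformed as in \eqref{e5.9B}. The difficulties start afterwards. First, the set of exponents allowed by \eqref{e5.4lebexp} is \emph{not} a line segment between your two endpoints: it is a multi-parameter family (fix $s=1/p+1/q-1$, then $p_2,q_2$ range freely subject to $q\le p_2,q_2\le p$, with $p_1,q_1$ determined). Complex interpolation between the single pair of endpoint operators only produces the diagonal family $M^{r}\to M^{r'}$ with $a\in M^{r'}$, and misses, for instance, the whole case $a\in M^{\infty ,1}_{(\omega )}$ acting on $M^{p,q}_{(\nu )}$ for arbitrary $p,q$ (which is the case used in Proposition \ref{contalgprop}). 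This is why the proof the paper points to (Thm.~4.6 / Prop.~4.12 in \cite{To8A}) does not interpolate at all: it writes $(\op _t(a)f,g)_{L^2}=(2\pi )^{-d/2}(a,W^t_{g,f})_{L^2}$, estimates $\nm {W^t_{g,f}}{M^{p',q'}_{(1/\omega )}}$ by $\nm f{M^{p_1,q_1}_{(\omega _1)}}\nm g{M^{p_2',q_2'}_{(1/\omega _2)}}$ via the Wigner-distribution mapping result (Proposition \ref{p5.3}, transported to general $t$ by Proposition \ref{p1.45}), and concludes by the duality of Proposition \ref{p1.4}\,(3)/Theorem \ref{densdualmodthm}; the inequalities $q\le p_2,q_2\le p$ are precisely the Young--H{\"o}lder conditions of that single estimate, which covers all admissible exponents at once.

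Second, your ``dual endpoint'' $\op _t(a):M^\infty _{(\omega _1)}\to M^1_{(\omega _2)}$ for $a\in M^1_{(\omega )}$ cannot be obtained by passing to the adjoint: an adjoint always maps into a dual space, and $M^1_{(\omega _2)}$ is not the dual of $M^\infty _{(1/\omega _2)}$ (only $(M^1)'=M^\infty$ holds, not the converse). The adjoint argument lands you in $(M^\infty _{(1/\omega _2)})'$, which strictly contains $M^1_{(\omega _2)}$, so an additional argument (e.g.\ an atomic or Gabor/Hermite decomposition of $M^1$ symbols into rank-one pieces, or again the Wigner-duality estimate) is needed to recover the $M^1$ target. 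Your compactness paragraph is essentially correct once the continuity bound is in place. To repair the proof you should replace the interpolation-plus-adjoint scheme by the direct duality argument against $W^t_{g,f}$ sketched above, which is the route the paper intends.
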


\par

\begin{thm}\label{omvthm5.2}
Let $t\in \mathbf R$, $a\in \maclS _{1/2}'(\rr{2d})$, $\omega \in
\mascP_E(\rr {2d}\oplus \rr {2d})$, and $\omega _1,\omega _2 \in
\mascP_E(\rr {2d})$ such that \eqref{e5.9} holds. Then the following is true:
\begin{enumerate}
\item the operator $\op _t(a)$ from $\maclS _{1/2}(\rr d)$ to
$\maclS _{1/2}'(\rr d)$ extends to a continuous mapping from
$M^1_{(\omega _1)}(\rr d)$ to $M^\infty
_{(\omega _2)}(\rr d)$, if and only if $a\in M^\infty
_{(\omega )}(\rr {2d})$;

\vrum

\item the map $a\mapsto \op _t(a)$ from $M^\infty
_{(\omega )}(\rr {2d})$ to the set of linear and continuous operators from
$M^1_{(\omega _1)}(\rr d)$ to $M^\infty
_{(\omega _2)}(\rr d)$.
\end{enumerate}
\end{thm}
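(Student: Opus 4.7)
The plan is to derive Theorem \ref{omvthm5.2} by combining the kernel theorem (Proposition \ref{schwartzgrochenig}) with the kernel--symbol correspondence (Proposition \ref{kernelweyl}). In this way the continuity question for $\op _t(a)$ on the modulation spaces at hand is translated first into a statement about the distributional kernel $K_{a,t}$ of $\op _t(a)$ given by \eqref{atkernel}, and then that statement is rewritten as a modulation-space condition on the symbol $a$ itself.

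In more detail, I would first apply Proposition \ref{schwartzgrochenig} with $d_1=d_2=d$: the operator $\op _t(a)$ extends to a continuous map from $M^1_{(\omega _1)}(\rr d)$ to $M^\infty _{(\omega _2)}(\rr d)$ if and only if $K_{a,t}\in M^\infty _{(\omega _K)}(\rr {2d})$, where
\[
\omega _K(x,y,\xi ,\eta ) \;=\; \omega _2(x,\xi )/\omega _1(y,-\eta ).
\]
Invoking Proposition \ref{kernelweyl} with $p=\infty$, the condition $K_{a,t}\in M^\infty _{(\omega _K)}$ is in turn equivalent to $a\in M^\infty _{(\widetilde \omega )}$, where $\widetilde \omega$ is obtained from $\omega _K$ via the canonical substitution
\[
\widetilde \omega (x,\xi ,\eta ,y) \;=\; \omega _K\bigl (x-ty,\, x+(1-t)y,\, -\xi +(1-t)\eta ,\, \xi +t\eta \bigr ).
\]
Inserting the explicit form of $\omega _K$ yields, modulo sign conventions on the frequency arguments (where the $\mascP _E$-symmetry properties of $\omega _1,\omega _2$ come into play), precisely the weight appearing on the left-hand side of \eqref{e5.9}. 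The hypothesis \eqref{e5.9} then says $\widetilde \omega \le C\omega$, hence $M^\infty _{(\omega )}(\rr {2d})\hookrightarrow M^\infty _{(\widetilde \omega )}(\rr {2d})$, which gives the ``if'' direction of (1). For the ``only if'' direction, one reads the two propositions as providing equivalences, so that continuity of $\op _t(a)$ forces $K_{a,t}\in M^\infty _{(\omega _K)}$ and hence $a\in M^\infty _{(\omega )}$ under the convention that \eqref{e5.9} is to be understood as an equivalence of $\omega$ with $\widetilde \omega$.

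For part (2), I would track the quantitative norm estimates along the same chain. Since Propositions \ref{schwartzgrochenig} and \ref{kernelweyl} both yield norm equivalences (up to multiplicative constants), one obtains
\[
\nm {\op _t(a)}{M^1_{(\omega _1)}\to M^\infty _{(\omega _2)}} \;\le \; C\nm {K_{a,t}}{M^\infty _{(\omega _K)}}\;\le \; C'\nm a{M^\infty _{(\widetilde \omega )}}\;\le \; C''\nm a{M^\infty _{(\omega )}},
\]
which shows that $a\mapsto \op _t(a)$ is bounded from $M^\infty _{(\omega )}(\rr {2d})$ into the Banach space of bounded linear operators $M^1_{(\omega _1)}(\rr d)\to M^\infty _{(\omega _2)}(\rr d)$.

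The main obstacle I anticipate is in the second step: carefully composing the two changes of variables, one coming from \eqref{atkernel} via Proposition \ref{kernelweyl} and one from the kernel theorem, so as to verify that the resulting weight $\widetilde \omega$ is indeed dominated by (and, for the ``only if'' direction, equivalent to) the weight $\omega$ featuring in \eqref{e5.9}. The sign conventions on the frequency arguments, together with the symmetry properties of any submultiplicative majorant $v$ of $\omega _1,\omega _2$ in $\mascP _E$, must be handled with care. A minor, secondary point is that the extension of $\op _t(a)$ from $\maclS _{1/2}(\rr d)$ to the larger modulation space is unique, which follows from density of $\maclS _{1/2}(\rr d)$ in $M^1_{(\omega _1)}(\rr d)$ (via a variant of Proposition \ref{p1.4} (4)) and uniqueness of the Feichtinger--Gr\"ochenig kernel representation.
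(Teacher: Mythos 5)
Your proposal is correct and follows essentially the route the paper intends: the paper omits the proof of Theorem \ref{omvthm5.2}, but Propositions \ref{schwartzgrochenig} and \ref{kernelweyl} are stated precisely so that the theorem follows by composing the kernel characterization with the kernel--symbol weight substitution, exactly as you do. Your observation that the ``only if'' direction requires \eqref{e5.9} to hold as an equivalence rather than merely a one-sided bound is also well taken, and is consistent with how the paper phrases the neighbouring results (Proposition \ref{HSweyl} and Theorem \ref{schattenthm} explicitly demand that equality be attained).
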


\par

Finally we consider Schatten-von Neumann properties. Let
$\omega _1,\omega _2\in \mascP _E(\rr {2d})$. Then the set
$s_{t,p}(\omega _1,\omega _2)$ consists of all $a\in \maclS
_{1/2}'(\rr {2d})$ such that $\op _t(a)$ belongs to $\mathscr
I_p(\omega _1,\omega _2)$, the set of
Schatten-von Neumann operator of order $p\in [1,\infty ]$ from $M^2_{(\omega _1)}(\rr d)$
to $M^2_{(\omega _2)}(\rr d)$.
%We also let $s_{t,\sharp }(\omega _1,\omega _2)$
%be the set of all $a\in \maclS
%_{1/2}'(\rr {2d})$ such that $\op _t(a)$  from $M^2_{(\omega _1)}(\rr d)$
%to $M^2_{(\omega _2)}(\rr d)$ is compact.
Note that
$$
\mathscr I_1(\omega _1,\omega _2),\quad \mathscr
I_2(\omega _1,\omega _2)\quad \text{and}\quad  \mathscr
I_\infty (\omega _1,\omega _2),
$$
are the sets of trace-class, Hilbert-Schmidt and continuous operators respectively, from
$M^2_{(\omega _1)}(\rr d)$ to $M^2_{(\omega _2)}(\rr d)$.
The space $s_{t,p}(\omega _1,\omega _2)$
is equipped by the norm
$$
\nm a{s_{t,p}(\omega _1,\omega _2)}\equiv \nm {\op _t(a)}{ \mathscr
I_p(\omega _1,\omega _2)}.
$$
By Theorem \ref{omvthm5.2} it follows that the map $a\mapsto \op _t(a)$ from
$s_{t,p}(\omega _1,\omega _2)$ to $\mathscr
I_p(\omega _1,\omega _2)$ is continuous and bijective.

\par

It is easy to obtain a complete characterization of symbols to Hilbert-Schmidt operators.
In fact, we have the following result. We refer to \cite[Prop4.11]{To8A} for the proof.

\par

\begin{prop}\label{HSweyl}
Let $a\in \maclS_{1/2}'(\rr {2d})$, $\omega _1,\omega _2\in \mascP _E(\rr {2d})$
and that $\omega \in \mathscr P_E(\rr {2d}\oplus \rr {2d})$ be such that equality
is attained in \eqref{e5.9} for $t=1/2$ and some constant $C$. Then
$\op ^w(a)\in \mathscr I_2(\omega _1,\omega
_2)$, if and only if $a\in M^2_{(\omega )}(\rr
{2d})$. Moreover, for some constant $C>0$ it holds
$$
C^{-1}\nm a{M^2_{(\omega )}} \le \nm {a}{s^w_2(\omega _1,\omega _2)}\le C\nm
{a}{M^2_{(\omega )}},
$$
for every $a\in \maclS _{1/2}'(\rr {2d})$.
\end{prop}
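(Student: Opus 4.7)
The plan is a two-step reduction: first pass from the symbol $a$ to its Weyl kernel $K = K_{a,1/2}$ via Proposition \ref{kernelweyl}, and then identify the Hilbert-Schmidt norm of $T = \op^w(a)$ with a weighted $L^2$-norm of the short-time Fourier transform of $K$.

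First I would apply Proposition \ref{kernelweyl} with $t=1/2$, which gives the equivalence $a\in M^2_{(\omega)}(\rr{2d})\Leftrightarrow K\in M^2_{(\omega_0)}(\rr{2d})$ with comparable norms, provided $\omega_0$ is determined by the substitution $\omega(x,\xi,\eta,y) = \omega_0(x-y/2,\,x+y/2,\,-\xi+\eta/2,\,\xi+\eta/2)$. Using the assumed equality in \eqref{e5.9} for $t=1/2$ and inverting this linear substitution (solving $x=(u_1+u_2)/2$, $y=u_2-u_1$, $\xi=(v_2-v_1)/2$, $\eta=v_1+v_2$) yields the closed form
$$
\omega_0(u_1,u_2,v_1,v_2) = C^{-1}\,\frac{\omega_2(u_1,v_2)}{\omega_1(u_2,-v_1)}.
$$
This step produces the norm equivalence $\|a\|_{M^2_{(\omega)}}\asymp \|K\|_{M^2_{(\omega_0)}}$.

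The second step is to show that $K\in M^2_{(\omega_0)}$ if and only if $T$ is Hilbert-Schmidt from $M^2_{(\omega_1)}(\rd)$ to $M^2_{(\omega_2)}(\rd)$, with equivalent norms. Since each $M^2_{(\omega_j)}(\rd)$ is a Hilbert space (Proposition \ref{modbanach} with $\mathscr B = L^2$), the map $U_j f = \omega_j\cdot V_\phi f$ is an isometric embedding $M^2_{(\omega_j)}(\rd)\hookrightarrow L^2(\rr{2d})$. Consequently $\|T\|_{HS}$ equals the Hilbert-Schmidt norm of $\widetilde T = U_2 T U_1^*$ as an operator on $L^2(\rr{2d})$. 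Computing $\widetilde T$ directly, via the identity $U_1^* F = V_\phi^*(F/\omega_1)$ and the coherent-state formula
$$
V_\phi\bigl(T\,\pi(y_1,\eta_1)\phi\bigr)(x_2,\xi_2) = V_\Phi K\bigl((x_2,y_1),(\xi_2,-\eta_1)\bigr),\qquad \Phi=\phi\otimes\phi,
$$
(where $\pi(y,\eta)\phi(z) = e^{iz\eta}\phi(z-y)$ is the time-frequency shift and the formula follows from Fubini applied to the definition of the STFT), one reads off the integral kernel of $\widetilde T$ as
$$
\widetilde K\bigl((x_2,\xi_2),(y_1,\eta_1)\bigr) = \frac{\omega_2(x_2,\xi_2)}{\omega_1(y_1,\eta_1)}\,V_\Phi K\bigl((x_2,y_1),(\xi_2,-\eta_1)\bigr).
$$
Taking the $L^2(\rr{4d})$-norm of $\widetilde K$, performing the substitution $\eta_1\mapsto -\eta_1$, and relabeling variables converts the integrand into $|V_\Phi K(U,V)|^2\cdot \omega_0(U,V)^2$ with exactly the weight $\omega_0$ from Step 1. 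Combining the two equivalences yields $\|\op^w(a)\|_{HS(M^2_{(\omega_1)},M^2_{(\omega_2)})} \asymp \|a\|_{M^2_{(\omega)}}$, which supplies both directions of the statement.

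The main obstacle is the careful variable bookkeeping in the second step, specifically verifying that the weight structure $\omega_2(x_2,\xi_2)/\omega_1(y_1,-\eta_1)$ produced by the coherent-state/HS computation matches $\omega_0$ from Proposition \ref{kernelweyl} after relabeling. This hinges on correctly identifying the paper's convention for the ordering of the two kernel-frequency variables $(v_1,v_2)$ of $V_\Phi K$ (input-dual versus output-dual) together with the sign flip $\eta_1\mapsto -\eta_1$ coming from the complex-conjugation in $V_\phi^*$. Once these conventions are pinned down, the two weight expressions coincide and the remainder of the argument reduces to routine applications of Fubini's theorem and the Plancherel identity for the STFT.
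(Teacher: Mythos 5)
The paper offers no proof of this proposition beyond the citation of \cite{To8A}, and your two-step argument --- reduction from the symbol $a$ to the Weyl kernel $K$ via Proposition \ref{kernelweyl}, followed by the identification of the Hilbert--Schmidt norm of $\op ^w(a)\colon M^2_{(\omega _1)}\to M^2_{(\omega _2)}$ with a weighted $L^2$-norm of $V_\Phi K$ through the isometries $f\mapsto \omega _j V_\phi f$ and the coherent-state formula --- is exactly the argument of that reference, so the approach is correct and is the intended one. The single point that must still be nailed down is the one you flag yourself: the weight your Hilbert--Schmidt computation produces, $\omega _2(z_1,\zeta _1)/\omega _1(z_2,-\zeta _2)$, matches the kernel-theorem weight in Proposition \ref{schwartzgrochenig}, but a literal reading of the substitution in Proposition \ref{kernelweyl} appears to yield the two dual slots in the opposite order ($\omega _2(z_1,\zeta _2)/\omega _1(z_2,-\zeta _1)$), so the frequency-variable convention there (and the fact that the hypothesis cites \eqref{e5.9}, which contains no $t$, rather than \eqref{e5.9B}) has to be reconciled before the two steps can be concatenated.
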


\par

We have now the following result.

\par

\begin{thm}\label{schattenthm}
Let $t\in \mathbf R$ and $p,q,p_j,q_j\in [1,\infty ]$ for
$j=1,2$, satisfy
\begin{equation*}%\label{pqolikhet}
p_1\le p \le p_2,\quad q_1\le \min (p,p')\quad \text{and}\quad q_2\ge
\max (p,p').
\end{equation*}
Also let $\omega \in \mathscr
P_E(\rr {2d}\oplus \rr {2d})$ and $\omega _1,\omega _2\in \mathscr P_E(\rr
{2d})$ be such that equality is attained in \eqref{e5.9B}, for some
constant $C$. Then
\begin{equation}\label{embedding1}
M^{p_1,q_1}_{(\omega )}(\rr{2d})\subseteq s_{t,p}(\omega
_1,\omega _2)\subseteq  M^{p_2,q_2}_{(\omega )}(\rr{2d})
\end{equation}
Moreover, for some constant $C>0$ it holds
$$
C^{-1}\nm a{M^{p_2,q_2}_{(\omega )}} \le \nm {a}{\mathscr
s_{t,p}(\omega _1,\omega _2)}\le C\nm
{a}{M^{p_1,q_1}_{(\omega )}}
$$
for every $a\in \maclS _{1/2}'(\rr {2d})$.
\end{thm}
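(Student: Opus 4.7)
The plan is to reduce to the Weyl case $t=1/2$ and then obtain the embeddings \eqref{embedding1} by complex interpolation between three natural endpoints.

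First, by \eqref{pseudorelation} we have $\op _t(a)=\op ^w(b)$ where $b=e^{i(1/2-t)\scal {D_x}{D_\xi }}a$, and Proposition \ref{p1.45}(2) shows that $e^{i(1/2-t)\scal {D_x}{D_\xi }}$ is a homeomorphism from $M^{p_1,q_1}_{(\omega )}(\rr {2d})$ onto $M^{p_1,q_1}_{(\omega _{1/2-t})}(\rr {2d})$ for a weight $\omega _{1/2-t}$ obtained from $\omega$ by the shift $(x,\xi )\mapsto (x-(1/2-t)y,\xi -(1/2-t)\eta )$. A direct computation shows that if $\omega$ satisfies equality in \eqref{e5.9B} for a given $t$ with weights $\omega _1,\omega _2$, then $\omega _{1/2-t}$ satisfies equality in \eqref{e5.9B} with $t=1/2$ for the same pair $(\omega _1,\omega _2)$. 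Consequently it suffices to prove the theorem for $\op ^w$, and we may write $s^w_p(\omega _1,\omega _2)$ in place of $s_{1/2,p}(\omega _1,\omega _2)$.

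Next I would establish three endpoint embeddings in the Weyl case, assuming equality in \eqref{e5.9}. At $p=2$, Proposition \ref{HSweyl} yields the identification $s^w_2(\omega _1,\omega _2)=M^2_{(\omega )}(\rr {2d})$ with equivalent norms. At $p=\infty$, taking $p_1=\infty$, $q_1=1$, $p_2=q_2=\infty$, the inclusion $M^{\infty ,1}_{(\omega )}\subseteq s^w_\infty (\omega _1,\omega _2)$ follows from Theorem \ref{p5.4B} applied with $p_1=q_1=p_2=q_2=2$ (so that the target/image spaces become $M^2_{(\omega _j)}$), while the companion inclusion $s^w_\infty (\omega _1,\omega _2)\subseteq M^\infty _{(\omega )}$ follows from Theorem \ref{omvthm5.2}. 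At $p=1$, taking $p_1=q_1=1$, $p_2=q_2=\infty$, both embeddings follow from the $p=\infty$ case by trace duality: the trace pairing identifies $\mathscr I_1(\omega _1,\omega _2)$ with the topological dual of $\mathscr I_\infty (\omega _2,\omega _1)$, and under the Weyl correspondence this pairing is translated to the $L^2$ pairing on symbols (via the Hilbert--Schmidt identification of $p=2$), whereas Theorem \ref{densdualmodthm} identifies the dual of $M^{\infty ,1}_{(1/\omega )}$ with $M^{1,\infty }_{(\omega )}$ and the predual of $M^\infty _{(1/\omega )}$ with $M^1_{(\omega )}$.

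Finally, the general embeddings \eqref{embedding1} follow by complex interpolation. For $p\in [2,\infty ]$ I would interpolate the endpoint embeddings at $p=2$ and $p=\infty$ with parameter $\theta =1-2/p$, and for $p\in [1,2]$ I would interpolate the endpoints at $p=1$ and $p=2$ with $\theta =2-2/p$; the $q$-indices satisfying $q_1\le \min (p,p')$ and $q_2\ge \max (p,p')$ are precisely what the interpolation of the endpoint pairs $(q=1,q=2)$ or $(q=\infty ,q=2)$ produces. On the modulation space side the relevant complex interpolation identities are supplied by Proposition \ref{interprop2}, and on the operator side by the classical Calder\'on result $(\mathscr I_2,\mathscr I_\infty )_{[\theta ]}=\mathscr I_p$ and $(\mathscr I_1,\mathscr I_2)_{[\theta ]}=\mathscr I_p$. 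The main obstacle will be the careful bookkeeping in the reduction step, namely verifying that the shifted weight $\omega _{1/2-t}$ remains in $\mascP _E(\rr {4d})$ and preserves the equality in \eqref{e5.9B}; and, in the $p=1$ endpoint, verifying that the trace pairing on $\mathscr I_\infty$--$\mathscr I_1$ transfers to the $L^2$ pairing between $M^{\infty ,1}_{(1/\omega )}$ and $M^{1,\infty }_{(\omega )}$ in an isometric way, which relies crucially on the Hilbert--Schmidt identification from Proposition \ref{HSweyl}.
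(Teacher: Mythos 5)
Your proposal is correct and follows essentially the same route as the paper: endpoint embeddings at $p=\infty$ via Theorem \ref{p5.4B} (for $M^{\infty ,1}_{(\omega )}\subseteq s_{t,\infty}$) and Theorem \ref{omvthm5.2} combined with the inclusions of Proposition \ref{p1.4} (for $s_{t,\infty}\subseteq M^{\infty}_{(\omega )}$), the $p=1$ case by duality, the $p=2$ case from Propositions \ref{p1.45} and \ref{HSweyl}, and then interpolation. Your explicit reduction to the Weyl case and the spelled-out trace-duality step are just more detailed versions of what the paper's proof does implicitly.
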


\par

\begin{proof}
By Proposition \ref{p1.4} and Theorem \ref{omvthm5.2} it follows that $s_{t,\infty}\subseteq
M^{\infty}_{(\omega )}$, and by Theorem \ref{p5.4B} we get $M^{\infty ,1}_{(\omega )}
\subseteq s_{t,\infty}$. By duality we obtain $M^{1}_{(\omega )}
\subseteq s_{t,1}$ and $s_{t,1}\subseteq
M^{1,\infty}_{(\omega )}$ Furthermore, If $p_1=p_2=q_1=q_2=2$, then \eqref{embedding1} follows
from and Propositions \ref{p1.45} and \ref{HSweyl}. The result now follows for general $p$
by interpolating these cases. The proof is complete.
\end{proof}

\par

\subsection{A pseudo-differential calculus in the Bargmann-Fock setting}

\par

In this section we show some possibilities to establish a pseudo-differential
calculus on Banach spaces of analytic functions, in the frame-work of the
theory of the Bargmann transform. The definition of the calculus is in some
sense similar to the usual pseudo-differential calculus, defined in Section
\ref{sec1} (cf. \eqref{e0.5}). We show that usual partial differential operators
have convenient forms, and remark that the usual calculus in Section \ref{sec1}, to
some extent, can be considered as a part of this pseudo-differential
calculus on analytic functions.

\par

Before the definition of the calculus on analytic functions, we consider
properties of compositions of the Bargmann transform with the Fourier
transform, translations or modulations. It is then convenient to
introduce some notations.

\par

The \emph{Fourier-Bargmann} transform $\mathscr F_{\mathfrak V,t}$ of any 
function or distribution $F$ on $\cc d$ of order $t\in \mathbf R$ is given by
$$
(\mathscr F_{\mathfrak V,t}F)(z) =F(e^{-it\pi /2}z).
$$
We also set $\mathscr F_{\mathfrak V}=\mathscr F_{\mathfrak V,1}$, and call this
map the Fourier-Bargmann transform. We note that $\mathscr F_{\mathfrak V,t}^{-1}=\mathscr F_{\mathfrak V,-t}$, and that $(\mathscr F_{\mathfrak V}^{-1}F)(z)=F(iz)$. The following lemma shows that the latter formula is strongly related to Fourier's inversion formula.

\par

\begin{lemma}\label{lemmaFourBarg}
Let $\omega \in \mascP _Q^0(\cc d)$, $\mascB$ be a mixed norm space on $\cc d$, and set $\omega _t(z)=\omega (e^{-it\pi /2}z) = (\mathscr F_{\mathfrak V,t}\omega )(z)$. Then the following is true:
\begin{enumerate}
\item $\mathscr F_{\mathfrak V,t}$ restricts to continuous bijective mappings from $B(\omega ,\mascB )$ to $B(\omega _t,\mascB )$, and $A(\omega ,\mascB )$ to $A(\omega _t,\mascB )$;

\vrum

\item $\mathfrak V\circ \mathscr F$ is equal to $\mathscr F_{\mathfrak V}\circ \mathfrak V$ as mappings from $M(\omega ,\mascB )$ to $A(\omega _1,\mascB )$;

\vrum

\item if $f\in M(\omega ,\mascB )$, then
\begin{align*}
(\mathfrak V(f(\cdo -x/\sqrt 2 )))(z) &= e^{\scal zx -|x|^2/4}(\mathfrak Vf)(z-x),
\\[1ex]
(\mathfrak V(e^{i\sqrt 2\scal \cdo \xi}f))(z) &= (\mathfrak Vf)(z+i\xi ).
\end{align*}
\end{enumerate}
\end{lemma}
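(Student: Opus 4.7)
All three claims rely on direct computations with the Bargmann integral kernel, together with rotation-invariance properties in $\cc d$. I sketch each in turn.

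For (1), the map $R_t: z \mapsto e^{-it\pi/2}z$ is an analytic automorphism of $\cc d$, so $\mathscr F_{\mathfrak V,t}F = F \circ R_t$ preserves $A(\cc d)$, with inverse $\mathscr F_{\mathfrak V,-t}$. For the norm, insert $G = F\circ R_t$ into $\nm{G}{B(\omega_t,\mascB)}=\nm{(U_{\mathfrak V}^{-1}G)\omega_t}{\mascB}$, use the explicit formula for $U_{\mathfrak V}^{-1}$ extracted from \eqref{bargstft2}, and perform the change of variable induced by $R_t$ on $\rr{2d}$. Since $R_t$ preserves Lebesgue measure and $|z|^2$, and the Gaussian factor in $U_{\mathfrak V}^{-1}$ depends only on $|z|^2$, the residual effect is a reshuffling of the mixed-norm components (controlled by a constant) together with the pointwise replacement of $\omega$ by $\omega_t$ built into the definition of $\omega_t$. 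Bijectivity follows from the explicit inverse.

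For (2), the cleanest route is to verify the identity on the Hermite basis and extend by density. Using $\mathfrak V h_\alpha(z)=z^\alpha/\sqrt{\alpha!}$ (recalled in the proof of Proposition \ref{mainthmspecial}) and $\widehat{h_\alpha}=(-i)^{|\alpha|}h_\alpha$, one has
\[
\mathfrak V(\widehat{h_\alpha})(z) = (-i)^{|\alpha|}\frac{z^\alpha}{\sqrt{\alpha!}} = \frac{(-iz)^\alpha}{\sqrt{\alpha!}} = (\mathfrak V h_\alpha)(-iz) = (\mathscr F_{\mathfrak V}\mathfrak V h_\alpha)(z).
\]
Extension to $M(\omega,\mascB)$ follows from density of $\mathscr S_0(\rr d)$ (Theorem \ref{densdualmodthm} when $\nu_2(\mascB)<\infty$, or Theorem \ref{narrowmodthm} with narrow convergence otherwise), combined with continuity from part (1) and the known mapping properties of $\mathscr F$ between the appropriate modulation spaces.

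For (3), apply the definition of $\mathfrak V$ directly and change variables. For the translation formula, set $y'=y-x/\sqrt 2$ in the Bargmann integral defining $\mathfrak V(f(\cdo - x/\sqrt 2))(z)$, expand $|y|^2 = |y'|^2 + \sqrt 2\scal{y'}x + |x|^2/2$ and $\scal zy = \scal z{y'} + \scal zx/\sqrt 2$, and collect the $y'$-independent terms to extract the exponential prefactor; the remaining integral reconstitutes $\mathfrak V f$ at the shifted argument. The modulation identity is more direct: the extra factor $e^{i\sqrt 2\scal y\xi}$ merges with $\sqrt 2\scal zy$ into $\sqrt 2\scal{(z+i\xi)}y$, after which completing the square against the shifted parameter produces the stated formula. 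The principal technical challenge is (1), where $R_t$ generally permutes the components of the decomposition $V_1\oplus\cdots\oplus V_n$ underlying $\mascB$, so the change of variables is not an isometry; since the claim is only continuity, one absorbs the reshuffling into a constant and applies the same argument to $\mathscr F_{\mathfrak V,-t}$ to secure the reverse inequality. Additional care is needed in (3) to keep track of quadratic identities involving the complex bilinear form $\scal zz$ versus the Hermitian modulus $|z|^2$, whose distinction is what fixes the correct prefactors.
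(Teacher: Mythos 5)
The paper's own ``proof'' of this lemma is a one-line remark that (1) and (3) follow from the definitions and (2) from Fourier's inversion formula, so your sketch is in the same spirit and supplies more detail than the original. However, one step in your argument for (1) does not hold up: you claim that the rotation $R_t(z)=e^{-it\pi /2}z$ merely produces ``a reshuffling of the mixed-norm components (controlled by a constant).'' For a genuinely mixed norm this is false. Take $\mascB =L^{\infty ,1}(\rr {2d})$ and $t=1$, so that $R_1$ corresponds to $(x,\xi )\mapsto (\xi ,-x)$ on $\rr {2d}$; the change of variables converts $\sup _x\int (\cdot )\, d\xi$ into $\sup _\xi \int (\cdot )\, dx$, and these two mixed norms are not equivalent --- this is precisely the difference between $L^{p,q}$ and $L^{p,q}_{\tw}$. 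Hence the permutation of the subspaces $V_1\oplus \cdots \oplus V_n$ underlying $\mascB$ cannot be absorbed into a constant, and your argument for continuity of $\mathscr F_{\mathfrak V,t}\colon B(\omega ,\mascB )\to B(\omega _t,\mascB )$ only works when $R_t$ preserves that decomposition (e.g. $\mascB =L^p$ unmixed, or $t$ an even integer). For the $A$-spaces one could try to route around this through Theorem \ref{analident}, but that theorem changes the weight within an admissible family rather than by a constant, so it does not directly give the stated target space either. This difficulty is inherited from the statement itself, which the paper also glosses over, but a proof should not hide it inside ``a constant.''

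For (2) you verify the identity on the Hermite basis, using $\mathfrak Vh_\alpha =z^\alpha /(\alpha !)^{1/2}$ and $\widehat {h_\alpha}=(-i)^{|\alpha |}h_\alpha$, and then extend by density; the computation on the basis is correct. The paper's intended route is to apply Fourier's inversion formula directly to the Bargmann kernel, i.e. to check that $\mathscr F(\mathfrak A_d(z,\cdo ))=\mathfrak A_d(-iz,\cdo )$, which gives $\mathfrak V\circ \mathscr F=\mathscr F_{\mathfrak V}\circ \mathfrak V$ pointwise for \emph{every} $f\in \maclS _{1/2}'(\rr d)$ and thus avoids the density discussion entirely --- a discussion that is not free for you, since norm density of $\mathscr S_0$ and narrow convergence do not cover every mixed norm space with $\nu _2(\mascB )=\infty$ (convergence in $\maclS _{1/2}'$ together with continuity of point evaluations of $\mathfrak V$ would repair this). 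For (3) your method --- substitute, expand, and complete the square while distinguishing $\scal zz$ from $|z|^2$ --- is the right one, but you do not carry it out, and carrying it out yields $e^{\scal zx /2-|x|^2/8}(\mathfrak Vf)(z-x/2)$ for the translation by $x/\sqrt 2$ and $e^{i\scal z\xi -|\xi |^2/2}(\mathfrak Vf)(z+i\xi )$ for the modulation, rather than the displayed expressions; the formulas in the statement correspond to a different normalization of the translation and modulation parameters. Asserting that the computation ``produces the stated formula'' without performing it therefore conceals a discrepancy you would otherwise have caught.
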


\par

We note that (2) and (3) in Lemma \ref{lemmaFourBarg} in some special cases were
proved already in \cite{B1,FGW,Gc1,GL}.

\par

\begin{proof}
The assertions (1) and (3) follows immediately from the definitions, and (2) follows
by a straight-forward application of Fourier's inversion formula. The details are left
for the reader.
\end{proof}

\par

By Lemma \ref{lemmaFourBarg} and the investigations in Section
\ref{sec1}, it follows that $e^{i\scal x\xi}$, $\mathscr F$ and $dx$
in \eqref{e0.5} concerning the usual pseudo-differential calculus
correspond to $e^{i(z,w)}$, $\mathscr F_{\mathfrak V}$ and
$d\mu (z)$ respectively. The following definition of our complex
version of pseudo-differential operators, is based on these
observations.

\par

\begin{defn}\label{defcomplpseudo}
Let $t\in \mathbf R$ and let $a\in (\maclS _{1/2})'(\cc d\oplus \cc d)$ be such that
\begin{enumerate}
\item $a(z,w)e^{-|z|^2-|w|^2+N(\eabs z +\eabs w)}\in \mathscr S '(\cc d\oplus \cc d)$
for every $N\ge 0$;

\vrum

\item if $p\in P(\cc d)$, then $z\mapsto \scal {a(z,i\cdo )}{e^{-|\cdo |^2+(z,\cdo )}p}$ is entire.
\end{enumerate}
Then the (complex) pseudo-differential operator $\op _{\mathfrak V,t}(a)$
with respect to the symbol $a$ is the linear operator from $P(\cc d)$ to $A(\cc d)$, given by
\begin{multline}\label{complpseudodef}
(\op _{\mathfrak V,t}(a)F)(z)
\\[1ex]
= \iint a((1-t)z+tw_1,w_2)F(w_1)
e^{i((z,w_2)-(w_2,w_1))}\, d\mu (w_1)d\mu (w_2)
\\[1ex]
= \iint a((1-t)z+tw_1,iw_2)F(w_1)
e^{((z,w_2)+(w_2,w_1))}\, d\mu (w_1)d\mu (w_2),
\end{multline}
when $F\in P(\cc d)$.
\end{defn}

\par

We note that the reproducing kernel in combination with the fact that
$w_1\mapsto a((1-t)z+tw_1,w_2)F(w_1)$ in \eqref{complpseudodef}
is analytic and satisfying appropriate conditions give
$$
(\op _{\mathfrak V,t}(a)F)(z)
= \int a((1-t)z+tw,w)F(w)
e^{i(z,w)}\, d\mu (w).
$$
If
$$
(w_1,w_2)\mapsto a((1-t)z+tw_1,w_2)F(w_1) e^{i((z,w_2)-(w_2,w_1))}
$$
in \eqref{complpseudodef} is not an integrable function, then $\op _{\mathfrak V,t}(a)$
is defined as the operator with kernel
$$
(z,w)\mapsto \pi ^{-d}\Pi _A(a((1-t)z+tw,i\cdo )
e^{((z,\cdo )-(\cdo ,w))})e^{-|w|^2}.
$$

\par

For conveniency we also set $\op _{\mathfrak V} = \op _{\mathfrak V,0}$.
%In
%this case we may relax the condition (1) in Definition \ref{defcomplpseudo} into
%$$
%\fy (z)a(z,w)e^{-|w|^2+N\eabs w}\in \mathscr S'(\cc d\oplus \cc d),
%$$
%for every $N\ge 0$ and $\fy \in C_0^\infty (\cc d)$.

\par

The following proposition gives motivations for considering operators
of the form $\op _{\mathfrak V,t}(a)$.

\par

\begin{prop}\label{oppolsymb}
Let $N\ge 0$ be an integer, $a_\beta \in A(\cc d)$ for every
$\beta \in \mathbf N^d$ such that $|\beta |\le N$, and let
$$
a(z,w) = \sum _{|\beta |\le N}a_\beta (z)\overline w^\beta .
$$
Then
$$
(\op _{\mathfrak V}(a)F)(z) = \sum _{|\beta |\le N}a_\beta (z) (D^\beta F)(z),
\qquad F\in P(\cc d).
$$
\end{prop}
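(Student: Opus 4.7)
The plan is to insert the polynomial (in $\overline w$) symbol $a(z,w) = \sum_{|\beta|\le N} a_\beta(z)\,\overline w^{\beta}$ directly into the defining formula \eqref{complpseudodef} with $t=0$, reduce the inner integral with Bargmann's reproducing kernel \eqref{reproducing}, and then recognize what remains as the derivative formula from Remark \ref{derivrepr}. Since $F\in P(\cc d)$, every integrand will be dominated by polynomials against the Gaussian $e^{-|w_1|^2-|w_2|^2}$ in $d\mu(w_1)\,d\mu(w_2)$, so Fubini's theorem applies without difficulty and the interchange of summation with the integrals is harmless.

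Concretely, I would begin by writing
\begin{equation*}
(\op _{\mathfrak V}(a)F)(z) = \sum _{|\beta|\le N} a_\beta(z) \iint \overline{w_2}^{\beta} F(w_1)\, e^{i(z,w_2)}e^{-i(w_2,w_1)}\, d\mu(w_1)\,d\mu(w_2),
\end{equation*}
and carrying out the $w_1$-integral first. Using that $-i(w_2,w_1)=(-iw_2,w_1)$ in the Hermitian pairing, the reproducing formula \eqref{reproducing} applied to the polynomial $F$ gives $\int F(w_1)e^{(-iw_2,w_1)}\,d\mu(w_1) = F(-iw_2)$. Substituting this leaves a single integral in $w_2$, after which the change of variables $u=-iw_2$ (which preserves $d\mu$ since $|iu|=|u|$ and Lebesgue measure is rotation-invariant) produces the identities $\overline{w_2}^{\beta}=(-i)^{|\beta|}\overline u^{\beta}$ and $e^{i(z,w_2)}=e^{(z,u)}$.

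After these reductions, what remains is
\begin{equation*}
(\op _{\mathfrak V}(a)F)(z) = \sum _{|\beta|\le N} (-i)^{|\beta|}\, a_\beta(z) \int \overline u^{\beta} F(u)\, e^{(z,u)}\, d\mu(u),
\end{equation*}
and the integral on the right is exactly $\partial^{\beta}F(z)$ by Remark \ref{derivrepr}. Since the paper uses the standard convention $D_j=-i\partial_j$, so that $D^{\beta}=(-i)^{|\beta|}\partial^{\beta}$, the factors $(-i)^{|\beta|}$ combine with $\partial^{\beta}$ to give precisely $D^{\beta}F(z)$, which is the claimed identity.

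I do not anticipate a genuine obstacle: the polynomiality of $a_\beta$ and $F$ guarantees all the growth conditions in Definition \ref{defcomplpseudo} are trivially satisfied, Fubini is automatic thanks to the Gaussian damping, and both the reproducing formula and the derivative formula \eqref{eq3.*} in Remark \ref{derivrepr} are directly quotable. The only point requiring a small remark is that one must carry out the $w_1$-integral before the $w_2$-integral (the second form of \eqref{complpseudodef}, after substituting $w_2\to iw_2$, gives the same conclusion and would serve equally well); once that order is fixed the rest is bookkeeping of the phase factors $i$ and $-i$.
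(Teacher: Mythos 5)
Your computation is correct and is exactly the ``straight-forward computation using Remark \ref{derivrepr}'' that the paper's one-line proof invokes: expand the symbol, integrate out $w_1$ with the reproducing kernel to get $F(-iw_2)$, substitute $w_2=iu$ (a rotation preserving $d\mu$), and identify $\int \overline u^{\beta}F(u)e^{(z,u)}\,d\mu(u)=\partial ^{\beta}F(z)$, the factors $(-i)^{|\beta |}$ assembling $\partial ^\beta$ into $D^{\beta}$. The only slip is your appeal to ``polynomiality of $a_\beta$'': the hypothesis is merely $a_\beta \in A(\cc d)$, but this is harmless because for each fixed $z$ the value $a_\beta (z)$ is a constant that factors out of the $(w_1,w_2)$-integrals, whose absolute convergence is guaranteed by the Gaussian in $d\mu (w_1)\, d\mu (w_2)$ dominating the cross term $e^{|w_1|\, |w_2|}$.
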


\par

\begin{proof}
The result follows by straight-forward computations, using Remark \ref{derivrepr}
\end{proof}

\par

\begin{rem}
We may use Lemma \ref{lemmaFourBarg} and the mapping properties for the Bargmann transform to reformulate certain pseudo-differential operators of the form $\op _t(a)$ into pseudo-differential operators given by Definition \ref{defcomplpseudo}. The details are left for the reader.
\end{rem}

\par

\begin{rem}
If $a(z,w) =(S^{-1}b)(w/i)$, then it follows by the definitions that $\op _{\mathfrak V,t}(a) = \operatorname T_{\mathfrak V}(b)$. Hence the set of Berezin-Toeplitz operators can be considered as a subclass of the Bargmann pseudo-differential operators.
\end{rem}

\par

\begin{rem}
Let $a$ fulfills the conditions in Definition \ref{defcomplpseudo}, and assume in addition that $w\mapsto a(z,w)$ is analytic. Then it follows by the reproducing formula that
$$
(\op _{\mathfrak V,t}(a)F)(z) = a(z,z)F(z),
$$
when $F\in P(\cc d)$.
\end{rem}

\medspace

\end{document}